\documentclass[11pt,letterpaper]{amsart}
    \pdfoutput=1
\usepackage[maxbibnames=99, style=alphabetic]{biblatex}
\addbibresource{references.bib}
    \usepackage{graphicx}
    \usepackage[english]{babel}
    \usepackage{graphicx}
    \usepackage{framed}
    \usepackage[normalem]{ulem}
    \usepackage{amsmath}
    \usepackage{dynkin-diagrams}
    \usepackage{amsthm}
    \usepackage{amssymb}
    \usepackage{hyperref} 
    \usepackage{comment}
    \usepackage{tikz}
    \usetikzlibrary{matrix,calc}
    \usepackage{mathtools}
    \usepackage{amsfonts}
    \usepackage{enumerate}
    \usepackage[utf8]{inputenc}
    \usepackage[top=1 in,bottom=1in, left=1 in, right=1 in]{geometry}
    \usepackage[capitalize]{cleveref}
    \usepackage[colorinlistoftodos]{todonotes}
    \usepackage[all]{xy}
    \usepackage{mathrsfs}
    \usepackage{longtable} 
    \usepackage{stmaryrd}

    \newcommand{\Z}{\mathbb{Z}} 
    \newcommand{\A}{\mathbb{A}} 
    \newcommand{\G}{\mathbb{G}} 
    
    
    \newcommand{\LSL}{\mathfrak{sl}}
    \newcommand{\SL}{\operatorname{SL}}

    \newcommand{\LG}{\mathfrak{g}}
    
    \newcommand{\LT}{\mathfrak{t}}
    \newcommand{\LN}{\mathfrak{n}}
    \newcommand{\LB}{\mathfrak{b}}
    
    \renewcommand{\a}{\alpha}

    \newcommand{\Hom}{\operatorname{Hom}}
    
    
    \renewcommand{\Hom}{\operatorname{Hom}} 
    \newcommand{\uHom}{\underline{\operatorname{Hom}}} 
    \newcommand{\uEnd}{\underline{\operatorname{End}}} 
    \newcommand{\QCoh}{\operatorname{QCoh}}
    \newcommand{\IndCoh}{\operatorname{IndCoh}}
    
    \newcommand{\C}{\mathcal{C}} 
    \renewcommand{\O}{\mathcal{O}} 
    \newcommand{\F}{\mathcal{F}} 
    \newcommand{\D}{\mathcal{D}} 
    \newcommand{\E}{\mathcal{E}} 
    \renewcommand{\\}{\backslash}
    
    \usepackage[mmddyyyy]{datetime}

    \theoremstyle{definition}
    \newtheorem{Theorem}{Theorem}[section]
    
    \newtheorem{Corollary}[Theorem]{Corollary}
    \newtheorem{Definition}[Theorem]{Definition}
    \newtheorem{Proposition}[Theorem]{Proposition}
    \newtheorem{Remark}[Theorem]{Remark}
    \newtheorem{Example}[Theorem]{Example}
    \newtheorem{Lemma}[Theorem]{Lemma}

    \setlength{\columnseprule}{1 pt}

    \title{The Universal Category $\mathcal{O}$ and the Gelfand-Graev Action}
    \author{Tom Gannon}

    \newcommand{\CatN}{\mathcal{D}(N \backslash G/N)_{\text{nondeg}}}

    \newcommand{\CatTwTw}{\mathcal{D}(N\backslash G/N)^{\text{(}T \times T\text{, w)}}_{\text{nondeg}}}

    \newcommand{\Hpsi}{\mathcal{H}_{\psi}}

    \newcommand{\AvN}{\text{Av}_*^N}
    \newcommand{\Avpsi}{\text{Av}_{!}^{\psi}}
    \newcommand{\Symt}{\text{Sym(}\mathfrak{t}\text{)}}
    
    \newcommand{\LTd}{\LT^{\ast}}

    \newcommand{\Wext}{\tilde{W}^{\text{aff}}}
    \newcommand{\Waff}{W^{\text{aff}}}

    \newcommand{\DGCatContk}{\text{DGCat}^k_{\text{cont}}}
\newcommand{\DGCatContL}{\text{DGCat}^L_{\text{cont}}}
\newcommand{\Aone}{\mathbf{A}}
\newcommand{\Atwo}{\mathcal{L}\text{-mod}(\Aone)}
\newcommand{\algobj}{\mathcal{A}}
\newcommand{\newalgobj}{\mathcal{A}'}

\newcommand{\AvNshifted}{\AvN[\text{dim}(N)]}
\newcommand{\Avpsishifted}{\Avpsi[-\text{dim}(N)]}
\newcommand{\HN}{\D(N \backslash G/N)}
\newcommand{\ClGlobalDiffOp}{\text{H}^0\Gamma(\mathcal{D}_{G/N})}
\newcommand{\GlobalDiffOp}{\Gamma(\mathcal{D}_{G/N})}

\newcommand{\HNTw}{\D(N \backslash G/N)^{T \times T, w}}
\newcommand{\indsch}{\mathcal{X}}
\newcommand{\characterlatticeforT}{X^{\bullet}(T)}
\newcommand{\rootlattice}{\mathbb{Z}\Phi}
\newcommand{\Spec}{\text{Spec}}
\newcommand{\fieldpossiblydifferentfromgroundfield}{K}

\newcommand\Tstrut{\rule{0pt}{2.8ex}}
\newcommand\smallTstrut{\rule{0pt}{2.5ex}}
\newcommand\Bstrut{\rule[-1.5ex]{0pt}{0pt}}

\newcommand{\Hn}{\mathcal{D}(N\backslash G/N)}

  \newcommand{\newGeometricCategoryO}{\widetilde{\mathcal{O}}_{\text{geom}}}
   \newcommand{\newAlgebraicCategoryO}{\widetilde{\mathcal{O}}_{\text{alg}}}
   
    \begin{document}
    \maketitle
    \begin{abstract}
We show that the definition and many useful properties of Soergel's functor $\mathbb{V}$ extend to \lq universal\rq{} variants of the BGG category $\mathcal{O}$, such as the category which drops the semisimplicity condition on the Cartan action. We show that this functor naturally factors through a quotient known as the nondegenerate quotient and show that this quotient admits a \lq Gelfand-Graev\rq{} action of the Weyl group. Although these categories are not finite length in any sense, we explicitly compute the kernel of this functor in a universal case and use this to extend these results to an arbitrary DG category with an action of a reductive group. 

Along the way, we prove a result which may be of independent interest, which says that a functor of $G$-categories with a continuous adjoint is an equivalence if and only if it is an equivalence at each field-valued point, strengthening a result of Ben-Zvi--Gunningham--Orem.
\newline\textit{MSC 2020 Classification}: 22E57, 17B10
    \end{abstract}
    
\tableofcontents 

    \section{Introduction}
    \subsection{Overview} The purpose of this paper is to study an analogue of Soergel's quotient functor $\mathbb{V}$ on \lq universal\rq{} variants of the BGG category $\mathcal{O}$ (see \cref{Variants of Category O Definition}) such as the category $\newAlgebraicCategoryO$ which drops the semisimplicity criterion on the Cartan action. Although these categories do not have finite length in any sense, in \cref{Kernel in Universal Case Subsubsection} we give a description for the kernel of this functor for each of these categories. This functor naturally factors through a certain quotient which we refer to as the \textit{nondegenerate quotient}, which we also prove admits an action of the Weyl group at the higher categorical level, see \cref{Nondegenerate G-Categories Have N Invariants Admit Weyl Group Action}. 
    
    We use the tools of categorical representation theory to reduce to the finite length case. In particular, we prove an extension of \cite{BZGO} which says that a functor of categories with an action of a reductive group which admits a continuous adjoint is an equivalence if and only if the induced functor is an equivalence at every field-valued point of a dual Cartan subalgebra $\LTd$, which we state more precisely in \cref{Can Check Equivalence on Each field-valued Point for Groups}. These methods give an analogue of Soergel's quotient functor for any DG category with an action of a reductive group, which is used in the companion paper \cite{GannonClassificationOfNondegenerateGCategories}.
    \subsection{Soergel Theory}It is often useful to prove claims about a category by reducing them to claims about a more tractable quotient. This technique has been particularly successful in understanding the BGG category $\mathcal{O}$ of a reductive group $G$ in terms of \textit{Soergel modules}. We now briefly survey this for the BGG category $\mathcal{O}_0$ of objects of generalized central character zero; we also provide a slightly more detailed overview near arbitrary central character in \cref{Soergel Classification of Olambda Subsection}. For a more detailed survey the reader may consult, for example, \cite[Part III]{EliasMakisumiThielWilliamsonIntroToSoergelBimodules}. 
    
    To exhibit the category $\mathcal{O}_0$ as modules for some ring, Soergel constructs an exact functor \begin{equation}\label{Definition of mathbbV in the Introduction} \mathbb{V}: \mathcal{O}_{0} \to \text{Vect}^{\heartsuit} \end{equation} on the BGG category $\mathcal{O}_{0}$ to the abelian category of vector spaces, which naturally factors through a quotient category via the quotient functor \begin{equation}
    \label{Nondegenerate Quotient in Introduction Functor}\mathcal{O}_{0} \to C_0\text{-mod}^{\heartsuit}    \end{equation} where $C_0$ denotes the coinvariant algebra for the Weyl group and $C_0\text{-mod}^{\heartsuit}$ denotes its abelian category of modules. Soergel then shows that the entirety of $\mathcal{O}_{0}$ can be recovered combinatorially from this quotient functor. 
    \subsection{Generalization to $G$-Categories} We now discuss our generalizations of the BGG category $\mathcal{O}$ and our generalization of $\mathbb{V}$. In fact, we generalize the construction of $\mathbb{V}$ to any $G$\textit{-category}, i.e. any DG category equipped with a module structure for the convolution monoidal category $\D(G)$ of $\D$-modules on $G$. 
    
    We set some notation: let $B$ denote a Borel subgroup of $G$ containing some fixed maximal torus $T \subseteq B$, and let $B^-$ denote the Borel subgroup containing $T$ in opposite position. Define $N := [B, B]$ and $N^- := [B^-,B^-]$.
    
    \subsubsection{Generalities on $G$-Categories}\label{Generalities on G Categories Subsubsection}In \cref{CategoricalPreliminaries}, we review the basic definitions and some basic results regarding $G$-categories. While some of these results may be known to experts, they are not available in the literature in the generality we need, so we prove them.
    
     For the purposes of this introduction we assume that, for a category $\C$ with an action of an algebraic group $H$, the reader is familiar with the construction of its \textit{twisted invariants}, which we define precisely in \cref{Character Sheaves and Twisted Invariants Subsubsection}, and its \textit{weak invariants} $\C^{H, w}$, i.e. the invariants as a $\QCoh(H)$-module category. We also give a more leisurely introduction to categorical representation theory in \cite[Section 1]{GannonClassificationOfNondegenerateGCategories}.
        \subsubsection{Non-Semisimple Variants of $\mathcal{O}$}\label{Non-Semisimple Variants of O Subsubsection}
    To motivate our definition for our variants of the BGG category $\mathcal{O}$, we first recall the Beilinson-Bernstein localization theorem \cite[Th\'eor\`eme Principal]{BB}, which states that the global sections functor induces a $t$-exact equivalence \begin{equation}\label{Beilinson-Bernstein Localization at Zero}       
   \D(G/B) \xrightarrow{\sim} \mathfrak{g}\text{-mod}_{\chi_0} \end{equation} of the category of $\D$-modules on the flag variety with the category of modules for $U\LG \otimes_{Z\LG}k$, and a $t$-exact equivalence \begin{equation}\label{Monodromic Beilinson-Bernstein Localization at Zero}\D(G)^{B\text{-mon}} \xrightarrow{\sim} \mathfrak{g}\text{-mod}_{\widehat{\chi_0}}\end{equation}of $B$-monodromic $\D$-modules (in the sense of \cref{Monodromic Objects Definition}) with the category of $U\LG$-modules of generalized central character zero. 
   
   Now, since the $N$-equivariant $\D$-modules on $B\backslash G$ are equivalently the $B$-monodromic $\D$-modules on $B\backslash G$, from \labelcref{Monodromic Beilinson-Bernstein Localization at Zero} one can deduce a $t$-exact equivalence of categories \begin{equation}\label{Beilinson-Bernstein In Particular Can Be Said to Identify NEquiv Dmods on G mod B with BGG O}\D(G/N)^B \simeq \D(B \backslash G)^N \simeq \D(B\backslash G)^{B\text{-mon}}\simeq \D(G)^{B\text{-mon}, B} \simeq \mathfrak{g}\text{-mod}_{\widehat{\chi_0}}^B\end{equation} which, at the abelian categorical level, identifies the category of compact $N$-equivariant $\D$-modules on the flag variety with the BGG category $\mathcal{O}_0$. 
   
   \begin{Remark}
       The equivalences of \labelcref{Beilinson-Bernstein Localization at Zero}, \labelcref{Monodromic Beilinson-Bernstein Localization at Zero}, and \labelcref{Beilinson-Bernstein In Particular Can Be Said to Identify NEquiv Dmods on G mod B with BGG O} are stated at the derived, ind-completed level, but of course the analogous equivalences also hold for the finite length abelian categories. For example, one can derive the equivalences of  \labelcref{Beilinson-Bernstein In Particular Can Be Said to Identify NEquiv Dmods on G mod B with BGG O} from the associated abelian categorical equivalence because the categories in \labelcref{Beilinson-Bernstein In Particular Can Be Said to Identify NEquiv Dmods on G mod B with BGG O}, which are a priori defined as invariance categories, are the derived categories of their hearts, which we prove in \cref{Twisted D-Modules are Derived Category of Heart Section}. 
   \end{Remark}
   
   Motivated by the above, we make the following definitions: 
 
    \begin{Definition}\label{Variants of Category O Definition}
        We define the \textit{geometric category} $\widetilde{\mathcal{O}}$ as the category \[\newGeometricCategoryO := \D(G/N)^{N, (T, w)} \simeq \D(N\backslash G/N)^{T, w}\] the \textit{algebraic category} $\widetilde{\mathcal{O}}$ as the category \[\newAlgebraicCategoryO := \D(G/N)^{G, w} \simeq \LG\text{-mod}^{N}\] and refer to the category \[\mathcal{O}_{\text{univ}} := \D(G/N)^{G \times T, w} \simeq \LG\text{-mod}^{N, (T, w)}\] as the \textit{universal category} $\mathcal{O}$.
    \end{Definition}

     The categories of \cref{Variants of Category O Definition} give natural generalizations of the BGG category $\mathcal{O}$. For example, as explained in \cite{KalSaf}, the universal category $\mathcal{O}$ can be identified with the category of $(U\LG, U\LT)$-bimodules whose diagonal $U\LB$-module structure integrates to a representation of $B$. Additionally, the Bernstein-Lunts theorem \cite{BernsteinLuntzEquivariantSheavesandFunctors}, whose higher categorical version is proved in \cite[Lemma A.18.1]{Ras2}, gives that the algebraic category $\widetilde{\mathcal{O}}$ is the derived category of the abelian category of those $U\LG$-modules whose restricted $U\LN$-module structure integrates to a representation of $N$. 
     
     The categories of \cref{Variants of Category O Definition} also give families of categories whose fiber at some $k$-point recovers the usual BGG category $\mathcal{O}_0$. For example, since $\newAlgebraicCategoryO$ admits an action of the category of $Z\LG$-mod, by \cite{GaitsgorySheavesofCategoriesandtheNotionof1Affineness} we can equivalently view $\newAlgebraicCategoryO$ as a category over $\Spec(Z\LG)$, and the fiber of this category at central character zero $\chi_0 \in \Spec(Z\LG)$ is the category $\LG\text{-mod}_{\chi_0}^{N}$. From this category, one can recover the usual category $\mathcal{O}_0$ by taking the compact objects in the heart of the standard $t$-structure. The categories $\mathcal{O}_{\text{univ}}$, respectively $\newGeometricCategoryO$, similarly admit a module structure for the category $\QCoh(\LTd) = \text{Sym}(\LT)\text{-mod}$, respectively $\D(T) \simeq \QCoh(\LTd/\characterlatticeforT)$ where $\characterlatticeforT$ is the lattice of characters for $T$.
    
    \subsubsection{Analogue of $\mathbb{V}$}\label{Analogue of V Subsubsection}Fix a generic character $\psi: N^- \to \mathbb{G}_a$. To demonstrate that \labelcref{Definition of mathbbV in the Introduction} is the special case of a general functor of $G$-categories, we first recall a well-known fact we prove for the sake of completeness below: \begin{Proposition}\label{SupportOfWhittakerSheaves}
The restriction functor gives a $t$-exact equivalence of right $T$-categories $$\D(G/N)^{N^-, \psi} \xrightarrow{\sim} \D(N^-B/N)^{N^-, \psi} \simeq \D(T).$$ 
\end{Proposition}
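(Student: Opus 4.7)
The plan is to combine the Bruhat stratification of $G/N$ with the standard fact that $(N^-,\psi)$-equivariant $\D$-modules are supported on the open cell, and then to explicitly parameterize the open cell.

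First I would stratify $G/N = \bigsqcup_{w \in W} N^- w B/N$ by (right) Bruhat cells and argue that for $w \neq e$ the category $\D(N^- w B/N)^{N^-, \psi}$ vanishes. Concretely, the stabilizer in $N^-$ of the point $wN \in N^- w B/N$ under left translation equals $N^- \cap wBw^{-1}$, which contains the root subgroup $U_{-\alpha}$ whenever $w^{-1}(\alpha)$ is a negative root. Since $w \neq e$, one can pick a \emph{simple} positive root $\alpha$ with this property (take $\alpha$ so that $s_\alpha$ appears in a reduced expression of $w^{-1}$). Because $\psi$ is nondegenerate, $\psi$ is nontrivial on the simple root subgroup $U_{-\alpha}$. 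Since $\psi$ factors through the abelianization of $N^-$, conjugation by elements of $N^-$ does not change the restriction of $\psi$ to root subgroups; thus at every point of the cell the stabilizer contains a subgroup on which $\psi$ is nontrivial, forcing the category of $(N^-,\psi)$-equivariant sheaves on the cell to vanish. A standard recollement/gluing argument over the Bruhat stratification then promotes these vanishings to the equivalence $\D(G/N)^{N^-,\psi} \xrightarrow{\sim} \D(N^- B/N)^{N^-,\psi}$ via restriction.

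For the second equivalence, I would use the open Bruhat decomposition $N^- \times T \xrightarrow{\sim} N^- B/N$ given by $(n^-, t) \mapsto n^- t N$; injectivity here reduces to $N^- \cap B = \{e\}$. Under this identification, the left $N^-$-action is translation on the first factor while the right $T$-action is translation on the second factor, so
\[
\D(N^- B/N)^{N^-,\psi} \simeq \D(N^-)^{N^-,\psi} \otimes \D(T) \simeq \D(T)
\]
as right $T$-categories, since $\D(N^-)^{N^-,\psi} \simeq \text{Vect}$ (the exponential $\D$-module is the unique simple generator). All functors involved---the $*$-restriction to an open, the isomorphism from the Bruhat cell, and the external tensor decomposition---are $t$-exact, yielding $t$-exactness of the composite.

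The main technical point is the vanishing on non-open strata, and inside that the only real subtlety is verifying that a simple root with the required property exists and that $\psi$'s nondegeneracy survives conjugation in $N^-$; everything else is a standard gluing and a direct identification of the open cell. The $T$-equivariance is automatic because the right $T$-action commutes with the left $N^-$-action and preserves the Bruhat stratification.
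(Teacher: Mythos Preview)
Your proof is correct and follows essentially the same approach as the paper: Bruhat-stratify $G/N$, show vanishing on each non-open cell by exhibiting a simple negative root subgroup $U_{-\alpha}\subset N^-$ that acts trivially there while $\psi|_{U_{-\alpha}}$ is nontrivial, and identify the open cell with $N^-\times T$. The paper packages the vanishing step slightly differently---it passes to the quotient by $[N^-,N^-]$ so that the $\mathbb{G}_a^{-\alpha}$-action becomes literally trivial on the cell and then invokes a lemma that $\mathbb{G}_a$-monodromic and $(\mathbb{G}_a,\psi)$-monodromic objects cannot coexist---but this is the same mechanism as your stabilizer argument. One small slip: your parenthetical should read ``take a simple $\alpha$ with $\ell(s_\alpha w)<\ell(w)$'' (equivalently, $s_\alpha$ begins a reduced expression of $w$, not of $w^{-1}$); this is what gives $w^{-1}\alpha<0$.
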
 Using this equivalence and the equivalence \labelcref{Beilinson-Bernstein In Particular Can Be Said to Identify NEquiv Dmods on G mod B with BGG O}, we can view the functor \labelcref{Definition of mathbbV in the Introduction} as a functor \begin{equation}\label{Definition of Avpsi in Introduction Without Label}\C^N \xrightarrow{} \C^{N^{-}, \psi}\end{equation} of invariants of the $G$-category $\C := \D(G/B) \simeq \D(G/N)^T$. In fact, as a consequence of \cite{BezYun}, Soergel's functor $\mathbb{V}$ is equivalently an \textit{averaging functor}
\begin{equation}\label{Definition of Avpsi in Introduction}\Avpsi: \C^N \xrightarrow{} \C^{N^{-}, \psi}\end{equation} for the $G$-category $\C := \D(G/B)$, up to cohomological shift. 

This averaging functor \labelcref{Definition of Avpsi in Introduction} is defined for any $G$-category, as we review in \cref{Averaging Functors Subsubsection}, and is a natural generalization of Soergel's functor $\mathbb{V}$. For example, in \cref{The Functor Avpsi On Olambda Subsection}, we show that this averaging functor for the twisted invariants category $\C := \D(G/_{[\lambda]}B)$ gives a twisted version of Soergel's $\mathbb{V}$. If we instead substitute $\C = \D(G/N)^{T,w}$ into \labelcref{Definition of Avpsi in Introduction}, we recover a functor on $\newGeometricCategoryO$, and substituting $\C = \D(G)^{G, w} = \LG\text{-mod}$ we similarly recover a functor on $\newAlgebraicCategoryO$. Finally, via the forgetful functor $\mathcal{O}_{\text{univ}} \to \newAlgebraicCategoryO$, one obtains a functor on $\mathcal{O}_{\text{univ}}$. 

We will also use this averaging functor for a general $G$-category to construct a quotient category analogous to the codomain of the quotient functor of \labelcref{Nondegenerate Quotient in Introduction Functor}, which we call a \textit{nondegenerate} $G$\textit{-category}, see \cref{Nondegenerate G Categories Introduction Subsubsection}. 

\begin{Remark}
In the setting of categorical representation theory, all functors such as \labelcref{Definition of Avpsi in Introduction} are a priori defined in the setting of DG categories, which are in particular stable $\infty$-categories in the sense of \cite{LuHTT}, \cite{LuHA}. However, we also show that if $\C$ admits a reasonable $t$-structure (in a sense we make precise in \cref{G Category with Compactly Generated Compatible tStructure Means Avpsi Is T Exact Up to Shift}) then this functor is $t$-exact, so in this case (which includes all of the categories in \cref{Variants of Category O Definition}) one obtains an exact functor of abelian categories by taking hearts.
\end{Remark}

\subsubsection{Highest Weights for Categorical Representations at Field-Valued Points}
Although essentially all of our results hold for general $G$-categories, we spend much of this paper studying the categories $\D(G/N)$ and $\D(G/N)^{T,w}$ because they are \lq universal cases\rq{} of $G$-categories in a sense we now make precise. Notice that tensoring with the bimodule category $\D(G/N)$ defines a functor \[G\text{-mod(DGCat)} \to \mathcal{H}_N\text{-mod(DGCat)}\] where $\mathcal{H}_N := \Hn$, and, similarly, tensoring with the bimodule category $\D(G/N)^{T,w}$ defines a functor \[G\text{-mod(DGCat)} \to  \mathcal{H}_{N, (T,w)}\text{-mod(DGCat)}\] where $ \mathcal{H}_{N, (T,w)} := \D(N\backslash G/N)^{T \times T, w}$. As we recall in \cref{Inv=Coinv}, these functors are equivalently given by the functors $\C \mapsto \C^N$ and $\C \mapsto \C^{N, (T,w)}$. The main result of \cite{BZGO} states that these bimodule categories are \lq universal cases\rq{} of $G$-categories in the sense that these functors are Morita equivalences: 
    
\begin{Theorem}\label{BZGO} \cite{BZGO} Let $\C$ be a category with a $G$-action. Then the functors $\C \mapsto \C^N$, $\C \mapsto \C^{N, (T,w)}$ are conservative. Equivalently, the canonical functors \[\D(G/N)\otimes_{\mathcal{H}_N}\C^N \to \C\] and \[\D(G/N)^{T, w}\otimes_{\mathcal{H}_{N, (T,w)}}\C^{N, (T,w)} \to \C\] are equivalences.
\end{Theorem}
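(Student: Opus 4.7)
The plan is to set up a Morita-theoretic adjunction using the $(\D(G), \mathcal{H}_N)$-bimodule $\D(G/N)$, and then separately establish conservativity of the invariants functor via a Bruhat-stratification argument. The bimodule induces an adjunction $\D(G/N)\otimes_{\mathcal{H}_N}(-) \dashv \text{Inv}^N$ between $\D(G)\text{-mod}(\DGCatContk)$ and $\mathcal{H}_N\text{-mod}(\DGCatContk)$. The associated monad on $\mathcal{H}_N\text{-mod}$ is canonically identified with the identity, since $\D(G/N)^N \simeq \mathcal{H}_N$. Hence, once conservativity of $\text{Inv}^N$ is known, the Barr--Beck--Lurie theorem immediately yields the desired equivalence, and the analogous adjunction (for the $(T,w)$-twisted bimodule $\D(G/N)^{T,w}$) handles the second statement once the corresponding conservativity is in hand.

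It therefore remains to prove that $\text{Inv}^N$ is conservative. I would proceed by a Bruhat-cell induction. Given a $G$-category $\C$ with $\C^N = 0$, stratify $G/B$ by the Bruhat cells $\{BwB/B\}_{w \in W}$. On the categorical side, this yields a recollement structure on $\C \otimes_{\D(G)} \D(G/B)$. The open cell is $N^- \cdot B/B \simeq N^-$, which is an $N^-$-torsor over a point, and the restriction of the $G$-action to this open cell makes the corresponding piece of $\C$ into an $N^-$-category whose vanishing is forced by $\C^N = 0$ (using that $N^-$ is conjugate to $N$ and the symmetry of the averaging functors, i.e., $\C^{N^-}$ also vanishes). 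An induction over the Bruhat order, combined with a Levi-type reduction on the closed strata, then forces $\C = 0$.

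The $(T,w)$-twisted conservativity reduces to the untwisted one via the canonical identification $\C^{N, (T,w)} \simeq (\C^N)^{T,w}$ together with the conservativity of the weak-$T$-invariants functor $(-)^{T,w}$. The latter follows from $1$-affineness of $BT$ proved by Gaitsgory, which identifies weakly $T$-equivariant DG categories with $\QCoh(\LTd)$-modules, making $(-)^{T,w}$ a base change functor along a faithfully flat morphism.

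The main obstacle is the Bruhat induction: the interaction between the recollement triangles and the invariants functor must be handled carefully, and the Levi-type reduction used on intermediate strata relies on a version of the statement for smaller reductive groups. Care must be taken to avoid circularity in the induction hypothesis --- one should organize the induction so that at each step the needed vanishing follows either from the base case or from a genuinely simpler (lower-rank) instance of the statement, rather than from the statement itself.
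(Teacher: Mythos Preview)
This theorem is not proved in the paper---it is stated as a citation of \cite{BZGO}, so there is no ``paper's own proof'' to compare against. I will therefore comment on the correctness of your proposal on its own terms.

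Your Morita-theoretic framework is correct: the bimodule $\D(G/N)$ does induce the adjunction you describe, and the computation $\D(N\backslash G)\otimes_{\D(G)}\D(G/N)\simeq \D(N\backslash G/N)=\mathcal{H}_N$ (via invariants $\simeq$ coinvariants, \cref{Inv=Coinv}) does show that the unit of the adjunction is an equivalence, so the left adjoint is fully faithful. Barr--Beck--Lurie then reduces everything to conservativity of $\text{Inv}^N$, exactly as you say. Your reduction of the $(T,w)$-twisted case to the untwisted one via $1$-affineness of $BT$ is also correct.

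The gap is in your Bruhat argument. You propose to stratify $G/B$ and obtain a recollement on $\C\otimes_{\D(G)}\D(G/B)$. But by \cref{Inv=Coinv} this category is canonically $\C^B\simeq(\C^N)^T$, which already vanishes trivially once $\C^N=0$; the stratification tells you nothing new. What you actually need is to show $\C$ itself vanishes, not $\C^B$. The Bruhat decomposition is indeed the right tool, but it must be applied to $\D(G)$ (or to the action of $\D(G)$ on $\C$), not to the quotient $G/B$: one shows that the convolution action of the standard objects $j_{w,!}(\omega_{BwB})\in\D(G)$ on $\C$ can be expressed through $\C^N$, and these objects generate $\D(G)$ as a bimodule over itself. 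Your ``Levi-type reduction on closed strata'' is heading in this direction, but as written you have not set up the induction on the correct object, and the circularity worry you flag is a symptom of this. The actual argument in \cite{BZGO} organizes this more carefully; you should consult it for the precise mechanism.
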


In contrast, however, the functor $\C \mapsto \C^B$ is \textit{not} an equivalence of categories. This can be understood conceptually using the language of categorical representation theory: for any $G$-category $\C$, the category $\C^{N, (T, w)}$ can in particular be viewed as a module category for the monoidal category $\IndCoh(\LTd)$. Using this module structure, we have that \[\C^B \simeq \C^{N, (T, w)} \otimes_{\IndCoh(\LTd)}\mathrm{Vect},\] or, in other words, that $\C^B$ can be obtained by tensoring $\C^{N, (T, w)}$ with the $\IndCoh(\LTd)$-module category $\mathrm{Vect}$ obtained by pullback by the inclusion $\Spec(k) \to \LTd$ of the origin. 

This conceptual framework suggests a natural alternative: to determine whether a functor $F: \C \to \D$ is an equivalence, one cannot simply check that the functor $F^{N, (T, w)}$ is an equivalence of categories \lq at\rq{} the origin of $\LTd$, but one must instead check that \begin{equation}\label{Functor at Every Field Valued Point}F^{N, (T, w)} \otimes_{\IndCoh(\LTd)}L\mathrm{-mod}:    \C^{N, (T, w)} \otimes_{\IndCoh(\LTd)}L\mathrm{-mod} \to \D^{N, (T, w)} \otimes_{\IndCoh(\LTd)}L\mathrm{-mod}\end{equation} at every field-valued point $\Spec(L) \to \LTd$ of $\LTd$. In proving our main theorems, we also prove that this replacement indeed holds, at least provided that our functor admits a continuous adjoint:

\begin{Theorem}\label{Can Check Equivalence on Each field-valued Point for Groups}
 If $F$ is a functor of $G$-categories which admits a continuous adjoint, then $F$ is fully faithful (respectively, an equivalence) if and only if the induced functor \labelcref{Functor at Every Field Valued Point} is fully faithful (respectively, an equivalence) for all field-valued points $\lambda \in \LTd(L)$.
 
\end{Theorem}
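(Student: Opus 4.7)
The plan is to combine the Morita equivalence of \cref{BZGO} with a Nakayama-type fiberwise criterion for $\QCoh(\LTd)$-linear functors between DG categories.

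First, by the $\C \mapsto \C^{N,(T,w)}$ half of \cref{BZGO}, the functor $F$ is fully faithful (respectively an equivalence) of $G$-categories if and only if the induced functor $F^{N,(T,w)}: \C^{N,(T,w)} \to \D^{N,(T,w)}$ is so as a functor of $\D(N\backslash G/N)^{T\times T, w}$-module categories; the hypothesis that $F$ admits a continuous adjoint descends through the functor of taking $(N,(T,w))$-invariants. The category $\C^{N,(T,w)}$ is canonically $\QCoh(\LTd) = \Symt\text{-mod}$-linear via the right weak $T$-equivariant structure, and $F^{N,(T,w)}$ preserves this structure.

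Second, I would establish the following general fact: a $\QCoh(\LTd)$-linear functor $G: \C_1 \to \C_2$ which admits a continuous adjoint is fully faithful (respectively an equivalence) if and only if, for every field-valued point $\lambda: \Spec(L) \to \LTd$, the base-changed functor $G \otimes_{\QCoh(\LTd)} L$ has the same property. The forward direction is formal. For the converse, fully faithfulness is controlled by the cofiber of the unit $\id \to G^R G$, and equivalence additionally by the cofiber of the counit $G G^R \to \id$, each of which defines a $\QCoh(\LTd)$-linear DG category. If this cofiber base-changes to zero over every residue field of $\LTd$, then a Nakayama-type argument leveraging that $\O(\LTd)$ is a Noetherian (in fact polynomial) ring forces the cofiber itself to vanish.

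Third, I would identify the fiber of $\C^{N,(T,w)}$ over $\lambda$ with $\C^{B,\mathcal{L}_{[\lambda]}}$. The base change of weak $T$-invariants along the residue field at $\lambda$ recovers $(T,\lambda)$-monodromic invariants, and since $\mathcal{L}_{[\lambda]}$ pulls back trivially along $N \hookrightarrow B$, iterating $N$-invariance with $(T,\lambda)$-monodromic invariance recovers $\mathcal{L}_{[\lambda]}$-twisted $B$-invariants.

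The main obstacle is the second step: since $\C^{N,(T,w)}$ is not known to be compactly generated, transferring information between the category and its fibers over residue fields of $\LTd$ requires care. The continuous adjoint hypothesis is essential precisely here, as it ensures that the unit/counit cofiber is a well-behaved $\QCoh(\LTd)$-linear object whose vanishing can be detected fiberwise.
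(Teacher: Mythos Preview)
Your proposal is correct and follows essentially the same route as the paper: the reduction via \cref{BZGO} to the $\IndCoh(\LTd)$-linear functor $F^{N,(T,w)}$, the fiberwise criterion for such functors (\cref{Can Check Fully Faithfulness on Each Field-Valued Point for LTd Module Categories}, whose content is the ``Nakayama-type'' \cref{Zero if and only if Zero at All Field Valued Points}), and the identification of the fibers with $\C^{B,\mathcal{L}_{[\lambda]}}$ via \cref{Invariants for Character Sheaves Is Well Defined}. One minor imprecision: the cofiber of the unit is a $\QCoh(\LTd)$-linear endofunctor rather than a DG category, and what \cref{Zero if and only if Zero at All Field Valued Points} actually proves is that an object of an $\IndCoh(\Spec A)$-module category vanishes once its tensor with every pushed-forward residue field does---established by showing these residue fields generate $\IndCoh(\Spec A)$ under colimits.
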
 

\begin{Remark}
    An earlier result due to Gaitsgory-Lurie \cite[Theorem 2.5.7]{Ber} also gives an \lq algebraic analogue\rq{} of \cref{BZGO}: it states that for any $G$-category, the functor $\C \mapsto \C^{G,w}$ is conservative. 
    
    Although we will not use this result outside of this remark, this result and \cref{BZGO} provide motivation for studying the categories of \cref{Variants of Category O Definition} apart from their connections to the BGG category $\mathcal{O}$: tensoring with them provides a \lq translation\rq{} between various equivalent incarnations of the 2-category of $G$-categories. 
\end{Remark}

    \subsection{Statement of Main Results}\label{Statement of Main Result Subsection} We now state our main results both in a universal case and in their extensions to general $G$-categories.
    \subsubsection{Kernel in Universal Case}\label{Kernel in Universal Case Subsubsection}As we recall above, one of the key features of Soergel's $\mathbb{V}$ is an explicit description of its kernel. In our setting, the kernel of the functor \labelcref{Definition of Avpsi in Introduction} applied to any invariant category, such as those of \cref{Variants of Category O Definition}, can be understood through the kernel of the \lq universal case\rq{} \begin{equation}\label{Definition of Avpsi in Introduction In Special Case of DG}
   \Avpsi: \D(G/N) \to \D(G/_{\psi}N^-) \end{equation} or, in other words, the case where $\C$ is the right $G$-category $\D(G)$. More precisely, an object $\F$ in an \lq invariance\rq{} category of $\D(G/N)$ such as those of \cref{Variants of Category O Definition} lies in the kernel of $\Avpsi$ if and only if $\text{oblv}(\F) \in \D(G/N)$ lies in the kernel of $\Avpsi$. This is due to the fact that the functor \labelcref{Definition of Avpsi in Introduction In Special Case of DG} is itself left $G$-invariant and the fact that the functor which forgets the equivariance is conservative. 
   
   However, $\D(G/N)$ is not finite length, and so a different type of analysis is required to understand the kernel of the functor \labelcref{Definition of Avpsi in Introduction In Special Case of DG}. Moreover, a choice of cohomological normalization occurs, even near central character zero: $C_0$ is essentially never a regular ring, so that, at the derived level, there are two reasonable choices for the DG variant of modules over $C_0$, namely, the category of ind-coherent sheaves or the category of quasi-coherent (or ind-perfect) sheaves. 
    
    We study the full subcategory of $\D(G/N)$ generated by the \textit{eventually coconnective} objects in the kernel, which we denote by $\D(G/N)_{\text{deg}}$ and, as we show in \cite{GannonClassificationOfNondegenerateGCategories}, corresponds to the choice of ind-coherent sheaves. One can show that the functor $\Avpsi$ of \labelcref{Definition of Avpsi in Introduction} is $t$-exact up to cohomological shift (\cref{BBMShiftedLeftAdjointIsExact}) so one can recover the entire kernel of this functor through the eventually coconnective objects in the kernel. Our first main result, which we also state as \cref{Definition of D(G/N)deg}, shows that $\D(G/N)_{\text{deg}}$ admits residual symmetries other than the $G$-action:

    \begin{Theorem}
        The subcategory $\D(G/N)_{\text{deg}}$ is closed under the action of $\mathcal{H}_N$.
    \end{Theorem}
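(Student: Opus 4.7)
The strategy is to reduce the claim to a check on generators and then verify it using the explicit form of $\Avpsi$. Since $\D(G/N)_{\text{deg}}$ is by definition the cocomplete DG subcategory generated by eventually coconnective objects of $\ker(\Avpsi)$, and since the right convolution action of $\mathcal{H}_N$ on $\D(G/N)$ preserves colimits in each variable, to show $\D(G/N)_{\text{deg}}$ is $\mathcal{H}_N$-stable it suffices to show that for every eventually coconnective $F \in \ker(\Avpsi)$ and every compact $K \in \mathcal{H}_N$, the convolution $F \ast K$ lies in $\D(G/N)_{\text{deg}}$.

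To this end, I would establish two subclaims: (a) $F \ast K$ is eventually coconnective, and (b) $F \ast K \in \ker(\Avpsi)$. Together these place $F \ast K$ itself among the generators of $\D(G/N)_{\text{deg}}$. Claim (a) follows from a cohomological amplitude estimate: $\mathcal{H}_N$ may be compactly generated by bounded objects, such as the $\D$-modules supported on single Bruhat cells in $N\backslash G/N$, and convolution with a bounded object has bounded cohomological amplitude on $\D(G/N)$. For claim (b), I would realize $\Avpsi$ as right convolution with a suitable Whittaker kernel $j_{\ast,\psi}$, so that by associativity $\Avpsi(F \ast K) \simeq F \ast (K \ast j_{\ast,\psi})$, and then analyze the object $K \ast j_{\ast,\psi}$ using the support theorem \cref{SupportOfWhittakerSheaves}. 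That theorem identifies $\D(G/_{\psi}N^-)^N$ with $\D(T)$, which should imply that $K \ast j_{\ast,\psi}$ is assembled, via colimits of translates by characters of $T$, from $j_{\ast,\psi}$ itself; combined with the hypothesis $F \ast j_{\ast,\psi} = 0$ and the continuity of convolution, this yields $F \ast (K \ast j_{\ast,\psi}) = 0$.

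The main obstacle will be the last step: making precise the factorization of the right $\mathcal{H}_N$-action on $j_{\ast,\psi}$ through the Cartan-type subcategory singled out by the support theorem. An attractive alternative strategy, which sidesteps the explicit analysis, would be to construct the nondegenerate quotient $\D(G/N)_{\text{nondeg}}$ directly as a right $\mathcal{H}_N$-module category equipped with an $\mathcal{H}_N$-linear localization map from $\D(G/N)$; one could then identify $\D(G/N)_{\text{deg}}$ with its kernel, after which the $\mathcal{H}_N$-stability is automatic. Either way the role of the support theorem, and hence of the \textit{nondegeneracy} of $\psi$, is essential.
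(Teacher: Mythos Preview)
Your reduction to generators and your amplitude argument for (a) are correct and match the paper's approach. The gap is exactly where you locate it, in claim (b), and it is a genuine one: the support theorem alone does not give you the factorization you want.

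Concretely, writing $\Avpsi(F \ast K) \simeq F \ast_N (K \ast_N j_{\ast,\psi})$ with $K \ast_N j_{\ast,\psi} \in \D(N\backslash G/_{\psi}N^-) \simeq \D(T)$ is fine, but to conclude that $F \ast_N (K \ast_N j_{\ast,\psi}) = 0$ from $F \ast_N j_{\ast,\psi} = 0$ you would need either that $j_{\ast,\psi}$ generates $\D(N\backslash G/_{\psi}N^-)$ under operations that $F \ast_N (-)$ respects, or that the $\mathcal{H}_N$-action on $\D(N\backslash G/_{\psi}N^-)$ factors through $\D(T)$-convolution. Neither is a formal consequence of \cref{SupportOfWhittakerSheaves}. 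Even the special case $K = \delta_t \in \D(T) \subset \mathcal{H}_N$ already asks whether $\ker(\Avpsi)$ is stable under the residual $T$-action on $\D(G/N)$, and this is not automatic: since $T$ does not preserve the character $\psi$, one only has $\Avpsi(F \cdot t) \simeq \text{Av}_!^{\psi_t}(F) \cdot t$ for the conjugated character $\psi_t$, which leads you straight back to the statement you are trying to prove.

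The paper fills this gap by a \emph{switch of sides}. On the bi-invariant category $\D(N\backslash G/N)$ one has both a left Whittaker averaging and a right one, and the paper shows (\cref{Deg is BiDeg}, via the conservativity result \cref{Mellin Dual of pistar is conservative on t}) that for eventually coconnective objects the two kernels coincide. The \emph{right} Whittaker averaging is manifestly equivariant for the \emph{left} $\mathcal{H}_N$-action, so left convolution by a compact $K$ preserves its kernel; combined with your amplitude estimate (a), this places $K \star \mathcal{G}$ back among the generators of the degenerate subcategory, and one concludes via \cref{BZGO}. This left/right swap is the missing idea; the support theorem enters not through a factorization of the $\mathcal{H}_N$-action on $\D(T)$, but through the conservativity of the induced averaging on $\D(T)$ that underlies \cref{Deg is BiDeg}. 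Your alternative strategy of constructing the nondegenerate quotient directly as an $\mathcal{H}_N$-module would also need an input of this kind to get off the ground.
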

    
    With this in mind, our main result (\cref{Simply Connected Version of Nondegenerate Subcategory of D(G/N) is the HN-subcategory Gen'd By Non-antidominant Simples}), which we state in the case where $G$ is simply connected for the ease of exposition, can be stated as follows: 
\begin{Theorem}\label{Explicit Description of Eventually Connective Kernel Objects}
If $G$ is simply connected, the category $\D(G/N)_{\text{deg}}$ is the full right $\mathcal{H}_N$-subcategory of $\D(G/N)$ generated by the right $[P, P]$-monodromic objects for every rank-one parabolic $P$.
\end{Theorem}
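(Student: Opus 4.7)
The plan is to establish both inclusions. For the easy direction---that the $\mathcal{H}_N$-subcategory generated by right $[P,P]$-monodromic objects lies in $\D(G/N)_{\text{deg}}$---we use the closure of $\D(G/N)_{\text{deg}}$ under the right $\mathcal{H}_N$-action given by the previous theorem to reduce to showing that any right $[P_\alpha, P_\alpha]$-monodromic object is killed by $\Avpsi$. This follows from the generic hypothesis on $\psi$: the negative root subgroup $U_{-\alpha}$ lies in $[P_\alpha, P_\alpha]$ via the copy of $\SL_2$ attached to the simple root $\alpha$, and $\psi|_{U_{-\alpha}}$ is a nontrivial additive character, so the factorization of $\Avpsi$ through averaging against $\psi|_{U_{-\alpha}}$ on $U_{-\alpha}$ annihilates any right $[P_\alpha, P_\alpha]$-monodromic object.

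For the reverse inclusion, let $\C'$ denote the right $\mathcal{H}_N$-subcategory of $\D(G/N)$ generated by the $[P,P]$-monodromic objects for rank one parabolics. The inclusion $\C' \hookrightarrow \D(G/N)_{\text{deg}}$ is a fully faithful functor of $G$-categories admitting a continuous right adjoint, so the plan is to apply \cref{Can Check Equivalence on Each field-valued Point for Groups} to reduce to checking that the induced inclusion on $(B, \mathcal{L}_{[\lambda]})$-invariants is an equivalence for every field-valued point $\lambda \in \LTd(L)$. At each such $\lambda$, these invariants identify (at the compact, abelian-categorical level) with the twisted BGG category $\mathcal{O}_\lambda$, which is finite length, and the functor $\Avpsi$ coincides up to cohomological shift with the classical twisted Soergel functor $\mathbb{V}_\lambda$ via \cite{BezYun}. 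Classical Soergel theory identifies the kernel of $\mathbb{V}_\lambda$ as the Serre subcategory generated by the non-antidominant simples, each of which is $[P_\alpha, P_\alpha]$-monodromic for a rank one parabolic $P_\alpha$ corresponding to a descent of the indexing Weyl group element; hence the inclusion is an equivalence at each $\lambda$.

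The main obstacle will be carefully verifying the pointwise identifications: one must check that the twisted invariants $\D(G/N)_{\text{deg}}^{B, \mathcal{L}_{[\lambda]}}$ recover the derived ind-coherent completion of the classical kernel of $\mathbb{V}_\lambda$, a place where the eventually coconnective hypothesis defining $\D(G/N)_{\text{deg}}$ is essential. One also needs that the restriction of the $\mathcal{H}_N$-action to each $\lambda$ recovers convolution with the appropriate Soergel bimodules, so that $\C'^{B, \mathcal{L}_{[\lambda]}}$ matches the Serre subcategory generated by the appropriate simples. The simply-connected hypothesis on $G$ is used to guarantee that the relevant twists $\mathcal{L}_{[\lambda]}$ exist for every field-valued $\lambda$ needed to saturate the criterion of \cref{Can Check Equivalence on Each field-valued Point for Groups}.
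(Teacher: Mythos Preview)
Your overall architecture matches the paper's: one shows the easy containment via \cref{Avpsi Vanishes on Qalpha Monodromic Objects}, and then applies \cref{Can Check Equivalence on Each field-valued Point for Groups} to reduce the hard direction to a statement at each field-valued $\lambda$, where the degenerate part is generated by the non-antidominant simples $L_\mu$ (this is \cref{Equivalent Notions of Degeneracy for Ind Derived Category O} and \cref{Avpsi is Translation}). The gap is in your pointwise claim. You assert that each non-antidominant simple $L_\mu$ is already $[P_\alpha,P_\alpha]$-monodromic for some rank-one parabolic $P_\alpha$ ``corresponding to a descent of the indexing Weyl group element.'' This is correct only when $\lambda$ is integral. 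Being $Q_\alpha$-monodromic for simple $\alpha$ is the condition $\langle\alpha^\vee,\mu\rangle\in\mathbb{Z}^{\geq 0}$, whereas non-antidominance only guarantees $\langle\gamma^\vee,\mu\rangle\in\mathbb{Z}^{\geq 0}$ for some \emph{positive} coroot $\gamma$, which need not be simple. For instance, in $\SL_3$ take $\mu$ with $\langle\alpha_1^\vee,\mu\rangle=\langle\alpha_2^\vee,\mu\rangle=1/2$: then $\langle(\alpha_1+\alpha_2)^\vee,\mu\rangle=1$, so $\mu$ is non-antidominant, yet $L_\mu$ is not $Q_\alpha$-monodromic for either simple root.

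The paper closes this gap with the Gelfand-Graev action. The symplectic Fourier transforms $F_{s_\alpha}$ of \cref{Kazhdan-Laumon SFT Construction Reminder} are left $G$-linear endofunctors of $\D(G/N)$ and hence are right $\mathcal{H}_N$-operations; \cref{Symplectic Fourier Transform for a Root Swaps Simples Non-integral For That Root} shows that when $\langle\alpha^\vee,\mu\rangle\notin\mathbb{Z}$ one has $F_{s_\alpha}(L_\mu)\cong L_{s_\alpha\cdot\mu}$. Then \cref{The Gelfand-Graev Action Matches Any Non-Antidominant Weight With Some Q-monodromic Weight} runs an induction on the height of a minimal $\gamma\in\Xi_\mu^{\geq 0}$, applying such $F_{s_\alpha}$'s to carry $L_\mu$ to some $L_\nu$ with $\langle\alpha^\vee,\nu\rangle\in\mathbb{Z}^{\geq 0}$ for a \emph{simple} $\alpha$, at which point $L_\nu$ is $Q_\alpha$-monodromic (here the simply connected hypothesis is used so that $Q_\alpha$ is simply connected and locally finite $\text{Lie}(Q_\alpha)$-modules integrate). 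This is the missing combinatorial/geometric ingredient in your proposal; the remarks about recovering Soergel-bimodule convolution and about the role of simple connectedness for the existence of twists are not what is actually needed.
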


\subsubsection{Nondegenerate $G$-Categories}\label{Nondegenerate G Categories Introduction Subsubsection}
We continue to assume that $G$ is simply connected. By \cref{BZGO}, $\C^N \simeq 0$ only if $\C \simeq 0$. This fact, along with \cref{Explicit Description of Eventually Connective Kernel Objects}, motivates the following definition, which is a main object of study in \cite{GannonClassificationOfNondegenerateGCategories}:

\begin{Definition}\label{Nondegenerate G Category Definition for Simply Connected Group}
    We say that a $G$-category $\C$ is \textit{nondegenerate} if $\C^{[P, P]} \simeq 0$ for any rank-one parabolic $P$. 
\end{Definition}

\cref{Explicit Description of Eventually Connective Kernel Objects} can also be used to study an arbitrary $G$-category: given any $G$-category $\C$, one can construct its \textit{nondegenerate quotient} $\C_{\text{nondeg}}^N$ which is obtained by quotienting $\C^N$ by the full $\mathcal{H}_N$-subcategory of objects generated by $\C^{[P, P]\text{-mon}}$ under colimits. This quotient is a localization in the sense of \cref{Intro to t-Structures on Quotient Categories} and, as we will see below in \cref{Weyl Group Action on Nondegenerate Quotient Subsubsection}, also admits an action of the Weyl group $W := N_G(T)/T$. 

If $\C$ is moreover equipped with a reasonable $t$-structure (in a sense we make precise in \cref{G Category with Compactly Generated Compatible tStructure Means Avpsi Is T Exact Up to Shift}) then this quotient functor is $t$-exact and so an exact Serre quotient functor is defined for the heart of such categories. In the companion paper \cite{GannonClassificationOfNondegenerateGCategories}, we study the class of nondegenerate $G$-categories in much more detail and give a coherent description of nondegenerate $G$-categories via a \lq Mellin transform.\rq{}

\subsubsection{Weyl Group Action on Nondegenerate Quotient}\label{Weyl Group Action on Nondegenerate Quotient Subsubsection}Let $G$ again be an arbitrary reductive group. If $\C := \D(G/_{\psi}N^-)$, then it turns out that $\C$ is nondegenerate (which follows from \cref{No Degenerate Whittaker Objects in D(G/N)}) although there exist nondegenerate $G$-categories whose Whittaker invariants vanish. \cref{SupportOfWhittakerSheaves} implies that $\C^N$ admits a nontrivial action of the Weyl group. In general, if a DG category is not given as $\D$-modules on some space, it is difficult to construct a group action on it, as a group action on a DG category in particular contains an infinite amount of higher coherence data, see for example \cite[Section 26.4]{RomanovWilliamsonLanglandsCorrespondenceandBezrukavnikovsEquivalence}. However, our methods give a similar action of the Weyl group on $\C^N$ for any nondegenerate $G$-category: 

\begin{Theorem}\label{Nondegenerate G-Categories Have N Invariants Admit Weyl Group Action}
    If $\C$ is any nondegenerate $G$-category, $\C^{N}$ admits an action of the Weyl group which is functorial in $\C$ and such that if $\C := \D(G/_{\psi}N^-)$, the isomorphism of \cref{SupportOfWhittakerSheaves} upgrades to an isomorphism of categories with an action of $T \rtimes W$.
\end{Theorem}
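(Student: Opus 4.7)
The plan is to transfer a Weyl group action from a concrete geometric model where it is manifest to the abstract $\C^N$ via the Morita framework of \cref{BZGO}.

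First, I would observe that for a nondegenerate $G$-category $\C$, the degenerate part of $\C^N$ vanishes, so $\C^N$ coincides with its nondegenerate quotient. Indeed, by \cref{Explicit Description of Eventually Connective Kernel Objects}, the degenerate subcategory of $\D(G/N)$ is the full $\mathcal{H}_N$-subcategory generated by right $[P, P]$-monodromic objects for rank one parabolics $P$; under the Morita equivalence, these act trivially on $\C^N$ whenever $\C^{[P, P]} \simeq 0$. Consequently $\C^N$ is naturally a module over the nondegenerate Hecke category $(\mathcal{H}_N)_{\text{nondeg}}$.

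Next, I would build a $W$-action on the monoidal category $(\mathcal{H}_N)_{\text{nondeg}}$ by identifying it with a concrete model where $W$ acts visibly. Applying $\Avpsi$ on one side and using \cref{SupportOfWhittakerSheaves} identifies $(\mathcal{H}_N)_{\text{nondeg}}$ with a Whittaker--$N$ bimodule category built out of $\D(T)$. Iterating the construction, or equivalently applying $\Avpsi$ on both sides, yields a monoidal equivalence between $(\mathcal{H}_N)_{\text{nondeg}}$ and a bi-Whittaker Hecke category whose underlying category is $\D(T)$ with its natural convolution structure. The Weyl group acts on $\D(T)$ by pullback along its action on $T$, and this induces an action of $W$ on $(\mathcal{H}_N)_{\text{nondeg}}$ by monoidal autoequivalences. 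Since any module category over a monoidally $W$-equivariant category inherits a $W$-action, this descends to a $W$-action on $\C^N$ for every nondegenerate $\C$, functorial in $\C$ by functoriality of the Morita equivalence.

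For the compatibility statement, when $\C = \D(G/_\psi N^-)$, \cref{SupportOfWhittakerSheaves} already identifies $\C^N \simeq \D(T)$ as right $T$-categories; one then checks that the $W$-action produced by the construction above is the natural pullback action of $W$ on $\D(T)$ coming from its action on $T$. Since $W$ acts on $T$ by group automorphisms, the actions of $T$ and $W$ assemble into an action of the semidirect product $T \rtimes W$, as claimed.

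The main obstacle will be establishing the full higher-categorical coherence of the $W$-action. Although each simple reflection can be realized by a wall-crossing autoequivalence built from a rank one Levi, assembling these into a coherent $\infty$-categorical $W$-action in $G\text{-mod}(\text{DGCat})$ requires infinitely many homotopies. The key simplification is that the problem is consolidated to a \emph{single} coherence statement at the level of the monoidal category $(\mathcal{H}_N)_{\text{nondeg}}$, after which the $W$-action propagates formally and functorially to every $(\mathcal{H}_N)_{\text{nondeg}}$-module category, bypassing the need to verify coherences category-by-category.
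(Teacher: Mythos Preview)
Your overall strategy---build the $W$-action in a universal case, then propagate to all nondegenerate $\C$ via the Morita equivalence of \cref{BZGO}---matches the paper's. However, you have placed the universal $W$-action on the wrong object, and this causes a genuine gap.

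You propose to construct $W$ acting by \emph{monoidal} autoequivalences on $(\mathcal{H}_N)_{\text{nondeg}}$ and then assert that ``any module category over a monoidally $W$-equivariant category inherits a $W$-action.'' This inference is false. A $W$-action on a monoidal category $\mathcal{A}$ gives an action of $W$ on the \emph{2-category} $\mathcal{A}\text{-mod}$ (twist the module structure by each $w$), but it does not give, for a fixed module $\mathcal{M}$, a $W$-action on $\mathcal{M}$ itself: there is no canonical identification of $\mathcal{M}$ with its $w$-twist $\mathcal{M}^{\phi_w}$. The analogy with rings is exact: if $W$ acts on a ring $R$ by automorphisms, a given $R$-module $M$ acquires no $W$-action. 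So even granting your identification of $(\mathcal{H}_N)_{\text{nondeg}}$ with a bi-Whittaker category carrying a visible $W$-action (itself a nontrivial step not established in this paper), nothing would follow for $\C^N$.

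The paper resolves this by building the $W$-action on the \emph{bimodule} $\D(G/N)_{\text{nondeg}}$ rather than on the monoidal category $(\mathcal{H}_N)_{\text{nondeg}}$. Concretely, the Ginzburg--Kazhdan isomorphism (\cref{GinzburgKazhdanRingIso}) yields a $T\rtimes W$-action on $\ClGlobalDiffOp\text{-mod}$ commuting with the left $G$-action; via \cref{FullyFaithfulDModLemma} one regards $\D(G/N)$ as a full subcategory, and the substantial work (\cref{Avpsi factors through order two coinvariants}, \cref{Fs preserves kernel of avpsi}, \cref{Nondegeneracy is Closed Under Actions}) is to show that the Gelfand--Graev action, realized on simple reflections by Kazhdan--Laumon symplectic Fourier transforms, preserves the nondegenerate subcategory $\D(G/N)_{\text{nondeg}}$. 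Once $T\rtimes W$ acts on $\D(G/N)_{\text{nondeg}}$ as a $G$-category, the definition of nondegeneracy gives $\C^N \simeq \D(N\backslash G)_{\text{nondeg}} \otimes_G \C$, and the action on the first tensor factor passes directly to $\C^N$---no monoidal structure on $\mathcal{H}_N$ is invoked. The compatibility for $\C=\D(G/_{\psi}N^-)$ is then \cref{WActionOnWhittaker}. Note also that the coherence problem you flag is handled by the paper not by assembling braid relations from scratch but by inheriting the entire $W$-action from an honest action on a ring of differential operators.
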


This result in particular says the nondegenerate quotient of any $G$-category admits an action of the Weyl group. In the universal case where $\C^N \simeq \D(G/N)_{\text{nondeg}}$, the $W$-action on $\C^N$ is given by the \textit{Gelfand-Graev action}. We make this more precise below and recall the definition of the Gelfand-Graev action (of endomorphisms of the global differential operators on $G/N$) in \cref{Gelfand-Graev Action Construction}.

\subsection{Outline of Paper} In \cref{CategoricalPreliminaries}, we survey our conventions and prove foundational results in categorical representation theory--in particular, we prove \cref{Can Check Equivalence on Each field-valued Point for Groups}. In \cref{Nondegenerate Categories and the Weyl Group Action Section}, after making the necessary definitions, we prove \cref{Nondegenerate G-Categories Have N Invariants Admit Weyl Group Action} in a universal case and use this to deduce it for a general nondegenerate $G$-category. From this, we prove \cref{Explicit Description of Eventually Connective Kernel Objects} in \cref{Computations in BGG Category O Section}. There is one appendix, \cref{Twisted D-Modules are Derived Category of Heart Section}, which proves that some invariance categories we use are the derived categories of their hearts, and is a key technical tool in passing between abelian categories and DG categories.

\subsection{Acknowledgments} I am especially grateful to my advisor, Sam Raskin, who originally suggested the notion of nondegenerate $G$-categories to me. I would also like to thank Rok Gregoric for patiently explaining the details of the theory of $\infty$-categories to me. Moreover, I would like to thank David Ben-Zvi, Tsao-Hsien Chen, Victor Ginzburg, Sam Gunningham, Natalie Hollenbaugh, Gus Lonergan, Kendric Schefers, Brian Shin, Germ\'an Stefanich, Harold Williams, and Yixian Wu for many interesting and useful conversations. Finally, I would like to thank the anonymous referees for their careful reading of this manuscript, as well as for many helpful comments which improved the readability of this paper.
 
This project was completed while I was a graduate student at the University of Texas at Austin, and I would also like to thank everyone there for contributing to such an excellent environment.  
\section{Categorical Preliminaries}\label{CategoricalPreliminaries}
\subsection{Conventions}
Unless otherwise stated, all of our conventions regarding DG categories and derived algebraic geometry will follow \cite{GaRoI}. We highlight the main conventions we use here. 

\subsubsection{Categorical Conventions}\label{Categorical Conventions Subsubsection}
We work over a field $k$ of characteristic zero and categories will, by default, be DG categories in the sense of Chapter 1, Section 10 of \cite{GaRoI}. In other words, in the notation of \cite{GaRoI}, we are by default working in the category $\text{DGCat}_{\text{cont}}$. By definition, the objects of this category are DG categories, which are presentable stable $\infty$-categories $\C$ which are equipped with the data exhibiting $\C$ as a module category for the monoidal stable $\infty$-category Vect of $k$-vector spaces, and the functors between them are by definition \textit{continuous}, i.e. they preserve all colimits, and in particular are \textit{exact}, by assumption, meaning that they preserve cofiber sequences.\footnote{On the other hand, functors need not be $t$-exact, since not all DG categories even come equipped with a $t$-structure.} Occasionally, to emphasize our ground field, we will write $\DGCatContk$ for $\text{DGCat}_{\text{cont}}$. 

We always highlight when the categories involved are not DG categories. One prominent example will occur when, in the notation of \cite{GaRoI}, we are working in $\text{DGCat}^{\text{non-cocmpl}}$. In this case, we will say that we are working with \textit{not necessarily cocomplete} DG categories. Furthermore, when we work with a DG category $\C$ equipped with a $t$-structure, we regard the categories $\C^{\leq m}$ and $\C^{\geq m}$ for each integer $m$ as ordinary $\infty$-categories, and the \textit{eventually coconnective subcategory} is $\C^+ := \cup_{n \in \mathbb{N}} \C^{\geq -n}$ as a not necessarily cocomplete DG category. (Observe also that we use cohomological indexing for our $t$-structure.)

We will say a functor $F: \C \to \D$ of stable $\infty$-categories \textit{reflects} the $t$-structure if $F(X) \in \D^{\leq 0}$ if and only if $X \in \C^{\leq 0}$ and $F(X) \in \D^{\geq 0}$ if and only if $X \in \C^{\geq 0}$. In particular, any conservative $t$-exact functor reflects the $t$-structure. 

Furthermore, for any $\F, \mathcal{G}$ in a DG category $\C$, we let the notation $\uHom_{\C}(\F, \mathcal{G})$ denote the internal mapping object in Vect, see Chapter 1, Section 10.3.7 in \cite{GaRoI}, and similarly for $\uEnd_{\C}(\F) := \uHom_{\C}(\F, \F)$, and reserve the notation $\text{Hom}_{\C}(\F, \mathcal{G})$ for the underlying space of maps. When the underlying category is clear, we also use the notation $\uHom(\F, \mathcal{G})$ or  $\uEnd(\F)$. Finally, given two DG categories $\C, \D$, we let $\uHom(\C, \D)$ denote the DG category of maps between $\C$ and $\D$ \cite[Chapter 1, Section 10.3.6]{GaRoI}. 

\subsubsection{Other DAG Conventions}\label{Other DAG Conventions Subsubsection}
Since our field $k$ has characteristic 0, our analogue of affine schemes $\text{Sch}^{\text{aff}}$ is defined so that $\text{Sch}^{\text{aff,op}} := \text{ComAlg}(\text{Vect}^{\leq 0})$, where the right-hand side denotes the category of commutative algebra objects in the category of connective vector spaces, see \cite[Chapter 2, Section 1]{GaRoI}. We will define objects such as $\LTd\sslash\Wext$ as objects in the category of \textit{prestacks}, i.e. the $(\infty, 1)$ category of functors $\text{Fun}(\text{Sch}^{\text{aff,op}}, \text{Spc})$, where $\text{Spc}$ denotes the $(\infty, 1)$-category of spaces. By field, we mean a classical field, which is in particular an object in $\text{ComAlg}(\text{Vect}^{\heartsuit})$. 

We also follow the convention of \cite{GaRoI} of writing $\D(X) := \IndCoh(X_{dR})$ for the DG category of right $\D$-modules on a locally almost of finite type prestack $X$. A \textit{sheaf} on some prestack $X$ will mean an object of $\QCoh(X)$ or $\IndCoh(X)$ (when the latter is defined), and context will always dictate which category we are referring to. In particular, a given sheaf need not lie in the heart of the $t$-structure. 

\subsubsection{Representation Theoretic Notation}\label{Representation Theoretic Notation Section} We fix once and for all a split reductive algebraic group $G$ with Lie algebra $\LG$, and we additionally fix a choice of Borel $B$, maximal torus $T$ with Weyl group $W := N_G(T)/T$, and associated character lattice $\characterlatticeforT$. Set $N := [B, B]$ and $N^- := [B^-, B^-]$, and let $\Phi$ be the induced root system with root lattice $\mathbb{Z}\Phi$. We set $\Lambda := \{\mu \in \characterlatticeforT \otimes_{\mathbb{Z}} \mathbb{Q} : \langle \mu, \alpha^{\vee} \rangle \in \mathbb{Z} \text{ for all }\alpha \in \Phi\}$. We also view $\Lambda$ and $\characterlatticeforT$ as ind-closed subschemes of $\LTd$. 

We also fix a pinning for $G$, which we record as a group homomorphism $\psi: N^- \to \mathbb{G}_a$, which factors through $N^{-}/[N^-, N^-]$ and is \textit{nondegenerate} (by definition of pinning) in the sense that its restriction to any one-dimensional subscheme of $\text{Lie}(N^{-}/[N^-, N^-])$ corresponding to a root vector for a simple root is nontrivial. By \textit{reflection} of a vector space, we mean a diagonalizable linear endomorphism of the vector space fixing some hyperplane and having $-1$ as an eigenvalue, so that the square of any reflection is the identity. 

\subsection{Groups Acting on Categories}
In this subsection, we will review some basic properties of groups acting on categories. For further surveys, see, for example, \cite{RasTutorial} or \cite[Section 1.1]{CGYJacquet}. 

\subsubsection{Definitions}\label{Definitions for Groups Acting on Categories}
Let $H$ be any affine algebraic group, and let $\D(H)$ denote the category of (right) $\D$-modules on $H$. More generally, we let $\tilde{H}$ denote any group ind-scheme, and let $\D(\tilde{H})$ denote the category of $\D$-modules on $\tilde{H}$. The category $\D(\tilde{H})$ obtains a co-monoidal structure: the co-multiplication map is explicitly given by the composite \[\D(\tilde{H}) \xrightarrow{m^!} \D(\tilde{H} \times \tilde{H}) \xleftarrow{\sim} \D(\tilde{H}) \otimes \D(\tilde{H})\] given by pullback by the multiplication map and the inverse of the equivalence $\boxtimes$, which is an equivalence by \cite[Corollary 10.3.6]{GaiIndCoh}. Completely analogously, the category $\IndCoh(\tilde{H})$ obtains a co-monoidal structure.
    
\begin{Definition}\label{ActionDefinition}
If $\C$ is equipped with the structure of a co-module for the co-monoidal category $\D(\tilde{H})$ (respectively $\IndCoh(\tilde{H})$), we say that the category $\C$ has a \textit{strong} (respectively \textit{weak}) \textit{action} of $\tilde{H}$. If $\C$ is a category with a strong action of $\tilde{H}$, we will also say that $\C$ is an $\tilde{H}$-category.
\end{Definition}

Since the forgetful functor $\D(\tilde{H}) \to \IndCoh(\tilde{H})$ is given by pullback by the group homomorphism $\tilde{H} \to \tilde{H}_{\mathrm{dR}}$ (see \cite{GaiRozCrystals}) this forgetful functor is co-monoidal. In particular, any category with a strong $\tilde{H}$-action admits a weak $\tilde{H}$-action.
\begin{Remark}
In fact, the notion of an action of any group object of the category of prestacks can be defined using the notion of \textit{sheaves of categories}, see \cite{GaiUnivConstr}. 
\end{Remark}

\begin{Remark}
One of the major interests in studying groups acting on categories came from the strong action of loop groups on categories, see \cite[Remark 1.3.1]{Ber}. In particular, a strong action of $H$ on a category is often referred to as an \textit{action} of $H$ on the category, and we will continue this practice below.
\end{Remark}

\begin{Remark}\label{RemarkOnModulesVsComodules}
We may equivalently define an action of $H$ on $\C$ as data realizing $\C$ as a module for the monoidal category $\D(H)$ under convolution, see \cite[Section 1.1.1]{CGYJacquet}. We will occasionally use this perspective below as well. 
\end{Remark}

\begin{Remark}\label{Orbits of Field-Valued Points Make Sense if Field-Valued Points Are Sets}
Assume we have an action of $\tilde{H}$ on some prestack $\mathcal{X}$ such that $\tilde{H}(\fieldpossiblydifferentfromgroundfield)$ and $\mathcal{X}(\fieldpossiblydifferentfromgroundfield)$ are discrete spaces (i.e. sets) for some field $K$. (For example, this occurs for the action of the extended affine Weyl group $\Wext$ on $\LTd$). Then the notions of \textit{orbit} and \textit{stabilizer} with respect to $K$ are defined in the classical sense. 
\end{Remark}

\subsubsection{Invariants}\label{Invariants Subsubsection}
If $\C, \D$ are categories with an action of $\tilde{H}$, we let $\uHom_{\D(\tilde{H})}(\C, \D)$ denote the DG category of $\D(\tilde{H})$-linear maps between them. 

\begin{Definition}
Given a category $\C$ with an $\tilde{H}$-action, we define its \textit{invariants} as the category $\C^{\tilde{H}} := \uHom_{\D(\tilde{H})}(\text{Vect}, \C)$, where Vect acquires a trivial $\tilde{H}$-structure, and we similarly define the \textit{weak} invariants as $\C^{\tilde{H}, w} := \uHom_{\IndCoh(\tilde{H})}(\text{Vect}, \text{oblv}(\C))$.
\end{Definition}

\begin{Proposition}\label{Invariants of G Acting on Sheaves on X is Invariants on X Mod G}
Let $X$ be any ind-scheme, and assume $\mathscr{G}$ is an algebraic group (in particular, a classical scheme) or  group ind-scheme whose underlying prestack is a discrete set of points acting on $X$. Then
\begin{enumerate}
    \item The category $\IndCoh(X)$ canonically acquires a weak action of $\mathscr{G}$ such that the canonical functor $\IndCoh(X/\mathscr{G}) \xrightarrow{q} \IndCoh(X)$ induced by the quotient $X \xrightarrow{q} X/\mathscr{G}$ lifts to an equivalence $\IndCoh(X/\mathscr{G}) \xrightarrow{\sim} \IndCoh(X)^{\mathscr{G}, w}$.
     \item The category $\D(X)$ canonically acquires an action of $\mathscr{G}$ such that the canonical functor $\D(X/\mathscr{G}) \to \D(X)$ induced by the quotient map lifts to an equivalence $\D(X/\mathscr{G}) \xrightarrow{\sim} \D(X)^{\mathscr{G}}$.
\end{enumerate}
\end{Proposition}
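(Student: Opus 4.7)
The strategy is to exhibit both sides as limits of the same cosimplicial category built from the bar resolution of $X/\mathscr{G}$, and then invoke the definitions of $\IndCoh$ and $\D$ on prestacks together with the definition of invariants. Concretely, I would first present the quotient prestack $X/\mathscr{G}$ as the geometric realization in prestacks of the simplicial object
$$\mathscr{G}^{\bullet} \times X : \quad \cdots \rightrightarrows \mathscr{G} \times \mathscr{G} \times X \rightrightarrows \mathscr{G} \times X \rightrightarrows X,$$
whose face maps are built from the action map $a : \mathscr{G} \times X \to X$, the projections, and the multiplication on $\mathscr{G}$. The weak action of $\mathscr{G}$ on $\IndCoh(X)$ (respectively, the strong action on $\D(X)$) is then constructed from IndCoh-pullback (respectively, $\D$-module pullback) along the maps appearing in this simplicial diagram, with coherence encoded by the simplicial identities. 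The hypothesis that $\mathscr{G}$ is a smooth algebraic group or a discrete group ind-scheme ensures these pullbacks are well-defined and that the resulting cosimplicial structure on $n \mapsto \IndCoh(\mathscr{G}^n \times X)$ (respectively, on $\D(\mathscr{G}^n \times X)$) indeed gives a coaction in the sense of \cref{ActionDefinition}.

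Next, by the definition of invariants as mapping categories out of Vect, the category $\IndCoh(X)^{\mathscr{G}}$ (respectively, $\D(X)^{\mathscr{G}}$) is precisely the totalization of this cosimplicial category. On the other hand, both $\IndCoh$ and $\D$ on prestacks are defined via right Kan extension from affine schemes, so they turn colimits of prestacks into limits of DG categories; since $X/\mathscr{G}$ is the colimit of $\mathscr{G}^\bullet \times X$, this yields
$$\IndCoh(X/\mathscr{G}) \simeq \lim_n \IndCoh(\mathscr{G}^n \times X), \qquad \D(X/\mathscr{G}) \simeq \lim_n \D(\mathscr{G}^n \times X).$$
The tautological comparison functor with the invariants, induced by pullback along $q : X \to X/\mathscr{G}$, then lifts to the desired equivalence of DG categories.

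The main obstacle I anticipate is verifying that the cosimplicial object arising from the coaction of $\mathscr{G}$ on $\IndCoh(X)$ (built from the action and projection maps of $\mathscr{G}$ on $X$) agrees, term by term and face-by-face, with the cosimplicial object $n \mapsto \IndCoh(\mathscr{G}^n \times X)$ coming from the bar presentation of $X/\mathscr{G}$. This is a coherence check at the level of $(\infty,1)$-categories involving the interplay between action, multiplication, and projection maps; while essentially tautological from the definitions, it demands some care in the $\IndCoh$ setting where pullback functoriality is delicate, which is precisely where the smoothness (or discreteness) hypothesis on $\mathscr{G}$ enters. In the discrete group ind-scheme case the verification reduces to a combinatorial identity on the level of $K$-points, as in \cref{Orbits of Field-Valued Points Make Sense if Field-Valued Points Are Sets}.
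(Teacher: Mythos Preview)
Your overall strategy---realizing both sides as totalizations of cosimplicial categories built from the bar resolution---is correct and matches the paper's approach. But you misidentify the substantive step.

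The cosimplicial object computing $\IndCoh(X)^{\mathscr{G}}$ is, by the definition of comodule invariants, $n \mapsto \IndCoh(\mathscr{G})^{\otimes n} \otimes \IndCoh(X)$ (Lurie tensor of DG categories), whereas the one coming from the bar presentation of $X/\mathscr{G}$ is $n \mapsto \IndCoh(\mathscr{G}^n \times X)$. These are different diagrams a priori, and the content is not a coherence check of face maps but the termwise equivalence
\[
\boxtimes:\ \IndCoh(\mathscr{G}^n)\otimes\IndCoh(X)\ \xrightarrow{\sim}\ \IndCoh(\mathscr{G}^n\times X).
\]
This is exactly what the paper isolates, and it is obtained from the self-duality of $\IndCoh$ on ind-inf-schemes together with the general fact that $\IndCoh$ tensors over dualizable factors \cite[Corollary 10.3.6]{GaiIndCoh}, \cite[Chapter 3, Section 6.2.3]{GaRoII}. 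Calling this ``essentially tautological'' is the gap: without dualizability the $\boxtimes$ map need not be an equivalence, and your argument would not close.

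There is also a minor divergence in how the two sides reach the totalization for $\IndCoh(X/\mathscr{G})$. You invoke the right Kan extension definition of $\IndCoh$ on prestacks to turn the colimit presentation of $X/\mathscr{G}$ into a limit; the paper instead invokes descent (fppf descent when $\mathscr{G}$ is an algebraic group, ind-proper descent when $\mathscr{G}$ is a discrete group ind-scheme). That is where the hypothesis on $\mathscr{G}$ actually enters in the paper's argument---not, as you suggest, in making $!$-pullback well-defined or in the discrete case reducing to a combinatorial check on $K$-points. Your route via right Kan extension is legitimate provided you are explicit that $X/\mathscr{G}$ denotes the prestack (unsheafified) quotient; if one means the stack quotient, descent is genuinely needed.
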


\begin{proof}
Recall that the objects $\mathscr{G}^n \times X$ admit maps between them which form a simplicial object $\mathscr{G}^{\bullet} \times X$ in PreStk whose geometric realization gives the quotient $X/\mathscr{G}$ after sheafification; we review this and give explicit maps in \cite[Section 2.1.1]{GannonDescentToTheCoarseQuotientForPseudoreflectionAndAffineWeylGroups}. We may identify the category $\IndCoh(X/\mathscr{G})$ with the totalization of the corresponding  cosimplicial object given by $\IndCoh(\mathscr{G}^{\bullet} \times X)$ using the fact that $\IndCoh$ satisfies fppf descent by \cite[Corollary 10.4.5]{GaiIndCoh} when $\mathscr{G}$ is an algebraic group and using the fact that $\IndCoh$ satisfies ind-proper descent  when $\mathscr{G}$ is a discrete set of points by \cite[Chapter 3, Proposition 3.3.3]{GaRoII}. Similarly, by definition, the category $\IndCoh(X)^{\mathscr{G}}$ may be identified with the totalization of the cosimplicial object given by $\IndCoh(\mathscr{G}^{\bullet}) \otimes \IndCoh(X)$. Furthermore, we have a map of cosimplicial categories $\boxtimes: \IndCoh(\mathscr{G}^{\bullet}) \otimes \IndCoh(X) \to \IndCoh(\mathscr{G}^{\bullet} \times X)$, and, for any fixed $i$, the associated functor $\boxtimes: \IndCoh(\mathscr{G}^{i}) \otimes \IndCoh(X) \to \IndCoh(\mathscr{G}^{i} \times X)$ is an equivalence if $\IndCoh(\mathscr{G}^{i})$ is dualizable, see \cite[Corollary 10.3.6]{GaiIndCoh}. 

However, if $\indsch$ denotes any ind-inf-scheme, such as $\mathscr{G}^i$ or $(\mathscr{G}^i)_{dR} \simeq (\mathscr{G}_{dR})^i$, then the category $\IndCoh(\indsch)$ is in fact self-dual, see \cite[Chapter 3, Section 6.2.3]{GaRoII}. Therefore, this equivalence holds for $\IndCoh$, and, furthermore, the analogous proof shows for the associated categories of $\D$-modules since $\IndCoh(X_{dR}) \simeq \D(X)$ and the functor $(-)_{dR}$ commutes with limits and colimits, see \cite[Lemma 1.1.4]{GaiRozCrystals}. 
\end{proof}

\begin{Definition}\label{Monodromic Objects Definition}
If $\C$ is a category with an $H$-action, we define the category of \textit{monodromic objects} $\C^{H\text{-Mon}}$ as the full subcategory of $\C$ generated by the essential image of the forgetful functor $\C^{H} \xrightarrow{\text{oblv}^H} \C$. 
\end{Definition}

\begin{Remark}
In general, the functor $\text{oblv}^H$ need not be fully faithful unless $H$ is unipotent, see \cref{Unipotent Implies Oblv FF}. 
\end{Remark}

Using \cref{RemarkOnModulesVsComodules}, we can similarly define the \textit{coinvariants} of $H$ acting on $\C$ as $\C_{H} := \text{Vect} \otimes_{\D(H)} \C$. 

\begin{Theorem}\label{Inv=Coinv} We have the following: 
\begin{enumerate}
    \item (\cite[Appendix B]{GaiWhit}, \cite[Corollary 3.1.5]{GaiWhit}) If $\C$ is a category with an $H$-action, the forgetful functor $\text{oblv}: \C^H \to \C$ admits a continuous right adjoint, denoted $\text{Av}_*^H$. This functor induces an equivalence $\C_H \xrightarrow{\sim} \C^H$.
    \item (Gaitsgory-Lurie, \cite[Theorem 2.5.7]{Ber}) If $\C$ is a category with a weak $H$-action, the forgetful functor $\text{oblv}^{H, w}: \C^{H,w} \to \C$ admits a continuous right adjoint, denoted $\text{Av}_*^{H, w}$. This functor induces an equivalence $\C_{H, w} \xrightarrow{\sim} \C^{H, w}$.
\end{enumerate}
\end{Theorem}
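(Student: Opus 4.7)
My plan is to treat both parts in parallel, since they follow the same template once one establishes a self-duality statement for the relevant convolution monoidal category. I would begin with part (2), the weak case, because the self-duality of $\IndCoh(H)$ is already invoked in the proof of \cref{Invariants of G Acting on Sheaves on X is Invariants on X Mod G}, and then adapt the argument to part (1) by running it for $\D(H) \simeq \IndCoh(H_{dR})$. The first and most substantive step is to upgrade these self-dualities of underlying DG categories to the statement that they exchange the comonoidal convolution structure (defining actions in the sense of \cref{ActionDefinition}) with the opposite monoidal convolution, so that a weak $H$-category structure on $\C$ is equivalently a right $\IndCoh(H)$-module structure, and similarly in the strong case. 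Under this identification, weak invariants $\C^{H,w}$ are matched, up to formal rewriting, with the coinvariants $\text{Vect} \otimes_{\IndCoh(H)} \C$ defined with respect to the opposite-side structure.

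Granted the self-duality above, I would construct $\text{Av}_*^H$ and $\text{Av}_*^{H,w}$ explicitly as integrals over $H$: informally $\text{Av}_*(X) = \int_H h \cdot X$, made precise as the appropriate pushforward along $a: H \to \text{pt}$ pre-composed with pullback along the action map. The non-formal input required here is continuity of $a_*^{dR}: \D(H) \to \text{Vect}$ in the strong case and of $a_*^{\IndCoh}$ in the weak case; both hold for smooth affine algebraic $H$ of finite type, the strong case reducing to finite-dimensionality of de Rham cohomology of $H$ in each degree. The continuous right adjoints to the forgetful functors then exist, and the equivalences $\C_H \simeq \C^H$ and $\C_{H,w} \simeq \C^{H,w}$ follow from a formal 2-categorical computation: the cosimplicial bar diagram computing invariants and the simplicial bar diagram computing coinvariants are interchanged by the self-duality, so that the canonical map $\C_H \to \C^H$ arising from the adjunction $(\text{oblv} \dashv \text{Av}_*)$ is an equivalence.

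The main obstacle is the first step: identifying the comonoidal convolution structure with the monoidal one under self-duality in a way that interchanges comodules and modules. This ultimately rests on a Serre-duality computation at the diagonal of $H$, and in the strong case requires careful bookkeeping through the equivalence $\D(H) \simeq \IndCoh(H_{dR})$ and through the de Rham pushforward along the diagonal. Once this monoidal self-duality is secured, both the existence of the continuous right adjoint and the identification of invariants with coinvariants become formal consequences, which is why the final statement appears essentially simultaneously with the construction of $\text{Av}_*$ in the literature.
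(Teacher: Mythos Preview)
The paper does not actually prove this theorem. It is stated as a result imported from the literature, with part (1) cited to \cite[Appendix B]{GaiWhit} and \cite[Corollary 3.1.5]{GaiWhit}, and part (2) attributed to Gaitsgory--Lurie via \cite[Theorem 2.5.7]{Ber}. No argument is given in the paper beyond these citations; the text immediately moves on to an analogous statement for discrete group ind-schemes (\cref{Inv=Coinv for Group Ind-Schemes Which Are Discrete Sets of Points}), which is proved separately.

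Your proposal is therefore not comparable to any argument in the paper itself. What you have sketched is a reasonable outline of the proof strategy that appears in the cited references: the monoidal self-duality of $\D(H)$ (respectively $\IndCoh(H)$) identifying the comodule and module structures, the explicit construction of $\text{Av}_*$ via pushforward to a point, and the resulting identification of the bar and cobar resolutions. This is indeed the standard route, and the obstacle you identify---matching the monoidal and comonoidal structures under self-duality---is the genuine content. But be aware that the paper treats this as a black box and does not reproduce any of it; if you were asked to supply a proof for this paper, the correct response is that none is given.
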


This theorem also holds in the case of an infinite discrete group:

\begin{Proposition}
\label{Inv=Coinv for Group Ind-Schemes Which Are Discrete Sets of Points}
Assume $\tilde{\Lambda}$ is a group ind-scheme whose underlying set of points is discrete, and that $\C$ is a category with an action of $\tilde{\Lambda}$. Then the canonical map $\C_{\tilde{\Lambda}} \to \C^{\tilde{\Lambda}}$ is an equivalence. 
\end{Proposition}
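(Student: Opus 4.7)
The plan is to establish the equivalence in a universal case and then bootstrap to arbitrary $\C$ via a self-duality argument for the $\D(\tilde\Lambda)$-module $\text{Vect}$.

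Since $\tilde\Lambda$ has discrete underlying set of points, I first identify $\D(\tilde\Lambda) \simeq \bigoplus_{\lambda \in \tilde\Lambda(k)} \text{Vect}$ as DG categories, with convolution sending $\delta_\lambda \otimes \delta_\mu$ to $\delta_{\lambda \mu}$. Each skyscraper $\delta_\lambda$ is invertible under convolution with inverse $\delta_{\lambda^{-1}}$, so every compact object of $\D(\tilde\Lambda)$ is dualizable; in particular, $\D(\tilde\Lambda)$ is a rigid monoidal DG category.

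Next I would verify the claim in the universal case $\C = \D(\tilde\Lambda)$ equipped with the left regular action. The coinvariants reduce to $\text{Vect} \otimes_{\D(\tilde\Lambda)} \D(\tilde\Lambda) \simeq \text{Vect}$, while the invariants, by \cref{Invariants of G Acting on Sheaves on X is Invariants on X Mod G} applied with $X = \mathscr{G} = \tilde\Lambda$ acting on itself by left multiplication, identify with $\D(\tilde\Lambda/\tilde\Lambda) = \D(\text{pt}) = \text{Vect}$. One then checks by direct inspection of the bar diagram that the canonical norm map $\C_{\tilde\Lambda} \to \C^{\tilde\Lambda}$ realizes this common identification with $\text{Vect}$, so the universal case holds. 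Equivalently, the computation in the universal case shows that the $\D(\tilde\Lambda)$-module $\text{Vect}$ is self-dual, in the sense that $\uHom_{\D(\tilde\Lambda)}(\text{Vect}, \D(\tilde\Lambda)) \simeq \text{Vect}$.

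For arbitrary $\C$, I would leverage the self-duality together with rigidity to produce a natural chain of equivalences
\[
\C_{\tilde\Lambda} = \text{Vect} \otimes_{\D(\tilde\Lambda)} \C \;\simeq\; \uHom_{\D(\tilde\Lambda)}(\text{Vect}, \C) = \C^{\tilde\Lambda},
\]
where the middle equivalence uses that $\text{Vect}$ is dualizable as a $\D(\tilde\Lambda)$-module with self-dual dual. A naturality check, together with the fact that both functors $(-)_{\tilde\Lambda}$ and $(-)^{\tilde\Lambda}$ preserve colimits in $\C$ (the first tautologically; the second via the identification just established on the generator $\D(\tilde\Lambda)$), shows that this chain of equivalences assembles into the canonical norm map.

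The main obstacle is verifying that the constructed equivalence is precisely the canonical norm map rather than some twist of it; this amounts to a compatibility check in the universal case, where one tracks how the unit and counit of the duality between $\text{Vect}$ and itself interact with the bar/cobar resolutions computing $\C_{\tilde\Lambda}$ and $\C^{\tilde\Lambda}$.
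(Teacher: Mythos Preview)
Your approach differs from the paper's and is in principle viable, but the argument as written has a gap. You claim that computing $\uHom_{\D(\tilde\Lambda)}(\text{Vect}, \D(\tilde\Lambda)) \simeq \text{Vect}$ in the universal case establishes that $\text{Vect}$ is self-dual as a $\D(\tilde\Lambda)$-module; but this only identifies what the dual \emph{would} be, not that the duality data (unit and counit satisfying the triangle identities) actually exist. Likewise, your claim that $(-)^{\tilde\Lambda}$ preserves colimits ``via the identification just established on the generator'' is circular: to propagate the norm equivalence from the generator $\D(\tilde\Lambda)$ to an arbitrary $\C$ you already need to know that $(-)^{\tilde\Lambda}$ is colimit-preserving. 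You yourself flag the compatibility of your constructed equivalence with the canonical norm map as an unresolved obstacle; these issues are really the same gap viewed from different angles.

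The paper's proof takes a shorter and different route. Since the underlying prestack of $\tilde\Lambda$ is a discrete set of points, the map $\tilde\Lambda \to \tilde\Lambda_{dR}$ is an isomorphism (so strong and weak actions coincide), and the structure maps in the cosimplicial diagram computing $\C^{\tilde\Lambda}$ are ind-proper, hence admit continuous left adjoints given by $\IndCoh$-pushforward. The result then follows for all $\C$ at once from the general limit--colimit comparison \cite[Chapter 1, Corollary 2.5.7]{GaRoI}, with no bootstrapping needed. Your rigidity observation is morally the same input repackaged, so a corrected version of your argument would note that $\D(\tilde\Lambda)$ is rigid and then invoke the same general machinery used to prove \cref{Inv=Coinv} for algebraic groups, rather than trying to rebuild it from a single instance.
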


\begin{proof}
Since the canonical map $\tilde{\Lambda} \xrightarrow{} \tilde{\Lambda}_{dR}$ is an equivalence, we need only show this for the weak action of $\tilde{\Lambda}$. However, since $\tilde{\Lambda}$ is ind-proper, the associated action maps are ind-proper, and therefore have left adjoints given by IndCoh pushforward (see Chapter 3 of \cite{GaRoII}). Therefore, our claim follows by \cite[Chapter 1, Corollary 2.5.7]{GaRoI}. 
\end{proof}

\begin{Remark}\label{UniversalCaseRemark}
The equivalences of invariants and coinvariants of \cref{Inv=Coinv} and \cref{Inv=Coinv for Group Ind-Schemes Which Are Discrete Sets of Points} have the following trivial but conceptually important consequence. Let $M$ be any closed subgroup of an affine algebraic group $H$ or some discrete group, and assume $\C$ is some category with an $H$-action. Then we have a canonical equivalence:
\raggedbottom
\[\uHom_{H}(\D(H/M), \C) \simeq \C^M\]

\noindent for which an $F \in \uHom_{H}(\D(H/M), \C)$ is canonically isomorphic to the functor $F(\delta_{1M}) \star^{H} -$, where $F(\delta_{1M}) \in C^H$. This is because:
\raggedbottom
\[\uHom_{H}(\D(H/M), \C) \simeq \uHom_{H}(\D(H)^M, \C) \simeq \uHom_{H}(\D(H)_M, \C)  \simeq \C^M\]

\noindent where the last step uses the explicit description of coinvariants as a colimit, as well as the dualizability of $\D(H)$. In particular, suppose $F(\delta_{1M})$ is contained in some $H$-subcategory of $\C$, say $\C'$. Then the entire essential image of $F$ is also contained in $\C'$. Of course, this entire discussion also applies to weak invariants and weak actions. 
\end{Remark}

\subsubsection{Character Sheaves and Twisted Invariants}\label{Character Sheaves and Twisted Invariants Subsubsection}
We briefly review the ideas of \textit{twisted invariants} of groups acting on categories in this section and \cref{Whittaker Invariance Section}, see, for example, \cite[Section 2]{CamDhi} for a more thorough treatment. Let $\mathcal{L}$ be some character sheaf on $H$. Then we may define the \textbf{twisted invariants} $\C^{H, \mathcal{L}}$ associated to $\mathcal{L}$. 

\begin{Remark}\label{Twisted Inv=Twisted Coinv}
The natural analogue to \cref{Inv=Coinv}, i.e. $\C_{H, \mathcal{L}} \xrightarrow{\sim} \C^{H, \mathcal{L}}$, holds for twisted invariants of any $H$ acting on a category, see \cite[Section 2.4]{BeraldoLoopGroupActionsonCategoriesandWhittakerInvariants}. In view of this fact, we will often abuse notation by passing interchangeably between these two categories.
\end{Remark}

Now fix any field $L$ and let $[\lambda]$ denote some $L$-point of $\LTd/\characterlatticeforT$. This gives rise to a monoidal functor \[\D(T) \simeq \IndCoh(\LTd)^{\characterlatticeforT} \xleftarrow{\sim}\IndCoh(\LTd/\characterlatticeforT) \to \text{Vect}_L\] via the Mellin transform (whose higher categorical version is reviewed in \cite[Appendix A]{GannonClassificationOfNondegenerateGCategories}), the equivalence of categories of \cref{Invariants of G Acting on Sheaves on X is Invariants on X Mod G}(1), and the pullback functor. By \cref{RemarkOnModulesVsComodules}, this in turn gives rise to a co-monoidal functor $\text{Vect}_L \to \D(T)$, which sends the one-dimensional vector space to a character sheaf $\mathcal{L}_{[\lambda]}$. Using this character sheaf, one can similarly define the twisted invariants \[\C^{T, \mathcal{L}_{[\lambda]}} := \C \otimes_{\D(T)} \mathrm{Vect}_L\] of any category $\C$ with a $T$-action or more generally the twisted invariants \[\C^{B, \mathcal{L}_{[\lambda]}} := (\C^N)^{T, \mathcal{L}_{[\lambda]}}\] for any category with a $B$-action. 

This category admits an alternative description which we now record:

\begin{Proposition}\label{Invariants for Character Sheaves Is Well Defined}
For any category $\C$ with an action of $T$, choose some lift $\lambda \in \LTd(L)$ of $[\lambda]$. Then the canonical functor $\C^{T,w} \otimes_{\IndCoh(\LTd)} \text{Vect}_L \to \C^{T, \mathcal{L}_{[\lambda]}}$ is an equivalence. 
\end{Proposition}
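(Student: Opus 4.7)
The plan is to deduce the equivalence by a chain of natural identifications that exploits the factorization $[\lambda] = \pi \circ \lambda$, where $\pi: \LTd \to \LTd/\characterlatticeforT$ is the quotient map.

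First, I would rewrite both sides as relative tensor products involving $\text{Vect}_L$. For the right-hand side, \cref{Twisted Inv=Twisted Coinv} combined with the Mellin transform $\D(T) \simeq \IndCoh(\LTd/\characterlatticeforT)$ yields $\C^{T, \mathcal{L}_{[\lambda]}} \simeq \C \otimes_{\D(T)} \text{Vect}_L^{[\lambda]}$, where $\text{Vect}_L^{[\lambda]}$ denotes $\text{Vect}_L$ equipped with the $\D(T)$-module structure corresponding to the pushforward $[\lambda]_* \mathcal{O}_{\Spec L}$. For the left-hand side, the key input would be the identification
\[
\C^{T,w} \simeq \C \otimes_{\D(T)} \IndCoh(\LTd),
\]
where $\IndCoh(\LTd)$ carries the natural $(\IndCoh(\LTd), \D(T))$-bimodule structure induced by the monoidal pullback $\pi^*: \IndCoh(\LTd/\characterlatticeforT) \to \IndCoh(\LTd)$.

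Granted this identification, the proposition follows formally. Since $\pi \circ \lambda = [\lambda]$, the $\D(T)$-modules $\text{Vect}_L^{[\lambda]}$ and $\IndCoh(\LTd) \otimes_{\IndCoh(\LTd)} \text{Vect}_L^{\lambda}$ are canonically equivalent, so associativity of relative tensor products yields
\[
\C^{T,w} \otimes_{\IndCoh(\LTd)} \text{Vect}_L \simeq \bigl(\C \otimes_{\D(T)} \IndCoh(\LTd)\bigr) \otimes_{\IndCoh(\LTd)} \text{Vect}_L \simeq \C \otimes_{\D(T)} \text{Vect}_L^{[\lambda]} \simeq \C^{T, \mathcal{L}_{[\lambda]}}.
\]
One must then verify that the equivalence so produced agrees with the canonical functor in the statement, which is a routine unwinding of definitions.

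The principal obstacle is the identification $\C^{T,w} \simeq \C \otimes_{\D(T)} \IndCoh(\LTd)$. I would establish it by first invoking the weak version of \cref{Inv=Coinv} to rewrite $\C^{T,w}$ as the weak coinvariants $\C \otimes_{\IndCoh(T)} \text{Vect}$, and then reducing by Morita theory: since both sides are continuous functors of $\C$ from $\D(T)\text{-mod}$ to $\IndCoh(\LTd)\text{-mod}$, it suffices to check agreement on the free module $\C = \D(T)$, where a direct computation using $\D(T) \otimes_{\IndCoh(T)} \text{Vect} \simeq \text{Sym}(\LT)\text{-mod} \simeq \IndCoh(\LTd)$ produces the desired equivalence. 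Moreover, the result is a priori independent of the choice of lift $\lambda$: the $\characterlatticeforT$-equivariant structure on $\C^{T,w}$ induced by the strong $T$-action intertwines the different choices.
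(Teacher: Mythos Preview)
Your proposal is correct and follows essentially the same route as the paper: both arguments reduce to the identification $\C^{T,w} \simeq \C \otimes_{\D(T)} \IndCoh(\LTd)$ (the paper writes $\IndCoh(\LTd/\characterlatticeforT)$ in place of $\D(T)$ via Mellin transform) and then apply associativity of the relative tensor product together with \cref{Twisted Inv=Twisted Coinv}. The only difference is that you spell out a Morita-theoretic justification for that identification, whereas the paper simply invokes it (it is used again without proof in \cref{Character Lattice of T Invariants Is C}).
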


\begin{proof}
This follows from the isomorphism
\raggedbottom
\[\C^{T,w} \otimes_{\IndCoh(\LTd)} \text{Vect}_L \simeq \C \otimes_{\IndCoh(\LTd/\characterlatticeforT)} \IndCoh(\LTd) \otimes_{\IndCoh(\LTd)} \text{Vect}_L\] \[\simeq \C \otimes_{\IndCoh(\LTd/\characterlatticeforT)} \text{Vect}_L\]
\noindent since the $\mathcal{L}_{[\lambda]}$-invariants and coinvariants agree. 
\end{proof}

In view of \cref{Invariants for Character Sheaves Is Well Defined}, we will also use the notation $\C^{B_{\lambda}} := \C^{B, \mathcal{L}_{[\lambda]}}$. Furthermore, in view of \cref{Invariants of G Acting on Sheaves on X is Invariants on X Mod G}, we will also use the notation $\D(X/_{\lambda}B) := \D(X)^{B_{\lambda}}$ if $X$ is a scheme with a $G$-action. 
\begin{Remark}\label{Definition of Invariants Given by a Character}
We recall that to any character $H \xrightarrow{\chi} \mathbb{G}_a$, one can pull back the exponential $\D$-module to create a character sheaf $\mathcal{L}_{\chi}$ on $H$, see Section 2.4 of \cite{BeraldoLoopGroupActionsonCategoriesandWhittakerInvariants}. We shift this character sheaf so that it has unique nonzero cohomological degree $-\text{dim}(H)$, and denote the corresponding invariant category by $\C^{H, \chi}$. 
\end{Remark}

\begin{Remark}\label{Left vs. Right Disclaimer}
Recall that, given any algebraic group $H$, the \textit{opposite} algebraic group is the scheme $H$ with multiplication $h_1\cdot h_2 := h_2h_1$ and the same identity and inverse map as $H$. Many constructions in this paper will involve studying the natural action of $G \times G_{\mathrm{op}}$ on the category $\D(G)$, and studying the invariants for the $G' \times G'_{\mathrm{op}}$ (and, more generally, twisted invariants) for some closed subgroup scheme $G'$ of $G$. To ease notational clutter, we will denote this category of invariants by $\D(G)^{G' \times G'}$ rather than $\D(G)^{G' \times G'_{\mathrm{op}}}$. More generally, if $\alpha: G' \to \G_a$ is some character of $G'$, we will use the notation $\D(G)^{G' \times G', \alpha \times \alpha}$ rather than the notation $\D(G)^{G' \times G'_{\mathrm{op}}, \alpha \times \alpha}$ or the notation $\D(G)^{G' \times G', \alpha \times -\alpha}$ given by viewing the $G'_{\mathrm{op}}$ action as a $G'$-action by the inverse map. 

In view of \cref{Invariants of G Acting on Sheaves on X is Invariants on X Mod G}, if $X$ is a scheme with an action of a group $G'$, we will also use the notation $\D(X/_{\alpha}G') := \D(X)^{G', \alpha}$.
\end{Remark}

\subsubsection{Recovering $G$-Categories From Their Invariants}
We now prove \cref{Can Check Equivalence on Each field-valued Point for Groups}. Notice that if $F$ is a functor of $\D(G)$-categories (respectively, $\IndCoh(\LTd)$-categories), then any adjoint is automatically a functor of $\D(G)$-categories (respectively, $\IndCoh(\LTd)$-categories) by \cref{Adjoint Functor is Automatically G Equivariant} (respectively, the fact that $\IndCoh(\LTd) \simeq \Symt\text{-Mod}$ is rigid monoidal, see \cite[Chapter 1, Section 9]{GaRoI}) and the fact that a functor with an adjoint is an equivalence if and only if the functor and its adjoint are fully faithful. Therefore, \cref{Can Check Equivalence on Each field-valued Point for Groups} follows from the following proposition which we prove below.  
\begin{Proposition}\label{Can Check Fully Faithfulness on Each Field-Valued Point for LTd Module Categories}
    If $F: \C \to \D$ is a functor of $\IndCoh(\LTd)$-module categories which admits a continuous adjoint, then $F$ is fully faithful if and only if the induced functor $F \otimes_{\IndCoh(\LTd)} \text{id}_{\text{Vect}_L}$ is fully faithful for every field-valued point $\lambda \in \LTd(L)$.
\end{Proposition}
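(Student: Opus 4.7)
My strategy is to reduce fully faithfulness of $F$ to the joint conservativity of the family of base change functors $\{(-)_\lambda : \C \to \C_\lambda\}_{\lambda \in \LTd(L)}$, where $\C_\lambda := \C \otimes_{\IndCoh(\LTd)} \text{Vect}_L$, and then derive this conservativity from the corresponding statement for $\IndCoh(\LTd)$ itself. Throughout I exploit that $\IndCoh(\LTd) \simeq \text{Sym}(\LT)\text{-mod}$ is rigid monoidal in $\DGCatContk$ as the category of modules over a commutative algebra object.

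Assume without loss of generality that $F$ admits a continuous right adjoint $F^R$ (the left adjoint case is symmetric, using the counit in place of the unit). Rigidity of $\IndCoh(\LTd)$ implies that (i) $F^R$ is automatically $\IndCoh(\LTd)$-linear, so $F$ is fully faithful iff the unit $\eta: \text{id}_\C \to F^R F$ is an equivalence; and (ii) base change along the monoidal functor $\lambda^* : \IndCoh(\LTd) \to \text{Vect}_L$ preserves adjunctions, so $F_\lambda \dashv (F^R)_\lambda$ and the unit of this adjunction is canonically identified with $\eta_\lambda := \eta \otimes_{\IndCoh(\LTd)} \text{id}_{\text{Vect}_L}$. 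The hypothesis thus translates to $\eta_\lambda$ being an equivalence for every $\lambda \in \LTd(L)$. Since base change preserves cofibers, this is equivalent to $(\text{cofib}(\eta_X))_\lambda = 0$ in $\C_\lambda$ for every $X \in \C$ and every $\lambda$, and we must conclude $\text{cofib}(\eta_X) = 0$ for every $X \in \C$.

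The remaining step is to show that for any $Z \in \C$, $Z = 0$ iff $Z_\lambda = 0$ for every $\lambda \in \LTd(L)$. For this, I use that rigidity makes $\C$ dualizable as an $\IndCoh(\LTd)$-module and equips it with relative internal Hom objects $\uHom^{\IndCoh(\LTd)}(Y, Z) \in \IndCoh(\LTd)$ satisfying the base change identity $\uHom^{\IndCoh(\LTd)}(Y, Z) \otimes_{\IndCoh(\LTd)} \text{Vect}_L \simeq \uHom_{\text{Vect}_L}(Y_\lambda, Z_\lambda)$. If $Z_\lambda = 0$ for all $\lambda$, the right-hand side vanishes, reducing matters to the classical conservativity statement for $\IndCoh(\LTd)$ itself: an object $M \in \text{Sym}(\LT)\text{-mod}$ is zero iff $M \otimes^L_{\text{Sym}(\LT)} L = 0$ for every field-valued point $\lambda: \text{Spec}(L) \to \LTd$. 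I would prove this by induction on the Krull dimension of the support of $M$, using prime avoidance to find a non-zero-divisor in the maximal ideal to step down in dimension, and using the hypertor spectral sequence to exhibit a non-vanishing fiber at an associated prime of the top cohomology of $M$ whenever $M$ is nonzero.

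The main obstacle I anticipate is this last conservativity statement for $\IndCoh(\LTd)$ in the derived setting, where naive Nakayama arguments can fail for unbounded modules. The crucial point is that the hypothesis is given at \emph{every} field-valued point, including generic points of positive-dimensional subvarieties of $\LTd$, which is what allows the dimension induction to proceed all the way down. Once this ingredient is in place, propagating conservativity to an arbitrary $\IndCoh(\LTd)$-module category via the relative internal Hom is essentially formal given rigidity.
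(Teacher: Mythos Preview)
Your high-level strategy matches the paper's: reduce to showing the unit (or counit) of the adjunction is an equivalence, rewrite this as vanishing of a cofiber, and then reduce to checking that vanishing at every field-valued point. The divergence is in how you pass from conservativity of the family $\{(-)_\lambda\}$ on $\IndCoh(\LTd)$ to the same statement for an arbitrary module category $\C$.

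That bridge has a gap as written. Rigidity of $\IndCoh(\LTd)$ does \emph{not} make an arbitrary module category $\C$ dualizable; that is an additional hypothesis not assumed here. The relative internal Hom $\uHom^{\IndCoh(\LTd)}(Y,Z)$ exists regardless, but the base change identity you invoke,
\[
\lambda^*\,\uHom^{\IndCoh(\LTd)}(Y,Z)\;\simeq\;\uHom_{\text{Vect}_L}(Y_\lambda,Z_\lambda),
\]
is a Beck--Chevalley condition that in general requires $Y$ to be compact (so that $\uHom^{\IndCoh(\LTd)}(Y,-)$ is continuous) or $\C$ to be dualizable. Neither is given, and the proposition is stated for arbitrary $\C$. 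If you are willing to assume $\C$ is compactly generated your argument goes through by restricting to compact $Y$, but that is a genuine extra hypothesis.

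The paper avoids this entirely. It proves directly (Lemma~\ref{Zero if and only if Zero at All Field Valued Points}) that the pushforwards $p_*^{\IndCoh}(K)$ of residue fields of integral closed subschemes generate $\IndCoh(\Spec(A))$; in particular the monoidal unit $\omega$ lies in the cocomplete subcategory they span. Conservativity for any module category $\C$ then follows formally: since $\IndCoh(\LTd)$ is rigid, $\C_\lambda$ identifies with modules in $\C$ for the algebra object $p_{\lambda,*}(L)$, so $Z_\lambda = 0$ is equivalent to $p_{\lambda,*}(L)\otimes Z = 0$ in $\C$; if this holds for all $\lambda$ then $Z = \omega\otimes Z$ is a colimit of zeros. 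No internal-Hom base change is needed. Your dimension-induction sketch for the $\IndCoh(\LTd)$ case is similar in spirit to the paper's support argument (which isolates the minimal closed subscheme with nonzero $!$-restriction and works on its smooth locus), and would serve equally well once you replace the internal-Hom step with this generation argument.
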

\begin{Remark}
We note in passing that there is a symmetric monoidal equivalence $\Upsilon_{\LTd}: \QCoh(\LTd) \xrightarrow{\sim} \IndCoh(\LTd)$ \cite[Chapter 6]{GaRoI} since $\LTd$ is smooth. In particular, we may replace $\IndCoh(\LTd)$ with $\QCoh(\LTd)$ in \cref{Can Check Fully Faithfulness on Each Field-Valued Point for LTd Module Categories}.
\end{Remark}

We will prove \cref{Can Check Fully Faithfulness on Each Field-Valued Point for LTd Module Categories} after showing the following lemma: 

\begin{Lemma}\label{Zero if and only if Zero at All Field Valued Points}
Let $A$ be a classical Noetherian commutative $k$-algebra. An object $\F$ of an $\IndCoh(\Spec(A))$-module category $\C$ is zero if and only if $\F \otimes p_{*}^{\IndCoh}(K) \simeq 0$ for all field-valued points $p: A \to K$.
\end{Lemma}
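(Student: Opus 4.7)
The forward direction is immediate, since $\F = 0$ forces $\F \otimes p_*^{\IndCoh}(K) = 0$ for every $p$. For the converse, the strategy is to reduce the claim to a purely commutative-algebraic statement about $\IndCoh(\Spec(A))$ itself, and then prove that statement by Noetherian induction on $\Spec(A)$.

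For the reduction, the action of $\IndCoh(\Spec(A))$ on $\F$ yields a continuous $\IndCoh(\Spec(A))$-linear functor
\[
T_\F \colon \IndCoh(\Spec(A)) \to \C, \qquad T_\F(\mathcal{G}) := \mathcal{G} \otimes \F,
\]
which sends the unit $\mathcal{O}_{\Spec(A)}$ to $\F$. By hypothesis $T_\F$ annihilates every $p_*^{\IndCoh}(K)$, and $\ker(T_\F)$ is a colimit-closed, $\IndCoh(\Spec(A))$-stable (i.e., tensor-ideal) full subcategory of $\IndCoh(\Spec(A))$ containing every skyscraper. The lemma therefore reduces to showing that the smallest such tensor-ideal contains $\mathcal{O}_{\Spec(A)}$; equivalently, that any object of $\IndCoh(\Spec(A))$ whose tensor product with every residue-field skyscraper vanishes is already zero.

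For this universal statement, I would use Noetherian induction on $\Spec(A)$. The base case, $A$ Artinian, is immediate because every coherent sheaf is a finite iterated extension of residue-field skyscrapers at the finitely many maximal ideals. For the induction step, pick a minimal prime $\eta$ of $A$ and a dense affine open $j \colon U \hookrightarrow \Spec(A)$ whose complement $Z$ has strictly smaller dimension, and use the recollement triangle $\Gamma_Z(\mathcal{O}_{\Spec(A)}) \to \mathcal{O}_{\Spec(A)} \to j_*(\mathcal{O}_U)$ to exhibit $\mathcal{O}_{\Spec(A)}$ as an extension of $j_*(\mathcal{O}_U)$ (which lies in the tensor-ideal generated by the generic skyscrapers $\eta_*\kappa(\eta)$, after a further reduction to $\Spec(A_\eta)$) by $\Gamma_Z(\mathcal{O}_{\Spec(A)})$ (which is supported on $Z$ and hence lies in the ideal by the induction hypothesis applied to the smaller scheme $Z$). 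The main obstacle is making this step rigorous in the $\IndCoh$ setting, particularly verifying that $j_*(\mathcal{O}_U)$ lies in the tensor-ideal generated by generic skyscrapers; this simplifies in the smooth case of the intended application, where $A = \Symt$ and $\IndCoh \simeq \QCoh$ makes the recollement standard.
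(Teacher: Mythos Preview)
Your reduction to the universal statement in $\IndCoh(\Spec A)$ is correct and matches what the paper does. However, the paper proves that universal statement by a different and more direct route, and your proposed Noetherian induction contains a genuine error.

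First, a minor point: the monoidal unit in $\IndCoh(\Spec A)$ is $\omega_{\Spec(A)}$, not $\mathcal{O}_{\Spec(A)}$; these coincide only when $A$ is Gorenstein.

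More seriously, your claim that $j_*(\mathcal{O}_U)$ lies in the tensor-ideal generated by the generic skyscrapers is false, even in the smooth case you flag as easier. Take $A = k[x]$, $U = D(x)$, so $j_*(\mathcal{O}_U) = k[x,x^{-1}]$. The tensor-ideal generated by the generic skyscraper $k(x)$ consists only of objects $M$ with $M \otimes_A k(a) = 0$ for every closed point $a$, since $k(x) \otimes_A k(a) = 0$ and this vanishing is preserved under colimits, shifts, and tensoring. But $k[x,x^{-1}] \otimes_A k(1) = k \neq 0$, so $j_*(\mathcal{O}_U)$ is not in that ideal and your recollement triangle does not give the inductive step. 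The open--closed decomposition is the wrong triangle here: for integral $Z$ with generic point $\eta$, the useful cofiber sequence is $\omega_Z \to \eta_*\kappa(\eta) \to (\text{torsion})$, whose last term is a filtered colimit of objects supported on proper closed subschemes.

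The paper sidesteps this entirely. Rather than building $\omega$ out of skyscrapers, it shows directly that the skyscrapers $i_{B,*}K_B$ at generic points of integral closed subschemes \emph{generate} $\IndCoh(\Spec A)$, by exhibiting for any nonzero $\F$ a nonzero map from some such $K_B$. The argument takes the minimal integral closed $\Spec(B)$ with $i_B^!\F \neq 0$, passes to a smooth dense open $\Spec(R) \subset \Spec(B)$ where $\IndCoh \simeq \QCoh$, and uses minimality to see that multiplication by any nonzero $g \in R$ is an isomorphism on $\uHom(R, i_B^!\F|_{\Spec R})$; taking the colimit over all $g$ forces $\uHom(K_B, \F) \neq 0$. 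Once the skyscrapers generate, $\omega$ lies in their colimit-closure and the module-category statement follows.
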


\begin{proof}
We first claim that the category $\IndCoh(\Spec(A))$ is generated by the set of elements of the form $i_{B, *}^{\IndCoh}(K_B)$ for $i_B: \Spec(B) \to \Spec(A)$ the integral closed subschemes of $\Spec(A)$ where $K_B$ is the field of fractions of $B$; here, we lightly abuse and use the terms $K_B$ and $\Xi(K_B)$ interchangeably. 

To see this, assume $\F \in \IndCoh(\Spec(A))$ is some nonzero object, and let $i: \Spec(B) \xhookrightarrow{} \Spec(A)$ denote the smallest closed subscheme such that $\mathcal{G} := i^!(\F)$ is nonzero. Since $X := \Spec(B)$ admits a proper surjective map from the disjoint union of the reduced subschemes of irreducible components, by \cite[Proposition 8.1.2]{GaiIndCoh} we have that $B$ is necessarily an integral domain. Notice that if $z$ is some closed subscheme of $X$ whose dimension is less than that of $X$ and $j$ is its complement, then we must have that the canonical map $\mathcal{G} \xrightarrow{} j_*^{\IndCoh}j^!(\mathcal{G})$ is an isomorphism, since otherwise $z^!(\F)$ would be nonzero by using \cite[Proposition 6.1.3(d)]{GaRoI} and then \cite[Proposition 6.1.3(c)]{GaRoI}. In particular, we may choose some nonzero $f \in B$ such that the image of the open embedding $j: \Spec(B[\frac{1}{f}]) \to \Spec(B)$ is contained in the smooth locus and such that $\mathcal{G} \xrightarrow{\sim} j_*(j^!(\mathcal{G}))$. 

Let $\mathcal{G}' := j^!(\mathcal{G})$ and $R := B[\frac{1}{f}]$. Since $\mathcal{G}' \simeq \Xi(M)$ for some $M \in \QCoh(\Spec(R)) = R\text{-mod}$ because $R$ is smooth, we see that \[\uHom_{\IndCoh(\Spec(R))}(R, \mathcal{G}') \simeq \uHom_{\QCoh(\Spec(R))}(R, M)\] is nonzero since $\mathcal{G}$, and thus $\mathcal{G}'$, is nonzero. However, notice that we have a cofiber sequence \[R \xrightarrow{g} R \to i_{g, *}(R/g)\] in $\QCoh(\Spec(R))$, and moreover we have that \[\uHom_{\QCoh(\Spec(R))}(i_{g, *}(R/g), M) \xrightarrow{\sim} \uHom_{\IndCoh(\Spec(R))}(\Xi(i_{g, *}(R/g)), \Xi(M))\] \[\simeq \uHom_{\IndCoh(\Spec(R))}(i_{g, *}^{\IndCoh}(\Xi(R/g)), \mathcal{G}')\] \[\simeq \uHom_{\IndCoh(\Spec(R))}(\Xi(R/g), i_{g}^{!}(\mathcal{G}')) \simeq 0\] from the fact that $\Xi$ is fully faithful, the compatibility of $\Xi$ with pushforward, and by assumption on $\mathcal{G}'$, respectively. Therefore we have that the natural multiplication map $\uHom_{\QCoh(\Spec(R))}(R, M) \xrightarrow{g} \uHom_{\QCoh(\Spec(R))}(R, M)$ is an isomorphism for any $g$ and so we see that \[\uHom_{\QCoh(\Spec(R))}(R[\frac{1}{g}], M) \xrightarrow{\sim} \uHom_{\QCoh(\Spec(R))}(R, M)\] since it can be computed as a limit of isomorphisms. Taking the limit over all nonzero $g \in R$, we finally see that \[\uHom_{\QCoh(\Spec(R))}(K_R, M) \xrightarrow{\sim} \uHom_{\QCoh(\Spec(R))}(R, M)\] where $K_R$ is the field of fractions of $R$. Thus since $R$ generates $R$-mod, we have \[0 \not\simeq \uHom_{\QCoh(\Spec(R))}(K_R, M) \xrightarrow{\sim} \uHom_{\IndCoh(\Spec(R))}(\Xi(K_R), \Xi(M))\] \[\xrightarrow{\sim} \uHom_{\IndCoh(\Spec(R))}(j_*^{\IndCoh}(\Xi(K_R)), j_*^{\IndCoh}(\mathcal{G}'))\] since $\Xi$ and $j_*^{\IndCoh}$ are fully faithful, and so \[0 \not\simeq \uHom_{\IndCoh(\Spec(R))}(K_B, \mathcal{G}) \xleftarrow{\sim} \uHom_{\IndCoh(\Spec(A))}(i_*^{\IndCoh}(K_B), \mathcal{F})\] by the fact that $K_B = K_R$ and the definition of $\mathcal{G}'$, and by adjunction, respectively. This shows the first claim; our full claim now follows from the fact that $\omega_{\Spec(A)}$ can be written as the colimit of such field-valued points. 
\end{proof}


\begin{proof}[Proof of \cref{Can Check Fully Faithfulness on Each Field-Valued Point for LTd Module Categories}]
Now assume $F: \C \to \D$ is a functor of $\QCoh(\LTd)$-module categories which admits an adjoint. We assume that $F$ admits a continuous right adjoint $R: \D \to \C$; a dual proof is valid if $F$ admits a (necessarily continuous) left adjoint. To show that $F$ is fully faithful, it suffices to show that the unit map $\text{id} \to RF$ is an equivalence. This is a natural transformation of functors, and therefore it is a natural isomorphism if and only if the induced map $C \to RF(C)$ is an equivalence for every $C \in \C$.

The map $C \to RF(C)$ is an equivalence if and only if its cofiber $K$ vanishes. By \cref{Zero if and only if Zero at All Field Valued Points}, we may check that this cofiber vanishes at all points. However, since $R$ is necessarily $\Symt$-linear by the rigid monoidality of $\Symt$-mod, and so one can check that for any field-valued $\lambda \in \LTd(L)$, $K \otimes L \in \C \otimes_{\IndCoh(\LTd)} \text{Vect}_L$ vanishes if and only if the associated unit map $\text{id}_{\C}\otimes \text{id}(C \otimes L) \to RF \otimes \text{id}(C \otimes L)$ is an equivalence, which is true since $RF \otimes \text{id} \simeq (R \otimes \text{id})(F \otimes \text{id})$ and $(F \otimes \text{id})$ is fully faithful by assumption. 
\end{proof}

We now derive the following corollary of \cref{Can Check Equivalence on Each field-valued Point for Groups}. To state it, we will use the notational shorthand $\mathrm{Av}_*^{B_{\lambda}}$ for the composite \[\C \xrightarrow{\AvN} \C^N \simeq \C^N \otimes_{\IndCoh(\LTd/\characterlatticeforT)} \IndCoh(\LTd/\characterlatticeforT) \xrightarrow{\mathrm{id}_\C \otimes [\lambda]^!} \C^N \otimes_{\IndCoh(\LTd/\characterlatticeforT)} L\mathrm{-mod} \simeq \C^{B, \mathcal{L}_{[\lambda]}}\] for any $G$-category $\C$ and any field-valued point $[\lambda] \in (\LTd/\characterlatticeforT)(L)$.

\begin{Corollary}\label{Any Nonzero Object in a G-Category has a field-valued point where averaging there won't vanish}
If $\C$ denotes some DG category with an action of $G$ and $\F \in \C$ is nonzero, then there exists some field-valued point $\lambda$ for which $\text{Av}_{\ast}^{B_{\lambda}}(\F)$ is nonzero.
\end{Corollary}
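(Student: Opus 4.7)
Plan:

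The plan is to use \cref{Can Check Equivalence on Each field-valued Point for Groups} to locate a field-valued point $\lambda$ for which a suitable invariance category is nonzero, and then to upgrade that to the nonvanishing of $\text{Av}_{*}^{B_{\lambda}}(\F)$ itself.

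Let $\C' \subseteq \C$ denote the $G$-subcategory generated by $\F$; since $\F \neq 0$, we have $\C' \neq 0$. Applying \cref{Can Check Equivalence on Each field-valued Point for Groups} to the zero functor $0 \to \C'$, whose trivial adjoint is automatically continuous, the fact that this is not an equivalence produces some field-valued point $\lambda \in \LTd(L)$ for which $(\C')^{B, \mathcal{L}_{[\lambda]}} \neq 0$.

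The remaining task is to show that $(\C')^{B, \mathcal{L}_{[\lambda]}} \neq 0$ forces $\text{Av}_{*}^{B_{\lambda}}(\F) \neq 0$. Since $\C'$ is the cyclic $G$-category generated by $\F$, the universality of \cref{UniversalCaseRemark} yields a canonical essentially surjective $G$-equivariant functor $\pi \colon \D(G) \to \C'$ characterized by $\pi(\delta_e) = \F$. The functor $(-)^{B, \mathcal{L}_{[\lambda]}}$ on $G$-categories preserves colimits, since \cref{Inv=Coinv} together with \cref{Twisted Inv=Twisted Coinv} identifies invariants with coinvariants, and the latter is a tensor product and hence cocontinuous. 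Consequently the induced functor
\[
\pi^{B, \mathcal{L}_{[\lambda]}} \colon \D(G)^{B, \mathcal{L}_{[\lambda]}} \longrightarrow (\C')^{B, \mathcal{L}_{[\lambda]}}
\]
is essentially surjective and, by naturality of the averaging construction in the $G$-category, sends the distinguished object $\text{Av}_{*}^{B_{\lambda}}(\delta_e)$ to $\text{Av}_{*}^{B_{\lambda}}(\F)$.

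Finally, I would note that $\pi^{B, \mathcal{L}_{[\lambda]}}$ is a morphism of left module categories over the appropriate twisted Hecke category $\D(G)^{B \times B, \mathcal{L}_{[\lambda]} \boxtimes \mathcal{L}_{[\lambda]}^{-1}}$ (the action arising from the residual left $G$-structure) and that $\D(G)^{B, \mathcal{L}_{[\lambda]}}$ is cyclic over this Hecke category with $\text{Av}_{*}^{B_{\lambda}}(\delta_e)$ as generator, since convolving on the left with the twisted Bruhat-cell sheaves $j_{w,!}$ yields the standard family of generators indexed by the Bruhat decomposition. Combined with essential surjectivity of $\pi^{B, \mathcal{L}_{[\lambda]}}$, this shows that $(\C')^{B, \mathcal{L}_{[\lambda]}}$ is generated under the Hecke action and colimits by $\text{Av}_{*}^{B_{\lambda}}(\F)$; the nonvanishing of the former therefore forces $\text{Av}_{*}^{B_{\lambda}}(\F) \neq 0$. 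The main obstacle I anticipate is the verification of this Hecke-cyclicity of $\D(G)^{B, \mathcal{L}_{[\lambda]}}$ and of the Hecke-equivariance of $\pi^{B, \mathcal{L}_{[\lambda]}}$, both of which ultimately reduce to unwinding the invariants-equals-coinvariants formalism.
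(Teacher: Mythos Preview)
Your initial steps mirror the paper's proof exactly: both pass to the $G$-subcategory $\C'$ generated by $\F$ and invoke \cref{Can Check Equivalence on Each field-valued Point for Groups} to obtain a field-valued $\lambda$ with $(\C')^{B,\mathcal{L}_{[\lambda]}} \neq 0$. The paper then simply observes that the forgetful functor is nonzero, hence so is its right adjoint $\text{Av}_*^{B_\lambda}$ \emph{as a functor}. You correctly perceive that a further argument is needed to pin this down to $\F$ itself, and you propose a Hecke-cyclicity argument.

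That cyclicity claim fails. The $\mathcal{H}_\lambda$-action on $\D(G)^{B,\mathcal{L}_{[\lambda]}} \simeq \D(B_\lambda\backslash G)$ compatible with $\pi^{B,\mathcal{L}_{[\lambda]}}$ is the one coming from functoriality of $(-)^{B,\mathcal{L}_{[\lambda]}}$ on left $\D(G)$-modules, namely left convolution by $\mathcal{H}_\lambda = \D(B_\lambda\backslash G/_\lambda B)$. Under this action the orbit of $\text{Av}_*^{B_\lambda}(\delta_e)$ is only the right-$(B,\lambda)$-monodromic locus, since $h \star \delta_B \simeq \text{oblv}^{B,\lambda}(h)$; this is far from all of $\D(B_\lambda\backslash G)$. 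The residual right $G$-action that would generate the rest is not available on $(\C')^{B,\mathcal{L}_{[\lambda]}}$, because $\C'$ carries only one $G$-action and it has already been consumed.

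In fact no argument can close this gap, because the statement as literally written is false. Take $G = \SL_2$, $\C = \D(G)$ with the left regular action, and $\F = i_{*,dR}(\text{exp})$ where $i \colon N \hookrightarrow G$ and $\text{exp}$ is the exponential $\D$-module on $N \cong \mathbb{A}^1$. Then $\text{Av}_*^N(\F)$ is (up to shift) $(qi)_{*,dR}(\text{exp})$ for $q \colon G \to N\backslash G$; since $qi$ is constant and $H^*_{dR}(\mathbb{A}^1,\text{exp}) = 0$, one gets $\text{Av}_*^N(\F) = 0$ and hence $\text{Av}_*^{B_\lambda}(\F) = 0$ for every field-valued $\lambda$. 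The paper's terse argument establishes only the weaker true assertion that $\text{Av}_*^{B_\lambda}$ is nonzero on \emph{some} object of the $G$-subcategory generated by $\F$; the passage to $\F$ itself is neither carried out there nor possible.
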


\begin{proof}
Consider the map given by including the zero category into the full $G$-subcategory $\D$ generated by $\F$. By \cref{Can Check Equivalence on Each field-valued Point for Groups}, because this functor is not an equivalence, there exists some field-valued $\lambda$ for which $\D^{B, \lambda}$ is nonzero. Therefore, the forgetful functor $\text{oblv}: \D^{B, \lambda} \to \D$ is nonzero and therefore its adjoint is nonzero as well. 
\end{proof}

The following lemma gives a related way to recover a category with an action of some torus from its weak invariants: 

\begin{Lemma}\label{Character Lattice of T Invariants Is C}
If $\C$ is a category with a $T$-action, then $(\C^{T, w})^{\characterlatticeforT} \simeq \C$.
\end{Lemma}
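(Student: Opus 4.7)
The plan is to use the Mellin transform identification $\D(T) \simeq \IndCoh(\LTd/\characterlatticeforT)$ together with the fact that $\LTd \to \LTd/\characterlatticeforT$ is a $\characterlatticeforT$-torsor, so that the passage between $\D(T)$-modules and $\IndCoh(\LTd)$-modules is an instance of descent.

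First, I would identify the weak invariants explicitly. Since $\D(T)$ is rigid monoidal, weak invariants are computed by tensoring: using the Mellin transform and the fact that the weak action is obtained by restricting the strong action along the pullback $\IndCoh(\LTd/\characterlatticeforT) \simeq \D(T) \xrightarrow{p^*} \IndCoh(\LTd)$, we have
\[\C^{T, w} \;\simeq\; \C \otimes_{\IndCoh(\LTd/\characterlatticeforT)} \IndCoh(\LTd).\]
Next, I would observe that $\characterlatticeforT$ acts on $\LTd$ by translations with quotient $\LTd/\characterlatticeforT$, and that this action is discrete in the sense of \cref{Invariants of G Acting on Sheaves on X is Invariants on X Mod G}(1), which therefore supplies the Galois equivalence
\[\IndCoh(\LTd)^{\characterlatticeforT} \;\simeq\; \IndCoh(\LTd/\characterlatticeforT).\]
This same $\characterlatticeforT$-action on $\IndCoh(\LTd)$ (coming from the deck transformations of the torsor $\LTd \to \LTd/\characterlatticeforT$) endows the relative tensor product above with a canonical $\characterlatticeforT$-action, which one checks is precisely the residual action on $\C^{T,w}$.

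Finally, I would commute $\characterlatticeforT$-invariants with the relative tensor product. Since $\characterlatticeforT$ is a discrete group ind-scheme, \cref{Inv=Coinv for Group Ind-Schemes Which Are Discrete Sets of Points} gives that its invariants and coinvariants agree; coinvariants are a colimit (a geometric realization), and colimits commute with the relative tensor product $- \otimes_{\IndCoh(\LTd/\characterlatticeforT)} -$. Because $\characterlatticeforT$ acts only on the right-hand factor, we obtain
\[(\C^{T, w})^{\characterlatticeforT} \;\simeq\; \C \otimes_{\IndCoh(\LTd/\characterlatticeforT)} \IndCoh(\LTd)^{\characterlatticeforT} \;\simeq\; \C \otimes_{\IndCoh(\LTd/\characterlatticeforT)} \IndCoh(\LTd/\characterlatticeforT) \;\simeq\; \C.\]

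The main subtlety I would need to be careful about is the identification of the natural $\characterlatticeforT$-action on the tensor product with the ambient $\characterlatticeforT$-action on $\C^{T,w}$ (and that this identification is compatible with the $\IndCoh(\LTd)$-module structure). Once that bookkeeping is settled, the rest is a formal descent argument packaged into the two propositions cited above.
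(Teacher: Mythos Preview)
Your proposal is correct and takes essentially the same approach as the paper. Both arguments express $\C^{T,w}$ as a relative tensor product with $\IndCoh(\LTd)$, commute $\characterlatticeforT$-invariants through the tensor using the invariants--coinvariants equivalence for the discrete group $\characterlatticeforT$, and then apply $\IndCoh(\LTd)^{\characterlatticeforT} \simeq \IndCoh(\LTd/\characterlatticeforT) \simeq \D(T)$; the only cosmetic difference is that the paper keeps the tensor over $\D(T)$ throughout (writing $\C^{T,w} \simeq \IndCoh(\LTd) \otimes_T \C$ from $\C \simeq \D(T) \otimes_T \C$) and invokes Mellin at the very end, whereas you transport through Mellin at the start and tensor over $\IndCoh(\LTd/\characterlatticeforT)$.
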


\begin{proof}
Note that there is a $T$-equivariant equivalence $\C \simeq \D(T) \otimes_{\D(T)} \C$. Taking the $(T, w)$-invariance, we see $\C^{T, w} \simeq \IndCoh(\LTd) \otimes_{\D(T)} \C$. Finally, taking $\characterlatticeforT$-invariants on both sides, we see that 
\raggedbottom
\[(\C^{T, w})^{\characterlatticeforT} \simeq (\IndCoh(\LTd) \otimes_{\D(T)} \C )^{\characterlatticeforT} \xleftarrow{\sim} \IndCoh(\LTd)^{\characterlatticeforT} \otimes_{\D(T)} \C \] using the fact that $\D(T)$-coinvariants equals $\D(T)$-invariants and thus commutes with limits. Therefore, since \[\IndCoh(\LTd)^{\characterlatticeforT} \otimes_{\D(T)} \C \xleftarrow{\sim} \IndCoh(\LTd/\characterlatticeforT) \otimes_{\D(T)} \C \simeq \D(T) \otimes_{\D(T)} \C \simeq \C\] by \cref{Invariants of G Acting on Sheaves on X is Invariants on X Mod G} and the Mellin transform, our claim follows.
\end{proof}

\subsubsection{Rigidity and Semi-Rigidity of Categories Related to $\D(G)$}
We recall that, as defined in \cite{BenZviNadlerCharacterTheoryofComplexGroup}, a monoidal category is \textit{semi-rigid} if it admits a set of compact generators which are dualizable on both the right and left. We also recall that a compactly generated monoidal category is \textit{rigid} in the sense of \cite[Chapter 1, Section 9]{GaRoI} if and only if it is semi-rigid and the monoidal unit is compact, see \cite[Proposition 3.3]{BenZviNadlerCharacterTheoryofComplexGroup}. In this section, we survey and prove results regarding rigid and semi-rigid monoidality of categories related to $\D(G)$, such as those in \cref{BZGO}. We first record the following more general result of Gaitsgory:

\begin{Theorem}(\cite[Lemma D.4.4]{GaiWhit})\label{Adjoint Functor is Automatically G Equivariant}
Let $\mathscr{G}$ denote any placid group ind-scheme. Then any datum of a lax or oplax equivariance on a functor of $\D(\mathscr{G})$-module categories is automatically strict. In particular, any adjoint of a $\D(\mathscr{G})$-equivariant functor acquires a canonical datum of $\D(\mathscr{G})$-equivariance. 
\end{Theorem}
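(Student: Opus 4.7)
The plan is to reduce the assertion to the general categorical principle that a lax (or oplax) module-linear structure on a functor between module categories over a monoidal category generated by invertible objects is automatically strict; the adjoint statement then follows by the standard Beck--Chevalley mate construction.

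First I would establish that $\D(\mathscr{G})$, under convolution, is generated under colimits and the bimodule structure by objects which are invertible under convolution. For an affine algebraic group this is classical: the delta $\D$-modules $\delta_g$ at $k$-points (more carefully, their de Rham pushforwards) are invertible under convolution with inverse $\delta_{g^{-1}}$, and convolution with $\delta_g$ is base-changed from the automorphism of $\mathscr{G}$ given by left translation, hence an equivalence. For a placid group ind-scheme, translation by any field-valued point still provides an automorphism of $\mathscr{G}$, giving rise to an invertible object of $\D(\mathscr{G})$ whose action on $\D(\mathscr{G})$ is an equivalence; one checks using the placidity hypothesis (writing $\mathscr{G}$ as a suitable limit of smooth schemes along smooth affine structure maps) that the category $\D(\mathscr{G})$ is well-defined and is generated under colimits by these translation delta sheaves.

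Second, I would use the following general categorical fact. If $\mathcal{A}$ is a monoidal category generated under colimits by a collection $\{X_\alpha\}$ of objects each invertible under the tensor product, and $F \colon M \to N$ is a continuous functor of $\mathcal{A}$-module categories equipped with a lax $\mathcal{A}$-linear structure $\lambda_X \colon X \otimes F(-) \to F(X \otimes -)$, then $\lambda_{X_\alpha}$ is an isomorphism for each $\alpha$: composing with $\lambda_{X_\alpha^{-1}}$ and using $X_\alpha^{-1} \otimes X_\alpha \simeq \mathbf{1}$ together with the hexagon coherence axioms shows that $\lambda_{X_\alpha}$ admits a two-sided inverse. Since both functors $X \otimes F(-)$ and $F(X \otimes -)$ are continuous in $X$ and the invertible objects generate under colimits, $\lambda_X$ is then an isomorphism for all $X \in \mathcal{A}$. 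The oplax case is formally dual.

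Third, for the adjoint statement: given a $\D(\mathscr{G})$-linear functor $F \colon M \to N$ with continuous right adjoint $R$, the Beck--Chevalley mate construction produces a canonical oplax $\D(\mathscr{G})$-linearity on $R$, where the structure map $R(X \star n) \to X \star R(n)$ is the mate of the linearity isomorphism $X \star F(-) \simeq F(X \star -)$ under the $(F, R)$-adjunction. By the principle above, this oplax structure is automatically strict, so $R$ inherits genuine $\D(\mathscr{G})$-equivariance; the continuous-left-adjoint case is entirely symmetric using a lax mate. The main obstacle I anticipate is the first step: making precise that $\D(\mathscr{G})$ for a placid group ind-scheme really is compactly (or at least densely) generated by convolution-invertible objects, since this requires setting up $\D$-modules on placid ind-schemes carefully and checking that translation by field-valued points yields a sufficient supply of invertibles; once this geometric input is in place, the remainder is a formal 2-categorical manipulation.
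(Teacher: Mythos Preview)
The paper does not supply its own proof; the result is quoted from \cite[Lemma D.4.4]{GaiWhit}. Your second and third steps are correct and standard: the hexagon axiom does force a lax module structure to be strict on invertible objects, this propagates along colimits when both sides are continuous in the acting variable, and the mate construction is exactly how an adjoint acquires its (op)lax structure.

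You correctly flag the first step as the crux, but it is not merely a technical matter of setup: the generation claim is \emph{false} in the generality of the paper, which works over an arbitrary characteristic-zero field $k$ (not assumed algebraically closed). Take $\mathscr{G}=\mathbb{G}_m$ over $k=\mathbb{Q}$. The skyscraper $\D$-module at the closed point $\{t^2+1=0\}$ is nonzero but has vanishing $!$-fiber at every $\mathbb{Q}$-point, so the $\delta_q$ for $q\in\mathbb{Q}^\times$ do not generate $\D(\mathbb{G}_{m,\mathbb{Q}})$; and that skyscraper is itself not invertible under convolution, since $\mathbb{Q}(i)\otimes_\mathbb{Q}\mathbb{Q}(i)\cong\mathbb{Q}(i)\times\mathbb{Q}(i)$ forces its self-convolution to be supported on two closed points. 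Passing to $g\in\mathscr{G}(L)$ for $L\supsetneq k$ does not repair this: translation by such $g$ is an automorphism of $\mathscr{G}_L$, yielding an invertible object of $\D(\mathscr{G}_L)$ rather than of $\D(\mathscr{G})$. Gaitsgory's argument avoids invertible generators altogether; the mechanism is that for placid $\mathscr{G}$ the convolution $m_*$ and co-convolution $m^!$ are continuous adjoints satisfying base change, so the module and comodule descriptions of a $\D(\mathscr{G})$-action interchange, and strictness is extracted from this adjunction at the level of the monoidal structure rather than from object-level invertibility.
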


Furthermore, we have the following theorem of Beraldo:

\begin{Theorem}\label{Harish-Chandra Category is Rigid Monoidal}(\cite[Proposition 2.3.11]{BeraldoLoopGroupActionsonCategoriesandWhittakerInvariants})
For any affine algebraic group $H$ of finite type, the Harish-Chandra category $\D(H)^{H \times H, w}$ is rigid monoidal.
\end{Theorem}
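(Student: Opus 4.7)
The plan is to verify the three conditions characterizing rigid monoidal DG categories recalled in this subsection: compact generation, compactness of the monoidal unit under convolution, and two-sided dualizability of every compact object.

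First, I would establish compact generation of $\D(H)^{H \times H, w}$ together with compactness of the convolution unit. By \cref{Inv=Coinv}, the forgetful functor $\text{oblv}: \D(H)^{H \times H, w} \to \D(H)$ admits a continuous right adjoint $\text{Av}_*^{H \times H, w}$, and the invariants-equals-coinvariants identification produces in addition a continuous left adjoint to $\text{oblv}$. Since $\D(H)$ is compactly generated and the forgetful functor is conservative, applying this left adjoint to a set of compact generators of $\D(H)$ produces a compact generating set of $\D(H)^{H \times H, w}$. The monoidal unit for convolution is the weak averaging of the delta sheaf at the identity; its compactness amounts to boundedness of the de Rham cohomology of $H$, which holds since $H$ is a finite-type affine algebraic group.

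Second, I would construct explicit two-sided duals for each compact object. On $\D(H)$ itself, the convolution monoidal structure is rigid on its subcategory of compact objects, with the dual of a compact $\F$ given by $\text{inv}^! \mathbb{D}(\F)$, where $\mathbb{D}$ denotes Verdier duality and $\text{inv}: H \to H$ is group inversion. The crucial observation is that $\text{inv}$ intertwines the left and right regular actions of $H$ on itself, so the endofunctor $\text{inv}^! \mathbb{D}$ exchanges the two weak $H$-actions and hence descends to an anti-involution on $\D(H)^{H \times H, w}$ which preserves compact objects (using smoothness of $H$). The evaluation and coevaluation morphisms for the rigid structure on the compact part of $\D(H)$ are themselves $(H \times H)$-weakly equivariant for the appropriate diagonal actions, so after averaging they lift to the required duality data inside $\D(H)^{H \times H, w}$.

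The main obstacle I anticipate is the compatibility check at the end: one must verify that forming weak invariants commutes with the convolution tensor product in a sufficiently functorial way that the proposed evaluation and coevaluation morphisms continue to satisfy the triangle identities after descending to the Harish-Chandra category. This reduces to a careful analysis of the interaction between $\text{Av}_*^{H \times H, w}$ and convolution, which in turn rests on base-change properties of the averaging functor together with the smoothness of $H$.
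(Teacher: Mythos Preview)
The paper does not prove this statement itself; it is cited as Beraldo's result. What the paper does record is the mechanism of Beraldo's argument: the rigid case of \cref{Formal Lemma on Semi-Rigidity}, stated immediately afterward, shows that rigidity transfers along any monoidal functor with a continuous conservative right adjoint. Beraldo applies this to a monoidal functor into $\D(H)^{H\times H,w}$ from a category whose rigidity is already known, so that compact generation, compactness of the unit, and dualizability of the generators are all inherited in one stroke without ever writing down a dual explicitly.

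Your approach is genuinely different: you attempt to verify rigidity directly by producing $\text{inv}^!\mathbb{D}(\F)$ as the dual and descending the (co)evaluation maps from $\D(H)$. There is a structural point that makes this harder than you indicate. The convolution on $\D(H)^{H\times H,w}$ is a \emph{relative} convolution (over the middle weak $H$-action), not simply the restriction of convolution on $\D(H)$, and the forgetful functor to $\D(H)$ is not monoidal in any straightforward sense. So even once you know the (co)evaluation maps in $\D(H)$ are equivariant, they are maps between the wrong tensor products, and ``averaging'' does not obviously convert them into duality data for the relative convolution. This is the real content of the obstacle you flag at the end, and it is precisely what Beraldo's transfer argument avoids: by pushing dualizability forward along a monoidal functor, one never has to compare the two convolution products directly.
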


The \lq rigid\rq{} version of the following observation was used in \cite{BeraldoLoopGroupActionsonCategoriesandWhittakerInvariants} to prove \cref{Harish-Chandra Category is Rigid Monoidal}; we record it and the semi-rigid variant for future use: 

\newcommand{\Amathcal}{\mathcal{A}}
\newcommand{\Bmathcal}{\mathcal{B}}
\begin{Lemma}\label{Formal Lemma on Semi-Rigidity}
Assume that $\Amathcal$ is a compactly generated, monoidal category, $\Bmathcal$ is a monoidal category, and $F: \mathcal{A} \to \mathcal{B}$ is a monoidal functor of monoidal categories which admits a continuous, conservative right adjoint. Then if $\mathcal{A}$ is semi-rigid (respectively, rigid), then so too is $\mathcal{B}$.
\end{Lemma}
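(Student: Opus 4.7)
The plan is to transport compact dualizable generators from $\Amathcal$ to $\Bmathcal$ along $F$, using the standard yoga of adjunctions plus the fact that monoidal functors preserve duals.

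First I would note that because $F$ admits a continuous right adjoint $G$, the functor $F$ preserves compact objects: this is the familiar fact that a left adjoint preserves compacts iff the right adjoint preserves filtered colimits. Thus if $\{A_i\}_{i \in I}$ is a set of compact generators of $\Amathcal$ which are dualizable on both the left and the right (which exists by semi-rigidity of $\Amathcal$), each $F(A_i)$ is compact in $\Bmathcal$.

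Next I would check that $\{F(A_i)\}_{i \in I}$ generates $\Bmathcal$. Given $B \in \Bmathcal$ with $\uHom_{\Bmathcal}(F(A_i), B) \simeq 0$ for all $i$, the adjunction gives $\uHom_{\Amathcal}(A_i, G(B)) \simeq 0$ for all $i$; since the $A_i$ generate $\Amathcal$, this forces $G(B) \simeq 0$, and then conservativity of $G$ gives $B \simeq 0$. I would then observe that since $F$ is monoidal, it preserves duals: a left dual $A_i^\vee$ of $A_i$ in $\Amathcal$ is sent to a left dual $F(A_i^\vee)$ of $F(A_i)$ in $\Bmathcal$, and similarly for right duals. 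So $\{F(A_i)\}$ is a set of compact generators of $\Bmathcal$ that are dualizable on both sides, proving semi-rigidity.

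For the \emph{rigid} version, I would invoke the characterization recalled just before the lemma statement: a compactly generated monoidal category is rigid iff it is semi-rigid and the unit is compact. If $\Amathcal$ is rigid then $1_{\Amathcal}$ is compact; since $F$ is monoidal, $F(1_{\Amathcal}) \simeq 1_{\Bmathcal}$, and since $F$ preserves compactness, $1_{\Bmathcal}$ is compact. Combined with the semi-rigidity proved above, this gives rigidity of $\Bmathcal$.

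I do not expect any real obstacle: the argument is essentially formal, and the only small subtlety is keeping track of the one-sided dualizability (handled simply by applying the monoidal functor $F$ to the evaluation and coevaluation morphisms on each side separately).
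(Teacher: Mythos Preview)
Your proposal is correct and follows essentially the same approach as the paper: transport the compact dualizable generators via $F$, use continuity of the right adjoint to get compactness and conservativity to get generation, use monoidality of $F$ to preserve duals and the unit, and invoke the characterization of rigidity as semi-rigidity plus compact unit. The paper spells out the preservation of duals slightly more explicitly by writing down the transported coevaluation map, but this is exactly the content of your remark that monoidal functors preserve left and right duals.
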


\begin{proof}
Assume $\Amathcal$ is semi-rigid. Then, by definition of semi-rigidity, $\Amathcal$ admits a set of compact generators which are both left and right dualizable, say by the collection of objects $A_{\alpha} \in \Amathcal$. Because $F$ is a left adjoint with continuous right adjoint, the objects $F(A_{\alpha})$ are compact in $\Bmathcal$, and because the right adjoint is conservative, this collection generates. In particular, $\Bmathcal$ is compactly generated. 

Since functors with continuous right adjoints preserve compact objects and monoidal functors preserve the monoidal unit $\mathbf{1}_{\Amathcal}$, if the monoidal unit of $\mathcal{A}$ is compact, then so too is the monoidal unit $\mathbf{1}_{\Bmathcal} \simeq F(\mathbf{1}_{\Amathcal})$ of $\Bmathcal$. Since a compactly generated monoidal category is rigid if and only if it is semi-rigid and has a compact monoidal unit (\cite[Proposition 3.3]{BenZviNadlerCharacterTheoryofComplexGroup}), it remains to show the semi-rigid version of this claim.

We have seen that the objects $F(A_{\alpha})$ are compact generators of $\Bmathcal$. Let $A^L_{\alpha}$ denote the left dual to $A_{\alpha}$. We claim $F(A^L_{\alpha})$ is the left dual to $F(A_{\alpha})$. To see this, note that, by definition of dualizability, there exists a coevaluation map $\mathbf{1}_{\Amathcal} \xrightarrow{u} A^L_{\alpha} \star_{\Amathcal} A_{\alpha}$ and a counit map $A^L_{\alpha} \star_{\Amathcal} A_{\alpha} \to \mathbf{1}_{\Amathcal}$ such that the compositions are equivalent to their respective identity maps (see \cite[Definition 2.4]{BenZviNadlerCharacterTheoryofComplexGroup}). Using the monoidality of $F$, one can check, for example, that the map 
\raggedbottom
\[\mathbf{1}_{\Bmathcal} \simeq F(\mathbf{1}_{\Amathcal}) \xrightarrow{F(u)} F(A^L_{\alpha} \star_{\Amathcal} A_{\alpha}) \simeq F(A^L_{\alpha}) \star_{\Bmathcal} F(A_{\alpha})\]

\noindent where the last equivalence uses the monoidality of $F$, gives the coevaluation map of the duality datum. An identical proof (with the roles of left and right reversed) shows that $F(A_{\alpha})$ is also right dualizable, and so $F(A_{\alpha})$ form a set of compact generators of $\Bmathcal$ which are both left and right dualizable, and so $\Bmathcal$ is semi-rigid monoidal. 
\end{proof}

We now prove a related result: 

\begin{Proposition}\label{Tw Hecke Category is Rigid Monoidal and Compactly Generated}
The category $\HNTw$ is rigid monoidal and compactly generated.
\end{Proposition}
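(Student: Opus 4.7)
The plan is to deduce the result from \cref{Harish-Chandra Category is Rigid Monoidal} via \cref{Formal Lemma on Semi-Rigidity}. First, I would identify $\HNTw$ with the weak $(B \times B)$-invariant category $\D(G)^{B \times B, w}$, where $B \times B$ acts on $G$ by left and right translation. This identification uses the semidirect product decomposition $B = T \ltimes N$ together with iterated application of \cref{Inv=Coinv}: weak $B$-invariance on each side factors as strong $N$-invariance followed by weak $T$-invariance through the quotient $B \twoheadrightarrow T$. The resulting identification is compatible with the convolution monoidal structures on both sides.

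Next, I would apply \cref{Harish-Chandra Category is Rigid Monoidal} with $H = B$ to conclude that $\D(B)^{B \times B, w}$ is rigid monoidal. The closed embedding $i \colon B \hookrightarrow G$ is $(B \times B)$-equivariant, and since $B$ is a subgroup (so $B \cdot B = B$ inside $G$), the pushforward $i_*$ intertwines convolution; passing to weak $(B \times B)$-invariants produces a monoidal functor $F \colon \D(B)^{B \times B, w} \to \HNTw$. However, this single functor does not satisfy the hypothesis of \cref{Formal Lemma on Semi-Rigidity}: its right adjoint $i^!$ is continuous but not conservative, because $B$ is not dense in $G$ and there exist nonzero objects of $\HNTw$ supported on other Bruhat cells. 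To fix this, I would replace $F$ by a single monoidal functor out of $\bigoplus_{w \in W} \D(BwB)^{B \times B, w}$ indexed by the Bruhat decomposition $G = \bigsqcup_{w \in W} BwB$, equipping the source with a length-filtered monoidal structure whose associated graded is this direct sum. Each summand is rigid monoidal by a direct variant of Beraldo's argument, using that each Bruhat cell is a $(B \times B)$-homogeneous space, so the source is rigid monoidal, and the sum of the $!$-restrictions is now conservative by the Bruhat decomposition.

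Compact generation can be obtained either as a byproduct of rigidity and the compactness of the monoidal unit, or directly from the Bruhat stratification: the compact generators of $\D(T)^{T \times T, w} \simeq \IndCoh(\LTd \times \LTd)$ lift through each of the finitely many Bruhat strata to furnish a set of compact generators of $\HNTw$. The main obstacle is verifying that the filtered monoidal structure on the stratified source exists and maps monoidally to $\HNTw$: convolution of sheaves on two Bruhat cells lands in a union of cells controlled by Bruhat multiplication, so one must interpret the direct sum as the associated graded of a length-filtered monoidal category whose underlying category coincides with $\HNTw$, and then check that the rigidity data assembled cell-by-cell survives passage to the associated graded.
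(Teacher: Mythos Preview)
Your strategy has a genuine gap in the step where you try to repair the non-conservativity of $i^!$. The direct sum $\bigoplus_{w \in W} \D(BwB)^{B \times B, w}$ is not a monoidal category: for $w \neq e$ the cell $BwB$ is not closed under multiplication, so convolution does not preserve the summands and the individual pieces carry no monoidal structure to which a ``variant of Beraldo's argument'' could apply. Your proposed fix via a length filtration does not help either: \cref{Formal Lemma on Semi-Rigidity} requires an honest monoidal functor $F \colon \mathcal{A} \to \mathcal{B}$ from a semi-rigid source with a continuous conservative right adjoint. A filtration on $\mathcal{B}$ whose associated graded splits as a direct sum is not the same data, and there is no mechanism by which rigidity of the graded pieces (even if that made sense) would transport to $\mathcal{B}$. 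The final paragraph of your proposal essentially acknowledges this obstacle without resolving it.

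The paper avoids the problem by choosing a different source category. Rather than working Bruhat-cell-by-cell inside $\D$-modules, it uses the single monoidal functor $q_*^{\IndCoh} \colon \IndCoh(B \backslash G / B) \to \HNTw$ arising from induction along the de Rham map $B \backslash G / B \to N_{dR}T \backslash G_{dR} / N_{dR}T$. The source is semi-rigid by the Ben-Zvi--Nadler result (\cref{IndCoh on Double Hecke Quotient is Semi-Rigid}), and the right adjoint $q^!$ is conservative because it amounts to forgetting the $\D$-module structure down to the underlying ind-coherent sheaf together with forgetting equivariance. This puts \cref{Formal Lemma on Semi-Rigidity} directly in play and yields semi-rigidity and compact generation in one stroke. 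Compactness of the monoidal unit is then handled separately, exactly as you suggest, via pushforward from the rigid category $\D(T)^{T \times T, w}$.
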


We prove this after first recalling the following result of Ben-Zvi and Nadler:

\begin{Theorem} (\cite[Theorem 6.2]{BenZviNadlerCharacterTheoryofComplexGroup})\label{IndCoh on Double Hecke Quotient is Semi-Rigid}
The category $\IndCoh(B\backslash G/B)$ is semi-rigid. 
\end{Theorem}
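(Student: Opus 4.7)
The plan is to exhibit an explicit set of compact generators of $\IndCoh(B\backslash G/B)$ which are each left and right dualizable under the convolution monoidal structure; semi-rigidity then follows by definition. The Bruhat decomposition $G = \bigsqcup_{w \in W} BwB$ stratifies the smooth Artin stack $B\backslash G/B$ by the Schubert strata $X_w := B\backslash BwB/B$, each of which is isomorphic to the classifying stack $\text{pt}/L_w$ of the connected solvable algebraic group $L_w := B \cap wBw^{-1}$. In particular, $\IndCoh(X_w)$ is compactly generated by the dualizing sheaf $\omega_{X_w}$.

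Writing $j_w: X_w \hookrightarrow B\backslash G/B$ for the locally closed embedding, I would next introduce the standard and costandard objects $J_w^! := (j_w)_!^{\IndCoh}(\omega_{X_w})$ and $J_w^* := (j_w)_*^{\IndCoh}(\omega_{X_w})$. Compactness of each follows from the fact that $j_w$ is of finite presentation and that $\omega_{X_w}$ is itself compact in $\IndCoh(X_w)$. Generation of $\IndCoh(B\backslash G/B)$ under colimits by the family $\{J_w^!\}_{w \in W}$ (and likewise by $\{J_w^*\}_{w \in W}$) follows by induction along the Bruhat order from the excision cofiber sequences attached to closed-open decompositions of the Schubert stratification.

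The main step is then to prove left and right dualizability of each standard and costandard object under convolution. To this end I would use the inversion automorphism $\iota: G \to G$, $g \mapsto g^{-1}$, which descends to an anti-involution of $B\backslash G/B$ swapping $X_w$ with $X_{w^{-1}}$ and interchanging the left and right convolution actions, thereby reducing left dualizability to right dualizability. I would then exhibit $(J_w^!, J_{w^{-1}}^*)$ as a left-right dual pair by explicitly computing the convolution $J_w^! \star J_{w^{-1}}^*$ via base change along the fiber square defining the multiplication morphism $B\backslash G \times^B G/B \to B\backslash G/B$, and identifying the result with the monoidal unit $\delta_e := (i_e)_*^{\IndCoh}(\omega_{X_e})$, where $i_e: X_e \hookrightarrow B\backslash G/B$ is the inclusion of the identity stratum. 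This produces a coevaluation map, and a dual computation on the other factor yields the corresponding evaluation map.

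The main obstacle will be the convolution computation and the verification of the zig-zag identities for the duality data. This requires careful use of the IndCoh base change formalism, since the multiplication morphism restricted to the relevant pair of Schubert strata is neither proper nor smooth in general, and only within $\IndCoh$ do the adjunctions $(j_w)_!^{\IndCoh} \dashv j_w^{!} \dashv (j_w)_*^{\IndCoh}$ all exist simultaneously to support the required manipulations. A secondary subtlety is that the monoidal unit $\delta_e$ need not itself be compact (since $\IndCoh(\text{pt}/B)$ lacks a compact unit for non-reductive $B$), which explains why only semi-rigidity and not full rigidity can be expected.
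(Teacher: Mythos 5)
Your approach is genuinely different from the paper's. The paper simply cites Ben-Zvi--Nadler's argument, which deduces semi-rigidity of $\IndCoh(BB \times_{BG} BB)$ from the properness of $p: BB \to BG$ and of the diagonal, the rigidity of $\IndCoh(BB) \simeq \text{Rep}(B)$, and Serre duality, without invoking the Bruhat decomposition at all. You instead try to exhibit explicit (co)standard compact generators, one per Schubert stratum, mimicking the machinery that works for the $\D$-module Hecke category.

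There is a fundamental gap at the very first step: it is false that $\omega_{X_w}$ compactly generates $\IndCoh(X_w)$. You correctly identify $X_w \cong \text{pt}/L_w$ with $L_w = B \cap wBw^{-1}$ connected solvable, but since $\text{pt}/L_w$ is smooth we have $\IndCoh(X_w) \simeq \QCoh(\text{pt}/L_w) \simeq \text{Rep}(L_w)$, and a single one-dimensional representation (which is what $\omega_{X_w}$ is, up to shift) never generates $\text{Rep}(L_w)$. For instance when $w = w_0$ we have $L_{w_0} = T$, and $\text{Rep}(T) \simeq \text{Vect}^{\characterlatticeforT}$ decomposes as a direct sum over the character lattice, so a single character only hits one summand. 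Consequently the finitely many objects $\{J_w^!\}_{w \in W}$ cannot generate $\IndCoh(B\backslash G/B)$ under colimits, and the rest of your argument, built entirely around dualizing precisely these objects, has nothing to stand on. This is exactly where $\IndCoh(B\backslash G/B)$ diverges from the $\D$-module Hecke category $\D(B\backslash G/B)$ your proposal is modeled on: there $\D(\text{pt}/L_w) \simeq \text{Vect}$ because $L_w$ is connected, so one compact generator per stratum really does suffice, whereas $\IndCoh$ records the full representation theory of the stabilizers. A repair along your lines would require a generating family indexed by $W \times \characterlatticeforT$ (pushing forward all characters of $T$ on each cell), with duals pairing $(w, \lambda)$ against $(w^{-1}, -\lambda)$ up to a $\rho$-shift; that is substantially more work than you sketch and is presumably why the paper opts for the Ben-Zvi--Nadler route. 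A separate, smaller error: your remark that $\IndCoh(\text{pt}/B)$ lacks a compact unit for non-reductive $B$ is mistaken, since $\text{Rep}(B)$ is rigid for every algebraic group $B$.
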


\begin{Remark}
In \cite{BenZviNadlerCharacterTheoryofComplexGroup}, \cref{IndCoh on Double Hecke Quotient is Semi-Rigid} is originally stated for the category of (right) $\D$-modules $\D(B\backslash G/B)$. However, the entire proof of this theorem goes through for the category $\IndCoh(B \backslash G/B) \simeq \IndCoh(BB \times_{BG} BB)$, with the standard modifications that the Verdier duality functor $\mathbb{D}$ is replaced with the Serre duality functor on $\IndCoh(B\backslash G/B)$, defined in \cite[Section 4.4]{DrinfeldGaitsgoryOnSomeFinitenessQuestionsforAlgebraicStacks}, and the constant sheaf for $\D$-modules is replaced with the constant sheaf for $\IndCoh$.

In particular, note that the same proper map $p: BB \to BG$ and diagonal map $BB \to BB \times BB$ are used in both the $\D$-module and $\IndCoh$ cases. The analogue of the semi-rigid monoidality of $\D(X) = \D(BB)$ in \cite[Section 6]{BenZviNadlerCharacterTheoryofComplexGroup} is replaced with the \textit{rigid} monoidality of the compactly generated category $\IndCoh(BB) \simeq \text{Rep}(B)$. 
\end{Remark}

\begin{proof}[Proof of \cref{Tw Hecke Category is Rigid Monoidal and Compactly Generated}]
\noindent Let $H := N_{dR}T$, so that $H_{dR} = B_{dR}$. Recall that pushforward along
\begin{equation}\label{ind map}\IndCoh(N\backslash G/N) \xrightarrow{} \D(N \backslash G/N)\end{equation} given by the IndCoh pushforward along the natural quotient map $N\backslash G/N \to (N\backslash G/N)_{dR} \simeq N_{dR}\backslash G_{dR}/G_{dR}$ admits a continuous right adjoint, see \cite{GaiRozCrystals}. Letting $q: B\backslash G/B \to H \backslash G_{dR}/H$, we see that, under the identifications of \cref{Invariants of G Acting on Sheaves on X is Invariants on X Mod G} for the group $T \times T$, we may identify $q_*^{\IndCoh}$ with the weak $(T \times T)$-invariants of the functor \labelcref{ind map}. We therefore see that the functor $q_*^{\IndCoh}$ admits a continuous right adjoint. 

We also note that $q_*^{\IndCoh}$ is monoidal, because, for example, it is equivalently the functor given by pushforward by the Cartesian product of the quotient maps $BB \times_{BG} BB \to BH \times_{BG_{dR}} BH$. Moreover the associated right adjoint $q^!$ is conservative--for example, pulling back by the cover $s: G/B \to B\backslash G/B$, we can identify $s^!q^!$ with the composite of two forgetful functors, one which forgets the $\D$-module structure to the $\IndCoh$ module structure and one which forgets the $N, (T,w)$-equivariance. Therefore, $q_*^{\IndCoh}$ falls into the setup of \cref{Formal Lemma on Semi-Rigidity}, and furthermore we have seen that $\IndCoh(B\backslash G/B)$ is semi-rigid by \cref{IndCoh on Double Hecke Quotient is Semi-Rigid}. Therefore, using \cref{Formal Lemma on Semi-Rigidity}, we see $\HNTw$ is semi-rigid.

Because $\HNTw$ is semi-rigid, it is in particular compactly generated. A compactly generated monoidal category is rigid if and only if it is semi-rigid and the monoidal unit is compact (\cite[Proposition 3.3]{BenZviNadlerCharacterTheoryofComplexGroup}), so it remains to verify that the monoidal unit is compact. Note that we have a functor $\D(T)^{T \times T, w} \xhookrightarrow{i_*^{\IndCoh}} \HNTw$ induced by the composite $T \cong N\backslash B/N \xhookrightarrow{i} N\backslash G/N$ which is monoidal. Furthermore, it admits a continuous right adjoint--the fact that $i^!: \D(N \backslash G/N) \to \D(T)$ is $T \times T$-linear follows from \cref{Adjoint Functor is Automatically G Equivariant}. Therefore, the monoidal unit of $\HNTw$ is mapped to by a compact object in $\D(T)^{T \times T, w}$ (namely, the monoidal unit, which is compact by \cref{Harish-Chandra Category is Rigid Monoidal}) via a functor admitting a continuous right adjoint, and thus is compact. 
\end{proof}

Finally, we prove a result analogous to \cref{Adjoint Functor is Automatically G Equivariant}:

\begin{Corollary}\label{Continuous Right Adjoint on N Invariants Implies HN Linear} Assume $F: \C \to \D$ is a map of $G$-categories and denote by $F^N: \C^N \to \D^N$ the associated functor on the $N$-invariant subcategory. Assume $F^N$ admits a continuous right adjoint (respectively, left adjoint). Then $F$ itself admits a continuous, $G$-equivariant right adjoint $R: \D \to \C$ (respectively, $G$-equivariant left adjoint). In particular, the adjoint of $F^N$ is $\mathcal{H}_N$-linear.
\end{Corollary}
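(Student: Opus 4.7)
The strategy is to reduce to a statement about $\mathcal{H}_N$-linear functors via Theorem~\ref{BZGO} and then to promote the continuous right adjoint $R^N$ to an $\mathcal{H}_N$-linear functor. By Theorem~\ref{BZGO}, there are canonical equivalences $\C \simeq \D(G/N) \otimes_{\mathcal{H}_N} \C^N$ and $\D \simeq \D(G/N) \otimes_{\mathcal{H}_N} \D^N$ of $G$-categories, under which $F \simeq \text{id}_{\D(G/N)} \otimes_{\mathcal{H}_N} F^N$. Consequently, producing a continuous $G$-equivariant right adjoint to $F$ is equivalent to producing a continuous $\mathcal{H}_N$-linear right adjoint to $F^N$, from which one would recover $R := \text{id}_{\D(G/N)} \otimes_{\mathcal{H}_N} R^N$.

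The central step is to show that the continuous right adjoint $R^N$ furnished by the hypothesis is automatically $\mathcal{H}_N$-linear. The $\mathcal{H}_N$-linearity of $F^N$ together with adjunction equips $R^N$ with a canonical lax $\mathcal{H}_N$-linear structure, given by natural transformations $h \star R^N(y) \to R^N(h \star y)$ for $h \in \mathcal{H}_N$ and $y \in \D^N$; the task is to show these are equivalences. This is the precise analogue, for the monoidal category $\mathcal{H}_N$, of the situation handled by Theorem~\ref{Adjoint Functor is Automatically G Equivariant} for $\D(\mathscr{G})$-module categories. I expect the same style of argument to apply once one exhibits a set of compact generators of $\mathcal{H}_N$ that are dualizable on both sides; such generators should be obtainable either by adapting the proof of \cite[Lemma D.4.4]{GaiWhit} directly, or by combining the rigidity of $\HNTw$ (Proposition~\ref{Tw Hecke Category is Rigid Monoidal and Compactly Generated}) with the relationship between the twisted and untwisted Hecke categories.

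Once $R^N$ is known to be $\mathcal{H}_N$-linear, the tensored functor $R := \text{id}_{\D(G/N)} \otimes_{\mathcal{H}_N} R^N$ is continuous and $G$-equivariant, and the triangle identities for the adjunction $(F^N, R^N)$ tensor up to yield triangle identities for $(F, R)$. The ``in particular'' clause of the corollary is then immediate, since $R^N$ is the restriction of the $G$-equivariant functor $R$ to $N$-invariants. The left-adjoint statement follows by the dual argument applied to opposite categories. The principal obstacle is precisely the automatic $\mathcal{H}_N$-linearity of $R^N$: once this input is secured, every subsequent step is a formal consequence of the functoriality of the relative tensor product and the uniqueness of adjoints.
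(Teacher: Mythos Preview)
Your reduction via \cref{BZGO} is sound, and you have correctly isolated the crux: everything hinges on the automatic $\mathcal{H}_N$-linearity of the continuous adjoint $R^N$, which you flag but do not prove. The paper does not establish this property of $\mathcal{H}_N$ either---it circumvents the issue. For the right-adjoint case, the paper first uses that $F^N$ is $T$-equivariant, so by \cref{Adjoint Functor is Automatically G Equivariant} its continuous right adjoint is too; hence $F^{N,(T,w)}$ has a continuous right adjoint. One then forms the square with top row $\text{id}\otimes F^{N,(T,w)}$ on $\D(G/N)^{T,w}\otimes_{\text{Vect}}(-)^{N,(T,w)}$ and with vertical arrows the $\HNTw$-action maps landing in $\C$ and $\D$. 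Rigidity of $\HNTw$ (\cref{Tw Hecke Category is Rigid Monoidal and Compactly Generated}) guarantees these action maps have continuous, conservative right adjoints, and a chase on the adjoint square shows that the right adjoint $R$ to $F$ is continuous. Its $G$-equivariance then follows from \cref{Adjoint Functor is Automatically G Equivariant} applied to $\D(G)$, and $\mathcal{H}_N$-linearity of $R^N$ is a \emph{consequence} rather than an input. So the paper trades a direct claim about $\mathcal{H}_N$ for the already-established rigidity of $\HNTw$.

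For the left-adjoint case the paper does not dualize: it runs a separate argument using the horocycle functor $\text{hc}$ (conservative, with a left adjoint) to show that $F$ preserves limits. Your route would handle both cases uniformly once the $\mathcal{H}_N$-linearity input is secured, which is a genuine advantage; but as written that step is the gap, and the paper's detour through the $(T,w)$-level is precisely what replaces it.
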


\begin{proof}
The induced map $F^N: \C^N \to \D^N$ is necessarily $T$-equivariant, and so by \cref{Adjoint Functor is Automatically G Equivariant}, the adjoint functor is also $T$-equivariant. In particular, applying $(-)^{T,w}$ to this adjoint pair, we see that the functor $F^{N, (T,w)}: \C^{N, (T,w)} \to \D^{N, (T,w)}$ admits a continuous right adjoint, and therefore so too does the functor $\text{id} \otimes F: \D(G/N)^{T,w} \otimes_{\text{Vect}} \C^{N, (T,w)} \to \D(G/N)^{T,w} \otimes_{\text{Vect}} \D^{N, (T,w)}$. Furthermore, by \cref{BZGO}, the following diagram commutes:
\raggedbottom
\begin{equation*}
  \xymatrix@R+2em@C+2em{
  \D(G/N)^{T,w} \otimes_{\text{Vect}} \C^{N, (T,w)} \ar[r]^{\text{id} \otimes F^{N, (T,w)}}\ar[d]^{\text{act}} & \D(G/N)^{T,w} \otimes_{\text{Vect}} \D^{N, (T,w)} \ar[d]^{\text{act}}\\
    \C \ar[r]^{F}  & \D 
  }
 \end{equation*}
 
 \noindent where the vertical arrows are given by the action maps. The category $\HNTw$ is rigid monoidal (\cref{Tw Hecke Category is Rigid Monoidal and Compactly Generated}) and so the action map here admits a continuous right adjoint, see \cite[Chapter 1, Lemma 9.3.3]{GaRoI}. However, the right adjoint to the action map is also conservative. Therefore, we may check whether the right adjoint to $F$ commutes with colimits after applying the right adjoint to the action map on $\C$. However, the diagram given by the right adjoints of the above diagram commutes, and the right adjoints given by the upper and rightmost arrows are both continuous. Therefore, we see that the right adjoint $R$ to $F$ commutes with colimits. Since $F$ is $G$-equivariant, its adjoint is $G$-equivariant by \cref{Adjoint Functor is Automatically G Equivariant}. Therefore, $R^N$ will be $\HN$-linear, as desired. 
  Now assume $F^N$ admits a left adjoint. Recall the \textit{horocycle functor} $\text{hc}$: for a category $\mathcal{C}$ with an action of $G \times G$, it is defined as the composite of the functors \[\C^{\Delta G} \xrightarrow{\mathrm{oblv}}\C^{\Delta B} \xrightarrow{\AvN} \C^{(N \times N)\Delta T}\] where for any group $H$ we use the notation $\Delta H$ for the diagonal copy of $H$ inside $H \times H$. Since $\text{hc}$ is defined as a composite of the forgetful functor and averaging functor, we see that the following diagram commutes:
\raggedbottom
\begin{equation*}
  \xymatrix@R+2em@C+2em{
   \D(G) \otimes_G \C \ar[r]^{F} \ar[d]^{\text{hc}} & \D(G) \otimes_G \D \ar[d]^{\text{hc}} \\
   \D(G/N) \otimes_T \C^N \ar[r]^{\text{id} \otimes F^N} & \D(G/N) \otimes_T \D^N
  }
\end{equation*}

It is a standard result that the functor $\text{hc}$ admits a left adjoint ch, see, for example, \cite[Section 2.6.1]{BZG} for an alternate description of this functor and its left adjoint. Moreover, following \cite[Section 2.6.1]{BZG} and \cite[Section 4.6]{RaskinAffineBBLocalization}, the functor $\text{hc}$ is conservative, since, for example, the composite $\text{hc}^L\circ \text{hc} \simeq \text{ch}\circ \text{hc}$ can be identified with the functor which convolves with the Springer sheaf, a sheaf which has the sheaf $\delta_{1G} \in \D(G/G)$ as a direct summand. Therefore, to see that $F$ commutes with limits, it suffices to show that $\text{hc}F$ commutes with limits, and by the commutativity of the diagram, this follows since $\text{hc}$ and $\text{id} \otimes F^N$ are both right adjoints. 
\end{proof}

\subsubsection{Completeness of $t$-Structures}
We now recall the following definition for later use:

\begin{Definition}\label{Left and right-complete t-structure Definition}
Let $\C$ be a category equipped with a $t$-structure. We say an object $\F \in \C$ is its \textit{right-completion} (respectively, is its \textit{left-completion}) with respect to the $t$-structure if the natural map $\text{colim}_n\tau^{\leq n}\F \to \F$ (respectively, the natural map $\F \to \text{lim}_m\tau^{\geq m}\F)$ is an equivalence. A $t$-structure on a stable or DG category $\C$ is said to be \textit{right-complete} (respectively, \textit{left-complete}) if all of its objects are their right-completions (respectively, their left-completions). 
\end{Definition}

\begin{Lemma}\label{Conservative t Exact Functor to right-complete Implies right-complete and Dual Statement}
Let $\C$ and $\D$ be DG categories equipped with $t$-structures, and let $F: \C \to \D$ be a (continuous) functor between them which is $t$-exact and conservative. 
\begin{enumerate}
    \item If $\D$ is right-complete with respect to its $t$-structure, then $\C$ is also right-complete with respect to its $t$-structure. 
    \item If $\D$ is left-complete with respect to its $t$-structure, then if $F$ commutes with (small) limits, $\C$ is also left-complete with respect to its $t$-structure. 
\end{enumerate}

\end{Lemma}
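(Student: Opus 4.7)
The plan is to leverage the conservativity of $F$ to reduce both statements to checking equivalences after applying $F$, where we can then exploit $t$-exactness together with the (co)continuity hypotheses.

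The preliminary observation I would record is that any $t$-exact functor intertwines with truncation functors: for any $\F \in \C$, one has $F(\tau^{\leq n}\F) \simeq \tau^{\leq n}F(\F)$ and $F(\tau^{\geq m}\F) \simeq \tau^{\geq m}F(\F)$. This follows by applying the exact functor $F$ to the defining cofiber sequence $\tau^{\leq n}\F \to \F \to \tau^{\geq n+1}\F$: the $t$-exactness of $F$ places the outer terms in $\D^{\leq n}$ and $\D^{\geq n+1}$ respectively, and the uniqueness of the truncation decomposition in $\D$ identifies them with the corresponding truncations of $F(\F)$.

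For part (1), I would apply $F$ to the natural morphism $\operatorname{colim}_n \tau^{\leq n}\F \to \F$. Since $F$ is continuous it commutes with the colimit, and combined with the truncation identity this yields the canonical right-completion map $\operatorname{colim}_n \tau^{\leq n}F(\F) \to F(\F)$ in $\D$, which is an equivalence by the assumed right-completeness of $\D$. Conservativity of $F$ then transports the equivalence back to $\C$. Part (2) is entirely analogous: apply $F$ to the morphism $\F \to \lim_m \tau^{\geq m}\F$, use that $F$ commutes with the limit — now an additional hypothesis, since continuity only supplies colimits — obtain the left-completion map for $F(\F)$ in $\D$, and conclude by left-completeness of $\D$ and conservativity of $F$.

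I do not anticipate any genuine obstacle here; the whole proof is formal once the truncation-intertwining identity is in hand. The only mildly subtle point is to notice that $t$-exactness alone already forces this identity, so no assumption on $F$ preserving limits is needed for part (1), whereas in part (2) the preservation of small limits must be imposed explicitly because the left-completion involves a genuine infinite limit that is not automatically preserved by a continuous functor.
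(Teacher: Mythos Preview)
Your proposal is correct and follows essentially the same approach as the paper's proof: apply $F$ to the canonical (co)completion map, commute $F$ past the (co)limit, use $t$-exactness to identify $F(\tau^{\leq n}\F)$ with $\tau^{\leq n}F(\F)$, and conclude by conservativity. The only cosmetic difference is that you isolate the truncation-intertwining identity as a preliminary observation, whereas the paper derives it inline in two steps (first using right $t$-exactness, then left $t$-exactness).
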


\begin{proof}
Assume $C \in \C$, and consider the map $C \xleftarrow{\phi} \text{colim}_n \iota^{\leq n}\tau^{\leq n}(C)$ in $\C$, where $\iota^{\leq n}$ denotes the inclusion. By the conservativity of $F$, it suffices to show that $F(\phi)$ is an equivalence. However, we have:
\raggedbottom
\[F(\text{colim}_n\iota^{\leq n}\tau^{\leq n}(C)) \simeq \text{colim}_nF(\iota^{\leq n}\tau^{\leq n}(C))\] \[\simeq\text{colim}_n\iota^{\leq n}\tau^{\leq n}(F(\iota^{\leq n}\tau^{\leq n}(C))) \simeq \text{colim}_n \iota^{\leq n}\tau^{\leq n}(F(C))\]

\noindent where the first equivalence uses the continuity of $F$, the second step uses the right $t$-exactness of $F$, and the third step uses the left $t$-exactness of $F$. By the right-completeness of $\D$, this composite map is an equivalence. However, we can identify this composite map with $F(\phi)$, so we see that $F(\phi)$ is an isomorphism, as desired. 
An argument dual to the above gives (2). Specifically, if $C \in \C$, then we have equivalences:
\raggedbottom
\[F(\text{lim}_m\iota^{\geq m}\tau^{\geq m}(C)) \simeq \text{lim}_mF(\iota^{\geq m}\tau^{\geq m}(C)) \] \[\simeq \text{lim}_m\iota^{\geq m}\tau^{\geq m}F(\iota^{\geq m}\tau^{\geq m}(C)) \simeq \text{lim}_m\iota^{\geq m}\tau^{\geq m}F(C)\]

\noindent where the first step uses the assumption that $F$ commutes with small limits, the second step uses the fact that $F$ is left $t$-exact, and the third uses the right $t$-exactness of $F$. Therefore, as above, the conservativity of $F$ gives that the $t$-structure on $\C$ is left-complete provided the $t$-structure on $\D$ is. 
\end{proof}

\subsubsection{t-Structures on $H$-Categories} \label{t-structures on G Categories}
For later use, we will also recall the $t$-structure on categories with a group action. We first recall a result of \cite{GaiFun}:

\begin{Lemma} (\cite[Lemma 4.1.3]{GaiFun})\label{GaitsgoryExactnessLemma}
Assume $\C, \D_1$, and $\D_2$ are DG categories equipped with $t$-structures and we are given a (continuous) functor $F: \D_1 \to \D_2$. Then the categories $\C \otimes \D_i$ each inherit a $t$-structure where the objects of $(\C \otimes \D_i)^{\leq 0}$ are generated under colimits by objects of the form $c \otimes d$ where $c \in \C^{\leq 0}$ and $d \in \D^{\leq 0}$. Furthermore:
\begin{enumerate}
    \item If $F: \D_1 \to \D_2$ is right $t$-exact, then so too is the functor $\text{id}_{\C} \otimes F: \C \otimes \D_1 \to \C \otimes \D_2$.  
    \item If $F: \D_1 \to \D_2$ is left $t$-exact, then so too is the functor $\text{id}_{\C} \otimes F: \C \otimes \D_1 \to \C \otimes \D_2$, if the $t$-structure on $\C$ is compactly generated, i.e. $\C^{\leq 0}$ is generated under colimits by the objects of $\C^{\leq 0}$ which are compact in $\C$.  
\end{enumerate}
\end{Lemma}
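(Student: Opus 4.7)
The plan is to first construct the $t$-structure and then verify parts (1) and (2). For the existence, I would invoke the standard construction from Lurie's \emph{Higher Algebra}, Proposition 1.4.4.11: in any presentable stable $\infty$-category, a small collection of objects generates a unique $t$-structure whose connective part is the smallest subcategory closed under small colimits and extensions containing the collection. Applying this to $\C \otimes \D_i$ with the collection of objects $c \otimes d$ for $c \in \C^{\leq 0}$ and $d \in \D_i^{\leq 0}$ (restricted to small generators, using presentability) produces the claimed $t$-structure.

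For part (1), $\text{id}_\C \otimes F$ is continuous by construction, so right $t$-exactness may be checked on the colimit-generators $c \otimes d$ of $(\C \otimes \D_1)^{\leq 0}$. But $(\text{id}_\C \otimes F)(c \otimes d) \simeq c \otimes F(d)$ lies in $(\C \otimes \D_2)^{\leq 0}$ by definition, since $F(d) \in \D_2^{\leq 0}$ by right $t$-exactness of $F$.

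Part (2) is more delicate and genuinely uses the compact generation hypothesis on $\C^{\leq 0}$. The key construction is the following: for any compact $c \in \C$, the functor $\uHom_\C(c, -) \colon \C \to \text{Vect}$ is continuous, so tensoring over $\text{Vect}$ with $\text{id}_{\D_i}$ produces a continuous functor $\phi_c \colon \C \otimes \D_i \to \D_i$. Since tensoring over $\text{Vect}$ preserves adjoint pairs of continuous functors, $\phi_c$ is right adjoint to $c \otimes - \colon \D_i \to \C \otimes \D_i$; equivalently, one has a natural isomorphism $\uHom_{\C \otimes \D_i}(c \otimes d, X) \simeq \uHom_{\D_i}(d, \phi_c(X))$. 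Moreover $\phi_c \circ (\text{id}_\C \otimes F) \simeq F \circ \phi_c$, since both composites identify with $\uHom_\C(c,-) \otimes F$. Using compact generation of $\C^{\leq 0}$, one checks that $X \in (\C \otimes \D_1)^{\geq 0}$ if and only if $\phi_c(X) \in \D_1^{\geq 0}$ for every compact $c \in \C^{\leq 0}$; applying left $t$-exactness of $F$ then yields $F(\phi_c(X)) \simeq \phi_c((\text{id}_\C \otimes F)(X)) \in \D_2^{\geq 0}$ for every such $c$, which via the same adjunction forces $(\text{id}_\C \otimes F)(X) \in (\C \otimes \D_2)^{\geq 0}$.

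The main technical obstacle is establishing the adjunction $c \otimes - \dashv \phi_c$ in the tensor-product category. Viewing $c \otimes -$ as the tensor of $- \otimes c \colon \text{Vect} \to \C$ (left adjoint to $\uHom_\C(c,-)$) with $\text{id}_{\D_i}$, the adjunction descends to $\C \otimes \D_i$ provided the Lurie tensor product preserves adjoint pairs, which requires both adjoints to be continuous; continuity of $\uHom_\C(c,-)$ is precisely compactness of $c \in \C$. This is exactly why compact generation of $\C^{\leq 0}$ enters in part (2) but not in part (1).
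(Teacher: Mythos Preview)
The paper does not actually prove this lemma; it is stated with a citation to \cite[Lemma 4.1.3]{GaiFun} and used as a black box. Your argument is correct and is essentially the proof given in that reference: the $t$-structure is produced via the standard presentable-category machinery, right $t$-exactness is checked on the tensor generators of the connective part, and left $t$-exactness is verified by testing coconnectivity against the continuous functors $\phi_c = \uHom_{\C}(c,-)\otimes \text{id}$ for $c$ compact in $\C^{\leq 0}$, using the adjunction $(c\otimes -)\dashv \phi_c$ and the intertwining $\phi_c\circ(\text{id}_{\C}\otimes F)\simeq F\circ \phi_c$. Your identification of why compactness of $c$ is needed (so that $\uHom_{\C}(c,-)$ is continuous and hence tensors to a right adjoint) is exactly the point.
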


Now, as above, we let $H$ be any affine algebraic group. We note that, in particular, $\IndCoh(H)$ and $\D(H)$ satisfy the hypotheses of \cref{GaitsgoryExactnessLemma} since, for example, each category is the derived category of its heart since $H$ is smooth--see \cite[Proposition 4.7.3]{GaiRozCrystals} for the $\D(H)$ case. We make the following definition: 

\begin{Definition} \cite[Appendix B.4]{Ras2}
We say that a $t$-structure on an $H$-category $\C$ is \textit{compatible} with the $H$-action if the coaction map $\text{coact}[-\text{dim}(H)]: \C \to \D(H) \otimes \C$ is $t$-exact.
\end{Definition}

\begin{Example}\label{If Smooth Group Acts on Scheme Induced Action on Sheaves is Compatible with t-Structure}
Assume $H$ acts on some scheme $X$. Then the action map $a: H \times X \to X$ is smooth (since there is an isomorphism $H \times X \cong H \times X$ which takes the action map to the projection map, which is smooth), and so in particular the coaction map $a^![-\text{dim}(H)]$ is $t$-exact. 
\end{Example}

Let $\C$ denote a DG category with an action of an algebraic group $H$, and let $\chi: H \to \mathbb{G}_a$ be any character. Recall the construction $\C^{H, \chi}$ of \cref{Definition of Invariants Given by a Character}. If $\C$ is equipped with a $t$-structure compatible with the action of $H$, then, following \cite[Appendix B.1-B.4]{Ras2}, we can equip $\C^{H, \chi}$ with a unique $t$-structure such that the forgetful functor is $t$-exact. 

\begin{Proposition}\label{t-Exact Implies Invariants Are}
If $F: \C \to \D$ is a $t$-exact functor of $G$-categories and $H \leq G$ is some closed subgroup of $G$, then the induced functor $\tilde{F}: \C^{H, \chi} \to \D^{H, \chi}$ is $t$-exact.
\end{Proposition}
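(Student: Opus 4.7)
The plan is to exploit the construction of the $t$-structure on twisted invariants reviewed just before the proposition: by the cited appendix of \cite{Ras2}, since the $t$-structures on $\C$ and $\D$ are compatible with the $G$-action (and hence with the $H$-action via restriction), the categories $\C^{H,\chi}$ and $\D^{H,\chi}$ acquire unique $t$-structures for which the respective forgetful functors $\text{oblv}^{H,\chi}_{\C}: \C^{H,\chi} \to \C$ and $\text{oblv}^{H,\chi}_{\D}: \D^{H,\chi} \to \D$ are $t$-exact. In fact, these forgetful functors are conservative and reflect the $t$-structure: an object $X$ of $\C^{H,\chi}$ lies in $(\C^{H,\chi})^{\leq 0}$ (respectively, $(\C^{H,\chi})^{\geq 0}$) if and only if $\text{oblv}^{H,\chi}_{\C}(X)$ does, and similarly for $\D$.

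Next I would assemble the commutative square
\[
\xymatrix@R+1em@C+2em{
\C^{H,\chi} \ar[r]^{\tilde{F}} \ar[d]_{\text{oblv}^{H,\chi}_{\C}} & \D^{H,\chi} \ar[d]^{\text{oblv}^{H,\chi}_{\D}} \\
\C \ar[r]^{F} & \D
}
\]
whose commutativity is automatic from the fact that $F$ is a morphism of $G$-categories (and hence intertwines all the structures used in the construction of twisted invariants). The induced functor $\tilde{F}$ on twisted invariants exists for this same reason.

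With these pieces in hand the verification is a diagram chase. Take $X \in (\C^{H,\chi})^{\leq 0}$; by the reflection property, $\text{oblv}^{H,\chi}_{\C}(X) \in \C^{\leq 0}$. Since $F$ is right $t$-exact, $F(\text{oblv}^{H,\chi}_{\C}(X)) \in \D^{\leq 0}$. Using the commutative square, this object equals $\text{oblv}^{H,\chi}_{\D}(\tilde{F}(X))$, and so applying the reflection property for $\D^{H,\chi}$ we conclude $\tilde{F}(X) \in (\D^{H,\chi})^{\leq 0}$. The dual argument (using left $t$-exactness of $F$) shows $\tilde{F}$ is left $t$-exact.

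There is no real obstacle here beyond invoking the construction of the $t$-structure on twisted invariants; the content of the proposition is just the compatibility of this construction with morphisms of $G$-categories. The only thing one must be careful about is confirming that the forgetful functors genuinely detect both halves of the $t$-structure (not merely one), which is a standard property of the construction in \cite{Ras2} and which I would recall explicitly for the reader.
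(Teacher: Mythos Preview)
Your proposal is correct and follows essentially the same route as the paper's proof: both set up the commutative square with the forgetful functors and use that $\text{oblv}^{H,\chi}$ is $t$-exact and conservative (hence reflects the $t$-structure) to transfer $t$-exactness of $F$ to $\tilde{F}$. The only cosmetic difference is that the paper spells out the reflection step inline (showing $\text{oblv}(\tau^{<0}\tilde{F}(\F)) \simeq 0$ and then invoking conservativity), whereas you package it as a single appeal to the reflection property already stated in the paper's conventions section.
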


\begin{proof}
By assumption, the following diagram canonically commutes:

\begin{equation*}
  \xymatrix@R+2em@C+2em{
   \C^{H, \chi} \ar[r]^{\tilde{F}} \ar[d]^{\text{oblv}} & \D^{H, \chi} \ar[d]^{\text{oblv}}\\
   \C \ar[r]^{F} & \D
  }
 \end{equation*}
 
\noindent where oblv denotes the forgetful functor. Assume $\F \in \C^{H, \chi, \geq 0}$. Then we have $\text{oblv}(\F) \in \C^{\geq 0}$ and so by the commutativity of the diagram above, we see that $\text{oblv}(\tilde{F}(\F)) \in \D^{\geq 0}$. In particular, we see that $\text{oblv}(\tau^{< 0}(\F)) \simeq 0$ by the $t$-exactness of oblv. Since oblv is conservative, we see that $\tau^{< 0}(\F) \simeq 0$, and therefore $\tilde{F}(\F) \in \D^{H, \chi, \geq 0}$. Repeating this same argument replacing the coconnective categories with the respective connective categories (and replacing $\tau^{< 0}$ with $\tau^{>0}$), an identical argument shows $\tilde{F}$ preserves connective objects and thus $\tilde{F}$ is $t$-exact.
\end{proof}
    
\begin{Definition}\label{Definition of Bounded Cohomological Dimension of Category and Cohomological Amplitude of Functor} Let $\C$ be a category equipped with a $t$-structure.
\begin{enumerate}
    \item We say an object $C \in \C$ is \textit{cohomologically bounded} if there exists $m, n \in \mathbb{Z}$ such that $\tau^{> n}C$ and $\tau^{< m}C$ vanish, so that $C \simeq \tau^{\leq n}\tau^{\geq m}C$. 
    \item We say $\C$ has \textit{bounded cohomological dimension} if there exists some $n > 0$ such that if $X \in \C^{\geq n}$ and $Y \in \C^{\leq 0}$, then the space of maps $\text{Hom}_{\C}(X, Y)$ is connected. 
    \item Let $\D$ be a category equipped with a $t$-structure, and let $F: \C \to \D$ be a functor between them. We say that $F$ has \textit{cohomological amplitude in} $[m,n]$ for some $m, n \in \mathbb{Z}$ if for all $C \in \C^{\heartsuit}$, the cohomology of $F(C)$ given by the $t$-structure is concentrated in degrees $[m, n]$, and furthermore we say it has \textit{bounded cohomological amplitude} if it has cohomological amplitude in $[m,n]$ for some integers $m$ and $n$. 
\end{enumerate}
\end{Definition}

\begin{Proposition} Let $F: \C \to \D$ be a functor of DG categories (or any stable $\infty$-categories) equipped with $t$-structures. 
\begin{enumerate}
    \item Assume further that the $t$-structure on $\C$ is right-complete as in \cref{Left and right-complete t-structure Definition} and the $t$-structure on $\D$ is compatible with filtered colimits. If $F$ maps $\C^{\heartsuit}$ to $\D^{\geq m}$ (which in particular occurs for some $m$ if $F$ has bounded cohomological amplitude), then $F$ maps $\C^{\geq 0}$ to $\D^{\geq m}$. 
    \item Further assume that $F$ commutes with cofiltered limits. Then if the $t$-structure on $\C$ is left-complete and the $t$-structure on $\D$ is compatible with cofiltered limits, then if $F$ maps $\C^{\heartsuit}$ to $\D^{\leq n}$, then $F$ maps $\C^{\leq 0}$ to $\D^{\leq n}$.
\end{enumerate}
\end{Proposition}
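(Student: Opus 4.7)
The plan is to handle both parts by a common two-step strategy: reduce the general case to cohomologically bounded objects using (co)completeness of $\C$, and then handle the bounded case by induction on the length of the cohomological filtration, bootstrapping off the hypothesis at the heart.

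For part (1), let $X \in \C^{\geq 0}$. Right-completeness of $\C$ gives $X \simeq \mathrm{colim}_N \tau^{\leq N} X$ with each $\tau^{\leq N}X \in \C^{\geq 0} \cap \C^{\leq N}$, so continuity of $F$ yields $F(X) \simeq \mathrm{colim}_N F(\tau^{\leq N}X)$. Compatibility of $\D$'s $t$-structure with filtered colimits gives that $\D^{\leq 0}$ is closed under filtered colimits; since filtered colimits commute with finite limits in a stable $\infty$-category and $\tau^{\geq m}$ is a finite limit construction, $\D^{\geq m}$ is also closed under filtered colimits. It therefore suffices to prove the statement for each $\tau^{\leq N}X$, i.e. in the cohomologically bounded case.

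I would prove the bounded case by induction on $N \geq 0$. The base case $N = 0$ is precisely the given hypothesis. For the inductive step, given $Y \in \C^{\geq 0} \cap \C^{\leq N}$, apply $F$ to the canonical cofiber sequence
\[\tau^{\leq N-1}Y \to Y \to H^N(Y)[-N].\]
The fiber lands in $\D^{\geq m}$ by the inductive hypothesis, while the cofiber satisfies $F(H^N(Y)[-N]) \simeq F(H^N(Y))[-N] \in \D^{\geq m+N} \subseteq \D^{\geq m}$, using that $H^N(Y) \in \C^{\heartsuit}$, that $F$ commutes with shifts, and that $N \geq 0$. Since $\D^{\geq m}$ is closed under extensions, $F(Y) \in \D^{\geq m}$.

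Part (2) is formally dual. Left-completeness of $\C$ gives $X \simeq \mathrm{lim}_M \tau^{\geq M}X$ as $M \to -\infty$, and the additional assumption that $F$ commutes with cofiltered limits, combined with compatibility of $\D$'s $t$-structure with cofiltered limits (ensuring $\D^{\leq n}$ is closed under such limits), reduces to the bounded case. For bounded $Y \in \C^{\leq 0} \cap \C^{\geq M}$, induct on $|M|$ using the triangle $H^M(Y)[-M] \to \tau^{\geq M}Y \to \tau^{\geq M+1}Y$: the right-hand term is handled by induction, while $F(H^M(Y)[-M]) \in \D^{\leq n+M} \subseteq \D^{\leq n}$ because $M \leq 0$. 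The only point requiring any care, hardly an obstacle, is this sign tracking: in both parts the extra shift moves the heart-level hypothesis into a \emph{smaller} subcategory of the desired one, so the induction stays inside $\D^{\geq m}$ (respectively $\D^{\leq n}$) throughout.
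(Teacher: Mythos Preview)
Your proposal is correct and follows essentially the same approach as the paper: both reduce to the cohomologically bounded case via right- (resp.\ left-) completeness and then induct on the length of the cohomological filtration using the cofiber sequence $\tau^{\leq N-1}Y \to Y \to H^N(Y)[-N]$ (resp.\ its dual). Your treatment is slightly more explicit about why $\D^{\geq m}$ is closed under filtered colimits, but otherwise the arguments coincide.
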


\begin{proof}
We first claim that $F$ sends $\C^{[0, q]} := \C^{\geq 0} \cap \C^{\leq q}$ to $\D^{\geq m}$. To see this, we induct on $q$, noting that the base case follows from definition of having cohomological amplitude in $[m, n]$. By the inductive step, note that any $\F \in \C^{[0, q]}$ for $q > 0$ admits a cofiber sequence $\tau^{\leq q-1}\F \to \F \to H^q(\F)[-q]$, where we omit the inclusion functors given by the $t$-structure. The inductive hypothesis implies that $F(\tau^{\leq q-1}\F) \in \D^{\geq m}$ and the fact that $F$ is exact implies that $F(H^q(\F)[-q]) \simeq F(H^q(\F))[-q]$ lies in $\D^{[m + q, n + q]} \subseteq \D^{\geq m}$. Now, for a general $\F \in \C^{\geq 0}$, by right-completeness we may write $\F \xleftarrow{\sim} \text{colim}_n\tau^{\leq n}(\F)$. Apply $F$, which commutes with colimits, to see that $F(\F) \xleftarrow{\sim} \text{colim}_nF(\tau^{\leq n}(\F))$. Therefore, by the above, we see that $F(\F)$ is a filtered colimit of objects of $\D^{\geq m}$, and therefore, since the $t$-structure on $\D$ is compatible with filtered colimits, we see that $F(\F) \in \D^{\geq m}$. 

The dual proof holds to prove claim (2), where one similarly first inducts on $q$ to show that $\C^{[-q, 0]} \in \\D^{\leq n}$ and uses left-completeness of $\C$, the fact that $F$ commutes with cofiltered limits, and the fact that $\D^{\leq 0}$ is closed under cofiltered limits to show the general claim.
\end{proof}

We now state two results in Appendix B of \cite{Ras2}:

\begin{Proposition}\label{CohomologicalAmplitudesForGActingOnC} Assume we are given a category $\C$ which is additionally equipped with an action of $H$ compatible with the $t$-structure, and let $\chi: H \to \mathbb{G}_a$ be any character.
 \begin{enumerate}
    \item (\cite[Lemma B.4.1]{Ras2}) The right adjoint $\text{Av}_*^{H, \chi}: \C \to \C^{H, \chi}$ to the forgetful functor has cohomological amplitude in $[0, \text{dim}(H)]$. 
    \item (\cite[Lemma B.6.1]{Ras2}) Assume $\C$ has a compactly generated $t$-structure. Then the partially defined left adjoint to the forgetful functor, denoted $\text{Av}_!^{H, \chi}: \C \to \C^{H, \chi}$, has cohomological amplitude $[-\text{dim}(H), 0]$. 
\end{enumerate}
\end{Proposition}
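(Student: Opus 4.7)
The plan is to split each claim into an easy adjointness half and a genuine cohomological bound. First, I would observe that the forgetful functor $\text{oblv}\colon \C^{H,\chi} \to \C$ is $t$-exact by the very definition of the $t$-structure on $\C^{H,\chi}$. Standard adjunction arguments then immediately give half of each claim for free: the right adjoint $\text{Av}_*^{H,\chi}$ is left $t$-exact, hence takes $\C^{\heartsuit}$ into $(\C^{H,\chi})^{\geq 0}$, and the partially defined left adjoint $\text{Av}_!^{H,\chi}$, where defined, is right $t$-exact, hence takes $\C^{\heartsuit}$ into $(\C^{H,\chi})^{\leq 0}$.

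To establish the remaining nontrivial upper and lower bounds, I would reduce to the universal case $\C = \D(H)$ with the regular $H$-action. There both averaging functors can be computed by de Rham push-forward along $p\colon H \to \mathrm{pt}$, twisted by the character sheaf $\mathcal{L}_\chi$. Since $H$ is a smooth affine algebraic group of dimension $d := \dim H$ over a field of characteristic zero, the $*$-pushforward has cohomological amplitude $[0,d]$ and the partially defined $!$-pushforward, on compact objects, has amplitude $[-d,0]$; tensoring with $\mathcal{L}_\chi$, a rank-one local system placed in a single cohomological degree under the convention of \cref{Definition of Invariants Given by a Character}, does not disturb the amplitude.

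To pass from the universal case to a general compatible $\C$, I would invoke Gaitsgory's Lemma \ref{GaitsgoryExactnessLemma} together with the compatibility hypothesis, which asserts that the coaction map $\mathrm{coact}[-d]\colon \C \to \D(H) \otimes \C$ is $t$-exact. Via the equivalence $\C \simeq \D(H) \otimes_{\D(H)} \C$, the averaging functors for $\C$ factor, up to $t$-exact manipulations, through applying the universal averaging to the $\D(H)$-factor and tensoring with $\mathrm{id}_{\C}$; the amplitudes computed in the universal case therefore transfer. For part (2), the compactly generated $t$-structure hypothesis is precisely what guarantees that the partial left adjoint is defined on a subcategory large enough to make this transfer argument meaningful, and is needed to invoke Gaitsgory's Lemma in its left $t$-exact incarnation.

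The main obstacle will be keeping the various cohomological shifts consistent: the shift $[-d]$ in the compatibility condition, the amplitude of the universal de Rham pushforward, and the normalizing shift built into the convention of \cref{Definition of Invariants Given by a Character} must interlock precisely to yield the claimed intervals $[0,d]$ and $[-d,0]$ rather than any off-by-$d$ variants. Once these normalizations are carefully reconciled, the verification in the universal case is essentially the standard computation of $H^{\ast}_{\mathrm{dR}}(H)$ on a smooth affine variety, and the reduction step is formal.
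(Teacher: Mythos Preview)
Your proposal is correct and follows essentially the same strategy as the paper: obtain one half of each bound from adjointness to the $t$-exact forgetful functor, establish the other half in the universal case $\C = \D(H)$ via the cohomological amplitude of de Rham pushforward along an affine smooth map (the paper phrases this via the action/multiplication map $m$ rather than the terminal map $p$, but the content is the same), and then transfer to general $\C$ using the $t$-exact coaction map together with \cref{GaitsgoryExactnessLemma}. Your remark that the compactly generated hypothesis in (2) is exactly what is needed to invoke the left $t$-exact half of \cref{GaitsgoryExactnessLemma} is also on point.
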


\newcommand{\charsheaf}{\mathcal{L}_{\chi}}
\begin{proof}
We show the methods of \cite[Appendix B]{Ras2} adapt to show the more general character case as in (1). Consider the underlying sheaf of our character sheaf, $\charsheaf \in \D(H)$, associated to $\chi$. By our conventions above, $\charsheaf$ is in cohomological degree $-\text{dim}(H) = -1 + (-\text{dim}(H) + 1)$. Since we may identify the averaging functor with the functor $m_{*, \text{dR}}(- \boxtimes \charsheaf[2\text{dim}(H)])$, the fact that $m$ is affine implies that the pushforward $m_{*, dR}$ is right $t$-exact, and so in particular this functor will have cohomology only in degrees bounded by $\text{dim}(H)$. Since $\text{Av}_*^{H, \chi}$ is a right adjoint to a $t$-exact functor, it is in particular left $t$-exact and thus, combining these results, we obtain (1) when $\C = \D(H)$.  

To demonstrate the argument for general $\C$, note that, by assumption, the functor $\text{coact}[-\text{dim}(H)]: \C \xrightarrow{} \D(H) \otimes \C$ is $t$-exact and therefore, in particular, by \cref{t-Exact Implies Invariants Are} we obtain that the induced functor on invariants $\C^{H, \chi} \xrightarrow{} \D(H)^{H, \chi} \otimes \C$ is $t$-exact. We also have that the following diagram commutes:

\begin{equation*}
  \xymatrix@R+2em@C+2em{
   \C \ar[r]^{}\ar[d]^{\text{Av}_*^{H, \chi}} & \D(H) \otimes \C \ar[d]^{\text{Av}_*^{H, \chi} \otimes \text{id}}\\
    \C^{H, \chi} \ar[r]^{}  & \D(H)^{H, \chi} \otimes \C 
  }
 \end{equation*}
 
 \noindent where the horizontal arrows are the shifted coaction maps as above. Since the coaction maps are conservative (for example, one can pull back by the identity morphism $\ast \to H$ to obtain the identity map) and $t$-exact, we obtain our claim by \cref{GaitsgoryExactnessLemma}.
\end{proof}

\subsection{Whittaker Invariants}\label{Whittaker Invariance Section}
We now recall the notion of the Whittaker invariants of a category, discussed above in \cref{Analogue of V Subsubsection}.

\subsubsection{Whittaker Invariants and Unipotent Groups}
Recall we have fixed a pinned reductive group in \cref{Representation Theoretic Notation Section}; in particular, we have fixed an additive character $\psi: N^- \to \mathbb{G}_a$ which is nondegenerate. 

\begin{Definition}
Given a category $\C$ with a $G$-action, we define its \textit{Whittaker invariants} as the category $\uHom_{\D(N^{-})}(\text{Vect}_{\psi}, \C)$.
\end{Definition}
We record one result on the twisted invariance of \textit{unipotent groups}:

\begin{Lemma}\label{Unipotent Implies Oblv FF} \cite[3.3.5]{Ber}
    The forgetful functors $\C^{N^{-}, \psi} \xrightarrow{\text{oblv}} \C$ and $\C^{N} \xrightarrow{\text{oblv}} \C$ are fully faithful. 
\end{Lemma}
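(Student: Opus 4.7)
The plan is to show that the unit of the adjunction $\text{oblv} \dashv \text{Av}_*^{N, \chi}$ (with $\chi$ trivial in the $N$-case, and $\chi = \psi$ in the $N^-$-case) is an equivalence, which is the standard categorical characterization of fully faithfulness of a left adjoint. Concretely, for every $F \in \C^{N, \chi}$, one must check that the unit map $F \to \text{Av}_*^{N, \chi}(\text{oblv}(F))$ is an equivalence inside $\C^{N, \chi}$.

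The first main step is to reduce to a universal computation. Using \cref{Inv=Coinv} to identify invariants with coinvariants, the endofunctor $\text{Av}_*^{N, \chi} \circ \text{oblv}$ of $\C^{N, \chi}$ is canonically $\D(N)$-linear in the appropriate sense, using \cref{Adjoint Functor is Automatically G Equivariant} to equip the averaging functor with its equivariant structure. Invoking \cref{UniversalCaseRemark}, this endofunctor is determined by its behavior in the universal case $\C = \D(N)$ equipped with the regular action. In that case, $\C^{N, \chi} \simeq \D(N)^{N, \chi}$ is equivalent to $\text{Vect}$, with compact generator given (up to cohomological shift) by the character sheaf $\mathcal{L}_\chi \in \D(N)$, and the forgetful functor simply sends the distinguished generator to $\mathcal{L}_\chi$.

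To conclude, it suffices to verify that $\uEnd_{\D(N)}(\mathcal{L}_\chi) \simeq k$ concentrated in cohomological degree zero (after the normalizing shift by $-\dim N$ implicit in the Whittaker convention of \cref{Definition of Invariants Given by a Character}). Since $\mathcal{L}_\chi$ is $\otimes$-invertible, this reduces to computing $H^*_{\text{dR}}(N) \simeq k$. As $N$ is unipotent in characteristic zero, it is isomorphic to $\mathbb{A}^{\dim N}$ as a variety, which gives the required de Rham cohomology computation. The identical argument, applied to $N^-$ with any nondegenerate additive character $\psi$, handles the Whittaker case; the nondegeneracy of $\psi$ plays no role in this final step because $\mathcal{L}_\psi^{\vee} \otimes \mathcal{L}_\psi$ is always canonically trivial.

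The main technical obstacle is making the reduction to the universal case genuinely rigorous at the $\infty$-categorical level. One must verify that $\text{Av}_*^{N, \chi} \circ \text{oblv}$, as a natural-in-$\C$ endofunctor of $\C^{N, \chi}$, is indeed determined by its value at a single universal test object, which requires combining the dualizability of $\D(N)$ as a $\D(N)$-bimodule with the $\D(N)$-linearity of the relevant adjoints. Once this formalism is in place, the rest of the argument is reduced to the classical cohomological vanishing for affine space.
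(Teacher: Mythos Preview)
The paper does not prove this lemma itself; it cites \cite{Ber} and remarks that the Whittaker case follows identically by replacing the constant sheaf with $\mathcal{L}_\psi$. Your argument is the standard one behind that citation and is correct: the key computation is $\uEnd_{\D(N)}(\mathcal{L}_\chi) \simeq k$, which reduces to $H^*_{\text{dR}}(\mathbb{A}^{\dim N}) \simeq k$ via unipotence.

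One minor imprecision worth flagging: saying that $\text{Av}_*^{N,\chi} \circ \text{oblv}$ is ``$\D(N)$-linear'' does not quite parse, since $\C^{N,\chi}$ carries no residual $\D(N)$-action, and \cref{UniversalCaseRemark} as written concerns $H$-linear functors out of $\D(H/M)$ rather than endofunctors of invariant categories. The cleaner way to phrase the reduction is either to note that both $\text{oblv}$ and $\text{Av}_*^{N,\chi}$ are natural in $\C$ as a $\D(N)$-module category (the latter via \cref{Adjoint Functor is Automatically G Equivariant}), so the unit is an equivalence for all $\C$ once it is for $\C = \D(N)$; or to observe that $\text{oblv} \circ \text{Av}_*^{N,\chi}: \C \to \C$ is convolution with $\mathcal{L}_\chi$, use conservativity of $\text{oblv}$, and invoke the idempotence $\mathcal{L}_\chi \star \mathcal{L}_\chi \simeq \mathcal{L}_\chi$ (up to the normalizing shift). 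Either route lands on the de Rham cohomology computation you identified, so the substance of your proof is sound.
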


\begin{Remark}
Strictly speaking, \cite{Ber} above does not discuss the case $\C^{N^{-}, \psi} \xrightarrow{\text{oblv}} \C$. However, after replacing the constant sheaf on $N^{-}$ with the sheaf $\psi$ gives identical results, largely due to the unipotence of $N^{-}$.
\end{Remark}
    
Because of these results, we will view $C^N$ and $C^{N^{-}, \psi}$ as full subcategories of $\C$. 

\newcommand{\wdot}{\dot{w}}
\newcommand{\Gaminusalpha}{\mathbb{G}_a^{-\alpha}}
\subsubsection{A Universal Example of a Whittaker Category}\label{Proof of Support of Whittaker Sheaves} By \cref{BZGO}, the category $\D(G/N)^{N^{-}, \psi}$ provides a universal example of the Whittaker invariants of a $G$-category. We compute this explicitly by showing $\D(G/N)^{N^{-}, \psi} \xrightarrow{\sim} \D(T)$, i.e. by proving \cref{SupportOfWhittakerSheaves}. We prove \cref{SupportOfWhittakerSheaves} after proving the following lemma:

\begin{Lemma}\label{No Simultaneously Monodromic and Twisted Monodromic Ga Objects}
For any category $\C$ with an action of $\mathbb{G}_a$, if $\psi: \mathbb{G}_a \to \mathbb{G}_a$ is a nontrivial character, then $\C^{\mathbb{G}_a, \psi\text{-mon}} \cap \C^{\mathbb{G}_a\text{-mon}} \simeq 0$.
\end{Lemma}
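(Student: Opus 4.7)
The plan is to show that any $\F$ in the intersection satisfies $\text{Av}_*^{\mathbb{G}_a}(\F) \simeq 0$, from which $\F \simeq 0$ follows by the $\mathbb{G}_a$-monodromic assumption.

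Since $\mathbb{G}_a$ is unipotent, by \cref{Unipotent Implies Oblv FF} both forgetful functors $\text{oblv}^{\mathbb{G}_a}$ and $\text{oblv}^{\mathbb{G}_a, \psi}$ are fully faithful, and each is continuous as a left adjoint to the averaging functor supplied by \cref{Inv=Coinv}. Their essential images are therefore closed under colimits, so equal the respective monodromic subcategories. For $\F \in \C^{\mathbb{G}_a\text{-mon}}$, the full faithfulness and the zig-zag identities give an equivalence $\text{oblv}^{\mathbb{G}_a}(\text{Av}_*^{\mathbb{G}_a}(\F)) \simeq \F$, so $\F \simeq 0$ if and only if $\text{Av}_*^{\mathbb{G}_a}(\F) \simeq 0$.

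To establish the vanishing, write $\F \simeq \text{oblv}^{\mathbb{G}_a, \psi}(\mathcal{G})$ for some $\mathcal{G} \in \C^{\mathbb{G}_a, \psi}$. I would identify the composite $\text{Av}_*^{\mathbb{G}_a} \circ \text{oblv}^{\mathbb{G}_a, \psi}$ via a universal-case argument: both $\text{Av}_*^{\mathbb{G}_a}$ and $\text{oblv}^{\mathbb{G}_a, \psi}$ are $\D(\mathbb{G}_a)$-equivariant by \cref{Adjoint Functor is Automatically G Equivariant} (and an analogue of \cref{UniversalCaseRemark} realizes $\mathcal{G} \in \C^{\mathbb{G}_a, \psi}$ as a $\D(\mathbb{G}_a)$-equivariant functor out of $\D(\mathbb{G}_a)^{\mathbb{G}_a, \psi}$ sending the generator to $\text{oblv}^{\mathbb{G}_a, \psi}(\mathcal{G})$), so by naturality the composite is determined by its value on the universal case $\C = \D(\mathbb{G}_a)$. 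There it sends the generator $\mathcal{L}_\psi$ to $\text{Av}_*^{\mathbb{G}_a}(\mathcal{L}_\psi) \simeq H^{*}_{dR}(\mathbb{G}_a, \mathcal{L}_\psi)$, which vanishes because the de Rham cohomology of a nontrivial exponential $\D$-module is zero. Hence the composite is the zero functor, giving $\text{Av}_*^{\mathbb{G}_a}(\F) \simeq 0$ and so $\F \simeq 0$.

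The main technical obstacle is the universal-case identification: formally checking that $\D(\mathbb{G}_a)^{\mathbb{G}_a, \psi}$ corepresents the functor $\C \mapsto \C^{\mathbb{G}_a, \psi}$ on $\D(\mathbb{G}_a)$-module categories (the twisted analogue of \cref{UniversalCaseRemark}), and that $\text{Av}_*^{\mathbb{G}_a}$ assembles into a natural transformation of such corepresentable functors, so that the vanishing on $\D(\mathbb{G}_a)$ propagates to every $\mathbb{G}_a$-category $\C$.
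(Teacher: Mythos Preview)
Your proof is correct; the key input---vanishing of the de Rham cohomology of the exponential $\D$-module on $\mathbb{G}_a$---is the same one the paper uses. The packaging is different, though. The paper works directly with the coaction: for $\F$ monodromic one has $\text{act}^!(\F) \cong \F \boxtimes \omega_{\mathbb{G}_a}$, so pushing forward along $\mathbb{G}_a \to \ast$ recovers $\F$ (up to shift); but if $\F$ is also $\psi$-monodromic then $\text{act}^!(\F) \cong \F \boxtimes \mathcal{L}_\psi$, and pushing this forward yields $\F \otimes t_{*,dR}(\mathcal{L}_\psi) \simeq 0$. Your route instead factors through the composite $\text{Av}_*^{\mathbb{G}_a} \circ \text{oblv}^{\mathbb{G}_a,\psi}$ and reduces to the universal case $\C = \D(\mathbb{G}_a)$. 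The paper's approach is shorter and sidesteps exactly the ``technical obstacle'' you flag, since it never needs to invoke corepresentability of twisted invariants; it just manipulates the coaction isomorphisms directly. Your approach is more functorial and makes the role of the universal computation explicit---and the obstacle you worry about is not serious, since \cref{Twisted Inv=Twisted Coinv} gives $\C^{\mathbb{G}_a,\psi} \simeq \text{Vect}_\psi \otimes_{\D(\mathbb{G}_a)} \C$, from which the kernel description follows immediately (compare also the integral-kernel argument in the proof of \cref{NoMonodromicWhittaker}).
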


\begin{proof}
If $\F \in \C^{\mathbb{G}_a\text{-mon}}$, there exists some isomorphism $\text{act}^!(\F) \cong \F \boxtimes \omega_{\mathbb{G}_a}$. In this case, if $t: \mathbb{G}_a \to \ast$ is the terminal map, then we see that 
\raggedbottom
\[\F \simeq \F \otimes k \simeq (\text{id}_{\C} \otimes t_{*, dR})(\F \boxtimes \omega_{\mathbb{G}_a})[-2] \cong (\text{id}_{\C} \otimes t_{*, dR})(\text{act}^!(\F))[-2]\]

Similarly, if $\F \in \C^{\mathbb{G}_a, \psi\text{-mon}}$, there is an isomorphism $(\text{act}^!(\F)) \cong \F \boxtimes \mathcal{L}_{\psi}$, where $\mathcal{L}_{\psi}$ is the shifted exponential $\D$-module on $\mathbb{G}_a \simeq \mathbb{A}^1$. Thus, continuing the chain of isomorphisms above, we see that if $\F$ is also $\psi$-monodromic, then $\F$ is equivalent to 
\raggedbottom
\[(\text{id}_{\C} \otimes t_{*, dR})(\F \boxtimes \mathcal{L}_{\psi}[-2]) \simeq \F \otimes t_{*, dR}(\mathcal{L}_{\psi})[-2] \simeq 0\]

\noindent since de Rham pushforward to a point is given by de Rham cohomology (up to shift), which vanishes on the exponential $\D$-module on $\mathbb{A}^1$. 
\end{proof}

\begin{Example}\label{NoMonodromicWhittakerForVect}
Consider the category Vect with the trivial $G$-action where $G$ is not its maximal torus. Then $\text{Vect}^{N^{-}, \psi} \simeq 0$ since every object is $N^-$-monodromic.
\end{Example}

\begin{proof}[Proof of \cref{SupportOfWhittakerSheaves}]
Fix some $\F \in\D(G/N)^{N^{-}, \psi}$, and let $j: N^-B/N \xhookrightarrow{} G/N$ denote the open embedding. We wish to show $\F \xrightarrow{\sim} j_{*, dR}j^!(\F)$. To do this, it suffices to show that the restriction to the complementary closed subset vanishes. In turn, by backwards induction on the length of the Weyl group element, it suffices to show that the restriction of $\F$ to each Schubert cell $N^-\wdot B/N$ vanishes, where $\wdot \in N_G(T)$ is some arbitrarily chosen lift of a given element $w \in W$ where $w \neq 1$. 

We claim that, furthermore, we have $\D(N^-\wdot B/N)^{N^{-}, \psi} \simeq 0$ if $w \neq 1$. To see this, note that because $\psi: N^- \to \mathbb{G}_a$ induces a map of group schemes $\tilde{\psi}: N^-/[N^-, N^-] \to \mathbb{G}_a$, we may equivalently show the identity 
\raggedbottom
\[\D([N^-, N^-]\backslash N^-\wdot B/N)^{N^{-}/[N^-, N^-], \tilde{\psi}} \simeq 0.\]

We may write $N^-/[N^-, N^-] \simeq \prod_{\alpha} \Gaminusalpha$ as a product of copies of the additive group $\mathbb{G}_a$, where $\alpha$ varies over the simple roots. Since $w \neq 1$, there exists some simple root $\alpha$ such that $\wdot^{-1} \Gaminusalpha\wdot \subseteq N$. We therefore see that this action of $\Gaminusalpha$ is trivial, since if $m$ is some $R$-point of $[N^-, N^-]\backslash N^-$ and $b$ is some $R$-point of $B/N$, then if $x \in \Gaminusalpha(R)$ then
\raggedbottom
\[x(m\wdot b) = mx\wdot b = m\wdot (\wdot^{-1}x\wdot)b = m\wdot b\]

\noindent in $G/N$, where the last step follows because $(\wdot^{-1}x\wdot) \in N$ and uses the fact that the action of $N$ on $B/N$ is trivial. We therefore see that the $\Gaminusalpha$ action on $\D([N^-, N^-]\backslash N^-\wdot B/N)$ is trivial. In particular, all objects are monodromic with respect to this $\Gaminusalpha$, and so no nonzero objects are monodromic with respect to some nontrivial character by \cref{No Simultaneously Monodromic and Twisted Monodromic Ga Objects}. 
\end{proof} 

In fact, \cref{SupportOfWhittakerSheaves} can be extended more generally, which we record for later use: 

    \begin{Proposition}\label{NoMonodromicWhittaker}
    Let $\C$ be some category with an action of a reductive group $G$, and let $\alpha$ be the negative of a simple root with associated parabolic subgroup $P_{\a}$. If $\F \in \C^{N^-, \psi}$ is $Q_{\a} := [P_{\a}, P_{\a}]$-monodromic, $\F \simeq 0$. 
    \end{Proposition}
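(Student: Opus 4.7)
The plan is to reduce this to the one-dimensional statement of Lemma \ref{No Simultaneously Monodromic and Twisted Monodromic Ga Objects} by isolating the root subgroup $U_\alpha \subseteq N^{-}$ corresponding to the negative simple root $\alpha$. The key point is that $U_\alpha$ lies inside \emph{both} $N^{-}$ and $Q_\alpha$: the first is obvious from the root space decomposition, while the second follows because $P_\alpha/R_u(P_\alpha)$ is a rank-one reductive Levi whose derived subgroup is a rank-one semisimple group (SL$_2$ or PGL$_2$), which contains the images of both $U_\alpha$ and $U_{-\alpha}$; combined with $R_u(P_\alpha) \subseteq [P_\alpha, P_\alpha]$, we conclude $U_\alpha \subseteq Q_\alpha$.

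With this containment in hand, the argument proceeds in two parallel steps. First, the forgetful functor $\C^{Q_\alpha} \to \C^{U_\alpha}$ obtained from the inclusion $U_\alpha \hookrightarrow Q_\alpha$ shows that any $Q_\alpha$-monodromic object is in particular $U_\alpha$-monodromic; so $\F \in \C^{U_\alpha\text{-mon}}$. Second, the forgetful functor $\C^{N^{-}, \psi} \to \C^{U_\alpha, \psi|_{U_\alpha}}$ realizes $\F$ as a $(U_\alpha, \psi|_{U_\alpha})$-equivariant object, and hence in particular a $(U_\alpha, \psi|_{U_\alpha})$-monodromic object; by the nondegeneracy clause of our pinning (see \cref{Representation Theoretic Notation Section}), the restricted character $\psi|_{U_\alpha}$ is a nontrivial homomorphism $U_\alpha \cong \mathbb{G}_a \to \mathbb{G}_a$.

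These two facts together place $\F$ in $\C^{U_\alpha, \psi|_{U_\alpha}\text{-mon}} \cap \C^{U_\alpha\text{-mon}}$, so applying Lemma \ref{No Simultaneously Monodromic and Twisted Monodromic Ga Objects} (with $\mathbb{G}_a$ identified with $U_\alpha$ and the nontrivial character being $\psi|_{U_\alpha}$) yields $\F \simeq 0$. The only step requiring care is the group-theoretic assertion $U_\alpha \subseteq Q_\alpha$; once this is pinned down, the rest of the argument is purely a matter of restricting equivariance along the subgroup inclusion $U_\alpha \hookrightarrow N^{-}$ and $U_\alpha \hookrightarrow Q_\alpha$ and invoking the lemma.
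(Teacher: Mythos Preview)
Your argument is correct and is genuinely more direct than the paper's. Both proofs ultimately reduce to the $\mathbb{G}_a$-level vanishing of \cref{No Simultaneously Monodromic and Twisted Monodromic Ga Objects}, but they get there differently. The paper works with the rank-one simple subgroup $M_s \subseteq Q_\alpha$ (an $\SL_2$-type group): it observes that $Q_\alpha$-monodromic implies $M_s$-monodromic, so $\text{Av}_*^{M_s}$ is conservative on such objects, and then computes via the integral-kernel description of \cref{UniversalCaseRemark} that the composite $\text{Av}_*^{M_s}\circ \text{oblv}^\alpha$ from $(\mathbb{G}_a^\alpha,\psi)$-equivariant objects vanishes, because its kernel lives in $\D(M_s/M_s)^{\mathbb{G}_a^\alpha,\omega_{\text{exp}}}\simeq 0$. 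You instead isolate the additive root group $U_\alpha\cong\mathbb{G}_a$ directly, use the containments $U_\alpha\subseteq N^-$ and $U_\alpha\subseteq Q_\alpha$ to see that $\F$ is simultaneously $(U_\alpha,\psi|_{U_\alpha})$-monodromic and $U_\alpha$-monodromic, and invoke \cref{No Simultaneously Monodromic and Twisted Monodromic Ga Objects} immediately. Your route avoids the integral-kernel machinery entirely and makes the dependence on nondegeneracy of $\psi$ completely transparent; the paper's route, on the other hand, is phrased so as to make visible the role of the rank-one Levi, which is thematically connected to the later Fourier-transform constructions in \cref{Kazhdan-Laumon SFT Construction Reminder}.
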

    
    \newcommand{\SLalpha}{M_s}
    \newcommand{\Avstar}{\text{Av}_*}
    \begin{proof}
    Let $s \in W$ denote the simple reflection associated to $\alpha$, and consider the Levi decomposition $P_{\a} = U_{w_0s} \rtimes L_s$. Then we see that $Q_{\alpha}$ contains a connected simple closed algebraic subgroup $M_s$ of rank-one. Note that the averaging functor $\C \xrightarrow{\Avstar^{Q_{\a}}} \C^{Q_{\a}}$ is conservative on the subcategory of $Q_{\a}$-monodromic objects. In particular, the averaging with respect to the $M_s$-action is conservative on this subcategory.
    
    We consider the restriction of the averaging functor $\Avstar^{\SLalpha}$ to the Whittaker subcategory, which is in particular given by the composite:
    \raggedbottom
    \[\C^{N^{-}, \psi} \xhookrightarrow{\text{oblv}} \C^{\G_a^{\alpha}, \omega_{\text{exp}}} \xhookrightarrow{\text{oblv}^{\alpha}} \C \xrightarrow{\Avstar^{\SLalpha}} \C^{\SLalpha}\]
    
    \noindent where the two leftmost functors are fully faithful by \cref{Unipotent Implies Oblv FF}. However, by \cref{UniversalCaseRemark} we have that the composite functor $\Avstar^{\SLalpha}\text{oblv}^{\alpha}$ is given by convolution with an object in $\D(\SLalpha/\SLalpha)^{\G^{\alpha}_a, \omega_{\text{exp}}} \simeq 0$. Therefore, if an object of $\C$ lies in the subcategory $\C^{N^{-}, \psi}$ and the subcategory of $Q_{\a}$-monodromic objects of $\C$, it must be the zero object. 
    \end{proof}
    
\subsubsection{Averaging Functors}\label{Averaging Functors Subsubsection}
Let $\C$ be a category with a $G$-action, and consider the partially defined left adjoint \cite[Appendix A]{DrinfeldGaitsgoryOnATheoremofBraden} to the forgetful functor $\text{oblv}: \C^{N^{-}, \psi} \to \C$, which we denote by $\Avpsi$. We will refer to this functor as the \textit{Whittaker averaging functor} and, in the case of a category with two commuting $G$-actions (such as $\D(G)$), we will use the terms \textit{left Whittaker averaging} and \textit{right Whittaker averaging} to emphasize which action we are averaging with respect to. 

Let $d$ denote the dimension of $N^-$. Then, following \cite[Corollary 3.3.8]{Ber}, we obtain a canonical map $\text{Av}_!^{\psi} \to \text{Av}_*^{\psi}[2d]$. With this, we can now review one of the main results of \cite{BBM}; see \cite[Section 2.7.1]{Ras2}, the proof of \cite[Corollary 7.3.1]{Ras2}, and \cite[p. 14]{Ras4} for further discussion:
    
    \begin{Theorem}\label{BBMAdjointTheorem}
    The canonical map $\Avpsi \to \text{Av}_*^{\psi}[2d]$ is an equivalence when restricted to the full subcategory $\C^N$. In particular, the functor $\AvN: \C \to \C^N$ admits a left adjoint when restricted to $\C^{N^{-}, \psi}$. 
    \end{Theorem}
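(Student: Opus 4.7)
The plan is to reduce to a universal case via \cref{BZGO} and then verify the isomorphism using \cref{SupportOfWhittakerSheaves}. The canonical natural transformation $\eta_\C: \Avpsi \to \text{Av}_*^{\psi}[2d]$ is natural in the $G$-category $\C$: given a $G$-equivariant functor $F: \C \to \C'$, the square relating $\eta_\C$ and $\eta_{\C'}$ commutes, as both transformations arise from adjunctions of $G$-equivariant data (with $G$-equivariance of the adjoints automatic by \cref{Adjoint Functor is Automatically G Equivariant}). By \cref{BZGO}, any $G$-category $\C$ is recovered from its $N$-invariants as $\C \simeq \D(G/N) \otimes_{\mathcal{H}_N} \C^N$, and since the Whittaker invariants are computed as a tensor product $\text{Vect}_\psi \otimes_{\D(N^-)} (-)$ (by the identification of invariants with coinvariants), they commute with this tensor product: $\C^{N^-, \psi} \simeq \D(G/N)^{N^-, \psi} \otimes_{\mathcal{H}_N} \C^N$. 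Under this identification, $\Avpsi|_{\C^N}$ is obtained from the universal functor $\Avpsi|_{\mathcal{H}_N}: \mathcal{H}_N \to \D(G/N)^{N^-, \psi}$ by tensoring with $\C^N$ over $\mathcal{H}_N$, and likewise for $\text{Av}_*^{\psi}[2d]|_{\C^N}$. Since tensor products preserve isomorphisms, it suffices to show $\eta_{\D(G/N)}$ is an equivalence on $\mathcal{H}_N$.

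In this universal case, \cref{SupportOfWhittakerSheaves} gives $\D(G/N)^{N^-, \psi} \simeq \D(T)$ via restriction $j^!$ along the open inclusion $j: U := N^-B/N \hookrightarrow G/N$. So it is enough to check that $j^! \Avpsi(\F) \to j^! \text{Av}_*^{\psi}(\F)[2d]$ is an isomorphism in $\D(T)$ for each $\F \in \mathcal{H}_N$. On the open cell, the Bruhat decomposition provides an isomorphism $U \cong N^- \times T$ under which the $N^-$-action on $U$ is translation on the first factor. The two averaging functors, applied to a sheaf on $U$ and landing in $\D(T)$, then become integration-along-a-free-action computations for the projection $N^- \times T \to T$ twisted by the exponential $\D$-module; the $!$- and $*$-versions of such integration agree up to a cohomological shift by $[2d]$ with $d = \dim N^-$, precisely matching the shift built into $\eta$.

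The principal difficulty in executing this plan is the first step: carefully verifying that $\Avpsi|_{\C^N}$, $\text{Av}_*^{\psi}[2d]|_{\C^N}$, and the transformation between them are obtained from universal data on $\mathcal{H}_N$ by the tensor product construction. This is ultimately a 2-categorical coherence check for which \cref{Adjoint Functor is Automatically G Equivariant} is the crucial input, since it transports $G$-equivariance (hence the relevant linearity over the convolution Hecke category) across all the adjunctions involved. Once the reduction is in place, the verification on the open cell is a direct computation, and the vanishing of both averages on the complementary Schubert strata (by an argument analogous to the proof of \cref{SupportOfWhittakerSheaves}, using \cref{No Simultaneously Monodromic and Twisted Monodromic Ga Objects}) ensures that the open-cell isomorphism lifts to the claimed global isomorphism in $\D(G/N)^{N^-, \psi}$.
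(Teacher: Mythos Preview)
The paper does not itself prove this theorem; it is cited from \cite{BBM} with pointers to \cite{Ras2} and \cite{Ras4}. Your overall strategy---reduce to the universal case $\C = \D(G/N)$ and exploit \cref{SupportOfWhittakerSheaves}---matches the standard argument. The reduction in your first paragraph is fine, and your observation that $j^!$ is already an equivalence onto $\D(T)$ (so no separate ``lifting from the open cell'' is needed) is correct; your final sentence about complementary strata is therefore redundant rather than wrong.

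The genuine gap is in the open-cell step. You assert that for \emph{any} sheaf $\G$ on $U \cong N^- \times T$, the $!$- and $*$-Whittaker averagings along the free $N^-$-action agree up to the shift $[2d]$. This is false as a general statement: take $\G = \text{oblv}(\mathcal{F})$ for $\mathcal{F} \in \D(T) \simeq \D(U)^{N^-,\psi}$; then full faithfulness of $\text{oblv}$ forces both $\Avpsi(\G) \simeq \mathcal{F}$ and $\text{Av}_*^\psi(\G) \simeq \mathcal{F}$, so the canonical map becomes a map $\mathcal{F} \to \mathcal{F}[2d]$, which is zero for $d>0$. The $N$-equivariance of the original $\mathcal{F} \in \mathcal{H}_N$ is essential to the theorem, but you never use it after restricting to $U$ (and indeed $U$ is not $N$-stable, so $j^!\mathcal{F}$ carries no residual $N$-equivariance in any naive sense).

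The fix is to push the reduction one step further before restricting. Both $\Avpsi|_{\mathcal{H}_N}$ and $\text{Av}_*^\psi[2d]|_{\mathcal{H}_N}$, together with the transformation $\eta$ between them, are right-$G$-linear functors $\D(N\backslash G) \to \D(N^-_\psi\backslash G)$ (or equivalently right-$\mathcal{H}_N$-linear after taking $N$-invariants), hence by \cref{UniversalCaseRemark} are determined by their values on the unit $\delta \in \mathcal{H}_N$. The two resulting kernels are the $!$- and $*$-extensions to $G/N$ of the sheaf $\mathcal{L}_\psi$ placed on the single $N^-$-orbit $N^- \cdot eN \cong N^-$. This orbit sits inside the open cell $U$ as the closed subset $N^- \times \{e\} \subset N^- \times T$; since both extensions are automatically objects of $\D(N^-_\psi\backslash G/N) \simeq \D(T)$, they are determined by their restrictions to $U$, where closedness of the orbit makes the $!$- and $*$-extensions coincide. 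This is the cleanness statement that actually drives the proof.
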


\begin{Corollary}\label{BBMShiftedLeftAdjointIsExact}
In the notation of \cref{BBMAdjointTheorem}, if $\C$ has a $G$-action compatible with the $t$-structure, then the functor $\Avpsi[-\text{dim}(N)] \simeq \text{Av}_*^{\psi}[\text{dim}(N)]$ is $t$-exact.
\end{Corollary}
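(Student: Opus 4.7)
The plan is to verify left $t$-exactness and right $t$-exactness of $G := \Avpsi[-d] \simeq \text{Av}_*^{\psi}[d]$ (where $d := \text{dim}(N) = \text{dim}(N^-)$) \emph{separately} on $\C^N$, pairing the two descriptions afforded by \cref{BBMAdjointTheorem} with the complementary amplitude bounds from \cref{CohomologicalAmplitudesForGActingOnC}. The key observation is that the intrinsic amplitude bounds on $\Avpsi$ and $\text{Av}_*^\psi$ are tighter than what one can read off directly from their adjointness properties, and these tighter bounds are exactly what is needed to make the shifted functor land in the heart.

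First I would invoke \cref{CohomologicalAmplitudesForGActingOnC} to record that $\Avpsi$ has cohomological amplitude $[-d,0]$ on the heart of $\C$, while $\text{Av}_*^{\psi}$ has cohomological amplitude $[0,d]$ on the heart of $\C$. Since $\Avpsi$ is continuous and $\text{Av}_*^{\psi}$ is a continuous right adjoint (hence commutes with all limits), applying both parts of the proposition that extends heart-amplitude bounds to all connective or coconnective objects---valid under the right- and left-completeness of the $t$-structures in play, which hold for the invariance categories in question via the machinery of \cref{t-structures on G Categories}---upgrades these to
\[\Avpsi(\C^{\geq 0}) \subseteq (\C^{N^-,\psi})^{\geq -d}, \qquad \text{Av}_*^{\psi}(\C^{\leq 0}) \subseteq (\C^{N^-,\psi})^{\leq d}.\]

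I would then restrict to $\C^N$, on which the forgetful functor $\text{oblv}\colon \C^N \to \C$ is $t$-exact and on which \cref{BBMAdjointTheorem} supplies the equivalence $\Avpsi \simeq \text{Av}_*^{\psi}[2d]$, so $G \simeq \Avpsi[-d] \simeq \text{Av}_*^{\psi}[d]$. For $X \in (\C^N)^{\geq 0}$ the first bound gives $\Avpsi(X) \in (\C^{N^-,\psi})^{\geq -d}$, hence $G(X) = \Avpsi(X)[-d] \in (\C^{N^-,\psi})^{\geq 0}$, yielding left $t$-exactness. Dually, using the second description $G \simeq \text{Av}_*^{\psi}[d]$, for $X \in (\C^N)^{\leq 0}$ the second bound gives $\text{Av}_*^{\psi}(X) \in (\C^{N^-,\psi})^{\leq d}$, so $G(X) \in (\C^{N^-,\psi})^{\leq 0}$, yielding right $t$-exactness.

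The main obstacle is conceptual rather than computational: one must confirm the completeness hypotheses of the amplitude-extension proposition for the $t$-structures on $\C$ and $\C^{N^-,\psi}$, and (for the $\text{Av}_!$ amplitude bound) the compact generation hypothesis on the $t$-structure of $\C$. All of these are built into the standing notion of a compatible $t$-structure developed in \cref{t-structures on G Categories}, so once unpacked, the argument reduces to the degree-shifting bookkeeping above.
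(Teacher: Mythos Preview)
Your argument is essentially the paper's own proof, just with more detail: both pair the amplitude bound $[-d,0]$ for $\text{Av}_!^\psi$ from \cref{CohomologicalAmplitudesForGActingOnC}(2) with the bound $[0,d]$ for $\text{Av}_*^\psi$ from \cref{CohomologicalAmplitudesForGActingOnC}(1), and use the BBM identification on $\C^N$ to conclude that the shifted functor lands in degree zero. The paper compresses this into one sentence, while you spell out the extension from heart-amplitude to full $t$-exactness.

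One small correction: your closing claim that the compact generation hypothesis and the completeness properties are ``built into the standing notion of a compatible $t$-structure'' is not quite right. The definition of compatibility in \cref{t-structures on G Categories} only requires that the shifted coaction map be $t$-exact; compact generation of the $t$-structure is a genuinely separate hypothesis (compare the explicit assumption in \cref{G Category with Compactly Generated Compatible tStructure Means Avpsi Is T Exact Up to Shift}). The paper is equally silent on this point in the statement of \cref{BBMShiftedLeftAdjointIsExact}, so this is not a defect of your argument relative to the paper's, but you should not claim the hypothesis is automatic.
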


\begin{proof}
We have that $\Avpsi[-\text{dim}(N)] \simeq \text{Av}_*^{\psi}[\text{dim}(N)]$ simultaneously has cohomological amplitude in $[0, \text{dim}(N)]$ and in $[-\text{dim}(N), 0]$ by \cref{CohomologicalAmplitudesForGActingOnC}(1) and \cref{CohomologicalAmplitudesForGActingOnC}(2), respectively. 
\end{proof}

As a consequence of \cref{BBMShiftedLeftAdjointIsExact}, we see that the right adjoint functor $\AvN[\text{dim}(N)]$ is left $t$-exact. We also recall the following result of Ginzburg which also gives right $t$-exactness:

\begin{Theorem}\label{Ginzburgt-Exactness} \cite[Theorem 1.5.4]{Gin}
The functor $\AvN[\text{dim}(N)]: \D(G)^{N^{-}, \psi} \to \D(G)^N$ is $t$-exact.
\end{Theorem}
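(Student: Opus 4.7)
The plan is to combine left $t$-exactness of $\AvN[\dim N]$ coming from the BBM adjunction with an upper cohomological amplitude bound coming from the general averaging theory, to first deduce $t$-exactness on the heart, and then promote it to full $t$-exactness by the completeness-propagation technique already used elsewhere in the paper.

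For left $t$-exactness, I would use \cref{BBMAdjointTheorem}: the adjunction $\Avpsi \dashv \AvN$ on the Whittaker subcategory shifts to an adjunction $\Avpsi[-\dim N] \dashv \AvN[\dim N]$, because shifting the left adjoint by $[-n]$ shifts the right adjoint by $[n]$. By \cref{BBMShiftedLeftAdjointIsExact} the functor $\Avpsi[-\dim N] \simeq \text{Av}_*^{\psi}[\dim N]$ is $t$-exact, hence in particular right $t$-exact, so its right adjoint $\AvN[\dim N]$ is left $t$-exact.

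For the upper cohomological amplitude bound, the right translation $G$-action on $\D(G)$ is compatible with the standard $t$-structure by \cref{If Smooth Group Acts on Scheme Induced Action on Sheaves is Compatible with t-Structure}, so \cref{CohomologicalAmplitudesForGActingOnC}(1) gives that $\AvN: \D(G) \to \D(G)^N$ has cohomological amplitude in $[0, \dim N]$. Because the inclusion $\D(G)^{N^-,\psi} \hookrightarrow \D(G)$ is fully faithful (\cref{Unipotent Implies Oblv FF}) and the $t$-structure on $\D(G)^{N^-,\psi}$ is defined so that this inclusion is $t$-exact, the restricted functor $\AvN: \D(G)^{N^-,\psi} \to \D(G)^N$ still has amplitude $[0, \dim N]$, and hence $\AvN[\dim N]$ has amplitude $[-\dim N, 0]$. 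Combining with the left $t$-exactness established above, $\AvN[\dim N]$ sends the heart of $\D(G)^{N^-,\psi}$ into the heart of $\D(G)^N$.

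To pass from heart-preservation to full $t$-exactness I would invoke part (2) of the Proposition stated just before \cref{CohomologicalAmplitudesForGActingOnC}: since $\AvN[\dim N]$ commutes with cofiltered limits (being a right adjoint), and the $t$-structure on the target $\D(G)^N$ is compatible with cofiltered limits, the condition that $\AvN[\dim N]$ sends the heart into $\leq 0$ implies it sends $\C^{\leq 0}$ into $\D^{\leq 0}$, provided the source $t$-structure is left-complete. Left-completeness of $\D(G)^{N^-,\psi}$ follows from left-completeness of $\D(G)$ together with \cref{Conservative t Exact Functor to right-complete Implies right-complete and Dual Statement}(2), using that the forgetful functor is $t$-exact, conservative (by \cref{Unipotent Implies Oblv FF}), and commutes with limits on the subcategory at hand (it is itself a right adjoint to $\Avpsi$ by \cref{BBMAdjointTheorem} on this subcategory).

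The main obstacle I anticipate is the last step: cleanly verifying the completeness hypotheses so that heart-to-heart upgrades to full $t$-exactness without gaps. In particular, one must confirm that $\D(G)^{N^-,\psi}$ really has a left-complete $t$-structure compatible with (co)filtered limits in the sense required, rather than merely right-completeness, since the standard $t$-structure on $\D$-module categories is in general right-complete but not automatically left-complete. Once this completeness is in hand, the proof goes through cleanly as a formal consequence of the adjunction and amplitude bounds.
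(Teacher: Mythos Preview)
The paper does not prove this statement; it simply cites \cite[Theorem 1.5.4]{Gin}. Your approach --- left $t$-exactness from the BBM adjunction, then upgrading the heart-level amplitude bound to full right $t$-exactness via completeness --- is a purely categorical route, quite different from Ginzburg's geometric argument.

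Your identification of the completeness step as the obstacle is correct, but your justification there has a real error. The claim that $\text{oblv}: \D(G)^{N^-,\psi} \to \D(G)$ commutes with limits because it ``is itself a right adjoint to $\Avpsi$ by \cref{BBMAdjointTheorem}'' is not right: BBM shows that $\Avpsi|_{\C^N}$ is everywhere defined and agrees with $\text{Av}_*^{\psi}[2d]$, hence that $\AvN|_{\C^{N^-,\psi}}$ admits a left adjoint; it does not say that $\text{oblv}^{N^-,\psi}: \C^{N^-,\psi} \to \C$ has a globally defined left adjoint. The forgetful functor does preserve limits, but because the twisted-invariants category is constructed as a totalization and $\text{oblv}$ is the projection to its zeroth term --- not because of BBM. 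You also leave unverified the second hypothesis of the Proposition preceding \cref{CohomologicalAmplitudesForGActingOnC}, namely that $\D(G)^{N,\leq 0}$ is closed under cofiltered limits; this can be deduced from the corresponding fact for $\D(G)^{\leq 0}$ (which holds since $G$ is smooth affine, so $\D(G)$ is the derived category of a module category with exact products) together with $\text{oblv}^N$ preserving limits and reflecting the $t$-structure, but that argument must be supplied. Finally, your worry that $\D$-module categories might fail to be left-complete is misplaced here: for smooth $X$ the category $\D(X)$ is the derived category of a Grothendieck abelian category and hence left-complete. With these points correctly justified, your strategy does go through.
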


\begin{Remark}
To compare the above with the notation of \cite{Gin}, we have $\Omega_{N} \simeq \omega_{N}[-\text{dim}(N)] \simeq \underline{k}_N[\text{dim}(N)]$, where $\underline{k}_N$ denotes the constant sheaf. (The notation $\Omega_{N}$ will not be used outside this remark.)
\end{Remark}

\begin{Corollary}\label{G Category with Compactly Generated Compatible tStructure Means Avpsi Is T Exact Up to Shift}
If $\C$ is a $G$-category equipped with a compactly generated $t$-structure compatible with the $G$-action, the functor 
\raggedbottom
\[\C^{N^{-}, \psi} \xhookrightarrow{} \C \xrightarrow{\AvNshifted} \C^N\]

\noindent is $t$-exact.
\end{Corollary}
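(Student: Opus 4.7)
The plan is to reduce the statement to the universal case $\C = \D(G)$, in which Ginzburg's theorem \cref{Ginzburgt-Exactness} provides the required $t$-exactness. The reduction follows the pattern of \cref{BBMShiftedLeftAdjointIsExact}: one uses the coaction map $\mathrm{coact}: \C \to \D(G) \otimes \C$, appropriately shifted, as a conservative $t$-exact functor relating the general situation to the universal one.

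More precisely, the shifted coaction $\mathrm{coact}[-\dim G]: \C \to \D(G) \otimes \C$ is $t$-exact by the compatibility of the $G$-action with the $t$-structure on $\C$ (applied to $G$ itself, cf.\ \cref{If Smooth Group Acts on Scheme Induced Action on Sheaves is Compatible with t-Structure}). It is conservative because pullback along the unit $\{e\} \hookrightarrow G$ recovers $\mathrm{id}_\C$. Taking the respective $(N^-, \psi)$-invariants and $N$-invariants yields a commutative diagram
\[
\xymatrix@R+1em@C+1em{
\C^{N^-,\psi} \ar[r] \ar[d]_{\AvNshifted} & \D(G)^{N^-,\psi} \otimes \C \ar[d]^{\AvNshifted \otimes \mathrm{id}_\C} \\
\C^N \ar[r] & \D(G)^N \otimes \C
}
\]
where the identification $(\D(G) \otimes \C)^{H,\chi} \simeq \D(G)^{H,\chi} \otimes \C$ uses that the residual $G$-action on $\D(G) \otimes \C$ after the coaction lives entirely on the $\D(G)$ factor. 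The horizontal arrows, as invariants of a $t$-exact functor, are $t$-exact by \cref{t-Exact Implies Invariants Are}, and they remain conservative since the corresponding forgetful functors to $\C$ and to $\D(G) \otimes \C$ are.

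The right vertical arrow is $t$-exact: by \cref{Ginzburgt-Exactness} the universal-case functor $\AvNshifted: \D(G)^{N^-,\psi} \to \D(G)^N$ is $t$-exact, and \cref{GaitsgoryExactnessLemma} preserves $t$-exactness upon tensoring with $\mathrm{id}_\C$, where the left $t$-exact direction invokes the compact generation of the $t$-structure on $\C$ from the hypothesis. A diagram chase then concludes the proof: for $\F \in \C^{N^-,\psi,\leq 0}$, the element $\mathrm{coact}[-\dim G](\AvNshifted\F) = (\AvNshifted \otimes \mathrm{id}_\C)(\mathrm{coact}[-\dim G](\F))$ lies in $(\D(G)^N \otimes \C)^{\leq 0}$, and the conservativity and $t$-exactness of the bottom horizontal arrow then force $\AvNshifted\F \in \C^{N,\leq 0}$; the dual argument handles $\C^{N^-,\psi,\geq 0}$.

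The main obstacle is the careful bookkeeping of the residual $G$-action on $\D(G) \otimes \C$: one must verify that the action making the coaction $G$-equivariant is the one carried by the $\D(G)$ factor, so that averaging there is precisely $\AvNshifted \otimes \mathrm{id}_\C$ rather than a functor genuinely mixing both tensor factors. Once this identification is secured, the remainder of the argument is a formal enhancement of the strategy used to prove \cref{BBMShiftedLeftAdjointIsExact}, with Ginzburg's theorem replacing the direct amplitude computation for $\D(G)$.
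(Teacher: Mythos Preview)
Your proposal is correct and follows essentially the same approach as the paper: reduce to the universal case $\C = \D(G)$ via the (shifted) coaction map, invoke \cref{Ginzburgt-Exactness} there, and transport $t$-exactness back through the commutative square using \cref{GaitsgoryExactnessLemma} and \cref{t-Exact Implies Invariants Are}. The paper's proof is terser and draws the diagram with an extra middle column (passing through $\C$ itself), but the content is identical, and the paper likewise defers the bookkeeping you flag to ``as in the proof of \cref{CohomologicalAmplitudesForGActingOnC}.''
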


\begin{proof}
When $\C = \D(G)$, this result follows from \cref{Ginzburgt-Exactness}. For general $\C$ equipped with a compactly generated $t$-structure compatible with the $G$-action, this follows since the diagram

\begin{equation*}
  \xymatrix@R+2em@C+2em{
   \C^{N^{-}, \psi} \ar[r]^{\text{oblv}} \ar[d]  & \C \ar[r]^{\AvNshifted} \ar[d] &  \C^N \ar[d]\\
    \D(G)^{N^{-}, \psi} \otimes \C \ar[r]^{\text{oblv} \otimes \text{id}_{\C}}  & \D(G) \otimes \C \ar[r]^{\AvNshifted \otimes \text{id}_{\C}} & \D(G)^N \otimes \C
  }
 \end{equation*}
 
\noindent commutes, where the vertical arrows are the coaction maps, which are conservative and $t$-exact (up to shift), as in the proof of \cref{CohomologicalAmplitudesForGActingOnC}.  
\end{proof}

\subsection{Other Categorical Preliminaries}
\subsubsection{A Base Change Lemma}\label{A Base Change Lemma Section}
As in \cite{GaRoI}, we let $\text{1-Cat}^{\text{St,cocmpl}}_{\text{cont}}$ denote the category of presentable stable $\infty$-categories whose functors commute with all colimits. Assume $\Aone$ is a commutative algebra object in $\text{1-Cat}^{\text{St,cocmpl}}_{\text{cont}}$. Assume $\algobj$ is an associative algebra object of $\Aone$ and $\mathcal{L}$ is a commutative algebra object of $\Aone$. We have a symmetric monoidal functor $\text{ind}_{\mathcal{L}}: \Aone \to \Atwo$ given by the left adjoint to the forgetful functor \cite[Corollary 4.2.4.8]{LuHA}, where the symmetric monoidal structure on $\Atwo$ is given by \cite[Theorem 4.5.2.1]{LuHA}, and the symmetric monoidality of the functor is given by \cite[Theorem 4.5.3.1]{LuHA}. In particular, $\newalgobj := \text{ind}_{\mathcal{L}}(\algobj)$ is an associative algebra object of $\Atwo$, and we obtain an induced functor $\widetilde{\text{ind}}_{\mathcal{L}}: \algobj\text{-mod}(\Aone) \to \newalgobj\text{-mod}(\Atwo)$. 

\begin{Proposition}\label{Categorical Extension of Scalars Proposition}
There is an equivalence of categories making the following diagram canonically commute:
    \begin{equation*}
  \xymatrix@R+2em@C+2em{
\algobj\text{-mod}(\Aone) \ar[d]_{\widetilde{\text{ind}}_{\mathcal{L}}}  \ar[dr]^{\newalgobj \otimes_{\algobj}(-)}  \\
\newalgobj\text{-mod}(\mathcal{L}\text{-mod}(\Aone)) & \newalgobj\text{-mod}(\Aone) \ar[l]_{\sim}
  }
 \end{equation*}
 
\noindent where the bottom arrow is the left adjoint to the forgetful functor induced by $\mathcal{L}\text{-mod}(\Aone) \xrightarrow{\text{oblv}} \Aone$.
\end{Proposition}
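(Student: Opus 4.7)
The plan is to prove the statement in two steps. First I would show that the bottom functor $\newalgobj\text{-mod}(\Aone) \to \newalgobj\text{-mod}(\Atwo)$ is an equivalence. Second, I would verify that the resulting triangle commutes by comparing both composites using the universal property of $\text{ind}_{\mathcal{L}}$.

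For the first step, my strategy is to argue that the forgetful functor $\newalgobj\text{-mod}(\Atwo) \to \newalgobj\text{-mod}(\Aone)$ (induced by $\Atwo \xrightarrow{\text{oblv}} \Aone$) is itself an equivalence; since both have continuous left adjoints, taking adjoints then shows the stated bottom arrow is an equivalence. The key observation is that because $\newalgobj = \text{ind}_{\mathcal{L}}(\algobj)$ is by construction an algebra object of $\Atwo$, the unit morphism of $\newalgobj$ in $\Aone$ canonically factors as $\mathbf{1}_{\Aone} \to \mathcal{L} \to \newalgobj$. Consequently every $\newalgobj$-module $M$ in $\Aone$ acquires a canonical $\mathcal{L}$-module structure by restriction along $\mathcal{L} \to \newalgobj$, and since $\mathcal{L}$ is commutative this $\mathcal{L}$-action is central with respect to the $\newalgobj$-action. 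The action therefore descends to an action $\newalgobj \otimes_{\mathcal{L}} M \to M$ in $\Atwo$, giving $M$ a $\newalgobj$-module structure there. This construction furnishes the inverse.

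For the second step, I would compute both composites on an arbitrary $M \in \algobj\text{-mod}(\Aone)$. By symmetric monoidality of $\text{ind}_{\mathcal{L}}$, $\widetilde{\text{ind}}_{\mathcal{L}}(M)$ is $\mathcal{L} \otimes_{\Aone} M$ equipped with its induced $\newalgobj$-action. On the other hand, $\newalgobj \otimes_{\algobj} M \simeq (\mathcal{L} \otimes_{\Aone} \algobj) \otimes_{\algobj} M \simeq \mathcal{L} \otimes_{\Aone} M$ in $\Aone$, and under the equivalence of the first step this lifts to the same underlying object of $\newalgobj\text{-mod}(\Atwo)$ with the same $\newalgobj$-action. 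That the module data agree coherently is a formal consequence of the universal property of $\text{ind}_{\mathcal{L}}$ as left adjoint to the forgetful functor, applied to the simplicial objects encoding module structures.

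I expect the main obstacle to lie in the first step: establishing the equivalence $\newalgobj\text{-mod}(\Atwo) \simeq \newalgobj\text{-mod}(\Aone)$ with full $\infty$-categorical coherence, rather than merely at the level of underlying objects. The cleanest formal route is likely to invoke \cite[Theorem 4.5.3.1]{LuHA} for the symmetric monoidal structure on $\Atwo$ making $\text{ind}_{\mathcal{L}}$ symmetric monoidal, together with the general principle (developed in \cite[\S4.8.5]{LuHA}) that for a commutative algebra $R$ in a symmetric monoidal $\infty$-category and an $R$-algebra $B$, the forgetful functor induces an equivalence between $B$-modules in $R$-mod and $B$-modules in the ambient category. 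Applied to $R = \mathcal{L}$ and $B = \newalgobj$, this yields precisely the needed identification, and tracing through the adjunctions identifies the bottom arrow in the diagram with the resulting equivalence.
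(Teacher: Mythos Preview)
Your approach is correct and essentially the same as the paper's: the equivalence at the bottom is obtained by citing the general fact that $B$-modules in $R\text{-mod}$ agree with $B$-modules in the ambient category for $B$ an $R$-algebra (the paper cites \cite[Proposition 8.5.4]{GaRoI} rather than \cite[\S4.8.5]{LuHA}, but these encode the same phenomenon). The one place where the paper is more efficient is commutativity: rather than computing both composites explicitly, the paper simply observes that all three arrows are left adjoints to the evident forgetful functors, and since those forgetful functors visibly compose correctly, the triangle of left adjoints commutes by uniqueness of adjoints. This bypasses your second step entirely.
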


\begin{proof}
The arrows are all the left adjoints to the respective forgetful functors, so the diagram commutes. The fact that the bottom functor is an equivalence is a direct consequence of \cite[Proposition 8.5.4]{GaRoI}, where, in their notation, $\mathcal{A}_1, \mathcal{A}_2, \mathbf{M}_1, \mathbf{M}_2$, and $\mathbf{A}$ correspond to our $\algobj, \mathcal{L}\text{-mod}, \Aone, \mathcal{L}\text{-mod}$, and $\mathbf{A}$, respectively. 
\end{proof}
\newcommand{\indkL}{\text{ind}_k^L}
Let $L/k$ be an extension of (classical) fields. Then we can set $\Aone := \DGCatContk$ and $\mathcal{L} := L\text{-mod}$, so that $\mathcal{L}\text{-mod}(\DGCatContk)$ can be identified with the associated category of DG categories over $L$, $\DGCatContL$. Let $\indkL: \DGCatContk \to \DGCatContL$ denote the functor $\text{ind}_{\mathcal{L}}$ as above. 

\begin{Lemma}\label{D-modules Base Change wrt Extension of Scalars}
Let $X$ be any $k$-scheme, and let $X_L := X \times_{\text{Spec}(k)} \text{Spec}(L)$ denote the base change. 
\begin{enumerate}
    \item There are canonical equivalences $\indkL(\IndCoh(X)) \simeq \IndCoh(X_L)$ and $\indkL(\D(X)) \simeq \D(X_L)$.
    \item If $X$ is equipped with an action of an affine algebraic group $H$ and $\chi: H \to \mathbb{G}_a$ is any character, then $\indkL(\D(X)^{H,w}) \simeq \D(X_L)^{H_L, w}$ and $\indkL(\D(X)^{H, \chi}) \simeq \D(X_L)^{H_L, \chi_L}$, where $\chi_L := \chi \times_{\text{Spec}(k)} \text{Spec}(L)$. 
\end{enumerate}
\end{Lemma}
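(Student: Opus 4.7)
The plan is to show that both assertions follow from the symmetric monoidality of $\indkL$ together with a Künneth-type base change formula for $\IndCoh$.

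For part (1), I would first establish the statement for $\IndCoh$ on an arbitrary (locally almost of finite type) $k$-scheme $X$. Since $\indkL$ is a symmetric monoidal functor of symmetric monoidal $(\infty,2)$-categories, the natural candidate identification is
\[
\indkL(\IndCoh(X)) = \IndCoh(X) \underset{\text{Vect}_k}{\otimes} L\text{-mod} \xrightarrow{\sim} \IndCoh(X_L),
\]
and this is precisely the Künneth formula for $\IndCoh$ applied to the product $X \times_{\Spec(k)} \Spec(L)$. This formula, proved in \cite[Chapter 3]{GaRoII} (and cited in the proof of \cref{Invariants of G Acting on Sheaves on X is Invariants on X Mod G}), is valid because $\IndCoh(\Spec(L)) \simeq L\text{-mod}$ is dualizable (in fact rigid) over $\text{Vect}_k$. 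The $\D$-module statement then follows since $\D(X) := \IndCoh(X_{dR})$ and, because $L$ is a field (so that $\Spec(L)_{dR} = \Spec(L)$ as prestacks, as the functor $R \mapsto R_{\text{red}}$ does not affect maps out of a field), we have a canonical isomorphism of prestacks $(X_L)_{dR} \simeq X_{dR} \times_{\Spec(k)} \Spec(L)$; one then applies the $\IndCoh$ result to $X_{dR}$.

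For part (2), I would exploit the description of invariance as a tensor product. Using \cref{Inv=Coinv}(2) together with \cref{RemarkOnModulesVsComodules}, we have a canonical equivalence
\[
\D(X)^{H, w} \simeq \D(X)_{H, w} \simeq \text{Vect}_k \underset{\IndCoh(H)}{\otimes} \D(X).
\]
Applying the symmetric monoidal functor $\indkL$, which commutes with relative tensor products of module categories by general nonsense (see \cite[Chapter 1, Section 10]{GaRoI}), yields
\[
\indkL(\D(X)^{H, w}) \simeq L\text{-mod} \underset{\indkL(\IndCoh(H))}{\otimes} \indkL(\D(X)),
\]
and the first identifications of part (1) turn this into $L\text{-mod} \otimes_{\IndCoh(H_L)} \D(X_L) \simeq \D(X_L)^{H_L, w}$. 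The same proof works in the twisted case, using \cref{Twisted Inv=Twisted Coinv} to express $\D(X)^{H, \chi}$ as a tensor product $\text{Vect}_{\chi} \otimes_{\D(H)} \D(X)$ and noting that $\indkL(\text{Vect}_\chi) \simeq (\text{Vect}_L)_{\chi_L}$ for the pulled-back character sheaf, since the exponential $\D$-module base-changes correctly.

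The only potentially subtle point will be verifying the compatibility of $\indkL$ with the relative tensor product over $\IndCoh(H)$. Concretely one must check that the two-sided bar construction computing the tensor product is preserved by $\indkL$; this reduces to the assertion that $\indkL$ commutes with the geometric realization of a simplicial object, which is true because $\indkL$ is a left adjoint (hence colimit-preserving), as observed in \cref{Categorical Extension of Scalars Proposition}. Once this is settled, the rest of the argument is a direct transport of the equivalences from \cref{Inv=Coinv} and \cref{Invariants for Character Sheaves Is Well Defined} across the symmetric monoidal functor $\indkL$.
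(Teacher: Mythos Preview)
Your proposal is correct and follows essentially the same approach as the paper: for part (1) both you and the paper invoke the K\"unneth formula for $\IndCoh$ via dualizability (the paper phrases this through self-duality of $\IndCoh$ on ind-inf-schemes, citing \cite[Corollary 10.3.6]{GaiIndCoh}), and for part (2) both arguments pass to coinvariants via \cref{Inv=Coinv}, write the result as a bar-type colimit, and use that $\indkL$ is a symmetric monoidal left adjoint to commute it past the colimit. The only cosmetic difference is that the paper writes out the simplicial object $\IndCoh(X_{dR}) \otimes \IndCoh(H)^{\otimes i}$ explicitly rather than packaging it as a relative tensor product.
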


\newcommand{\Symkt}{\text{Sym}_k(\LT)}
\newcommand{\SymLt}{\text{Sym}_L(\LT)}
\newcommand{\Symktmod}{\text{Sym}_k(\LT)\text{-mod}}
\newcommand{\SymLtmod}{\text{Sym}_L(\LT)\text{-mod}}
\newcommand{\Vectk}{\text{Vect}_k}
\newcommand{\VectL}{\text{Vect}_L}

\begin{proof}
Both claims of (1) are general properties of $\IndCoh$. For example, we can show this via the fact that $\IndCoh$ on any ind-inf-scheme (such as $X$ or $X_{dR}$) is dualizable (in fact, it is self-dual by \cite[Chapter 3, Section 6.2.3]{GaRoII}) and therefore the tensor product of the category of ind-coherent sheaves on ind-inf-schemes is IndCoh of the product of the ind-inf-schemes \cite[Corollary 10.3.6]{GaiIndCoh}. 

We show the first claim of (2); the second follows by an identical argument. We have that $\D(X)^{H, w} \xleftarrow{\sim} \D(X)_{H, w}$ by \cref{Inv=Coinv}. Write $\D(X)_{H, w}$ as a colimit of maps of objects of the form $\IndCoh(X_{dR}) \otimes \IndCoh(H)^{\otimes i}$. Since $\indkL$ is a left adjoint and therefore commutes with colimits, so we see that $\indkL(\D(X)^{H,w})$ is a colimit of objects of the form $\indkL(\IndCoh(X_{dR}) \otimes \IndCoh(H)^{\otimes i})$. Using the symmetric monoidality of $\indkL$ above and \cref{D-modules Base Change wrt Extension of Scalars}(1), we see that this colimit diagram is precisely the diagram whose colimit is $\D(X_L)_{H_L, w}$. Applying \cref{Inv=Coinv} once again, we obtain our desired claim. 
\end{proof}

\begin{Corollary}
For any map $\Symkt \to L$, there is a canonical equivalence
\raggedbottom
\[\D(X)^{T, w} \otimes_{\Symkt\text{-mod}} \text{Vect}_L \simeq \D(X_L)^{T_L, w} \otimes_{\SymLt\text{-mod}} \text{Vect}_L\]

\noindent of objects in $\DGCatContL$.
\end{Corollary}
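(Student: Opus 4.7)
The plan is to factor the scalar-extension map $\Symkt \to L$ through $\SymLt = \Symkt \otimes_k L$ and then invoke \cref{D-modules Base Change wrt Extension of Scalars}(2). The key identity, which is a special case of \cref{Categorical Extension of Scalars Proposition} applied with $\algobj = \Symktmod$ and $\mathcal{L} = \VectL$, says that for any $\Symktmod$-module $\C$ in $\DGCatContk$ one has $\C \otimes_{\Symktmod} \SymLtmod \simeq \indkL(\C)$ as $\SymLtmod$-modules; note that $\SymLtmod = \indkL(\Symktmod)$ either by direct inspection or as a special case of \cref{D-modules Base Change wrt Extension of Scalars}(1) applied to $X = \LTd$.

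First, I would observe that the given $k$-algebra map $\Symkt \to L$ extends uniquely to an $L$-algebra map $\SymLt \to L$, so that $\VectL$ is naturally a $\SymLtmod$-module and $\VectL \simeq \SymLtmod \otimes_{\SymLtmod} \VectL$, with the underlying $\Symktmod$-action recovering the original one. Associativity of the relative tensor product then yields
\[\D(X)^{T,w} \otimes_{\Symktmod} \VectL \;\simeq\; \bigl(\D(X)^{T,w} \otimes_{\Symktmod} \SymLtmod\bigr) \otimes_{\SymLtmod} \VectL.\]
Applying the key identity to the inner factor rewrites it as $\indkL(\D(X)^{T,w})$, and then \cref{D-modules Base Change wrt Extension of Scalars}(2) identifies this with $\D(X_L)^{T_L,w}$. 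Substituting gives the desired equivalence, and canonicality is built into each of the steps.

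The one substantive point requiring care, and thus the main obstacle, is verifying that under the equivalence of \cref{D-modules Base Change wrt Extension of Scalars}(2) the residual $\SymLtmod$-action on $\indkL(\D(X)^{T,w})$ (coming from $\SymLtmod \simeq \indkL(\Symktmod)$) corresponds to the canonical $\SymLtmod$-action on $\D(X_L)^{T_L,w}$ arising from the weak $T_L$-action on $X_L$. This should follow formally from the compatibility of weak invariants with base change: since the $T$-action on $X$ base-changes to the $T_L$-action on $X_L$, so too does the $\IndCoh(\LTd)$-action on weak invariants, and the identification at the level of the monoidal acting categories is precisely \cref{D-modules Base Change wrt Extension of Scalars}(1) applied to $X = \LTd$, which one would want to cite functorially to conclude that the actions match.
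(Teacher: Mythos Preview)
Your proposal is correct and follows essentially the same approach as the paper: factor $\VectL$ as $\SymLtmod \otimes_{\SymLtmod} \VectL$, apply \cref{Categorical Extension of Scalars Proposition} to identify $\D(X)^{T,w} \otimes_{\Symktmod} \SymLtmod$ with $\indkL(\D(X)^{T,w})$, and then invoke \cref{D-modules Base Change wrt Extension of Scalars}(2). The paper's proof is terser and does not explicitly flag the compatibility-of-actions point you raise, but the argument is the same.
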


\begin{proof}
Note that
\raggedbottom
\[\D(X)^{T,w} \otimes_{\Symktmod} \VectL \simeq \text{ind}^{\SymLt}_{\Symkt}(\D(X)^{T,w}) \otimes_{\SymLtmod} \VectL\]

\noindent and so, by applying \cref{Categorical Extension of Scalars Proposition} and then applying \cref{D-modules Base Change wrt Extension of Scalars} we see that the above category is equivalent to
\raggedbottom
\[\indkL(\D(X)^{T,w}) \otimes_{\SymLtmod} \VectL \simeq \D(X_L)^{T_L, w} \otimes_{\SymLtmod} \VectL\]

\noindent of objects of $\DGCatContL$. 
\end{proof}

\subsubsection{Categorical Lemmas from Adjunctions}
The following lemma will later be used to determine the essential image of a fully faithful functor. 

\begin{Lemma}\label{FullyFaithfulAdjointProperty}
Assume we have an adjoint pair $(L, R)$ where $L: \C \to \D$. Let $u$ (respectively, $c$) denote the unit (respectively, counit) of this adjunction such that $R$ is fully faithful. Then we have:
\begin{enumerate}
    \item For any $X \in \C$, $L(u(X))$ is an isomorphism. 
    \item The essential image of $R$ is precisely those objects $C \in \C$ for which the unit map $C \to RL(C)$ is an equivalence. 
\end{enumerate}
\end{Lemma}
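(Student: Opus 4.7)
The plan is to deduce both statements from the triangle identities for the adjunction together with the standard categorical fact that, for an adjunction $L \dashv R$, the right adjoint $R$ is fully faithful if and only if the counit $c: LR \to \mathrm{id}_\D$ is a natural isomorphism. Both parts will follow by elementary manipulation, so there is no serious obstacle; the main point is just to be careful about which triangle identity one invokes.

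For part (1), I would start by recalling the triangle identity
\[
c_{L(X)} \circ L(u_X) = \mathrm{id}_{L(X)}
\]
for every $X \in \C$. Since $R$ is fully faithful, the counit $c$ is a natural isomorphism, so in particular $c_{L(X)}$ is invertible. The displayed identity then forces $L(u_X) = c_{L(X)}^{-1}$, which is an isomorphism.

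For part (2), one direction is immediate: if $C \to RL(C)$ is an equivalence, then $C \simeq RL(C)$, so $C$ lies in the essential image of $R$. For the converse, suppose $C \simeq R(D)$ for some $D \in \D$. Applying the other triangle identity
\[
R(c_D) \circ u_{R(D)} = \mathrm{id}_{R(D)},
\]
we see that $u_{R(D)}$ has a left inverse given by $R(c_D)$. Since $R$ is fully faithful, $c_D$ is an isomorphism, so $R(c_D)$ is an isomorphism, and therefore $u_{R(D)}$ is an isomorphism. Transporting along the equivalence $C \simeq R(D)$ and using naturality of $u$ then gives that $u_C: C \to RL(C)$ is also an equivalence, completing the proof.

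The only subtlety worth flagging is that we are working in the $\infty$-categorical setting, so the triangle identities hold up to coherent homotopy rather than on the nose; however, for checking that a morphism is an equivalence this is irrelevant, and the argument goes through verbatim.
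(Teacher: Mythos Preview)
Your proof is correct and follows essentially the same approach as the paper: both invoke the two triangle identities together with the fact that full faithfulness of $R$ forces the counit to be an equivalence, then read off (1) and (2) directly. The only addition in your version is the brief remark about the $\infty$-categorical setting, which is fine but not needed here.
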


\begin{proof}
General properties of adjunction give that the composite
\raggedbottom
\[L(X) \xrightarrow{L(u(X))} LRL(X) \xrightarrow{c(R(X))} L(X)\]

\noindent can be identified with the identity map. However, since $R$ is fully faithful, the counit is an equivalence, and therefore so is $L(u(X))$. 

We now show (2). If the unit map is an equivalence, then $C \xrightarrow{\sim} R(L(C))$. Conversely, assume that $C \in \C$ is in the essential image. Then $C$ is isomorphic to $R(D)$ for some $D \in \D$. We then obtain the composite
\raggedbottom
\[R(D) \xrightarrow{u(R(D))} RLR(D) \xrightarrow{R(c(D))} R(D)\]

\noindent can be identified with the identity, and the rightmost arrow is an isomorphism since $R$ is fully faithful. Therefore, $u(R(D))$ is an isomorphism, and thus so too is the map $u(C)$. 
\end{proof}

\subsubsection{A Lemma on Compact Objects}
We will repeatedly use the following lemma on compact objects of a subcategory of a compactly generated category, so we recall it here:

\begin{Lemma}\cite[Lemma 2.2]{NeemanTheConnectionBetweenKTheoryLocalizationTheoremAndSmashingSubcategories}\label{Subcategory Generated By Compact Objects Implies Inclusion Preserves Compact Objects}\label{Subcategory Generated by Compacts of Big Category Has Continuous Right Adjoint}
Assume $\D$ is a compactly generated DG category and $R$ is a subset of compact objects of $\D^{c}$ closed under cohomological shifts (i.e. suspensions). Let $\mathcal{R}$ denote the full subcategory of $\D$ generated under colimits by $R$. Then the inclusion functor $\mathcal{R} \xhookrightarrow{} \D$ preserves compact objects and in particular admits a continuous right adjoint. 
\end{Lemma}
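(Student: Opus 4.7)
The plan is to realize $\mathcal{R}$ as the ind-completion of a small stable subcategory of $\D^c$, namely the thick envelope of $R$, and then read off both conclusions from general facts about ind-completions.

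First, let $\overline{R}$ denote the smallest full stable subcategory of $\D$ containing $R$ and closed under retracts; concretely it is the idempotent completion of the full subcategory generated by $R$ under finite colimits and shifts. Because the finite-colimit/retract closure of a collection of compact objects in $\D$ consists of compact objects, $\overline{R}$ is an essentially small, idempotent-complete stable $\infty$-subcategory of $\D^c$. The inclusion $\overline{R} \hookrightarrow \D$ therefore factors through $\D^c$.

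Next, consider the ind-extension $F: \mathrm{Ind}(\overline{R}) \to \D$. Since objects of $\overline{R}$ are compact in $\D$, for $x, y \in \overline{R}$ the mapping space $\uHom_{\mathrm{Ind}(\overline{R})}(x, y) \simeq \uHom_{\overline{R}}(x, y) \simeq \uHom_{\D}(x,y)$, and by the universal property of ind-completion this forces $F$ to be fully faithful. Its essential image is closed under colimits (since $F$ is cocontinuous and fully faithful) and contains $R$, so it contains $\mathcal{R}$; conversely every object of the essential image is a filtered colimit of objects of $\overline{R}$, each of which is built from $R$ by colimits and retracts, hence lies in $\mathcal{R}$. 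Therefore $F$ restricts to an equivalence $\mathrm{Ind}(\overline{R}) \xrightarrow{\sim} \mathcal{R}$, compatibly with the inclusions into $\D$.

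Under this identification, the compact objects of $\mathcal{R}$ are precisely the retracts in $\mathrm{Ind}(\overline{R})$ of objects of $\overline{R}$. But retracts of compact objects of $\D$ are compact in $\D$ (by idempotent completeness of the presentable category $\D$), so the inclusion $\mathcal{R} \hookrightarrow \D$ sends $\mathcal{R}^c$ into $\D^c$. This gives the first conclusion. For the second, the inclusion is a continuous functor between presentable stable $\infty$-categories, hence admits a right adjoint by the adjoint functor theorem (see e.g. \cite[Chapter 1]{GaRoI}); and a left adjoint between compactly generated categories preserves compact objects if and only if its right adjoint is continuous, so the right adjoint is continuous.

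The only step requiring real attention is the identification of the essential image of $F$ with $\mathcal{R}$; everything else is a formal consequence of the general theory of ind-completions and the hypothesis that $R \subseteq \D^c$. This is also the only place the assumption that $R$ is closed under shifts is convenient, since it shortens the description of $\overline{R}$, though it is not strictly necessary.
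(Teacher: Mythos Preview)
The paper does not supply its own proof of this lemma; it is stated with a citation to Neeman and then used as a black box. Your argument is correct and is essentially the standard one: identify $\mathcal{R}$ with $\mathrm{Ind}(\overline{R})$ for $\overline{R}$ the thick subcategory of $\D^c$ generated by $R$, and read off both conclusions from general properties of ind-completions. One small point worth making explicit is that $\mathcal{R}$, being closed under all small colimits in a presentable stable category, is automatically closed under retracts (idempotents split via sequential colimits), which is what guarantees $\overline{R} \subseteq \mathcal{R}$ and hence that the essential image of $F$ is exactly $\mathcal{R}$ rather than possibly larger.
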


\subsubsection{A Lemma on AB5 Categories}
Recall that an abelian category $\mathcal{A}$ is said to be \textit{AB5} if it is closed under colimits and filtered colimits of short exact sequences exist and are exact. We now present a lemma which is likely well-known, but we were unable to locate a reference for. We thank Rok Gregoric for explaining its proof to us:

\begin{Lemma}\label{Colim in AB5 is Colim of Image}
Fix an AB5 abelian category $\mathcal{A}$ and some filtered colimit $\F := \text{colim}_i \F_i$ in $\mathcal{A}$. Let $\phi_i: \F_i \to \F$ denote the structure maps, and let $\tilde{\F}_i$ denote the image of $\phi_i$ for each $i$. Then the canonical filtered colimit $\text{colim}_i\tilde{\F}_i$ is equivalent to $\F$, i.e. $\F$ may be written as an increasing union of the images of the $\phi_i$. 
\end{Lemma}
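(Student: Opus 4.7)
The plan is to use the epi-mono factorization of each $\phi_i$ to construct mutually inverse maps between $\F$ and $\text{colim}_i \tilde{\F}_i$, using exactness of filtered colimits (the defining property of AB5) to compare them.

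First, for each $i$, factor $\phi_i$ as an epimorphism $\psi_i : \F_i \twoheadrightarrow \tilde{\F}_i$ followed by the monomorphism $\iota_i : \tilde{\F}_i \hookrightarrow \F$. For $i \leq j$, the transition map $\F_i \to \F_j$ induces, by functoriality of the image, a map $\tilde{\F}_i \to \tilde{\F}_j$, making $i \mapsto \tilde{\F}_i$ into a filtered diagram in $\mathcal{A}$. The $\iota_i$ then assemble into a canonical map
\[ \beta : \text{colim}_i \tilde{\F}_i \to \F \]
by the universal property of the colimit, and the $\psi_i$ (with $\F$ as constant diagram on the left and $\tilde{\F}_i$ on the right) similarly assemble into a map
\[ \alpha := \text{colim}_i \psi_i \; : \; \F = \text{colim}_i \F_i \to \text{colim}_i \tilde{\F}_i. \]
By construction, the composition $\beta \circ \alpha$ is the colimit of the $\phi_i$'s, which is the identity on $\F$.

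Next, I would show that $\alpha$ is an epimorphism. Setting $K_i := \ker(\psi_i)$, one has short exact sequences
\[ 0 \to K_i \to \F_i \xrightarrow{\psi_i} \tilde{\F}_i \to 0 \]
in $\mathcal{A}$, and these fit together into a short exact sequence of filtered diagrams. The AB5 hypothesis says precisely that the filtered colimit functor is exact, so applying it yields a short exact sequence
\[ 0 \to \text{colim}_i K_i \to \F \xrightarrow{\alpha} \text{colim}_i \tilde{\F}_i \to 0, \]
so $\alpha$ is in particular an epimorphism. Combined with $\beta \alpha = \text{id}_\F$, this exhibits $\alpha$ as both an epimorphism and a split monomorphism, hence an isomorphism in the abelian category $\mathcal{A}$. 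Then $\beta$ is the two-sided inverse of $\alpha$, and is itself an isomorphism, which is what was to be shown.

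There is no serious obstacle: the only subtle point is to make sure the epi-mono factorizations assemble into an honest filtered diagram and that the composition $\beta \alpha$ really is the identity, both of which are formal consequences of the uniqueness of the image factorization. The substantive input — that filtered colimits of short exact sequences are short exact — is exactly the AB5 assumption, and is used once, to establish that $\alpha$ is epic.
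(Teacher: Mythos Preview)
Your proof is correct and takes a different route from the paper's. The paper realizes each image $\tilde{\F}_i$ as the equalizer of the two maps $\F \rightrightarrows \F \coprod_{\F_i} \F$, then uses AB5 to commute the filtered colimit past this finite limit, landing on the equalizer of $\F \rightrightarrows \F \coprod_{\F} \F$, which is $\F$ itself. Your argument instead works directly with the epi--mono factorization and uses AB5 in its short-exact-sequence form to show that $\alpha$ is epic; combined with $\beta\alpha = \text{id}_{\F}$ this finishes. Both arguments invoke AB5 exactly once. Your approach is arguably more elementary, since it avoids the description of the image as an equalizer of a cokernel pair, while the paper's argument is a one-line computation once that description is in hand. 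A minor variant of your argument: one could instead note that $\beta$, being a filtered colimit of the monomorphisms $\iota_i$ into the constant diagram $\F$, is itself a monomorphism by AB5, and then conclude directly from $\beta\alpha = \text{id}_{\F}$ that $\beta$ is an isomorphism; this would spare you the kernel diagram.
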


\begin{proof}
We may realize the image $\tilde{\F}_i$ as the limit of the two arrows $\F \rightrightarrows \F \coprod_{\F_i} \F$ which are given by the universal property of the coproduct. We then see that 
\raggedbottom
\[\text{colim}_i \tilde{\F}_i \cong \text{colim}_i\text{lim}(\F \rightrightarrows \F \coprod_{\F_i} \F) \] \[\cong \text{lim}(\text{colim}_i\F \rightrightarrows \text{colim}_i(\F \coprod_{\F_i} \F)) \cong \text{lim}(\F \rightrightarrows \F \coprod_{\text{colim}_i(\F_i)} \F)\]

\noindent and since $\text{colim}_i\F_i \cong \F$ by assumption, we see that this expression is equivalently given by the image of the identity, i.e. $\F$. Here, the commutation of limits and filtered colimits is given by the axiom of AB5 categories because, by assumption, the colimit functor is left exact, and left exact functors preserve all finite limits. 
\end{proof}

\subsubsection{t-Structures on Quotient Categories}\label{Intro to t-Structures on Quotient Categories}
\newcommand{\Ccirc}{\mathring{\C}}
\newcommand{\tildeR}{\tilde{R}}
\newcommand{\tildeL}{\tilde{L}}

We set the following notation for this subsection: Assume $I_*: \C_0 \xhookrightarrow{} \C$ is a fully faithful embedding in $\DGCatContk$ which admits a (continuous) right adjoint $I^!$. Furthermore, assume $\C$ is equipped with an accessible $t$-structure (i.e. the $\infty$-category $\C^{\geq 0}$ is accessible) which is compatible with filtered colimits, i.e. $\C^{\geq 0}$ is closed under filtered colimits. Furthermore assume the essential image of $I_*$ is closed under truncation functors (and so, in particular, $\C_0$ admits a unique $t$-structure so that $I_*$ is $t$-exact) and the essential image of $I_*^{\heartsuit}: \C_0^{\heartsuit} \xhookrightarrow{} \C^{\heartsuit}$ is closed under subobjects. Let $\Ccirc := \text{ker}(\C \xrightarrow{I^!} \C_0)$. Note that the inclusion functor $J_*: \Ccirc \xhookrightarrow{} \C$ admits a (necessarily continuous) left adjoint $J^!$ because $I^!$ preserves small limits.

By definition, the functor $J^!$ is a \text{localization}:

\begin{Definition}\label{LocalizationDefinition}
A functor with a fully faithful right adjoint is called a \textit{localization}.  
\end{Definition}
    
When clear from context, we will refer to $J^!$ as \textit{the quotient functor}.

\begin{Example}\label{LocalizationExample}
If $j: U \xhookrightarrow{} X$ is an open embedding of algebraic varieties, the functor $j^!: \D(X) \to \D(U)$ is a localization. 
\end{Example}

We will now recall the following general result of \cite{RaskinAffineBBLocalization}.

\begin{Proposition}\cite[Section 10.2]{RaskinAffineBBLocalization}\label{Raskin t-ExactCatProp}
 There is a unique $t$-structure on $\Ccirc$ such that $J^!$ is $t$-exact, uniquely determined by setting $\Ccirc^{\geq 0}$ as the full subcategory of objects $\F \in \Ccirc$ for which $J_*(\F) \in \C^{\geq 0}$. 
\end{Proposition}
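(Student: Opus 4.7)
The plan is to define $\Ccirc^{\geq 0}$ by the prescription in the statement, to take $\Ccirc^{\leq 0}$ to be its left orthogonal inside $\Ccirc$, and then to verify the $t$-structure axioms together with $t$-exactness of $J^!$ directly. Orthogonality and closure under the appropriate shifts will be immediate from the definitions and the exactness of $J_*$; the core work lies in constructing truncation triangles and in establishing left $t$-exactness of $J^!$.

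For the truncations, given $\F \in \Ccirc$, I would set $C := J_*\F \in \C$, form the truncation triangle $\tau^{\leq 0} C \to C \to \tau^{\geq 1} C$ in $\C$, and apply the exact functor $J^!$; using $J^! J_* \simeq \id$ this yields a cofiber sequence
\[
J^!(\tau^{\leq 0} C) \to \F \to J^!(\tau^{\geq 1} C).
\]
The left term lies in $\Ccirc^{\leq 0}$ by the adjunction $J^! \dashv J_*$: for any $\mathcal{G} \in \Ccirc^{\geq 1}$, one has $\Hom_{\Ccirc}(J^!(\tau^{\leq 0} C), \mathcal{G}) \simeq \Hom_{\C}(\tau^{\leq 0} C, J_* \mathcal{G}) = 0$ since $\tau^{\leq 0} C \in \C^{\leq 0}$ and $J_*\mathcal{G} \in \C^{\geq 1}$. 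For the right term, inserting $D := \tau^{\geq 1} C$ into the recollement cofiber sequence $I_* I^!(D) \to D \to J_* J^!(D)$ exhibits $J_* J^!(\tau^{\geq 1} C)$ as the cofiber of two objects of $\C^{\geq 1}$ (the latter by construction, the former because $I^!$ is left $t$-exact as a right adjoint to the $t$-exact $I_*$), so by the long exact cohomology sequence $J^!(\tau^{\geq 1} C) \in \Ccirc^{\geq 1}$.

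For $t$-exactness of $J^!$, right $t$-exactness is a repeat of the adjunction argument above. Left $t$-exactness---which I expect to be the main obstacle---amounts to showing $J_* J^! C \in \C^{\geq 0}$ whenever $C \in \C^{\geq 0}$; the same recollement triangle and its long exact cohomology sequence reduce this to proving that the counit map $H^0(I_* I^! C) \to H^0(C)$ is a monomorphism. Using the identity $H^0(I^! C) \simeq I^{!,\heartsuit}(H^0(C))$---itself a consequence of the left $t$-exactness of $I^!$ and the $t$-exactness of $I_*$ applied to the triangle $H^0(C) \to C \to \tau^{\geq 1} C$---this counit is identified with the counit $u^{\heartsuit}$ of the induced adjunction $I_*^{\heartsuit} \dashv I^{!,\heartsuit}$ on the hearts. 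Its kernel is a subobject of $I_*^{\heartsuit} I^{!,\heartsuit}(H^0 C)$, which lies in the essential image of $I_*^{\heartsuit}$; by hypothesis this kernel is itself of the form $I_*^{\heartsuit}(K)$, and fully faithfulness of $I_*^{\heartsuit}$ combined with the triangle identity of the adjunction forces the corresponding morphism $K \to I^{!,\heartsuit}(H^0 C)$ to be zero, hence $K = 0$.

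Uniqueness is then formal: if a $t$-structure on $\Ccirc$ makes $J^!$ $t$-exact, then its right adjoint $J_*$ is left $t$-exact, so $J_*(\Ccirc^{\geq 0}) \subseteq \C^{\geq 0}$; combined with $\F \simeq J^! J_* \F$ for any $\F \in \Ccirc$ and the left $t$-exactness of $J^!$ just established, this forces $\Ccirc^{\geq 0}$ to coincide with the prescription of the statement, and the connective part is then determined by left orthogonality.
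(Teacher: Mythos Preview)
The paper does not give its own proof of this proposition; it simply cites \cite[Section 10.2]{RaskinAffineBBLocalization}. So there is no paper argument to compare against directly, and your proof must stand on its own.

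Your argument is essentially correct, but there is one gap in the presentation. In the truncation step you write that $J_* J^!(\tau^{\geq 1} C)$, being the cofiber of a map between two objects of $\C^{\geq 1}$, lies in $\C^{\geq 1}$ ``by the long exact cohomology sequence.'' This inference is not valid in general: the cofiber of a map $A \to B$ with $A, B \in \C^{\geq 1}$ has $H^0$ isomorphic to $\ker\bigl(H^1(A) \to H^1(B)\bigr)$, which need not vanish. What you actually need here is that the counit $I_* I^!(D) \to D$ induces a monomorphism on each cohomology group---and this is exactly the statement you prove carefully in your ``left $t$-exactness'' paragraph, using closure of the image of $I_*^\heartsuit$ under subobjects together with the triangle identity. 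So the proof is correct once you recognize that this lemma is doing double duty: it is needed both to construct the truncation triangles and to establish left $t$-exactness of $J^!$ (these are in fact the same assertion). You should either move that argument earlier, or replace the hand-wave in the truncation step with a forward reference to it.
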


Note that, in particular, this $t$-structure is accessible, since the category $\Ccirc^{\geq 0}$ is the fiber product of accessible categories $\Ccirc \times_{\C} \C^{\geq 0}$ and therefore is accessible by \cite[Section 5.4.6]{LuHTT}. 

\begin{Corollary}\label{Quotient Category Identification on Eventually Coconnective Subcategories}
The inclusion functor $J_*$ identifies $\Ccirc^+$ with the kernel of $I^!|_{\mathcal{C}^+}$, and the inclusion functor $I_*$ identifies $\C_0^+$ with the kernel of $J^!|_{\C^+}$.
\end{Corollary}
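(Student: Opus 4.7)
The plan is to unpack both identifications by combining the definition of the $t$-structure on $\Ccirc$ with the standard semi-orthogonal decomposition coming from the pair $(I_*, J_*)$. The first identification is essentially tautological. The second requires a bit more: the left $t$-exactness of $I^!$, and a fiber sequence relating $G$ to its projections onto the two subcategories.

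For the first claim, I would argue as follows. By \cref{Raskin t-ExactCatProp} and the definition of $\Ccirc^{\geq -n}$, an object $F \in \Ccirc$ lies in $\Ccirc^+$ if and only if $J_*(F) \in \C^+$. Since $J_*$ takes values in $\Ccirc = \ker(I^!)$, we have $I^! J_*(F) = 0$, so $J_*(\Ccirc^+) \subseteq \ker(I^!|_{\C^+})$. Conversely, any $G \in \ker(I^!|_{\C^+})$ lies in $\Ccirc$ (since $I^!(G) = 0$) and can therefore be written $G \simeq J_*(F)$ for a unique $F \in \Ccirc$, and then $G \in \C^+$ says exactly that $F \in \Ccirc^+$ by the definition of the $t$-structure.

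For the second claim, the first step is to check that $I^!$ is left $t$-exact. Since the essential image of $I_*$ is closed under truncations, $I_*$ is $t$-exact for the induced $t$-structure on $\C_0$, and as the right adjoint of a $t$-exact functor, $I^!$ is left $t$-exact. In particular $I^!$ maps $\C^+$ to $\C_0^+$, and $I_*$ (being $t$-exact) maps $\C_0^+$ into $\C^+$. Next I note that $J^! I_* \simeq 0$: for $F \in \C_0$ and $H \in \Ccirc$, $\Hom(J^!I_*F, H) \simeq \Hom(I_*F, J_*H) \simeq \Hom(F, I^!J_*H) \simeq 0$ since $I^!J_* = 0$. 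This gives the inclusion $I_*(\C_0^+) \subseteq \ker(J^!|_{\C^+})$.

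For the reverse inclusion, I would establish the fiber sequence
\[ I_*I^!(G) \longrightarrow G \longrightarrow J_*J^!(G) \]
for every $G \in \C$. Let $K$ be the cofiber of the counit $I_*I^!(G) \to G$; applying $I^!$ and using $I^!I_* \simeq \id$ shows $I^!(K) \simeq 0$, so $K \in \Ccirc$ and $K \simeq J_*J^!(K)$. Applying $J^!$ to the cofiber sequence and using $J^!I_* \simeq 0$ identifies $J^!(K)$ with $J^!(G)$, so $K \simeq J_*J^!(G)$ as desired. Now if $G \in \ker(J^!|_{\C^+})$, then $J_*J^!(G) \simeq 0$, the counit $I_*I^!(G) \to G$ is an equivalence, and we have already seen $I^!(G) \in \C_0^+$. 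Thus $G$ lies in the essential image of $I_*$ restricted to $\C_0^+$, completing the proof.

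The argument is essentially formal; no step should be a genuine obstacle. The one point to be careful about is the construction of the fiber sequence (verifying that the cofiber is what one expects using only fully faithfulness of $I_*$ and $J_*$), together with keeping track of the $t$-exactness properties of $I_*$ and $I^!$, both of which are immediate from the hypothesis that $\C_0$ is closed under truncation.
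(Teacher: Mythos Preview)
Your proposal is correct and follows essentially the same approach as the paper. The paper's proof is terser: it records only the $t$-exactness facts (that $I_*$ and $J^!$ are $t$-exact, hence $I^!$ is left $t$-exact) and leaves implicit the identifications $X \simeq J_*J^!(X)$ for $X \in \ker(I^!)$ and $X \simeq I_*I^!(X)$ for $X \in \ker(J^!)$; you spell these out via the fiber sequence $I_*I^!(G) \to G \to J_*J^!(G)$ and the verification $J^!I_* \simeq 0$, which is a cleaner way to package the same content.
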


\begin{proof}
Because $I_*$ is $t$-exact, its right adjoint $I^!$ is left $t$-exact and thus preserves the eventually coconnective subcategory. In particular, if $\F \in \C^+$ then $J_*(\F) \in \Ccirc^+$, and $I^!(J_*(\F)) \simeq 0$. Conversely if $X \in \C^+$ lies in the kernel of $I^!$ then $J^!(X) \in \Ccirc^+$ by the $t$-exactness of $J^!$. Similarly if $Z \in \C_0^+$ then $I_*(Z) \in \C^+$ by the $t$-exactness of $I_*$ and if $X \in \C^+$ lies in the kernel of $J^!$ then $I^!(X) \in \C_0^{+}$ since $I^!$ is left $t$-exact, being the right adjoint to a $t$-exact functor $I_*$. 
\end{proof}

\begin{Lemma}\label{t-Structure on ccirc is compatible with filtered colimits}
The $t$-structure on $\Ccirc$ is compatible with filtered colimits.
\end{Lemma}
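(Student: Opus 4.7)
The plan is to reduce the claim to the corresponding property of the $t$-structure on $\C$ by showing that the inclusion $J_* : \Ccirc \hookrightarrow \C$ is continuous and $t$-exact on the coconnective side. Once this is in place, filtered colimits in $\Ccirc$ may be computed in $\C$, and the compatibility hypothesis on $\C$ finishes the argument.

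First, I would verify that $J_*$ commutes with all colimits. By hypothesis $\Ccirc$ is defined as $\ker(I^!)$, and $I^!$ is continuous (it is a continuous right adjoint by assumption). Hence if $\{\F_i\}$ is any small diagram in $\Ccirc$ and $\text{colim}_i \F_i$ is computed in $\C$, then
\[
I^!\bigl(\text{colim}_{\C,\,i} \F_i\bigr) \;\simeq\; \text{colim}_i I^!(\F_i) \;\simeq\; 0,
\]
so the colimit already lies in $\Ccirc$. This identifies colimits in $\Ccirc$ with colimits in $\C$ and in particular shows that $J_*$ preserves them.

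Next, I would combine this with the definition of the $t$-structure on $\Ccirc$ recalled in \cref{Raskin t-ExactCatProp}: an object $\F \in \Ccirc$ lies in $\Ccirc^{\geq 0}$ precisely when $J_*(\F) \in \C^{\geq 0}$. Let $\{\F_i\}$ be a filtered diagram in $\Ccirc^{\geq 0}$ and let $\F$ be its colimit, which we may compute in $\Ccirc$ or in $\C$ by the previous paragraph. Then
\[
J_*(\F) \;\simeq\; \text{colim}_i J_*(\F_i),
\]
a filtered colimit of objects of $\C^{\geq 0}$. By the standing assumption that the $t$-structure on $\C$ is compatible with filtered colimits, this colimit again lies in $\C^{\geq 0}$, so $\F \in \Ccirc^{\geq 0}$, as required.

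There is no real obstacle here; the only subtle input is the observation that $J_*$ is continuous, which is immediate from the continuity of $I^!$ and the description $\Ccirc = \ker(I^!)$. Everything else is a direct unwinding of the definition of the quotient $t$-structure.
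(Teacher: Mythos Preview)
Your argument is correct and follows essentially the same route as the paper: use the continuity of $J_*$ together with the characterization $\Ccirc^{\geq 0} = J_*^{-1}(\C^{\geq 0})$ to reduce to the compatibility of the $t$-structure on $\C$ with filtered colimits. The only difference is that you supply a brief justification for the continuity of $J_*$ (via $\Ccirc = \ker(I^!)$ with $I^!$ continuous), whereas the paper simply asserts it.
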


\begin{proof}
Assume we are given a filtered diagram $I$ in $\Ccirc^{\geq 0}$. Let $X := \text{colim}_{i \in I}X_i$ denote the colimit of this diagram. Then we see that the continuity of $J_*$ gives $J_*(X) \simeq \text{colim}_i(J_*(X_i))$, and therefore since the $t$-structure on $\C$ is compatible with filtered colimits by assumption, we see $\text{colim}_i(J_*(X_i)) \in \C^{\geq 0}$ and so, by \cref{Raskin t-ExactCatProp} we have $X \in \Ccirc^{\geq 0}$, and so the $t$-structure is compatible with filtered colimits by definition. 
%
\end{proof}

\begin{Corollary}\label{t-Structure on Quotient of Right-Complete Category is Right-Complete}
If the $t$-structure on $\C$ is right-complete, then the $t$-structure on $\Ccirc$ is right-complete.
\end{Corollary}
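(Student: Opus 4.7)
The plan is to transport right-completeness across the adjunction $(J^!, J_*)$ by using that $J^!$ is continuous and $t$-exact while $J_*$ is fully faithful. The key observation is that, although $J_*$ need not be right $t$-exact (so Lemma \ref{Conservative t Exact Functor to right-complete Implies right-complete and Dual Statement} does not apply directly to $J_*$), the properties we do have on $J^!$ and $J_*$ are enough to compute the right-completion on $\Ccirc$ as the image under $J^!$ of the right-completion on $\C$.

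More precisely, fix an arbitrary $\F \in \Ccirc$. The plan is to first push $\F$ into $\C$ via $J_*$: since the $t$-structure on $\C$ is right-complete by hypothesis, the canonical map
\[
\mathrm{colim}_n \, \tau^{\leq n}_{\C}(J_*\F) \longrightarrow J_*\F
\]
is an equivalence in $\C$. Next, I would apply the continuous functor $J^!$ to both sides. On the right hand side one uses that $J_*$ is fully faithful, which gives $J^!J_*\F \simeq \F$. On the left hand side one uses that $J^!$ is continuous (so it commutes with the filtered colimit) and that $J^!$ is $t$-exact by \cref{Raskin t-ExactCatProp} (so it commutes canonically with the truncation functors $\tau^{\leq n}$), yielding
\[
J^!\bigl(\mathrm{colim}_n \, \tau^{\leq n}_{\C}(J_*\F)\bigr) \simeq \mathrm{colim}_n \, \tau^{\leq n}_{\Ccirc}(J^!J_*\F) \simeq \mathrm{colim}_n \, \tau^{\leq n}_{\Ccirc}(\F).
\]
Combining these identifications, the natural map $\mathrm{colim}_n \, \tau^{\leq n}_{\Ccirc}(\F) \to \F$ is an equivalence in $\Ccirc$, which is precisely the assertion of right-completeness.

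There is no real obstacle here; the one subtle point worth flagging in the write-up is the interchange of $J^!$ with $\tau^{\leq n}_{\C}$, which is valid because a $t$-exact left adjoint automatically intertwines the truncation functors on the two sides (the cofiber sequence $\tau^{\leq n}X \to X \to \tau^{\geq n+1}X$ is preserved by $J^!$, and both $J^!(\tau^{\leq n}X) \in \Ccirc^{\leq n}$ and $J^!(\tau^{\geq n+1}X) \in \Ccirc^{\geq n+1}$ by $t$-exactness, so this cofiber sequence must be the truncation sequence for $J^!X$ by uniqueness). Once this is in place, the continuity of $J^!$ and the fully faithfulness of $J_*$ finish the argument.
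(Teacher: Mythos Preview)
Your proof is correct, but it takes a different route from the paper. The paper argues on the other side of the adjunction: it considers the fiber $\mathcal{K}$ of the canonical map $\mathrm{colim}_n\,\tau^{\leq n}_{\Ccirc}\F \to \F$, uses \cref{t-Structure on ccirc is compatible with filtered colimits} to see that $\tau^{\leq n}_{\Ccirc}\mathcal{K} \simeq 0$ for all $n$, and then applies $J_*$ (using the description of $\Ccirc^{\geq 0}$ in \cref{Raskin t-ExactCatProp}) to conclude $J_*\mathcal{K} \in \bigcap_n \C^{\geq n}$, which vanishes by right-completeness of $\C$.

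Your argument is arguably cleaner: by working with $J^!$ rather than $J_*$, you exploit continuity and $t$-exactness simultaneously, and you transport the equivalence $\mathrm{colim}_n\,\tau^{\leq n}_{\C}(J_*\F) \xrightarrow{\sim} J_*\F$ directly back to $\Ccirc$ without needing to isolate the fiber or invoke \cref{t-Structure on ccirc is compatible with filtered colimits} as a separate step. The paper's approach, on the other hand, makes the role of the left $t$-exactness of $J_*$ more visible and fits the general pattern of checking vanishing of $\bigcap_n \C^{\geq n}$. Both are short; yours uses one fewer auxiliary lemma.
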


\begin{proof}
Let $\F \in \Ccirc$, and let $\phi^{\F}: \F \to \text{colim}(\tau^{\leq n}\F)$ denote the canonical map. Using the fact that the $t$-structure on $\Ccirc$ is compatible with filtered colimits, we see that for all $n \in \mathbb{Z}$ the map $\tau^{\leq n}(\phi^{\F})$ is an equivalence. Therefore, if we denote by $\mathcal{K}$ the fiber of $\phi^{\F}$, we see $\tau^{\leq n}(\mathcal{K}) \simeq 0$ for all $n$. Thus $J_*(\mathcal{K}) \in \cap_n C^{\geq n} \simeq 0$, so $\phi^{\F}$ is an equivalence, as required. 
\end{proof}

Although $J_*$ is not right $t$-exact in general, we do have the following statement:

\begin{Lemma}\label{Any leq 0 of quotient category is quotient of leq 0 object}
If $X \in \Ccirc^{\leq 0}$, then there is a canonical equivalence $X \simeq J^!(\tau^{\leq 0}J_*(X))$. 
\end{Lemma}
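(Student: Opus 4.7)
The plan is to derive the claim as a formal consequence of two facts already available: the $t$-exactness of $J^!$ established in \cref{Raskin t-ExactCatProp}, and the fully faithfulness of $J_*$. The key conceptual input is the standard observation that any $t$-exact exact functor between DG categories equipped with $t$-structures commutes with the truncation functor $\tau^{\leq 0}$ up to canonical equivalence.

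In more detail, I would proceed in three steps. First, applying $J^!$ to the fiber sequence $\tau^{\leq 0}J_*(X) \to J_*(X) \to \tau^{\geq 1}J_*(X)$ in $\C$ and invoking the $t$-exactness of $J^!$, one has $J^!(\tau^{\leq 0}J_*(X)) \in \Ccirc^{\leq 0}$ and $J^!(\tau^{\geq 1}J_*(X)) \in \Ccirc^{\geq 1}$. By the uniqueness of truncation triangles, this identifies the resulting fiber sequence $J^!(\tau^{\leq 0}J_*(X)) \to J^!J_*(X) \to J^!(\tau^{\geq 1}J_*(X))$ with the truncation triangle for $J^!J_*(X)$, producing a canonical equivalence $J^!(\tau^{\leq 0}J_*(X)) \simeq \tau^{\leq 0}(J^!J_*(X))$. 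Second, the fully faithfulness of $J_*$ implies that the counit $J^!J_*(X) \to X$ of the adjunction is an equivalence, so $\tau^{\leq 0}(J^!J_*(X)) \simeq \tau^{\leq 0}(X)$. Third, the hypothesis $X \in \Ccirc^{\leq 0}$ gives $\tau^{\leq 0}(X) \simeq X$.

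Chaining these three canonical equivalences together yields the desired identification $J^!(\tau^{\leq 0}J_*(X)) \simeq X$. I do not anticipate any significant obstacle; the argument is purely formal once \cref{Raskin t-ExactCatProp} and the general adjunction formalism of \cref{FullyFaithfulAdjointProperty} are in hand.
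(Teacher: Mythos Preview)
Your proposal is correct and follows essentially the same approach as the paper: both apply $J^!$ to the truncation triangle of $J_*(X)$, use the $t$-exactness of $J^!$ from \cref{Raskin t-ExactCatProp} to identify the resulting sequence as the truncation triangle of $J^!J_*(X)\simeq X$, and then invoke the hypothesis $X\in\Ccirc^{\leq 0}$. Your phrasing via uniqueness of truncation triangles is arguably cleaner than the paper's, which reaches the same conclusion by observing that the cofiber term lies in $\Ccirc^{>0}$ and hence must vanish.
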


\begin{proof}
For such an $X$, we obtain a cofiber sequence $\tau^{\leq 0}J_*X \to J_*X \to \tau^{> 0}J_*X$. Applying $J^!$ to this cofiber sequence (which is continuous and thus exact by construction), we obtain the cofiber sequence 
\raggedbottom
\begin{equation}\label{Cofiber Sequence for Object of Quotient Category}J^!(\tau^{\leq 0}J_*X) \to J^!J_*X \to J^!(\tau^{> 0}X)\end{equation}

\noindent and furthermore we see that the unit map $X \xrightarrow{\sim} J_*J^!(X)$ identifies the middle term of \labelcref{Cofiber Sequence for Object of Quotient Category} with $X$. The left and middle terms of  \labelcref{Cofiber Sequence for Object of Quotient Category} thus both lie in $\Ccirc^{\leq 0}$ while the rightmost term lies in $\Ccirc^{> 0}$. Thus the rightmost term in  \labelcref{Cofiber Sequence for Object of Quotient Category} vanishes and so the composite 
\raggedbottom
\[J^!(\tau^{\leq 0}J_*X) \xrightarrow{\sim} J^!J_*X \xleftarrow{\sim} X\]

\noindent gives our desired equivalence. 
\end{proof}

Finally, we record the interaction with the above discussion when our underlying category DGCat is replaced with $G$-categories:

\begin{Proposition}\label{Quotient of G Category with Compatible t-Structure Action by G Subcategory Has G Action Compatible with t-Structure}
Assume $G$ acts on $\C$ compatibly with the $t$-structure and the subcategory $\C_0$ is closed under the $G$-action. Then $\Ccirc$ acquires a $G$-action and moreover $G$ acts compatibly with the $t$-structure. 
\end{Proposition}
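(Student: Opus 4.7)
The plan is to first produce the coaction map on $\Ccirc$ by descending the coaction on $\C$ along the localization $J^!$, and then verify that the resulting coaction is $t$-exact after shift by $-\text{dim}(G)$.

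For the construction of the $G$-action, I would first observe that by hypothesis $I_*: \C_0 \hookrightarrow \C$ is $G$-equivariant, so by \cref{Adjoint Functor is Automatically G Equivariant} its right adjoint $I^!$ is also $G$-equivariant. Then I would consider the composite
\[
(\text{id}_{\D(G)} \otimes J^!) \circ \text{coact}_{\C} : \C \to \D(G) \otimes \Ccirc,
\]
and show that its restriction along $I_*$ vanishes using the $G$-equivariance of $I_*$ and the identity $J^! \circ I_* = 0$. Therefore by the universal property of the localization $J^!: \C \to \Ccirc$, this composite factors through a coaction map $\text{coact}_{\Ccirc}: \Ccirc \to \D(G) \otimes \Ccirc$. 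A routine diagram chase then verifies coassociativity and the unit axiom (inherited from those on $\C$), giving $\Ccirc$ the structure of a $G$-category for which $J^!$ is $G$-equivariant, and hence $J_*$ is also $G$-equivariant by another application of \cref{Adjoint Functor is Automatically G Equivariant}.

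Next I would prove compatibility with the $t$-structure using the commutative square
\begin{equation*}
\xymatrix@R+1em@C+1em{
\C \ar[r]^{\text{coact}_{\C}[-\text{dim}(G)]} \ar[d]_{J^!} & \D(G) \otimes \C \ar[d]^{\text{id} \otimes J^!} \\
\Ccirc \ar[r]^{\text{coact}_{\Ccirc}[-\text{dim}(G)]} & \D(G) \otimes \Ccirc
}
\end{equation*}
whose top arrow is $t$-exact by hypothesis, and whose left arrow $J^!$ is $t$-exact by \cref{Raskin t-ExactCatProp}. For right $t$-exactness of the bottom arrow, given $Y \in \Ccirc^{\leq 0}$, \cref{Any leq 0 of quotient category is quotient of leq 0 object} produces $X \in \C^{\leq 0}$ with $Y \simeq J^!(X)$, and chasing the square gives $\text{coact}_{\Ccirc}(Y)[-\text{dim}(G)] \simeq (\text{id} \otimes J^!)(\text{coact}_{\C}(X)[-\text{dim}(G)]) \in (\D(G) \otimes \Ccirc)^{\leq 0}$. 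For left $t$-exactness, given $Y \in \Ccirc^{\geq 0}$, the definition of the $t$-structure on $\Ccirc$ gives $J_*(Y) \in \C^{\geq 0}$, and since $Y \simeq J^!(J_*(Y))$ the analogous diagram chase with $X = J_*(Y)$ yields the result.

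The main obstacle will be verifying the $t$-exactness (especially left $t$-exactness) of the induced functor $\text{id}_{\D(G)} \otimes J^!$, which enters both legs of the diagram chase; this is where one invokes \cref{GaitsgoryExactnessLemma}, whose left $t$-exactness hypothesis requires that $\D(G)$ carries a compactly generated $t$-structure (which holds for the smooth affine algebraic group $G$, as $\D(G)$ is then the derived category of its heart). The first step is essentially formal once the $G$-equivariance of $I^!$ is established, so the bulk of the work lies in setting up the tensor-product $t$-structure compatibilities in the second step.
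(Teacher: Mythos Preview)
Your proof is correct and the $t$-structure argument matches the paper's almost exactly: both use the commuting square with the shifted coaction maps, invoke \cref{Any leq 0 of quotient category is quotient of leq 0 object} for right $t$-exactness, and appeal to \cref{GaitsgoryExactnessLemma} (using the compactly generated $t$-structure on $\D(G)$) for the $t$-exactness of $\text{id}_{\D(G)}\otimes J^!$. The only substantive difference is in how the $G$-action on $\Ccirc$ is produced: you construct the coaction by hand, factoring $(\text{id}\otimes J^!)\circ\text{coact}_\C$ through the localization and then checking the comodule axioms, whereas the paper bypasses this by observing that since $I^!$ is $G$-equivariant (via \cref{Adjoint Functor is Automatically G Equivariant}), its kernel $\Ccirc$ is automatically a $G$-subcategory of $\C$, with $J_*$ (and hence $J^!$) $G$-equivariant for free. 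The paper's route avoids the coassociativity and counit verifications entirely; your route is more explicit but does extra work.
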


\begin{proof}
First note that the fact that $I_*$ is a functor of $G$-categories gives that $I^!$ is a functor of $G$-categories by \cref{Adjoint Functor is Automatically G Equivariant}. Therefore we see that $\Ccirc := \text{ker}(I^!)$ acquires a $G$-action. We have that $J_*$ is $G$-equivariant and so $J^!$ is as well by \cref{Adjoint Functor is Automatically G Equivariant}. Let $c, c_0$ (respectively) denote the coaction map of $G$ on $\C, \Ccirc$ (respectively) shifted by $[-\text{dim}(G)]$ so that $c$ is by assumption $t$-exact. 

We now wish to show $c_0$ is $t$-exact if $c$ is. To see this, first note if $X \in \Ccirc^{\geq 0}$ then $(\text{id}_G \otimes J_*)c_0(X) \simeq c(J_*(X))$ by the $G$-equivariance of $J_*$ and so $c_0(X) \in (\D(G) \otimes \Ccirc)^{\geq 0}$ by \cref{GaitsgoryExactnessLemma}. Now assume $X \in \Ccirc^{\leq 0}$. By \cref{Any leq 0 of quotient category is quotient of leq 0 object}, we may write $X \simeq J^!(\tau^{\leq 0}J_*(X))$. Since $c$ is $t$-exact and $G$-equivariant, we can use this description of $X$ to obtain equivalences
\raggedbottom
\[c_0(X) \simeq (\text{id}_{\D(G)} \otimes J^!)c_0\tau^{\leq 0}J_*(X)\]\[ \simeq (\text{id}_{\D(G)} \otimes J^!)\tau^{\leq 0}c_0J_*(X) \simeq (\text{id}_{\D(G)} \otimes J^!)\tau^{\leq 0}(\text{id}_{\D(G)} \otimes J_*)(c_0(X))\] 

\noindent so that in particular $c_0(X) \simeq (\text{id}_{\D(G)} \otimes J^!)(Y)$ for some $Y \in (\D(G) \otimes \C)^{\leq 0}$. Therefore, $c_0(X) \in (\D(G) \otimes \Ccirc)^{\leq 0}$ by the $t$-exactness of $\text{id}_{\D(G)} \otimes J^!$ given by \cref{GaitsgoryExactnessLemma}. 
\end{proof}

\subsubsection{t-Structures on Quotient Categories Given by the Eventually Coconnective Kernel}
Assume $\C$ is a DG category with a $t$-structure compatible with filtered colimits, and let $L: \C \to \D$ be a $t$-exact functor of stable $\infty$-categories equipped with $t$-structures. Set $\C_0$ to denote the full subcategory generated under filtered colimits by eventually coconnective objects in the kernel of $L$, i.e. the full subcategory generated by objects $X \in \C^+$ for which $L(X) \simeq 0$. By $t$-exactness of $L$, $\C_0$ is closed under the truncation functors $\tau^{\leq 0}$ and $\tau^{\geq 0}$, where the latter also uses that the $t$-structure on $\C$ is compatible with filtered colimits. The $t$-exactness of $L$ also gives that the essential image of $\C_0^{\heartsuit} \xhookrightarrow{} \C^{\heartsuit}$ is closed under subobjects. We equip the full subcategory $\Ccirc \xhookrightarrow{J_*} \C$ with the $t$-structure so that the quotient functor $J^!$ is $t$-exact as in \cref{Raskin t-ExactCatProp}. Let $\tildeL := L\circ J_*$.  

\begin{Proposition}
The functor $\tildeL$ is $t$-exact.
\end{Proposition}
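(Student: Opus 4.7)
This is immediate: if $X \in \Ccirc^{\geq 0}$, then by the characterization of the $t$-structure on $\Ccirc$ given in \cref{Raskin t-ExactCatProp}, one has $J_*(X) \in \C^{\geq 0}$, and the hypothesized $t$-exactness of $L$ gives $\tildeL(X) = L(J_*(X)) \in \D^{\geq 0}$.

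\textbf{Right $t$-exactness.} The key preliminary observation is that $L$ vanishes not merely on the eventually coconnective objects in its kernel but on all of $\C_0$. Since $L$ is continuous (as a morphism in $\DGCatContk$) and exact, the full subcategory $\{X \in \C : L(X) \simeq 0\}$ is closed under all colimits, hence contains the subcategory generated under filtered colimits by the eventually coconnective part of $\ker(L)$, namely $\C_0$. With this in hand, fix $X \in \Ccirc^{\leq 0}$ and consider the canonical cofiber sequence
\[\tau^{\leq 0}J_*X \to J_*X \to \tau^{>0}J_*X\]
in $\C$. Applying the exact functor $J^!$ and using $J^!J_* \simeq \text{id}$ together with \cref{Any leq 0 of quotient category is quotient of leq 0 object} (which identifies $X \simeq J^!(\tau^{\leq 0}J_*X)$), the cofiber term maps to zero, so $\tau^{>0}J_*X \in \ker(J^!)$. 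By the recollement set up in \cref{Intro to t-Structures on Quotient Categories}, $\ker(J^!)$ coincides with the essential image of $I_*$, i.e.\ with $\C_0$, so $L(\tau^{>0}J_*X) = 0$ by the first observation. Applying $L$ to the cofiber sequence then yields $\tildeL(X) \simeq L(\tau^{\leq 0}J_*X) \in \D^{\leq 0}$ by $t$-exactness of $L$, as desired.

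\textbf{Anticipated difficulty.} The argument is essentially formal; the only points that require care are the two inputs above, namely that continuity and exactness of $L$ force vanishing on all of $\C_0$, and that $\ker(J^!) = \C_0$. The latter is a standard consequence of the recollement between the adjoint pairs $(I_*, I^!)$ and $(J^!, J_*)$ that is already implicit throughout \cref{Intro to t-Structures on Quotient Categories}, so no new technical ingredient is needed beyond the cofiber sequence manipulation itself.
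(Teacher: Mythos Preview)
Your proof is correct and follows essentially the same approach as the paper: both identify $\tau^{>0}J_*X$ (respectively $\tau^{\geq 0}J_*X$ in the paper's indexing) as lying in $\ker(J^!) = \C_0$, observe that $L$ vanishes there, and conclude via the $t$-exactness of $L$. Your version is slightly more explicit about why $L$ vanishes on all of $\C_0$ (continuity of $L$), which the paper leaves implicit; note also that your appeal to \cref{Any leq 0 of quotient category is quotient of leq 0 object} can be replaced by the more direct observation that $J^!(\tau^{>0}J_*X) \simeq \tau^{>0}(J^!J_*X) \simeq \tau^{>0}X = 0$ using the $t$-exactness of $J^!$.
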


\begin{proof}
By definition, $\tildeL$ is the composite of two left $t$-exact functors, and therefore is left $t$-exact. Now assume that $X \in \Ccirc^{< 0}$. Then, because $X \xrightarrow{\sim} J^!J_*(X)$, we have that $\tau^{\geq 0}(J_*(X))$ lies in the kernel of $J^!$, since $J^!$ is $t$-exact, and thus $\tau^{\geq 0}(J_*(X)) \in \C_0$. By definition of $\C_0$, we see that $L(\tau^{\geq 0}(J_*(X))) \simeq 0$. Therefore we see that $\tau^{\geq 0}\tildeL(X) \simeq \tau^{\geq 0}LJ_*(X) \simeq L(\tau^{\geq 0}J_*(X)) \simeq 0$, so $\tildeL$ is also right $t$-exact.
\end{proof}

Now, with the above notation, further assume that $\D$ is a DG category and that $L$ is a functor in $\DGCatContk$ which admits a (continuous) right adjoint $R: \D \to \C$. Because $I^!R$ is a right adjoint, one can easily verify that $I^!R \simeq 0$, and so the canonical map $R \to J_*J^!R$ is an equivalence. Let $\tildeR$ denote the functor $J^!R$. 

\begin{Proposition}\label{Left Adjoint of Quotient Functor}
In the above notation, $\tildeR$ is right adjoint to $\tildeL$.
\end{Proposition}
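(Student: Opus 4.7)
The plan is to derive the desired adjunction by chaining together the two adjunctions already in hand: $L \dashv R$ and $J^! \dashv J_*$. The key observation, recorded immediately before the statement, is that $I^!R \simeq 0$ forces $R$ to factor through the essential image of $J_*$, giving a canonical equivalence $R \simeq J_* \tildeR = J_* J^! R$. This factorization is what allows the two adjunctions to communicate across the subcategory $\Ccirc$.

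Concretely, for $x \in \Ccirc$ and $d \in \D$, I will establish a natural equivalence of mapping spaces by the following chain. First, unfold the definition $\tildeL(x) = LJ_*(x)$. Then apply the adjunction $L \dashv R$ to identify
\[
\Hom_\D(LJ_*(x), d) \simeq \Hom_\C(J_*(x), R(d)).
\]
Next, substitute the factorization $R(d) \simeq J_*\tildeR(d)$ on the right-hand side to rewrite this as $\Hom_\C(J_*(x), J_*\tildeR(d))$. Finally, invoke fully faithfulness of $J_*$ to identify the latter with $\Hom_{\Ccirc}(x, \tildeR(d))$. Each step is natural in both variables, so the composite equivalence assembles into an adjunction of functors, giving the desired pair.

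There is no real obstacle: once the factorization $R \simeq J_*\tildeR$ is in place, the proof is entirely formal, a two-step combination of the ambient adjunctions together with fully faithfulness of $J_*$. I note that the chain above realizes $\tildeR$ as the right adjoint of $\tildeL$ (i.e.\ $\tildeL \dashv \tildeR$); the statement names $\tildeR$ the left adjoint, which appears to be a minor slip, since $L$ is only assumed to admit a right adjoint and so the opposite direction of adjunction need not exist without further hypotheses.
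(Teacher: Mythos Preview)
Your proof is correct and essentially identical to the paper's own argument: the same chain of equivalences using $L \dashv R$, the factorization $R \simeq J_*\tildeR$, and full faithfulness of $J_*$. Your observation about the direction of the adjunction is also accurate: the paper's proof, like yours, establishes $\tildeL \dashv \tildeR$ (i.e.\ $\tildeR$ is the \emph{right} adjoint of $\tildeL$), so the wording of the statement is indeed a minor slip.
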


\begin{proof}
We have equivalences, for all $X \in \Ccirc$ and $Y \in \D$:
\raggedbottom
\[\Hom_{\D}(\tildeL(X), Y) \simeq \Hom_{\D}(\tildeL J^!J_*(X), Y) \simeq \Hom_{\D}(LJ_*(X), Y) \simeq \Hom_{\C}(J_*(X), R(Y))\]

\noindent via the fully faithfulness of $J_*$, the definition of $\tildeL$, and the adjoint property respectively. Since $R \xrightarrow{\sim} J_*J^!R$, the above expression is equivalent to:
\raggedbottom
\[\Hom_{\C}(J_*(X), J_*J^!R(Y)) \simeq \Hom_{\Ccirc}(X, J^!R(Y)) \simeq \Hom_{\Ccirc}(X, \tildeR(Y))\]

\noindent by the fully faithfulness of $J_*$ and the definition of $\tildeR$, respectively, which completes our proof.
\end{proof}

\newcommand{\LTdcompleted}{\LT^{\ast, \wedge}_{\lambda}}
\newcommand{\LTdmodquotientcompleted}{(\LTd/\characterlatticeforT)^{\wedge}_{[\lambda]}}
\newcommand{\quotientmapforcoarsequotient}{\overline{s}}
\newcommand{\quotientmaptostackquotient}{q}
\newcommand{\inclusionofcoinvariantalgebraintovectorspace}{\overline{x}'}
\newcommand{\inclusionofcoinvariantalgebraintovectorspaceINDCOHpullback}{\overline{x}'^{!}}
\newcommand{\inclusionofcoinvariantalgebraintovectorspaceQCOHpullback}{\overline{x}'^{*}}
\newcommand{\terminalmapfromC}{\alpha}
\newcommand{\terminalmapfromCmodassociatedstabilizer}{\dot{\alpha}}
\newcommand{\quotientmapforFINITEcoarsequotient}{\overline{s}_{\text{fin}}}
\newcommand{\mapfromstackquotientofWexttocoarsequotientofWext}{\tilde{\phi}}
\newcommand{\mapfromstackquotientofWexttocoarsequotientofWextATAPOINT}{\dot{\tilde{\phi}}}
\renewcommand{\G}{\mathbb{G}}
\section{Nondegenerate Categories and the Weyl Group Action}\label{Nondegenerate Categories and the Weyl Group Action Section}
In this section, we introduce the notion of a nondegenerate $G$-category and show it admits a Weyl group action. 
\newcommand{\coactbar}{\overline{\text{coact}}}
\newcommand{\DNdeg}{\mathcal{D}(N \backslash G/N)_{\text{deg}}}
\newcommand{\Ddeg}{\D(N \backslash G)_{\text{deg}}}
\raggedbottom
\subsection{Degenerate $G$-Categories}\label{Degenerate G-Categories Subsection}
\subsubsection{Definition of the Universal Degenerate Category} We make the following definitions: 
\begin{Definition}\label{Deg subcat of DNmodG and DnmodGmodN Definitions}With respect to the generic character $\psi$ we have fixed above:
\begin{enumerate}
    \item The \textit{degenerate subcategory} $\Ddeg$ of $\D(N\backslash G)$ is the full subcategory of $\D(N\backslash G)$ generated by eventually coconnective objects in the kernel of $\Avpsi: \D(N \backslash G) \to \D(N^{-}_{\psi}\backslash G)$. 
   \item The \textit{degenerate subcategory} $\DNdeg$ of $\D(N\backslash G/N)$ is the full subcategory of $\D(N\backslash G/N)$ generated by eventually coconnective objects in the kernel of the composite \[\D(N\backslash G/N) \xrightarrow{\sim} \D(G/N) \xrightarrow{\Avpsi} \D(G/N)^{N^-, \psi}\] of the pullback map and the averaging functor $\Avpsi$. 
     \end{enumerate}
\end{Definition}

\begin{Remark}
    We will see in \cref{Simply Connected Version of Nondegenerate Subcategory of D(G/N) is the HN-subcategory Gen'd By Non-antidominant Simples} that the degenerate subcategory does not, in fact, depend on the choice of generic character $\psi$.
\end{Remark}
The categories of \cref{Deg subcat of DNmodG and DnmodGmodN Definitions} are related as follows:
We now justify the use of the subscript \lq deg\rq{} in the notation \lq$\DNdeg$\rq{} as opposed to a subscript such as \lq left-deg\rq{}.

\begin{Proposition}\label{Deg is BiDeg}
The following categories are equivalent:
\begin{enumerate}
    \item The full subcategory of $\D(N\backslash G/N)$ generated by eventually coconnective objects in the kernel of the left Whittaker averaging functor $\Avpsi: \D(N\backslash G/N) \to \D(N^-_{\psi}\backslash G/N)$.
    \item The full subcategory of $\D(N\backslash G/N)$ generated by eventually coconnective objects in the kernel of the right Whittaker averaging functor $\text{Av}_!^{\psi}: \D(N\backslash G/N) \to \D(N\backslash G/_{\psi}N^-)$.
    \item The full subcategory of $\D(N\backslash G/N)$ generated by eventually coconnective objects in the kernel of the bi-Whittaker averaging functor $\text{Av}_!^{\psi \times \psi}: \D(N\backslash G/N) \to \Hpsi$.
\end{enumerate}
\end{Proposition}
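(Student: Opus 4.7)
The plan is as follows. The inclusions $(1) \subseteq (3)$ and $(2) \subseteq (3)$ are immediate from the factorizations $\text{Av}_!^{\psi \times -\psi} \simeq \text{Av}_!^{-\psi} \circ \Avpsi \simeq \Avpsi \circ \text{Av}_!^{-\psi}$: anything killed by either factor is killed by the composition. I focus on proving the reverse inclusion $(3) \subseteq (1)$; the inclusion $(3) \subseteq (2)$ then follows by the symmetric argument swapping left and right (and $\psi \leftrightarrow -\psi$). As a first reduction, \cref{BBMShiftedLeftAdjointIsExact} and its right-handed analogue show that each of the three averaging functors is $t$-exact up to cohomological shift, so each kernel is closed under truncation functors. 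Combined with right-completeness of the standard $t$-structure on $\D(N\backslash G/N)$ (established in \cref{Twisted D-Modules are Derived Category of Heart Section}), any object $\F$ of the kernel is the filtered colimit of its eventually coconnective truncations $\tau^{\geq -n}(\F)$, each of which is again in the kernel. Hence each $\C_i$ coincides with the full kernel of the $i$-th averaging functor, and the proposition reduces to showing these three kernels agree.

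Given $\F \in \D(N\backslash G/N)$ with $\text{Av}_!^{\psi \times -\psi}(\F) = 0$, set $\mathcal{G} := \Avpsi(\F) \in \D(N^-_{\psi}\backslash G/N)$, so that $\text{Av}_!^{-\psi}(\mathcal{G}) = 0$. To deduce $\F \in \C_1$ it is enough to show that the right Whittaker averaging
\[ \text{Av}_!^{-\psi} \colon \D(N^-_{\psi}\backslash G/N) \longrightarrow \Hpsi \]
is conservative. By \cref{SupportOfWhittakerSheaves}, the source is equivalent to $\D(T)$ via restriction to the open Bruhat cell $N^-B/N$. The plan is to prove conservativity via a bi-Whittaker variant of the cell-by-cell analysis used in the proof of \cref{SupportOfWhittakerSheaves}. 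Concretely, I will apply \cref{No Simultaneously Monodromic and Twisted Monodromic Ga Objects} to a root $\mathbb{A}^1$-subgroup in each non-open stratum of a common refinement of the left and right $N^-$-orbit stratifications of $G$, forcing any object in the kernel of the right averaging to be supported on a single open bi-Whittaker stratum. On that stratum, a direct Whittaker-nondegeneracy computation (using \cref{NoMonodromicWhittaker}) rules out nonzero kernel objects.

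The main technical obstacle is this bi-Whittaker cell analysis itself. Right $N^-$-multiplication does not preserve the Schubert stratification used in the proof of \cref{SupportOfWhittakerSheaves}, so I cannot simply apply that proof verbatim on both sides. I will instead work with the double coset decomposition $N^- \backslash G/N^-$ and, on each non-open stratum, identify an $\mathbb{A}^1$-subgroup in either the left or the right $N^-$ along which one of the two Whittaker characters becomes trivial, so that \cref{No Simultaneously Monodromic and Twisted Monodromic Ga Objects} applies. Once the support is confined to the open bi-Whittaker cell, conservativity follows from the standard Whittaker-nondegeneracy argument, completing the proof of $(3) \subseteq (1)$ and hence, by symmetry, of the full proposition.
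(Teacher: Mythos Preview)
Your overall strategy---reduce to showing conservativity of the right Whittaker averaging $\text{Av}_!^{-\psi}: \D(N^-_{\psi}\backslash G/N) \to \Hpsi$---is exactly what the paper does. But there are two problems, one minor and one substantive.

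\textbf{Minor.} Your first reduction is confused. You claim each $\C_i$ coincides with the full kernel by writing any $\F$ as a filtered colimit of $\tau^{\geq -n}(\F)$; but those truncations form an \emph{inverse} system, not a direct one, and right-completeness (\cref{Left and right-complete t-structure Definition}) concerns $\text{colim}_n \tau^{\leq n}$, not $\tau^{\geq -n}$. Fortunately you do not need this reduction: it suffices to check that an eventually coconnective $\F$ in the kernel of bi-Whittaker averaging lies in the kernel of left Whittaker averaging, and since $\Avpsi$ is $t$-exact up to shift, $\Avpsi(\F)$ is again eventually coconnective. So the conservativity statement alone gives $(3)\subseteq(1)$ on generators.

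\textbf{Substantive.} Your proposed proof of conservativity via a ``bi-Whittaker cell analysis'' on $N^-\backslash G/N^-$ does not work as stated. By \cref{SupportOfWhittakerSheaves}, any $\mathcal{G}\in\D(N^-_{\psi}\backslash G/N)$ is already supported on the single open cell $N^-B/N\cong N^-\times T$; there are no further strata to kill with \cref{No Simultaneously Monodromic and Twisted Monodromic Ga Objects}. What remains is to show that right $(-\psi)$-averaging is conservative on $\D(T)$, and that is not a support/stratification statement---it is a genuine computation. The paper's \cref{Mellin Dual of pistar is conservative on t} handles this by a different device: it factors through $\D(N\backslash G/N)$ via $\AvNshifted$, uses the commuting square of left/right averagings, and then checks on the heart that for $\F\in\D(T)^{\heartsuit}$ the composite $\Avpsishifted\AvNshifted$ applied to the corresponding Whittaker sheaf contains $\F$ as a subobject (because restricting back to the closed cell $T\hookrightarrow N\backslash G$ recovers $\F$). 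Your sketch does not supply any mechanism of this kind, and neither \cref{No Simultaneously Monodromic and Twisted Monodromic Ga Objects} nor \cref{NoMonodromicWhittaker} yields conservativity on the open cell.
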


We will prove this after showing the following lemma:

\begin{Lemma}\label{Mellin Dual of pistar is conservative on t}
The functor $\D(T) \xrightarrow{\sim} \D(N\backslash G/_{\psi}N^-) \xrightarrow{\Avpsi} \D(N^-_{\psi}\backslash G/_{\psi}N^-)$, where the first arrow is given by \cref{SupportOfWhittakerSheaves}, is conservative.
\end{Lemma}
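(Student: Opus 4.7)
The plan is to identify the composite functor, by combining the BBM equivalence with the Mellin transform, as a continuous $t$-exact functor between categories over $\LTd$, and then reduce conservativity to a fiberwise non-vanishing statement which amounts to a classical Kostant-type result.

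First, I would apply \cref{BBMAdjointTheorem} to rewrite the composite $\Avpsi$, restricted to the $N$-invariant subcategory, as $\text{Av}_*^{\psi}[2\dim N]$. By \cref{G Category with Compactly Generated Compatible tStructure Means Avpsi Is T Exact Up to Shift} this functor is $t$-exact up to the shift. Since the standard $t$-structures on $\D(T)$ and on $\Hpsi$ are right-complete and compactly generated, a continuous $t$-exact functor between them is conservative if and only if it does not annihilate any nonzero object in the heart of the source. Thus it suffices to check that $\F \mapsto \Avpsi(\F)$ is injective on $\D(T)^{\heartsuit}$.

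Next, I would set up a reduction to pointwise conservativity. Applying the Mellin transform identifies $\D(T) \simeq \QCoh(\LTd/\characterlatticeforT)$. Both the source and the target carry compatible $\QCoh(\LTd/\characterlatticeforT)$-module structures: on the source via Mellin, and on the target via the residual right $T$-action on $N\backslash G/_{-\psi}N^-$ (which commutes with the left $N^-$-averaging because left and right translations commute). The composite functor is linear for this structure. An argument analogous to \cref{Can Check Fully Faithfulness on Each Field-Valued Point for LTd Module Categories}, adapted to conservativity of continuous functors of $\QCoh(\LTd/\characterlatticeforT)$-module categories, reduces the claim to showing non-vanishing after tensoring over $\QCoh(\LTd/\characterlatticeforT)$ with $\text{Vect}_L$ at every field-valued point $[\lambda] \in (\LTd/\characterlatticeforT)(L)$; this uses \cref{Zero if and only if Zero at All Field Valued Points} applied to the source.

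At such a point $[\lambda]$, the source tensors down to $\text{Vect}_L$, generated by the unit, and conservativity reduces to verifying that the image of the monoidal unit under the composite is a nonzero object in $\Hpsi \otimes_{\QCoh(\LTd/\characterlatticeforT)} \text{Vect}_L$. This image is canonically identified with the $\lambda$-twisted bi-Whittaker unit, equivalently the Whittaker module with infinitesimal character $\lambda$. Its non-vanishing is a classical consequence of Kostant's theorem on the existence of Whittaker vectors for every infinitesimal character, or equivalently follows from Skryabin's identification of bi-Whittaker invariants with modules over the Harish-Chandra center $\Sym(\LT)^{W}$.

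The main obstacle is the rigorous formulation of the residual right $T$-action on $\Hpsi$, since the adjoint action of $T$ rescales the character $\psi$ and so a naive right $T$-action does not preserve the $(N^-, -\psi)$-equivariant structure. This is handled either by using a twisted action (conjugating by a cocharacter adapted to $\psi$) that genuinely preserves the Whittaker condition, or — to entirely avoid the spectral description — by a direct stratification argument: using \cref{BBMAdjointTheorem} and \cref{SupportOfWhittakerSheaves}, the $!$-extension of a nonzero stalk on the open Bruhat cell is mapped, under the shifted averaging, to an object whose non-vanishing is detected by $!$-restriction back to the big cell, giving conservativity on a set of generators of $\D(T)^\heartsuit$.
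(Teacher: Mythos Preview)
Your spectral approach runs into the obstacle you yourself flag: there is no honest right $T$-action on $\Hpsi$ compatible with the left Whittaker averaging, because conjugation by $T$ rescales $\psi$. You suggest patching this with a twisted action or appealing to Kostant/Skryabin, but neither fix is carried out, and both import nontrivial external input. Your fallback ``stratification'' sentence is closer to a valid argument but is too vague as written to count as a proof.

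The paper avoids all of this with a much more elementary trick. It factors the functor through $\D(N\backslash G/N)$ via the commutative square
\[
\xymatrix{
\D(N^-_\psi\backslash G/N) & \D(N\backslash G/N) \ar[l]_{\Avpsishifted} \\
\Hpsi \ar[u]^{\AvNshifted} & \D(N\backslash G/_{-\psi}N^-) \ar[u]^{\AvNshifted} \ar[l]_{\Avpsishifted}
}
\]
and checks that the composite $\Avpsishifted\circ\AvNshifted$ is conservative. Both functors are $t$-exact (\cref{BBMShiftedLeftAdjointIsExact}, \cref{Ginzburgt-Exactness}), so one reduces to $\F\in\D(T)^{\heartsuit}$. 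The key observation is that the closed embedding $i:T\cong N\backslash B/N\hookrightarrow N\backslash G/N$ satisfies $i^!\circ\AvNshifted\simeq\mathrm{id}$ on $\D(T)$ (computed on the open cell), so $\AvNshifted(\F)$ contains $i_{*,dR}(\F)$ as a subobject; likewise $\Avpsishifted\circ i_{*,dR}\simeq\mathrm{id}$ (same open-cell computation from the other side). Hence the composite applied to $\F$ contains $\F$ as a subobject, giving conservativity with no spectral machinery and no appeal to Kostant or Skryabin.
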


\begin{proof}
We may equivalently show $\Avpsishifted$ is conservative. Because both functors $\AvNshifted$ appearing below average with respect to the right $N$-action and both functors $\Avpsishifted$ appearing below average with respect to the left action, the following diagram commutes
\raggedbottom
\begin{equation*}
  \xymatrix@R+2em@C+2em{
   \D(N^-_{\psi}\backslash G/N)  & \D(N \backslash G/N) \ar[l]_{\text{Av}_!^{\psi}[-\text{dim}(N)]} \\
   \Hpsi \ar[u]_{\AvNshifted}  & \D(N\backslash G/_{\psi}N^-) \ar[l]^{\Avpsishifted} \ar[u]_{\AvNshifted}
  }
 \end{equation*}
 
 \noindent and so to verify conservativity of $\Avpsishifted$ we may verify the composite given by $\Avpsishifted\AvNshifted$ as in the diagram is conservative. Both functors in the diagram are $t$-exact by \cref{BBMShiftedLeftAdjointIsExact} and \cref{Ginzburgt-Exactness}. Let $i: T \cong N \backslash B \xhookrightarrow{} N \backslash G$ denote the closed embedding. Since the category $\D(T)$ is the derived category of its heart \cite[Section 4.7]{GaiRozCrystals}, we may equivalently show that the composite $\Avpsishifted\AvNshifted$ on objects of the form $m_{*, dR}(\F \boxtimes \psi)$, where $m: N \backslash B \times N^- \xhookrightarrow{} G$ is the open embedding induced by multiplication. 
 
By the $t$-exactness of $\Avpsishifted$ given by \cref{BBMShiftedLeftAdjointIsExact}, we may verify conservativity for objects $\F \in \D(T)^{\heartsuit}$. If $\F \in \D(T)^{\heartsuit}$ then $\AvNshifted(m_{*, dR}(\F \boxtimes \psi))$ contains $i_{*, dR}(\F) \in \D(N\backslash G/N)^{\heartsuit}$ as a subobject, since the composite $\D(T) \simeq \D(N\backslash G/_{\psi}N^-) \xrightarrow{\AvN} \D(N \backslash G/N) \xrightarrow{i^!} \D(T)$ is the identity. We similarly see that $\Avpsishifted$ is the identity when restricted to $\D(T) \xrightarrow{i_{*, dR}} \D(N \backslash G/N)$ by the uniqueness of left adjoints. Therefore, we see that $\Avpsishifted\AvNshifted(\F)$ contains $i^!(\F) \simeq \F$ as a subobject, and therefore if $\F$ is nonzero, so too is $\AvNshifted\Avpsishifted(\F)$. Therefore $\Avpsishifted(\F) \simeq 0$. 
\end{proof}

\begin{proof}[Proof of \cref{Deg is BiDeg}]
We show the categories generated as in (1) and (3) are equivalent; a symmetric argument gives that the categories in (2) and (3) are equivalent. Since bi-Whittaker averaging can be realized as a composite of right and left Whittaker averaging, we see that the subcategory as in (1) is contained in the subcategory of (3). This factorization, along with the conservativity of the averaging functor $\text{Av}^{\psi}_!: \D(N^-_{\psi}\backslash G/N) \to \Hpsi$ in \cref{Mellin Dual of pistar is conservative on t}, gives that the category in (3) is contained in the category in (1).
\end{proof}

We now record some corollaries of the above results.

\begin{Corollary}\label{No Degenerate Whittaker Objects in D(G/N)}
We have an equivalence of categories $(\D(N\backslash G)_{\text{deg}})^{N^-, \psi} \simeq 0$.
\end{Corollary}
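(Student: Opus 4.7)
The plan is to reduce to the conservativity statement already established in \cref{Mellin Dual of pistar is conservative on t}, after performing a left-right and sign swap.

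First, I would apply \cref{Eventually Coconnective Kernel Object Implies Eventually Coconnective Monodromic Kernel Object}, taking the unipotent group to be $N^-$ acting on the right with character $\psi$, to obtain an identification
\[
(\D(N\backslash G)_{\text{deg}})^{N^-, \psi} \;\simeq\; (\D(N\backslash G)^{N^-, \psi})_{\text{deg}}.
\]
Thus any object $\F$ in the left-hand category can be written as a filtered colimit of eventually coconnective objects $\F_i \in \D(N\backslash G)^{N^-, \psi}$ with $\Avpsi(\F_i) \simeq 0$, where $\Avpsi$ denotes the \emph{left} Whittaker averaging functor.

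Next, I would apply the left-right analogue of \cref{SupportOfWhittakerSheaves} to identify
\[
\D(N\backslash G)^{N^-, \psi} \;\simeq\; \D(N\backslash G/_{\psi} N^-) \;\simeq\; \D(T).
\]
Since the left and right $G$-actions commute, the left Whittaker averaging restricts to a functor $\D(N\backslash G)^{N^-,\psi} \to \D(N^-_\psi \backslash G)^{N^-,\psi}$, and under the above identifications this is precisely the sign-swapped analogue of the functor appearing in \cref{Mellin Dual of pistar is conservative on t}. The proof of that lemma goes through verbatim with $-\psi$ replaced by $\psi$, using only nondegeneracy of the character, so the restricted functor is conservative. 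Consequently each $\F_i \simeq 0$, whence $\F \simeq 0$.

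The only potential obstacle is bookkeeping the left-right and sign conventions to ensure that the restriction of $\Avpsi$ to the right $(N^-,\psi)$-invariants really matches the functor in \cref{Mellin Dual of pistar is conservative on t} up to the symmetry $\psi \leftrightarrow -\psi$, but this reduces to noting that all the averaging and invariance functors involved are defined with respect to commuting $G$-actions on $\D(G)$ and hence commute with one another.
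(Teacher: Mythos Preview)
Your proposal is correct and follows essentially the same approach as the paper: both apply \cref{Eventually Coconnective Kernel Object Implies Eventually Coconnective Monodromic Kernel Object} to reduce to eventually coconnective objects of $\D(N\backslash G)^{N^-,\psi}$ in the kernel of left Whittaker averaging, and then invoke \cref{Mellin Dual of pistar is conservative on t} to conclude these vanish. You spell out the $\psi \leftrightarrow -\psi$ bookkeeping that the paper's two-sentence proof leaves implicit, but this is elaboration rather than a different route.
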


\begin{proof}
If $\F \in (\D(N\backslash G)_{\mathrm{deg}})^{N^-, \psi}$ then it can be written as a colimit of eventually coconnective objects in the kernel of the left Whittaker averaging functor $\Avpsi$. Since $\Avpsi$ commutes with colimits, we see that $\F \in \D(N\backslash G/_{\psi}N)^-$ lies in the kernel of $\Avpsi$. Therefore, by \cref{Mellin Dual of pistar is conservative on t}, $\F \simeq 0$. in particular lies in the kernel of $\Avpsi$. 
\end{proof}
\begin{Corollary}\label{Other Action Theorem}
The category $\DNdeg$ is closed under the left action of $\HN$. 
\end{Corollary}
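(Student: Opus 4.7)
The plan is to invoke the alternative description of $\DNdeg$ provided by \cref{Deg is BiDeg}(2): $\DNdeg$ is the full subcategory of $\HN$ generated under colimits by eventually coconnective objects in the kernel of the right Whittaker averaging functor $\text{Av}_!^{-\psi}\colon \HN \to \D(N\backslash G/_{-\psi}N^-)$. The point of this reformulation is that $\text{Av}_!^{-\psi}$ averages only with respect to the right $N^-$-action on $\D(N\backslash G/N)$, so it is manifestly left $\HN$-linear, either by inspection of the averaging diagram or as a formal consequence of \cref{Adjoint Functor is Automatically G Equivariant}. Consequently its kernel $K$ is automatically a left $\HN$-submodule, closed under left convolution with any object of $\HN$.

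To pass from $K$ being $\HN$-stable to $\DNdeg$ itself being $\HN$-stable, I would reduce to generators. Since $\DNdeg$ is closed under colimits by construction and since left convolution $Y \star (-)$ is continuous for every $Y \in \HN$, it suffices to check that $Y \star X \in \DNdeg$ whenever $Y \in \HN$ is a compact object and $X \in K \cap \D(N\backslash G/N)^+$. For such $Y, X$ the convolution $Y \star X$ already lies in $K$ by the left $\HN$-linearity of the previous paragraph, so the only remaining point is to verify that $Y \star X$ is itself eventually coconnective, which would place it in $K \cap \D(N\backslash G/N)^+ \subseteq \DNdeg$.

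The main technical input is the assertion that compact objects of $\HN$ are cohomologically bounded and that left convolution with such a bounded object has bounded cohomological amplitude as an endofunctor of $\D(N\backslash G/N)$. Both should follow from a direct analysis of the convolution diagram: convolution is a composition of $!$-pullback along smooth projection maps (each $t$-exact up to a cohomological shift), an exterior tensor product, and de Rham pushforward along a multiplication map which is smooth of bounded relative dimension. I expect this cohomological amplitude estimate to be the main obstacle, although it is a routine verification once the convolution diagram is written out explicitly. Granted this step, continuity of left convolution and closure of $\DNdeg$ under colimits immediately extend the conclusion from compact $Y$ and $X \in K \cap \D(N\backslash G/N)^+$ to arbitrary $Y \in \HN$ and $X \in \DNdeg$, completing the argument.
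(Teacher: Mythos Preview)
Your proposal is correct and follows essentially the same route as the paper's proof: both invoke \cref{Deg is BiDeg} to replace left Whittaker averaging by the right averaging $\text{Av}_!^{-\psi}$, exploit the left $\HN$-linearity of the latter, reduce to compact $Y\in\HN$ acting on eventually coconnective generators, and then use the bounded cohomological amplitude of convolution with a bounded object to check that the result remains eventually coconnective. The only cosmetic difference is that the paper phrases the boundedness of compact objects as ``eventually connective'' via right-completeness, whereas you (correctly, and in fact more precisely for what is needed) invoke full cohomological boundedness.
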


\begin{proof}
The category $\HN$ is compactly generated, and therefore it suffices to show the action of the compact objects $\F \in \HN$ preserves the category $\DNdeg$. Since the $t$-structure on $\D(N \backslash G/N)$ is right-complete (see \cref{Left and right-complete t-structure Definition}), $\F$ is eventually connective. Moreover, the convolution action of $\HN$ on itself preserves the eventually connective subcategory, since the pullback by a smooth map is $t$-exact up to shift and the pushforward map has finite cohomological amplitude, see \cite[Proposition 1.5.29]{HTT}. 

Let $\mathcal{G}$ be a generator of $\DNdeg$, i.e. an eventually coconnective object in the kernel of left Whittaker averaging. Since each averaging functor is continuous, by \cref{Deg is BiDeg} we see that $\mathcal{G}$ is also in the kernel of the \textit{right} averaging functor $\text{Av}_!^{\psi}: \D(N \backslash G/N) \to \D(N \backslash G/_{\psi}N^-)$. Therefore since the right averaging functor is left $\HN$-equivariant, we see that $\F \star^N \mathcal{G}$ is an eventually coconnective object in the kernel of the right averaging functor, and therefore in $\DNdeg$. 
\end{proof}

\begin{Corollary}\label{Definition of D(G/N)deg}
Let $\D(N\backslash G)_{\text{deg}}$ denote the full subcategory generated by eventually coconnective objects in the kernel of the left Whittaker averaging functor $\Avpsi: \D(N\backslash G) \to \D(N^-_{\psi}\backslash G)$. Then $\D(N\backslash G)_{\text{deg}}$ is closed under the left action of $\HN$.
\end{Corollary}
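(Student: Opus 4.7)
The plan is to reduce to \cref{Other Action Theorem} via the Morita equivalence of \cref{BZGO}, rather than to repeat its argument verbatim. The key observations will be that $\D(N\backslash G)_{\text{deg}}$ is a right $G$-subcategory of $\D(N\backslash G)$ whose right $N$-invariants recover $\DNdeg$, and that for a right $G$-equivariant action, closure on the right $N$-invariant side is equivalent to closure of the ambient category.

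First I would verify that $\D(N\backslash G)_{\text{deg}}$ is a right $G$-subcategory. The left Whittaker averaging $\Avpsi\colon \D(N\backslash G)\to \D(N^-_\psi\backslash G)$ is right $G$-equivariant since it only modifies the left equivariance, so $\ker(\Avpsi)$ is a right $G$-subcategory. Right translation by $G$ is an isomorphism of schemes, hence $t$-exact, so the right $G$-action preserves the eventually coconnective subcategory $\D(N\backslash G)^+$. Consequently the full subcategory generated under colimits by $\ker(\Avpsi)\cap \D(N\backslash G)^+$—namely $\D(N\backslash G)_{\text{deg}}$—is closed under the right $G$-action. Next I would identify $(\D(N\backslash G)_{\text{deg}})^{N,R}$ with $\DNdeg$ using \cref{Eventually Coconnective Kernel Object Implies Eventually Coconnective Monodromic Kernel Object}, taking $\alpha$ to be the trivial character (and the adaptation to right $N$-equivariance, which the statement explicitly allows for). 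By \cref{Other Action Theorem}, this category $\DNdeg$ is closed under the left $\HN$-action on $\HN$.

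Finally I would promote the closure from $\DNdeg$ to $\D(N\backslash G)_{\text{deg}}$. The left $\HN$-action on $\D(N\backslash G)$—given by convolution with elements of $\HN$ via group multiplication—commutes with the right $G$-action, since these act on independent factors; hence $\mathcal{K}\star(-)$ is a right $G$-equivariant endofunctor of $\D(N\backslash G)$ for each $\mathcal{K}\in \HN$. By \cref{BZGO} applied to the universal right $G$-category $\D(N\backslash G)$, the assignment $\mathcal{E}\mapsto \mathcal{E}^{N,R}$ is a bijection between right $G$-subcategories of $\D(N\backslash G)$ and left $\HN$-submodules of $\HN$, with inverse given (up to the appropriate tensor product) by $\mathcal{F}\mapsto \D(G/N)\otimes_{\HN}\mathcal{F}$. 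Since this correspondence manifestly respects the left $\HN$-action, closure of $\mathcal{E}^{N,R}=\DNdeg$ under left $\HN$-action is equivalent to closure of $\mathcal{E}=\D(N\backslash G)_{\text{deg}}$, completing the proof.

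The main obstacle in this plan is not in any of the structural manipulations above—each step is essentially formal once \cref{BZGO} and \cref{Eventually Coconnective Kernel Object Implies Eventually Coconnective Monodromic Kernel Object} are in hand—but rather lies in the input \cref{Other Action Theorem}, whose proof in turn required the nontrivial kernel identification of \cref{Deg is BiDeg}. Thus the technical heart of the argument was absorbed at the bi-equivariant level, and the present corollary is a clean consequence of propagating that closure across the Morita equivalence.
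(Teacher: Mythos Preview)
Your proposal is correct and takes essentially the same approach as the paper: both arguments use \cref{Eventually Coconnective Kernel Object Implies Eventually Coconnective Monodromic Kernel Object} to identify the right $N$-invariants of $\D(N\backslash G)_{\text{deg}}$ with $\DNdeg$, invoke \cref{Other Action Theorem} for closure there, and then use the Morita equivalence of \cref{BZGO} to propagate back. The paper phrases the last step slightly differently---it constructs the left $\HN$-closure $\D\supseteq \D(N\backslash G)_{\text{deg}}$ as a right $G$-category and checks the inclusion is an equivalence after $(-)^N$---but this is the same use of \cref{BZGO} you describe.
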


\begin{proof}
Let $\D$ denote the full (left) $\HN$-subcategory of $\D(N\backslash G)$ generated by eventually coconnective objects in the kernel of left Whittaker averaging. This naturally acquires a right $G$-action, and we have a natural inclusion functor of right $G$-categories $\D(N \backslash G)_{\text{deg}} \xhookrightarrow{} \D$. Moreover, by \cref{BZGO}, we may check that this functor is an equivalence after applying the right invariants $(-)^{N}$. Therefore this result follows directly from \cref{Other Action Theorem}. 
\end{proof}

\renewcommand{\G}{\mathbb{G}}

\subsubsection{Definition of Degenerate $G$-Categories in General}
\begin{Definition} Let $\C$ denote a $G$-category. We say $\C$ is \textit{degenerate} if the canonical map $\D(N \backslash G)_{\text{deg}} \otimes_{G} \C \to \C^N$ is an equivalence.
\end{Definition}

\begin{Proposition}\label{Equivalence of Various Notions of Degeneracy}
A $G$-category $\C$ is degenerate if and only if for any Borel $B'$ with $N' := [B', B']$, the canonical map $\C^{N'} \xleftarrow{} \D(N' \backslash G)_{\text{deg}} \otimes_{G} \C$ is an equivalence.
\end{Proposition}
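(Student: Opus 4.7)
The reverse direction is immediate by specializing to $B' = B$, so the content lies in the forward direction. The plan is to use that all Borel subgroups of $G$ are $G(k)$-conjugate and to transport the degeneracy condition for $B$ to an arbitrary $B'$ via the $G$-action on $\C$.

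Given an arbitrary Borel $B'$, I would fix $g \in G(k)$ with $B' = g B g^{-1}$, so that $N' = g N g^{-1}$, and conjugate the remaining Whittaker data accordingly, setting $N'^- := g N^- g^{-1}$ and $\psi'(x) := \psi(g^{-1} x g)$ for $x \in N'^-$. Left multiplication by $g$ on $G$ descends to a right-$G$-equivariant isomorphism $\phi_g \colon N \backslash G \xrightarrow{\sim} N' \backslash G$ sending $[h] \mapsto [gh]$, and similarly an isomorphism $N^-_\psi \backslash G \xrightarrow{\sim} N'^-_{\psi'} \backslash G$ of Whittaker-twisted quotients. I would verify that the resulting equivalence $\D(N \backslash G) \xrightarrow{\sim} \D(N' \backslash G)$ of right $G$-categories identifies the left Whittaker averaging functors on both sides, and hence restricts to an equivalence $\D(N \backslash G)_{\text{deg}} \xrightarrow{\sim} \D(N' \backslash G)_{\text{deg}}$. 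Dually, the $G$-action on $\C$ furnishes an autoequivalence $g \cdot \colon \C \to \C$ intertwining the $N$- and $N'$-actions in the sense $g \cdot (n \cdot x) = (g n g^{-1}) \cdot (g \cdot x)$, which restricts to an equivalence $\C^N \xrightarrow{\sim} \C^{N'}$. Tensoring the former equivalence over $G$ with $\C$ and comparing with the latter assembles a commutative square whose horizontal arrows are equivalences and whose vertical arrows are the canonical action maps for $B$ and $B'$; therefore the action map for $B'$ is an equivalence whenever the one for $B$ is.

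The main subtlety I anticipate is that $\D(N' \backslash G)_{\text{deg}}$ a priori depends on the auxiliary choice of opposite Borel and nondegenerate character used to define it, and the conjugate data $(N'^-, \psi')$ above is only one such choice. Any two choices of Whittaker data compatible with $B'$ differ by conjugation by an element of $B'$, so independence of the choice can be handled by a second, identical application of the transport argument---this time to an element of $B'$---yielding that the property \emph{the canonical action map is an equivalence} is intrinsic to $B'$ and so completing the forward direction.
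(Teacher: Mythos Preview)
Your proposal is correct and follows essentially the same route as the paper: both arguments pick $g\in G(k)$ conjugating $B$ to $B'$, use left multiplication by $g$ to obtain a right-$G$-equivariant equivalence $\D(N\backslash G)\xrightarrow{\sim}\D(N'\backslash G)$ that intertwines the Whittaker averaging functors (hence identifies the degenerate subcategories), and then tensor over $G$ with $\C$ to transport the degeneracy condition along a commutative square whose vertical arrows are equivalences. The paper carries out the last step by directly identifying $\C^{N}$ and $\C^{N'}$ with $\D(N\backslash G)\otimes_G\C$ and $\D(N'\backslash G)\otimes_G\C$ via \cref{Inv=Coinv}, rather than via the autoequivalence $\delta_g\star(-)$ of $\C$ you describe; these are the same map under those identifications.

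One point worth noting: you explicitly flag, and resolve, the issue that $\D(N'\backslash G)_{\text{deg}}$ a priori depends on a choice of Whittaker data for $B'$, and you handle it by a second conjugation (now by an element of $B'$). The paper does not address this point explicitly---it constructs $\psi'$ from the chosen $g$ and works with that---so your treatment is slightly more careful here. Your claim that any two such choices differ by $B'$-conjugation is correct (two maximal tori in $B'$ are $B'$-conjugate, the opposite Borel is then unique, and $T'$ acts transitively on nondegenerate characters), and the same transport argument applies.
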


\begin{proof}
The \lq if\rq{} direction follows from taking $B' := B$. For the other direction, fix some $B', N'$ as in the statement of \cref{Equivalence of Various Notions of Degeneracy}. There exists some $g \in G(k)$ such that $gBg^{-1} = B'$ so that $gNg^{-1} = N'$. Furthermore, $g$ determines a character $\psi'$, defined as the composite $N^{'-}/[N^{'-}, N^{'-}] \xrightarrow{\text{Ad}_{g^{-1}}} N^-/[N^-, N^-] \to \G_a$, which is nondegenerate. Therefore, the following diagram commutes:
\raggedbottom
\begin{equation*}
  \xymatrix@R+2em@C+2em{
   \D(N^-_{\psi}\backslash G) \ar[d]^{\ell_g} & \D(N\backslash G) \ar[d]^{\ell_g}  \ar[l]_{\Avpsi}\\
   \D(N^{'-}_{\psi^{'}}\backslash G)   & \D(N'\backslash G)  \ar[l]_{\text{Av}_!^{\psi{'}}}
  }
 \end{equation*}
 
 \noindent where the vertical arrows are given by left multiplication by $g$, which is manifestly right $G$-equivariant. The fact that the action of $G$ on itself yields an action on $\D(G)$ compatible with the $t$-structure implies that $\text{Ad}_g$ induces an isomorphism $\D(N\backslash G)_{\text{deg}} \xrightarrow{\sim} \D(N'\backslash G)_{\text{deg}}$. Furthermore, the following diagram commutes
 \raggedbottom
 \begin{equation*}
  \xymatrix@R+2em@C+2em{
  \C^N \ar[d]^{:= } & \D(N\backslash G) \otimes_G \C \ar[d]^{\text{Ad}_g \otimes \text{id}}  \ar[l]_{\sim} & \ar[l]_{I_* \otimes \text{id}} \D(N\backslash G)_{\text{deg}} \otimes_G \C \ar[d]^{\text{Ad}_g \otimes \text{id}}\\
  \C^{N'}  & \D(N'\backslash G) \otimes_G \C \ar[l]_{\sim} & \ar[l]_{I'_* \otimes \text{id}} \D(N'\backslash G)_{\text{deg}} \otimes_G \C
  }
 \end{equation*}
 
\noindent where the maps $I_*$ and $I'_*$ are the inclusions and the leftmost horizontal arrows are the canonical maps, which are equivalences by \cref{Inv=Coinv}. We have that all three vertical maps are equivalences, and, assuming that $\C$ is degenerate, all maps in the top row of the diagram are equivalences. Therefore, all maps in the bottom row of the diagram are equivalences.
\end{proof}

\subsection{Nondegenerate $G$-Categories}

\subsubsection{Definition and Basic Properties}
Let $I_*: \D(G/N)_{\text{deg}} \xhookrightarrow{} \D(G/N)$ denote the inclusion functor. We first note:

\begin{Proposition}\label{Inclusion of Degenerate Category of D(G/N) Is Closed Under Truncation Functors}
The essential image of $I_*$ is closed under truncation functors.
\end{Proposition}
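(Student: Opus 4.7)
The plan is to deduce closure under truncation from two key inputs already established in the paper: \cref{BBMShiftedLeftAdjointIsExact}, which gives that the shifted averaging functor $\Avpsi[-\text{dim}(N)]$ is $t$-exact, and hence that $\mathcal{K} := \ker(\Avpsi)$ is closed under both $\tau^{\leq n}$ and $\tau^{\geq n}$; together with the standard compatibility of the $t$-structure on $\D(G/N)$ with filtered colimits. A first consequence of these is that the class $\mathcal{K}^+$ of eventually coconnective objects in the kernel of $\Avpsi$ is itself closed under truncation: if $C \in \mathcal{K}^+$, then both $\tau^{\leq n}C$ and $\tau^{\geq n}C$ remain eventually coconnective (an automatic property of truncation) and, by $t$-exactness of $\Avpsi[-\text{dim}(N)]$, remain in $\mathcal{K}$.

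Given an arbitrary $\F \in \D(G/N)_{\text{deg}}$, I would use the definition to write $\F \simeq \text{colim}_i\, C_i$ as a colimit of objects of $\mathcal{K}^+$. Since $\tau^{\leq n}$ is left adjoint to the inclusion $\D(G/N)^{\leq n} \hookrightarrow \D(G/N)$, it preserves colimits, yielding $\tau^{\leq n}\F \simeq \text{colim}_i \,\tau^{\leq n}C_i$ computed in $\D(G/N)^{\leq n}$. To identify this with the colimit computed in $\D(G/N)$, I would invoke that $\D(G/N)^{\leq n}$ is closed under all small colimits in $\D(G/N)$: closure under filtered colimits is the assumed compatibility, while closure under finite colimits is automatic from the long exact sequence of cohomology associated to a cofiber sequence in any $t$-structure on a stable $\infty$-category. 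Since each $\tau^{\leq n}C_i$ lies in $\mathcal{K}^+$, this presents $\tau^{\leq n}\F$ as an object of $\D(G/N)_{\text{deg}}$.

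For the other truncation, $\tau^{\geq n}$ is a right adjoint and so the direct colimit-preservation argument is unavailable; instead I would exploit the cofiber sequence $\tau^{\leq n-1}\F \to \F \to \tau^{\geq n}\F$. The subcategory $\D(G/N)_{\text{deg}}$ is closed under colimits (by definition) and hence under cofibers, and by the previous step it contains both $\F$ and $\tau^{\leq n-1}\F$, whence it also contains $\tau^{\geq n}\F$. The main technical point to verify carefully is the commutation of $\tau^{\leq n}$ with the defining colimit presentation, which as indicated reduces to the closure of $\D(G/N)^{\leq n}$ under small colimits in $\D(G/N)$; beyond this I do not expect a serious obstacle, the argument being a formal consequence of the $t$-exactness of $\Avpsi[-\text{dim}(N)]$ and the colimit-closure defining $\D(G/N)_{\text{deg}}$.
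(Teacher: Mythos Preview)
Your argument has a genuine gap in the treatment of $\tau^{\leq n}$. You claim that $\tau^{\leq n}$ is \emph{left} adjoint to the inclusion $\D(G/N)^{\leq n} \hookrightarrow \D(G/N)$ and hence preserves colimits, but in any $t$-structure $\tau^{\leq n}$ is the \emph{right} adjoint to this inclusion; it is the inclusion itself that is the left adjoint (which is exactly why $\D(G/N)^{\leq n}$ is closed under all colimits---a fact you also invoke, though you misattribute it to ``compatibility with filtered colimits,'' a condition on $\D^{\geq 0}$, not $\D^{\leq 0}$). As a right adjoint, $\tau^{\leq n}$ does not preserve arbitrary colimits: for instance it fails to preserve the cofiber of $k[-1] \to 0$, since $\tau^{\leq 0}(k[-1]) = \tau^{\leq 0}(0) = 0$ while $\tau^{\leq 0}$ of the cofiber is $k$. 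So the identification $\tau^{\leq n}\F \simeq \text{colim}_i\, \tau^{\leq n}C_i$ is not justified as written.

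The paper's proof avoids this by reversing which truncation is handled directly. It observes that $\tau^{>0}\F$ is automatically eventually coconnective and lies in $\ker(\Avpsi)$ because $\F$ does (the colimit-closure of $\mathcal{K}^+$ is contained in $\mathcal{K}$ by continuity of $\Avpsi$) and $\Avpsi$ is $t$-exact up to shift; hence $\tau^{>0}\F \in \mathcal{K}^+ \subseteq \D(G/N)_{\text{deg}}$ outright, with no colimit manipulation needed. Then $\tau^{\leq 0}\F$ is obtained from the cofiber sequence and closure under shifts, exactly as in your step for $\tau^{\geq n}$. Swapping the roles of the two truncations in your proposal repairs it and recovers the paper's proof. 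One could instead try to rescue your original order by showing $\F$ is a \emph{filtered} colimit of objects of $\mathcal{K}^+$, since $\tau^{\leq n}$ does commute with filtered colimits under the compatibility hypothesis; but establishing such a filtered presentation at this stage is essentially the content of the later \cref{Degenerate Category is Compactly Generated} and is more work than the direct route.
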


\begin{proof}
Assume $\F$ lies in the essential image of $I_*$, i.e. $\F$ is a colimit of eventually coconnective objects in the kernel of $\Avpsi$. By definition of $t$-structures, we obtain a cofiber sequence 
\raggedbottom
\[\tau^{\leq 0}\F \to \F \to \tau^{> 0}\F\]

\noindent where we omit the inclusion functors. Since $\Avpsi$ is $t$-exact up to shift by \cref{BBMShiftedLeftAdjointIsExact}, we see that $\tau^{> 0}\F$ is an eventually coconnective object in the kernel of $\Avpsi$, and thus lies in the essential image of $I_*$. Rotation gives a cofiber sequence
\raggedbottom
\[\F \to \tau^{> 0}\F \to \tau^{\leq 0}(\F)[1]\]
\noindent and so $\tau^{\leq 0}(\F)[1]$ is a colimit of objects in the essential image of $I_*$ and thus lies in the essential image of $I_*$. Finally, since the shift functor $[-1]$ is an equivalence of categories, it commutes with all colimits and so the essential image of $I_*$ is closed under all cohomological shifts. Therefore, we obtain that $\tau^{\leq 0}(\F)$ lies in the essential image of $I_*$, as required. 
\end{proof}

By \cref{Inclusion of Degenerate Category of D(G/N) Is Closed Under Truncation Functors}, we see that the inclusion $I_*$ falls into the setup of \cref{Intro to t-Structures on Quotient Categories}. We defer the proof of the following statement to \cref{Proof of Deg Inclusion Admits a Continuous Right Adjoint}.

\begin{Proposition}\label{Degenerate Inclusion Admits Continuous Right Adjoint}
The functor $I_*$ admits a continuous, $\HN$-equivariant right adjoint $I^!: \D(G/N) \to \D(G/N)_{\text{deg}}$, and the category $\D(G/N)_{\text{deg}}$ is compactly generated.
\end{Proposition}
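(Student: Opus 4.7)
The plan is to present $\D(G/N)_{\text{deg}}$ as the cocomplete subcategory of $\D(G/N)$ generated by a set of objects that are compact in the ambient category. Once this is done, both the existence and continuity of the right adjoint $I^!$ as well as the compact generation of $\D(G/N)_{\text{deg}}$ will follow from \cref{Subcategory Generated By Compact Objects Implies Inclusion Preserves Compact Objects}, and the $\HN$-equivariance of $I^!$ will follow formally from the $\HN$-equivariance of $I_*$ (which is \cref{Definition of D(G/N)deg}) together with a rigidity-style adjoint principle analogous to \cref{Adjoint Functor is Automatically G Equivariant} for the $\HN$-action.

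First I would reduce to the heart. By \cref{BBMShiftedLeftAdjointIsExact} the functor $\Avpsi[-\text{dim}(N)]$ is $t$-exact, so the kernel $\mathcal{K}^{\heartsuit}$ of $\Avpsi$ restricted to $\D(G/N)^{\heartsuit}$ is a Serre subcategory, and an object $\F \in \D(G/N)^+$ lies in the kernel of $\Avpsi$ if and only if each cohomology sheaf $H^i(\F)$ lies in $\mathcal{K}^{\heartsuit}$.

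Next, since $G/N$ is a scheme of finite type, the compact objects of $\D(G/N)$ lying in the heart are precisely the coherent $\D$-modules, and in the Grothendieck abelian category $\D(G/N)^{\heartsuit}$ every object is the filtered colimit of its coherent subobjects. Because $\mathcal{K}^{\heartsuit}$ is closed under subobjects, every $\F \in \mathcal{K}^{\heartsuit}$ is then a filtered colimit of coherent $\D$-modules all of which lie in $\mathcal{K}^{\heartsuit}$ and are compact in $\D(G/N)$. Letting $\mathcal{R}$ denote the cohomological shifts of compact objects of $\D(G/N)$ that lie in the kernel of $\Avpsi$, and writing $\langle \mathcal{R} \rangle$ for the cocomplete subcategory of $\D(G/N)$ generated by $\mathcal{R}$, one then verifies $\D(G/N)_{\text{deg}} = \langle \mathcal{R} \rangle$. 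The containment $\langle \mathcal{R} \rangle \subseteq \D(G/N)_{\text{deg}}$ is clear since compacts in $\D(G/N)$ are cohomologically bounded. For the reverse inclusion, induction on cohomological amplitude using the truncation cofiber sequences $\tau^{\leq n-1}\F \to \tau^{\leq n}\F \to H^n(\F)[-n]$ places every bounded complex with cohomology in $\mathcal{K}^{\heartsuit}$ into $\langle \mathcal{R} \rangle$; applying right-completeness of the $t$-structure on $\D(G/N)$ then writes an arbitrary eventually coconnective $\F \in \ker(\Avpsi)$ as $\text{colim}_n \tau^{\leq n}\F$, placing $\F$ itself in $\langle \mathcal{R} \rangle$.

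I expect the substantive step to be the filtered-colimit presentation of objects of $\mathcal{K}^{\heartsuit}$ by compact subobjects, which rests on the Serre-subcategory property of $\mathcal{K}^{\heartsuit}$ combined with the standard fact that each object of $\D(G/N)^{\heartsuit}$ is the directed union of its coherent submodules. All remaining steps are formal manipulations of truncation triangles, right-completeness, and Neeman's generation criterion.
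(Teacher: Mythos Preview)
Your proposal is correct and follows essentially the same route as the paper: the paper packages the reduction as \cref{Degenerate Category is Compactly Generated}, proving that the subcategories generated by compact, by cohomologically bounded, and by eventually coconnective kernel objects all coincide, using exactly the ingredients you name ($t$-exactness of $\Avpsishifted$, right-completeness of the $t$-structure, and the fact that every object of $\D(G/N)^{\heartsuit}$ is a filtered union of its coherent subobjects, for which the paper cites \cref{Colim in AB5 is Colim of Image}); then \cref{Subcategory Generated by Compacts of Big Category Has Continuous Right Adjoint} yields both compact generation and the continuous right adjoint.

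One point to sharpen: your appeal to ``a rigidity-style adjoint principle analogous to \cref{Adjoint Functor is Automatically G Equivariant} for the $\HN$-action'' is not available in the paper as stated, because the paper never establishes that $\HN = \D(N\backslash G/N)$ is rigid monoidal (only $\HNTw$ is shown to be rigid, in \cref{Tw Hecke Category is Rigid Monoidal and Compactly Generated}). The paper instead invokes \cref{Continuous Right Adjoint on N Invariants Implies HN Linear}, which deduces $\HN$-linearity of the adjoint by recognizing $I_*$ as the $N$-invariants of a $G$-equivariant functor (via \cref{BZGO}) and then passing through the rigidity of $\HNTw$. That corollary is the precise reference you want here; your instinct that some adjoint-equivariance principle applies is right, but it is this specific one rather than a direct rigidity statement for $\HN$.
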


We set $\D(G/N)_{\text{nondeg}}$ to denote the right orthogonal category to $\D(G/N)_{\text{deg}}$. Following \cref{Intro to t-Structures on Quotient Categories}, we let $J_*$ denote the inclusion of $\D(G/N)_{\text{nondeg}}$ into $\D(G/N)$. From \cref{Degenerate Inclusion Admits Continuous Right Adjoint}, we therefore obtain: 

\begin{Proposition}
The inclusion functor of the nondegenerate category $\D(G/N)_{\text{nondeg}}$ admits a $\HN$-equivariant left adjoint $J^!: \D(G/N) \to \D(G/N)_{\text{nondeg}}$, and there is a unique $t$-structure on $\D(G/N)_{\text{nondeg}}$ for which this quotient functor is $t$-exact.
\end{Proposition}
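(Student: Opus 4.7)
The plan is to fit the claim into the general machinery of \cref{Intro to t-Structures on Quotient Categories}. Since $\D(G/N)_{\text{nondeg}}$ is by definition the right orthogonal of $\D(G/N)_{\text{deg}}$, the usual adjunction identifies it with $\ker(I^!)$, and the continuity of $I^!$ from \cref{Degenerate Inclusion Admits Continuous Right Adjoint} ensures this kernel is closed under small limits in $\D(G/N)$. The fully faithful inclusion $J_*$ therefore admits a continuous left adjoint $J^!$, exactly as noted in the setup of \cref{Intro to t-Structures on Quotient Categories}, and the resulting functor is a localization in the sense of \cref{LocalizationDefinition}.

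For the $t$-structure assertion, I would apply \cref{Raskin t-ExactCatProp}. Its hypotheses are the ones verified in the discussion preceding it for a full subcategory generated by the eventually coconnective kernel of a $t$-exact functor. Concretely, the $t$-exactness of $\Avpsi[-\dim(N)]$ (\cref{BBMShiftedLeftAdjointIsExact}) combined with \cref{Inclusion of Degenerate Category of D(G/N) Is Closed Under Truncation Functors} gives closure of the essential image of $I_*$ under truncation. Closure of $\D(G/N)_{\text{deg}}^{\heartsuit}$ under subobjects in $\D(G/N)^{\heartsuit}$ follows because a heart object in the kernel of $\Avpsi[-\dim(N)]$ is automatically eventually coconnective, hence already a generator of $\D(G/N)_{\text{deg}}$, and subobjects of kernel heart objects remain in the kernel by the exactness of $\Avpsi[-\dim(N)]$ on the heart. \cref{Raskin t-ExactCatProp} then supplies the unique $t$-structure on $\D(G/N)_{\text{nondeg}}$ for which $J^!$ is $t$-exact.

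The main technical point is the $\HN$-equivariance of $J^!$. Since $I^!$ is $\HN$-equivariant by \cref{Degenerate Inclusion Admits Continuous Right Adjoint}, its kernel $\D(G/N)_{\text{nondeg}}$ is stable under the left $\HN$-action, so $J_*$ is itself a map of $\HN$-module categories. To transfer this to its left adjoint, I would invoke \cref{Continuous Right Adjoint on N Invariants Implies HN Linear}: realizing $J_*$ as the $N$-invariants of the fully faithful, right $G$-equivariant inclusion into $\D(G)$ of the analogous right orthogonal of $\Ddeg$, the inclusion becomes $F^N$ for a $G$-equivariant $F$, so that its continuous left adjoint $J^!$ is automatically $\HN$-linear. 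I expect the main obstacle to be exactly this identification---matching the abstract $N$-invariants of the right $G$-equivariant situation on $\D(G)$ with the $\HN$-structure on $\D(G/N)_{\text{nondeg}}$ coming out of the quotient construction---but it should follow formally from \cref{BZGO} together with the compatibility between the $I^!$ on $\D(G)$ and its restriction to $\D(G/N)$ after taking $N$-invariants.
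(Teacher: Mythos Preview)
Your proposal is correct and follows essentially the same route as the paper. The paper's own justification is extremely terse: it simply notes that the setup of \cref{Intro to t-Structures on Quotient Categories} applies (having already verified the truncation closure in \cref{Inclusion of Degenerate Category of D(G/N) Is Closed Under Truncation Functors} and the continuity of $I^!$ in \cref{Degenerate Inclusion Admits Continuous Right Adjoint}), and then remarks in one line that the $\HN$-equivariance follows from \cref{Continuous Right Adjoint on N Invariants Implies HN Linear}. You have unpacked exactly these ingredients.

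One small clarification on your last paragraph: the phrase ``analogous right orthogonal of $\Ddeg$'' is slightly off as written, since $\Ddeg$ lives in $\D(N\backslash G)$ rather than $\D(G)$. What you want is the right $G$-category corresponding to $\D(G/N)_{\text{nondeg}}$ under the Morita equivalence of \cref{BZGO}; this is automatically a full right $G$-subcategory of $\D(G)$ (because BZGO preserves full faithfulness), and its inclusion is the $F$ whose right $N$-invariants recover $J_*$. You do not need to identify this subcategory explicitly: the mere existence of the Morita inverse, together with the fact that $J_*$ is $\HN$-linear (inherited from $I^!$) and admits a continuous left adjoint, is enough to place you in the hypotheses of \cref{Continuous Right Adjoint on N Invariants Implies HN Linear}. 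Your flagged ``main obstacle'' is therefore not really an obstacle---it is handled formally by \cref{BZGO}, exactly as you anticipated.
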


Note that the $\HN$-equivariance follows from \cref{Continuous Right Adjoint on N Invariants Implies HN Linear}. We now make the following definition: 

\begin{Definition}\label{NondegenerateDefinition}
We say a $G$-category $\C$ is \textit{nondegenerate} if $\D(N \backslash G)_{\text{nondeg}} \otimes_{G} \C \simeq \C^N$ (or, equivalently, $\D(N \backslash G)_{\text{deg}} \otimes_{G} \C \simeq 0$). 
\end{Definition}

We now record some first properties of our category $\D(N\backslash G)_{\text{nondeg}}$.

\begin{Proposition}\label{Whittaker Invariants Are Automatically Nondegenerate}
The canonical map $\D(N\backslash G)^{N^-, \psi} \to (\D(N\backslash G)_{\text{nondeg}})^{(N^-, \psi)}$ is an equivalence. 
\end{Proposition}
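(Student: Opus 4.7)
The plan is to deduce this from \cref{No Degenerate Whittaker Objects in D(G/N)}, which asserts the vanishing $(\D(N\backslash G)_{\text{deg}})^{N^-, \psi} \simeq 0$, together with the localization formalism of \cref{Intro to t-Structures on Quotient Categories}. First I will observe that $\D(N\backslash G)_{\text{deg}}$ is a right $G$-subcategory of $\D(N\backslash G)$: its defining generators, namely eventually coconnective objects in the kernel of the left Whittaker averaging functor $\Avpsi$, form a class manifestly preserved by the right $G$-action, since $\Avpsi$ commutes with the right $G$-action and the $G$-action on $\D(N\backslash G)$ is compatible with the $t$-structure by \cref{If Smooth Group Acts on Scheme Induced Action on Sheaves is Compatible with t-Structure}. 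Consequently the orthogonal complement $\D(N\backslash G)_{\text{nondeg}}$ inherits a $G$-action, and the adjoint pairs $(I_*, I^!)$ and $(J^!, J_*)$ are $G$-equivariant by \cref{Adjoint Functor is Automatically G Equivariant}.

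Next I will pass these adjunctions through the $(N^-, \psi)$-invariants functor. Since $G$-equivariant adjunctions induce adjunctions on $(N^-,\psi)$-invariants, we obtain an adjunction $J^{!,N^-,\psi} \dashv J_*^{N^-,\psi}$, and the canonical map in the proposition is precisely $J^{!,N^-,\psi}$. The right adjoint $J_*^{N^-,\psi}$ remains fully faithful: the counit $J^!J_* \xrightarrow{\sim} \mathrm{id}$ (an equivalence because $J_*$ is fully faithful) persists as an equivalence after taking invariants, since the $(N^-,\psi)$-invariants functor is a limit and in particular preserves equivalences. It therefore suffices to verify that the unit $\mathrm{id} \to J_*^{N^-,\psi} J^{!,N^-,\psi}$ is also an equivalence, which amounts to showing that the fiber of the unit $X \to J_*J^!(X)$ vanishes for every $X \in \D(N\backslash G)^{N^-,\psi}$.

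This fiber is $I_*I^!(X)$, so the task reduces to showing $I^!(X) \simeq 0$. By $G$-equivariance of $I^!$, the object $I^!(X)$ naturally lifts to $(\D(N\backslash G)_{\text{deg}})^{N^-,\psi}$, which is the zero category by \cref{No Degenerate Whittaker Objects in D(G/N)}; this finishes the argument. I do not expect a real obstacle: the proof is formal, relying only on the localization structure and the nontrivial input of \cref{No Degenerate Whittaker Objects in D(G/N)}. The only point requiring brief care is the right $G$-equivariance of $\D(N\backslash G)_{\text{deg}}$, which however is forced by the defining description of the subcategory.
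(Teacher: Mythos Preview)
Your proposal is correct and follows essentially the same approach as the paper: both identify $(\D(N\backslash G)_{\text{nondeg}})^{N^-,\psi}$ with the kernel of the induced functor $I^{!,N^-,\psi}$ (equivalently, both reduce to showing $I^!(X)\simeq 0$ for $X$ in the Whittaker category), and both conclude via \cref{No Degenerate Whittaker Objects in D(G/N)} that the codomain $(\D(N\backslash G)_{\text{deg}})^{N^-,\psi}$ vanishes. Your write-up merely unpacks the localization adjunction $(J^!,J_*)$ and its behavior on invariants more explicitly than the paper's terse two-sentence argument.
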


\begin{proof}
The map $I^!: \D(N\backslash G) \to \D(N\backslash G)_{\text{deg}}$ is manifestly right $G$-equivariant. Therefore we can identify $(\D(N\backslash G)_{\text{nondeg}})^{(N^-, \psi)}$ with the kernel of the induced functor on Whittaker invariants $I^{!, N^{-}, \psi}: \D(N\backslash G)^{N^-, \psi} \to \D(N\backslash G)_{\text{deg}}^{N^-, \psi}$. By \cref{No Degenerate Whittaker Objects in D(G/N)}, the codomain of this functor vanishes, and so all objects are nondegenerate.  
\end{proof}

\begin{Proposition}\label{Easier Classification of Kernel Properties}The functor $\AvN: \D(N^-_{\psi}\backslash G) \to \D(N\backslash G)$ canonically factors through the nondegenerate subcategory $D(N \backslash G)_{\text{nondeg}}$, i.e. we have an equivalence $\AvN \xrightarrow{\sim} J_*J^!\AvN$. The functor $J^!\AvNshifted$ is $t$-exact.
\end{Proposition}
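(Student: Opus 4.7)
The plan is to derive both claims from the BBM adjunction (\cref{BBMAdjointTheorem}) together with the continuity of $\Avpsi$ and the $t$-exactness results already assembled, so neither claim should require genuinely new input.

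For the factorization $\AvN \xrightarrow{\sim} J_*J^!\AvN$, the strategy is to show that for every $\mathcal{G} \in \D(N^-_\psi\backslash G)$, the object $\AvN\mathcal{G}$ lies in the right orthogonal complement of $\D(N\backslash G)_{\text{deg}}$, which is by definition $\D(N\backslash G)_{\text{nondeg}}$, i.e.\ the essential image of $J_*$; once this is established, the unit of the adjunction $(J^!, J_*)$ is automatically an equivalence on such objects since $J_*$ is fully faithful. The first step is to upgrade the containment from generators to the whole subcategory: $\D(N\backslash G)_{\text{deg}}$ is by construction generated under colimits by eventually coconnective objects in $\ker(\Avpsi)$, and since \cref{BBMAdjointTheorem} identifies $\Avpsi$ on $\D(N\backslash G) = \D(G)^N$ with (a shift of) $\text{Av}_*^\psi$, the functor $\Avpsi$ is a left adjoint, hence continuous, hence its kernel is closed under colimits. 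Then for $\F \in \D(N\backslash G)_{\text{deg}}$ and $\mathcal{G} \in \D(N^-_\psi\backslash G)$, the adjunction $(\Avpsi, \AvN)$ produces
\[
\Hom_{\D(N\backslash G)}(\F, \AvN\mathcal{G}) \simeq \Hom_{\D(N^-_\psi\backslash G)}(\Avpsi\F, \mathcal{G}) \simeq 0,
\]
yielding the desired orthogonality.

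For the $t$-exactness of $J^!\AvNshifted$, I would simply observe that it is a composite of $t$-exact functors: $\AvNshifted\colon \D(N^-_\psi\backslash G) \to \D(N\backslash G)$ is $t$-exact by \cref{G Category with Compactly Generated Compatible tStructure Means Avpsi Is T Exact Up to Shift} applied with $\C = \D(G)$ (or equivalently by \cref{Ginzburgt-Exactness}), and $J^!$ is $t$-exact by its construction via \cref{Raskin t-ExactCatProp} (together with \cref{Inclusion of Degenerate Category of D(G/N) Is Closed Under Truncation Functors}). The only subtlety worth flagging is the upgrade of the kernel containment from the generating class to the full subcategory, which reduces to continuity of $\Avpsi$; beyond that there is no genuine obstacle.
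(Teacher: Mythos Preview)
Your proposal is correct and matches the paper's proof essentially verbatim: both use the adjunction $(\Avpsi,\AvN)$ to show $\AvN(\F)$ is right-orthogonal to $\D(N\backslash G)_{\text{deg}}$, and both obtain the $t$-exactness by composing \cref{Ginzburgt-Exactness} with the $t$-exactness of $J^!$ from \cref{Raskin t-ExactCatProp}. The only cosmetic difference is that your ``upgrade'' step (extending $\Avpsi\F \simeq 0$ from generators to all of $\D(N\backslash G)_{\text{deg}}$ via continuity) is unnecessary: the paper simply checks $\uHom(\mathcal{G},\AvN\F)\simeq 0$ on the generating class and uses that $\uHom(-,X)$ sends colimits to limits, which is slightly more direct.
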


\begin{proof}
This follows from the fact that, for any $\mathcal{G} \in \D(N \backslash G)^+_{\text{deg}}$, we have by adjunction that $\uHom_{\D(N \backslash G)}(\mathcal{G}, \AvN(\F)) \simeq \uHom_{\D(N^-_{\psi}\backslash G)}(\Avpsi(\mathcal{G}), \F) \simeq 0$, and such $\mathcal{G}$ generate $\D(N \backslash G)_{\text{deg}}$ by \cref{Definition of D(G/N)deg}. The $t$-exactness follows from \cref{Ginzburgt-Exactness} and the fact the quotient functor is $t$-exact, see \cref{Intro to t-Structures on Quotient Categories}. 
\end{proof}

For a rank-one parabolic $P_{\alpha}$ associated to a simple root $\alpha$, let $Q_{\alpha} := [P_{\alpha}, P_{\alpha}]$. 

\begin{Proposition}\label{Avpsi Vanishes on Qalpha Monodromic Objects}
The left Whittaker averaging functor $\Avpsi$ sends objects of $\D(N\backslash G)$ which are $Q_{\alpha}$-monodromic for some simple root $\alpha$ (i.e. objects which are equivalent to objects of the form $q^!(\mathcal{F})$ for $\mathcal{F} \in \D(Q_{\alpha}\backslash G)$ where $q: N\backslash G \to Q_{\alpha}\backslash G$ the quotient map) to zero. Furthermore, $\Avpsi$ vanishes on the full $G$-subcategory of $\D(N\backslash G)$ generated under shifts and colimits by the various $Q_{\alpha}$-monodromic objects.
\end{Proposition}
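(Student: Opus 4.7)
The second assertion follows formally from the first: the functor $\Avpsi$ is continuous, exact, and right $G$-equivariant, so its kernel is automatically closed under colimits, shifts, and the right $G$-action. Hence it suffices to establish that $\Avpsi(q^!\mathcal{F}) = 0$ for any simple root $\alpha$ and $\mathcal{F} \in \D(Q_\alpha\backslash G)$. Since $N \subseteq Q_\alpha$, the object $\text{oblv}^{N}(q^!\mathcal{F}) \in \D(G)$ (forgetting the left $N$-equivariance) is canonically identified with $\text{oblv}^{Q_\alpha}(\mathcal{F})$ and hence is left $Q_\alpha$-equivariant. Because the negative simple root subgroup $U_{-\alpha} \cong \G_a$ is contained in $Q_\alpha$ and is unipotent, \cref{Unipotent Implies Oblv FF} implies this object is also left $U_{-\alpha}$-equivariant. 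It thus suffices to prove: for any $\mathcal{G} \in \D(G)$ in the essential image of $\text{oblv}^{U_{-\alpha}}: \D(G)^{U_{-\alpha}, \text{left}} \to \D(G)$, the left $(N^-, \psi)$-averaging $\Avpsi(\mathcal{G})$ vanishes.

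To prove this, I would use the variety decomposition $N^- \cong N_0^- \times U_{-\alpha}$, where $N_0^- := \prod_{\beta \in \Phi^+ \setminus \{\alpha\}} U_{-\beta}$ is a subgroup of $N^-$ (the subgroup structure uses that $\alpha$, being simple, cannot be written as a sum of other positive roots, so no commutators fall into $U_{-\alpha}$). Under this decomposition, the character sheaf $\psi|_{N^-}$ factors as $\psi_0 \boxtimes \chi$, where $\psi_0 := \psi|_{N_0^-}$ and $\chi := \psi|_{U_{-\alpha}}$ is nontrivial by the nondegeneracy of the pinning. Factoring the action map $N^- \times G \to G$ through $N_0^- \times U_{-\alpha} \times G$ and applying base change and the Fubini identity at the level of pushforwards then yields the identification $\Avpsi(\mathcal{G}) \simeq \text{Av}_!^{N_0^-, \psi_0}(\text{Av}_!^{U_{-\alpha}, \chi}(\mathcal{G}))$, after appropriately identifying the target category $\D(G)^{N^-, \psi, \text{left}}$ using the compatibilities between the two averagings. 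Hence it suffices to check that $\text{Av}_!^{U_{-\alpha}, \chi}$ vanishes on left $U_{-\alpha}$-equivariant objects.

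This last vanishing is established by the universal case argument mirroring the proof of \cref{NoMonodromicWhittaker}: by \cref{UniversalCaseRemark}, the composite $\D(G)^{U_{-\alpha}, \text{left}} \xrightarrow{\text{oblv}} \D(G) \xrightarrow{\text{Av}_!^{U_{-\alpha}, \chi}} \D(G)^{U_{-\alpha}, \chi, \text{left}}$ is given by convolution with an object of $\D(U_{-\alpha}/U_{-\alpha})^{U_{-\alpha}, \chi} \simeq \D(\text{pt})^{\G_a, \chi}$, and the latter category vanishes because the trivial $\G_a$-action on a point is incompatible with the nontrivial character $\chi$ (equivalently, by the vanishing of $H^*(\G_a, \chi)$). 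The main technical obstacle is the Fubini-style factorization of $\Avpsi$ through the intermediate $U_{-\alpha}$-averaging; formalizing this at the level of DG categories requires carefully tracking the character sheaf decomposition $\psi|_{N^-} \simeq \psi_0 \boxtimes \chi$ under the variety decomposition and using base change for the pushforward along the action map.
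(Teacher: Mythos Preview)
Your proof is correct but takes a genuinely different route from the paper. The paper argues by adjunction: it considers the right adjoint $\text{Av}_*^{Q_\alpha}\circ\AvN:\D(N^-_\psi\backslash G)\to\D(Q_\alpha\backslash G)$, observes via \cref{UniversalCaseRemark} that its integral kernel lies in the right $Q_\alpha$-monodromic part of $\D(N^-_\psi\backslash G/N)$, and concludes this kernel is zero by \cref{SupportOfWhittakerSheaves}; the left adjoint $\Avpsi\circ\text{oblv}^{Q_\alpha}$ therefore vanishes too. Your approach is more elementary, bypassing the support computation entirely and reducing to the vanishing of de Rham cohomology of the exponential sheaf on $\mathbb{G}_a$. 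One minor correction: since $N_0^-$ is the \emph{normal} factor in $N^-\cong N_0^-\rtimes U_{-\alpha}$ (not $U_{-\alpha}$), the categorical factorization of $\Avpsi$ runs in the opposite order, $\text{Av}_!^{U_{-\alpha},\chi}\circ\text{Av}_!^{N_0^-,\psi_0}$. This does not break your argument, however: $U_{-\alpha}$ preserves $\psi_0$ (as $\psi$ is conjugation-invariant on $N^-$), so $\text{Av}_!^{N_0^-,\psi_0}$ is $U_{-\alpha}$-equivariant and carries $U_{-\alpha}$-equivariant objects to $U_{-\alpha}$-equivariant objects; the final $\text{Av}_!^{U_{-\alpha},\chi}$ then kills these by exactly the universal-case computation you give.
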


\begin{proof}
Let $\alpha$ denote a simple root. Since the composite of the $G$-functors $\text{Av}_*^{Q_{\alpha}^r}\AvN$ has integral kernel in the right $Q_{\alpha}$-monodromic objects of $\D(N^-_{\psi}\backslash G/N)$, we see that its integral kernel is zero by \cref{SupportOfWhittakerSheaves}. Therefore, its left adjoint $\Avpsi\text{oblv}^{Q_{\alpha}}$ vanishes. The latter claim follows from the continuity and exactness of the left adjoint $\Avpsi$.
\end{proof}

\subsubsection{Proof of \cref{Degenerate Inclusion Admits Continuous Right Adjoint}}\label{Proof of Deg Inclusion Admits a Continuous Right Adjoint}

In this section, we prove \cref{Degenerate Inclusion Admits Continuous Right Adjoint} after showing the following proposition: 

\begin{Proposition}\label{Degenerate Category is Compactly Generated}
The following categories are equivalent: 
\begin{enumerate}
    \item The full subcategory of $\mathcal{D}(G/N)$ generated under colimits by the compact objects in the kernel of $\Avpsi$.
    \item The full subcategory of $\mathcal{D}(G/N)$ generated under colimits by cohomologically bounded objects in the kernel of $\Avpsi$.
    \item The full subcategory of $\mathcal{D}(G/N)$ generated under colimits by eventually coconnective objects in the kernel of $\Avpsi$. 
\end{enumerate}
\end{Proposition}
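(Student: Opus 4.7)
The plan is to prove the three containments needed. The inclusions $(1) \subseteq (2) \subseteq (3)$ are immediate: since $G/N$ is smooth, the compact objects of $\D(G/N)$ are coherent $D$-modules and in particular cohomologically bounded, while cohomologically bounded objects are a fortiori eventually coconnective.

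For $(3) \subseteq (2)$, I would use right-completeness of the $t$-structure on $\D(G/N)$ to write any eventually coconnective $\F \in \ker(\Avpsi)$ as $\F \simeq \text{colim}_n \tau^{\leq n}\F$. Each truncation $\tau^{\leq n}\F$ is cohomologically bounded, since $\F$ is eventually coconnective. By the $t$-exactness of $\Avpsi[-\text{dim}(N)]$ (\cref{BBMShiftedLeftAdjointIsExact}), the functor $\Avpsi$ commutes with $\tau^{\leq n}$ up to a shift by $[\text{dim}(N)]$, so $\Avpsi(\tau^{\leq n}\F) \simeq \tau^{\leq n + \text{dim}(N)}\Avpsi(\F) = 0$ and each $\tau^{\leq n}\F$ lies in $\ker(\Avpsi)$.

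The essential content is $(2) \subseteq (1)$, which I would establish by induction on cohomological amplitude. For the base case $\F \in \D(G/N)^{\heartsuit} \cap \ker(\Avpsi)$: since $\D(G/N)^{\heartsuit}$ is a Grothendieck abelian category, $\F$ is the filtered colimit of its finitely generated sub-$D$-modules $\F_i \hookrightarrow \F$, and each such $\F_i$ is a coherent $D$-module on the smooth scheme $G/N$, hence compact in $\D(G/N)$. Applying the exact functor $\Avpsi[-\text{dim}(N)]$ on hearts sends the injection $\F_i \hookrightarrow \F$ to an injection $\Avpsi(\F_i)[-\text{dim}(N)] \hookrightarrow \Avpsi(\F)[-\text{dim}(N)] = 0$, so $\Avpsi(\F_i) = 0$ and $\F_i$ is a compact object of $\ker(\Avpsi)$. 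For the inductive step with $\F \in \D(G/N)^{[a, b]}$ of amplitude $b - a > 0$ in the kernel, I would use the cofiber sequence $\tau^{\leq b-1}\F \to \F \to H^b(\F)[-b]$. Applying $\Avpsi$ identifies $\Avpsi(\tau^{\leq b-1}\F)$ with $\Avpsi(H^b(\F))[-b-1]$, but the latter is concentrated in cohomological degree $b + \text{dim}(N) + 1$ while the former lies in degrees $\leq b - 1 + \text{dim}(N)$ by $t$-exactness of $\Avpsi[-\text{dim}(N)]$, so both vanish. Hence both $\tau^{\leq b-1}\F$ and $H^b(\F)$ lie in $\ker(\Avpsi)$ with strictly smaller amplitude; by induction each is in the colimit-closure of compact kernel objects, and the cofiber sequence realizes $\F$ as a finite colimit of such.

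The main obstacle is the base case: converting a filtered colimit decomposition of $\F$ in the heart into a filtered colimit of compact \emph{kernel} objects. The $t$-exactness of $\Avpsi[-\text{dim}(N)]$ is indispensable here, as it forces each finitely generated subobject of a kernel object in the heart to itself lie in the kernel; without this, one would extract a filtered colimit of compact objects with no direct control over kernel membership.
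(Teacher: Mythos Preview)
Your proof is correct and essentially matches the paper's. The paper shortcuts your amplitude induction by noting that $t$-exactness of $\Avpsi[-\dim(N)]$ gives $\Avpsi(\F)=0 \iff \Avpsi(H^i\F)=0$ for all $i$, reducing directly to the heart, where your base-case argument (writing $\F$ as a union of coherent subobjects, each in the kernel by exactness on the heart) coincides with theirs. One cosmetic slip: with the paper's convention $\Avpsi[-\dim(N)]$ is $t$-exact, so $\Avpsi$ sends the heart to degree $-\dim(N)$, and your degree bounds in the inductive step should read $b+1-\dim(N)$ and $\leq b-1-\dim(N)$; the disjointness, hence the conclusion, is unaffected.
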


\begin{proof}
The compact objects in $\D(G/N)$ are in particular cohomologically bounded \cite[Section 5.1.17]{DrinfeldGaitsgoryOnSomeFinitenessQuestionsforAlgebraicStacks}, and cohomologically bounded objects are in particular eventually coconnective. Now, assume we are given an eventually coconnective $\mathcal{F}$. Since the $t$-structure on $\D(G/N)$ is right-complete  (see \cref{Left and right-complete t-structure Definition}), we may write $\mathcal{F} \simeq \text{colim}\tau^{\leq n}\F$ as the colimit of objects which have nonzero cohomology in only finitely many degrees, and therefore we assume $\mathcal{F}$ has nonzero cohomology in only finitely many degrees. Furthermore, we have that $\Avpsi$ is $t$-exact, so $\Avpsi(\F)$ vanishes if and only if $\Avpsi$ takes each cohomology group to zero. Therefore, we may assume $\F$ is concentrated in a single cohomological degree, and, since $\Avpsi$ commutes with shifts, we may assume $\F \in \D(G/N)^{\heartsuit}$. However, $\D(G/N)^{\heartsuit}$ is a Grothendieck abelian category, and furthermore it is compactly generated, see \cite[Corollary 3.3.3]{GaiRozCrystals}. Furthermore, the compact objects are closed under subquotients (see \cite[Section 4.7.7]{GaiRozCrystals}) and so, by \cref{Colim in AB5 is Colim of Image} we see that every object is a union of compact subobjects. Since $\Avpsi$ is $t$-exact, we therefore see that this object $\F$ is a colimit of compact objects in the kernel of $\Avpsi$. 
\end{proof}

\begin{proof}[Proof of \cref{Degenerate Inclusion Admits Continuous Right Adjoint}]
The compact generation follows by \cref{Degenerate Category is Compactly Generated}. By \cref{Subcategory Generated by Compacts of Big Category Has Continuous Right Adjoint} with $R$ taken to be the compact objects concentrated in a single cohomological degree, we see further that the inclusion functor of the degenerate subcategory $\D(G/N)_{\text{deg}}$ preserves compact objects and therefore admits a continuous right adjoint. Finally, the right adjoint $I^!$ to the inclusion functor $I_*: \D(G/N)_{\text{deg}} \xhookrightarrow{} \D(G/N)$ is necessarily $\HN$-linear by \cref{Continuous Right Adjoint on N Invariants Implies HN Linear}. 
\end{proof}

\subsubsection{t-Structures on Universal Nondegenerate Categories}
We equip $\D(G/N)_{\text{nondeg}}$ and $\D(N\backslash G/N)_{\text{nondeg}}$ with $t$-structures as in \cref{Intro to t-Structures on Quotient Categories}. 

\begin{Remark}
A priori, we may equip $\D(N\backslash G/N)_{\text{nondeg}} \simeq (\D(G/N)_{\text{nondeg}})^N$ with a $t$-structure determined by the condition that the forgetful functor $(\D(G/N)_{\text{nondeg}})^N \xhookrightarrow{\text{oblv}^N} \D(G/N)_{\text{nondeg}}$ is $t$-exact. However, we claim these two $t$-structures are equivalent. To see this, assume we equip $(\D(G/N)_{\text{nondeg}})^N$ with the other $t$-structure. Since the map $J_*: \D(G/N)_{\text{nondeg}} \xhookrightarrow{} \D(G/N)$ is $G$-equivariant, we see that the following diagram canonically commutes
\raggedbottom
\begin{equation*}
  \xymatrix@R+2em@C+2em{
   \D(G/N)_{\text{nondeg}} \ar[r]^{J_*} & \D(G/N)  \\
  \D(G/N)_{\text{nondeg}}^N \ar[u]^{\text{oblv}} \ar[r]^{J_*} & \D(G/N)^N \ar[u]^{\text{oblv}}
  }
\end{equation*}

\noindent and, by assumption on the $t$-structure on $\D(G/N)^N_{\text{nondeg}}$ and $\D(G/N)^N$, the forgetful functors reflect the $t$-structure. Therefore, we see that the $t$-structure on $\D(G/N)_{\text{nondeg}}^N$ determined by the requirement that $\text{oblv}^N$ is $t$-exact also has the property that $X \in \D(G/N)_{\text{nondeg}}^{N, \geq 0}$ if and only if $J_*(X) \in \D(G/N)^{N, \geq 0}$, and thus these two $t$-structures agree. 
\end{Remark}

Since $T \times T$ acts on $\D(N\backslash G/N)_{\text{nondeg}}$ compatibly with the $t$-structure by \cref{Quotient of G Category with Compatible t-Structure Action by G Subcategory Has G Action Compatible with t-Structure}, we may equip $\HNTw_{\text{nondeg}}$ with a $t$-structure such that the forgetful functor is $t$-exact. 

\begin{Proposition}\label{Right Completeness of Universal Nondegenerate G-Categories}
The $t$-structures on the categories \[\D(G/N)_{\text{nondeg}}\text{, }(\D(G/N)_{\text{nondeg}})^N\text{, and }\HNTw_{\text{nondeg}}\] are right-complete.
\end{Proposition}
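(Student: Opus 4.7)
The plan is to reduce all three right-completeness claims to the two general tools developed earlier in the paper: \cref{t-Structure on Quotient of Right-Complete Category is Right-Complete}, which passes right-completeness through quotients of the form considered in \cref{Intro to t-Structures on Quotient Categories}, and \cref{Conservative t Exact Functor to right-complete Implies right-complete and Dual Statement}(1), which passes right-completeness backwards along a $t$-exact conservative functor. The strategy is to handle $\D(G/N)_{\text{nondeg}}$ directly via the first tool, and then bootstrap through the forgetful functors for $N$-invariants and weak $(T \times T)$-invariants via the second.

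First I would treat $\D(G/N)_{\text{nondeg}}$. The category $\D(G/N)$ is the derived category of its Grothendieck abelian heart by \cite[Corollary 3.3.3]{GaiRozCrystals}, so its natural $t$-structure is right-complete. The $t$-structure on $\D(G/N)_{\text{nondeg}}$ is the one induced by the quotient construction of \cref{Intro to t-Structures on Quotient Categories} applied to the fully faithful inclusion $I_*: \D(G/N)_{\text{deg}} \hookrightarrow \D(G/N)$, whose essential image is closed under truncations by \cref{Inclusion of Degenerate Category of D(G/N) Is Closed Under Truncation Functors}. Hence \cref{t-Structure on Quotient of Right-Complete Category is Right-Complete} immediately gives right-completeness.

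Next I would treat $(\D(G/N)_{\text{nondeg}})^N$. The forgetful functor $\text{oblv}^N: (\D(G/N)_{\text{nondeg}})^N \to \D(G/N)_{\text{nondeg}}$ is $t$-exact by the definition of the $t$-structure on the invariants, and is fully faithful (hence in particular conservative) by \cref{Unipotent Implies Oblv FF}, since $N$ is unipotent. Given the previous step, \cref{Conservative t Exact Functor to right-complete Implies right-complete and Dual Statement}(1) applied to $\text{oblv}^N$ delivers the second claim.

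Finally, for $\HNTw_{\text{nondeg}}$, the $t$-structure was set up in the paragraph preceding the proposition so that the forgetful functor $\text{oblv}^{T \times T, w}: \HNTw_{\text{nondeg}} \to (\D(G/N)_{\text{nondeg}})^N$ is $t$-exact; since this $t$-structure is characterized by the condition that $\text{oblv}^{T \times T, w}$ both preserves and reflects the $t$-structure, the forgetful functor is automatically conservative. A second application of \cref{Conservative t Exact Functor to right-complete Implies right-complete and Dual Statement}(1) then completes the argument. The one place where one might worry is this last conservativity statement, but it is essentially forced by the very setup of the $t$-structure on the weak invariants, and if a cleaner justification is required I would note that the shifted coaction map is a $t$-exact, conservative functor out of $\HNTw_{\text{nondeg}}$ which factors through $\text{oblv}^{T \times T, w}$. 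I do not expect any substantive obstacle.
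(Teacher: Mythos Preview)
Your proposal is correct and follows essentially the same approach as the paper: both arguments rest on \cref{t-Structure on Quotient of Right-Complete Category is Right-Complete} and \cref{Conservative t Exact Functor to right-complete Implies right-complete and Dual Statement}(1). The only cosmetic difference is in the middle step: the paper first shows $\D(N\backslash G/N)$ is right-complete (via the $t$-exact conservative $\text{oblv}^N$ into $\D(G/N)$) and then applies the quotient lemma to obtain right-completeness of $\D(N\backslash G/N)_{\text{nondeg}}$, whereas you apply the quotient lemma first and then pass through $\text{oblv}^N$ into $\D(G/N)_{\text{nondeg}}$; the Remark immediately preceding the Proposition guarantees these two routes describe the same $t$-structure, so the reordering is harmless.
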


\begin{proof}
The $t$-structures on $\D(G/N)_{\text{nondeg}}$ and $(\D(G/N)_{\text{nondeg}})^N$ are right-complete by construction and \cref{t-Structure on Quotient of Right-Complete Category is Right-Complete}, since the $t$-structures on $\D(G/N)$ and $\D(N\backslash G/N)$ are right-complete. Here, the $t$-structure on $\D(N\backslash G/N)$ is right-complete because it admits a $t$-exact conservative functor $\text{oblv}^N: \D(N\backslash G/N) \xhookrightarrow{} \D(G/N)$ to a category for which the $t$-structure is right-complete. Since the forgetful functor is $t$-exact and conservative, it reflects the $t$-structure. Therefore, we also obtain the right-completeness of the $t$-structure on $\HNTw_{\text{nondeg}}$. 
\end{proof}

\subsection{The Weyl Group Action on Nondegenerate Categories}\label{WeylGroupAction} In \cref{WeylGroupAction}, we construct a Weyl group action on the $N$-invariants of a $G$-category $\C$ which is nondegenerate in the sense of \cref{NondegenerateDefinition}. We now briefly sketch the construction so as to give an overview of the content of \cref{WeylGroupAction}: this sketch will also highlight where we use the fact that $\C$ is nondegenerate.

By the definition of nondegenerate $G$-categories, we have $\C^N \simeq \D(N\backslash G)_{\mathrm{nondeg}} \otimes_G \C$, and so the construction of a $W$-action on $\C^N$ will reduce to constructing a $W$-action on $\D(N\backslash G)_{\mathrm{nondeg}}$ compatible with the $G$-action. 

A category closely related to $\D(N\backslash G)$ admits an action of $W$, namely the category of modules for the \textit{ordinary} ring of differential operators on $N\backslash G$, see \cref{Big Action on Modules for Ring of Differential Operators of G Mod N}. (We briefly review this action in \cref{Gelfand-Graev Action Construction}.) Moreover, one can show that there is a fully faithful functor \[\D(N\backslash G) \xhookrightarrow{} \operatorname{H}^0\Gamma(\D_{N\backslash G})\mathrm{-mod},\] see \cref{FullyFaithfulDModLemma}. However, since $N\backslash G$ is essentially never affine, this functor is essentially \textit{never} an equivalence. Moreover, even if $G = \mathrm{SL}_2$, the full subcategory $\D(N\backslash G)$ of $ \operatorname{H}^0\Gamma(\D_{N\backslash G})\mathrm{-mod}$ is \textit{not} preserved under the $W$-action on $\operatorname{H}^0\Gamma(\D_{N\backslash G})\mathrm{-mod}$. However, in \cref{Nondegeneracy is Closed Under Actions} we show that the full subcategory $\D(N\backslash G)_{\mathrm{nondeg}}$ of $\D(N\backslash G)$ \textit{is} preserved under this $W$-action. We obtain our desired $W$-action on $\D(N\backslash G)_{\mathrm{nondeg}}$ by restricting the action on $\operatorname{H}^0\Gamma(\D_{N\backslash G})\mathrm{-mod}$ to this subcategory. 

\begin{Remark} We now compare the above strategy to other papers in the literature:
\begin{enumerate}
\item The construction of a $W$-action on $\D(N\backslash G)_{\mathrm{nondeg}}$ can be viewed as an \lq inverse\rq{} to a result of \cite{BBP}. In our notation, the main result of \cite{BBP} describes the category $\operatorname{H}^0\Gamma(\D_{N\backslash G})\mathrm{-mod}^{\heartsuit}$ in terms of the localization functor $\operatorname{H}^0\Gamma(\D_{N\backslash G})\mathrm{-mod}^{\heartsuit} \to \D(N\backslash G)^{\heartsuit}$ and its translates under the Gelfand-Graev action. On the other hand, our goal is to describe a \lq large\rq{} subcategory of $\D(N\backslash G)$ which, when viewed as a subcategory of $\operatorname{H}^0\Gamma(\D_{N\backslash G})\mathrm{-mod}$, is preserved under the $W$-action.
\item The fact that the image of $\D(N\backslash G)_{\mathrm{nondeg}}$ in $\operatorname{H}^0\Gamma(\D_{N\backslash G})\mathrm{-mod}$ is preserved under the Gelfand-Graev action can be viewed as a categorical analogue of \cite[Proposition 5.1.4]{Gin}, which states that the open subset $T^*(N\backslash G) \times_{\mathfrak{g}^*} \mathfrak{g}^*_{\mathrm{reg}}$ of the affinization \[\overline{T^*(N\backslash G)} := \Spec(\operatorname{H}^0\Gamma(\O_{T^*(N\backslash G)}))\] of the cotangent bundle $T^*(N\backslash G)$ of $N\backslash G$ given by points of $T^*(N\backslash G)$ whose value under the moment map to $\mathfrak{g}^*$ is regular is preserved under the quasi-classical Gelfand-Graev action on $\overline{T^*(N\backslash G)}$.
\item We say a $G$-category $\C$ is \textit{generated by its Whittaker invariants} if the action map $\D(G/_{\psi}N) \otimes_{\Hpsi} \C^{N^-, \psi} \to \C$ is an equivalence. Using \cref{Whittaker Invariants Are Automatically Nondegenerate}, one can show that any $G$-category $\C$ generated by its Whittaker invariants is nondegenerate, so that, in particular, $\C^N$ admits a $W$-action. On the other hand, as we have already mentioned in \cref{Weyl Group Action on Nondegenerate Quotient Subsubsection}, there are nondegenerate $G$-categories $\C$ which are not generated by their Whittaker invariants. The interplay between nondegenerate $G$-categories and $G$-categories generated by their Whittaker invariants is studied in the companion paper \cite{GannonClassificationOfNondegenerateGCategories}.

\end{enumerate}
\end{Remark}

\subsubsection{Overview of the Gelfand-Graev Action on $   \operatorname{H}^0\Gamma(\D_{G/N})$}\label{Gelfand-Graev Action Construction}First, we recall the construction of the Gelfand-Graev action on the (ordinary) ring $\operatorname{H}^0\Gamma(\D_{G/N})$ of differential operators on $G/N$. We give the classical account here; an alternative construction is given for $G$ adjoint or simply connected in \cite{GiKa}.

First, assume $[G, G]$ is simply connected. We specify an action of $s \in W$ for every simple reflection $s$. For our fixed $s$, one can construct a certain rank-two symplectic vector bundle $V_s$ over $G/[P_s, P_s]$ such that the complement of the zero section is isomorphic to $G/N$. (Because we will use this construction in what follows, we will spell out this construction in more detail in \cref{Kazhdan-Laumon SFT Construction Reminder}.) From this vector bundle, one can naturally define a \textit{symplectic Fourier transformation} \[\operatorname{H}^0\Gamma(V_s) \xrightarrow{\sim} \operatorname{H}^0\Gamma(V_s)\] on the global differential operators on $V_s$ as in \cite[Section B.2]{GinRic}. Because $V_s$ is a smooth (and, in particular, normal) variety, the restriction map gives an isomorphism \[\operatorname{H}^0\Gamma(V_s) \xrightarrow{\sim} \operatorname{H}^0\Gamma(\D_{G/N})\] of ordinary $k$-algebras, and so we obtain an involution of the ring $\operatorname{H}^0\Gamma(\D_{G/N})$ labeled by a simple reflection $s$. These symplectic Fourier transforms satisfy the braid relations and thus induce a $W$-action on $\operatorname{H}^0\Gamma(\D_{G/N})$, see for example \cite[Lemma 3.2.1(1)]{GinRic}. 

For general $G$, the Gelfand-Graev action is defined by taking the Gelfand-Graev action on $\operatorname{H}^0\Gamma(\D_{\tilde{G}/\tilde{N}})$, where $\tilde{G}$ is some reductive group with $[\tilde{G}, \tilde{G}]$ simply connected admitting a finite central isogeny $\tilde{G} \to G$ with kernel $Z$, and taking $Z$-invariants. Essentially by construction, these automorphisms upgrade the $(G \times T)$-action on $\operatorname{H}^0\Gamma(\D_{G/N})$ to an action of $G \times (T \rtimes W)$, and are compatible with the comoment map $U\LG \otimes_{Z\LG} \Symt \to \operatorname{H}^0\Gamma(\D_{G/N})$ induced from the action of $G \times T$ on $G/N$.  In particular, we obtain a group action on the category of modules for $\operatorname{H}^0\Gamma(\D_{G/N})$ using the Harish-Chandra datum in the sense of \cite[Section 1.2]{Ras3}:

\begin{Corollary}\label{Big Action on Modules for Ring of Differential Operators of G Mod N}
The group $G \times (T \rtimes W)$ acts strongly on $\operatorname{H}^0\Gamma(\D_{G/N})$-Mod, where the $W$-action is given by the Gelfand-Graev action.
\end{Corollary}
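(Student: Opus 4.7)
The plan is to deduce this corollary from \cref{GinzburgKazhdanRingIso} by exhibiting a natural $G \times (T \rtimes W)$--Harish-Chandra algebra structure on the right-hand side of \labelcref{GiKaRingIso} and then transporting it through the isomorphism to the algebra $A := \operatorname{H}^0\Gamma(\D_{G/N})$; the strong action on $A\text{-Mod}$ then follows from the general passage from Harish-Chandra algebras to strong group actions on their module categories.

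First I would verify that each of the two factors in $B := (\Gamma(\D_T) \otimes_{Z\LG} \Gamma(\D_{G/_{\psi}N^-}))^{Z\LG}$ contributes a piece of the structure. The second factor $\Gamma(\D_{G/_{\psi}N^-})$ is a $G$--Harish-Chandra algebra under left translation on $G/_{\psi}N^-$, and this extends to $B$ through the tensor product. The first factor $\Gamma(\D_T)$ is a $T$--Harish-Chandra algebra under left translation. Finally, the natural $W$-action on $T$ induces a $W$-action on $\Gamma(\D_T)$ that fixes the image of $Z\LG$ pointwise (this is precisely the $W$-invariance built into the Harish-Chandra isomorphism $Z\LG \hookrightarrow \Gamma(\D_T)^W$), so it descends to a $W$-action on $B$ which acts trivially on the second tensor factor.

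Next, since both the $T$-- and $W$--actions on $B$ are pulled back through the first tensor factor from the natural $T \rtimes W$-action on the torus $T$ itself, they automatically assemble into a $T \rtimes W$-action on $B$. By \cite{GiKa}, the isomorphism of \cref{GinzburgKazhdanRingIso} is an isomorphism of $G \times T$--Harish-Chandra algebras and intertwines the above $W$-action on $B$ with the Gelfand-Graev action on $A$; transporting through this isomorphism therefore endows $A$ with a $G \times (T \rtimes W)$--Harish-Chandra algebra structure. To conclude, I invoke the general principle that a Harish-Chandra algebra structure on an associative algebra $A$ for a pro-algebraic group $H$ promotes $A\text{-Mod}$ to a strong $H$-category: the automorphism datum encoding the $H$-action, together with the infinitesimal map $U\mathrm{Lie}(H) \to A$, produces the requisite coaction map giving a $\D(H)$-module structure on $A\text{-Mod}$, see for example \cite[Section 1.2]{Ras3}.

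The main obstacle I expect is the verification of the semi-direct (rather than direct) product compatibility between the $T$-- and $W$--actions at the Harish-Chandra level; this amounts to checking that the Weyl conjugation relation $sts^{-1}$ is reflected correctly in the interaction between the embedding $U\LT \hookrightarrow \Gamma(\D_T)$ giving the $T$ Harish-Chandra datum and the $W$-action on $\Gamma(\D_T)$. The key point here is that the embedding is by construction $W$-equivariant with respect to the $W$-action on $U\LT$ coming from its action on $\LT$, which is exactly the relation needed for the semi-direct product, and this is precisely the content of $W$ acting on $T$ by its defining action.
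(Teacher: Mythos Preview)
Your proposal is correct and follows essentially the same approach as the paper: the paper states this corollary as an immediate rephrasing of \cref{GinzburgKazhdanRingIso}, noting that the isomorphism respects the $G, T$ actions, that the $W$-action on the right-hand side transports to the Gelfand-Graev action, and that the Harish-Chandra datum is explicitly computed in \cite{GiKa}, so one may invoke \cite[Section 1.2]{Ras3} to pass to strong actions on module categories. Your write-up is more detailed in spelling out where each piece of the $G \times (T \rtimes W)$ structure comes from and in flagging the semi-direct product compatibility, but the strategy is identical.
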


\subsubsection{Fully Faithfulness of Forgetful Functor}  We now prove the following Lemma, which relates the category $\D(G/N)$ with the ordinary category of modules for the (classical) global differential operators on $G/N$:

\begin{Lemma}\label{FullyFaithfulDModLemma}
We have the following:
\begin{enumerate}
    \item The global sections functor induces an equivalence $\D(G/N) \xrightarrow{\sim} \GlobalDiffOp\text{-mod}$.
    \item The forgetful functor $\GlobalDiffOp\text{-mod} \xrightarrow{\text{oblv}} \ClGlobalDiffOp\text{-mod}$ induced by the ring map $\ClGlobalDiffOp \to \GlobalDiffOp$ is fully faithful.
\end{enumerate}
\end{Lemma}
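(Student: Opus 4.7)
For part (1), the plan is to reduce to a Beilinson--Bernstein-type localization for the basic affine space $G/N$. Using the $T$-torsor structure $p: G/N \to G/B$, the category $\D(G/N)$ decomposes over characters of $T$, with each twisted piece $\D(G/B, \lambda)$ identified with $U\LG_{\chi_\lambda}\text{-mod}$ via the twisted monodromic Beilinson--Bernstein theorem (cf.\ \labelcref{Monodromic Beilinson-Bernstein Localization at Zero}). Assembling these equivalences across the weight lattice and identifying $\GlobalDiffOp \simeq \Gamma(G/B, p_*\D_{G/N})$ with the direct integral of the twisted enveloping algebras $\Gamma(\D_{G/B, \lambda})$ then yields the desired derived equivalence $\D(G/N) \xrightarrow{\sim} \GlobalDiffOp\text{-mod}$. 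The non-regular weights require care, but are handled uniformly by the monodromic variant.

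For part (2), my strategy is to show that $\GlobalDiffOp$ is already concentrated in cohomological degree $0$, so that the natural comparison map $\ClGlobalDiffOp \to \GlobalDiffOp$ is an isomorphism of DG algebras and the forgetful functor is literally the identity, which is trivially fully faithful. To prove this cohomological vanishing, I would use the order filtration on $\D_{G/N}$ (whose associated graded is $\operatorname{Sym}^\bullet T_{G/N}$), reducing via the associated spectral sequence to the statement $H^i(G/N, \operatorname{Sym}^k T_{G/N}) = 0$ for all $i > 0$ and $k \geq 0$. In our setting (where $G$ is a product of a simply connected semisimple group and a torus, as in the context of \cref{GinzburgKazhdanRingIso}), $G/N$ is quasi-affine and embeds as an open subvariety of its affine closure $\overline{G/N} := \Spec(\Gamma(G/N, \mathcal{O}))$ with complement of codimension $\geq 2$. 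I would extend each $\operatorname{Sym}^k T_{G/N}$ reflexively to $\overline{G/N}$ and then invoke Serre vanishing together with the codimension-two local cohomology vanishing to conclude.

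The main obstacle is the cohomological vanishing in part (2): checking that the symmetric powers of the tangent sheaf extend suitably to $\overline{G/N}$ and that $\overline{G/N}$ has enough regularity (e.g.\ is Cohen--Macaulay) to apply the codimension-two argument requires substantive geometric input. Part (1), while nontrivial, is essentially a repackaging of known Beilinson--Bernstein localization results for the basic affine space.
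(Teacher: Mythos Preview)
Your strategy for part (2) rests on a claim that is false: $\GlobalDiffOp$ is \emph{not} concentrated in cohomological degree zero in general. Already for $G = \SL_2$ one has $G/N \cong \mathbb{A}^2 \setminus \{0\}$, and since $\D_{G/N}$ is free as a quasi-coherent $\mathcal{O}$-module there, $H^1(G/N, \D_{G/N})$ contains $H^1(\mathbb{A}^2 \setminus \{0\}, \mathcal{O}) \neq 0$. Equivalently, on the associated graded side you are asking for $H^i(T^*(G/N), \mathcal{O}) = 0$ for $i > 0$, but quasi-affineness does not imply this (the punctured plane is the standard counterexample), and no amount of codimension-two or Cohen--Macaulay input will change that. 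So the map $\ClGlobalDiffOp \to \GlobalDiffOp$ is genuinely not an isomorphism, and you cannot deduce full faithfulness by arguing the forgetful functor is the identity.

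The paper's argument for (2) sidesteps this completely. Rather than proving $\GlobalDiffOp$ is discrete, it shows the weaker statement that the counit $\GlobalDiffOp \otimes_{\ClGlobalDiffOp} \GlobalDiffOp \to \GlobalDiffOp$ is an equivalence, which suffices for full faithfulness of the right adjoint. Passing to associated graded, this becomes $A \otimes_{H^0 A} A \to A$ for $A$ the derived global functions on $T^*(G/N)$; identifying $A\text{-mod} \simeq \QCoh(T^*(G/N))$ via quasi-affineness, this map is exactly the counit $j^*j_*(A) \to A$ for the open immersion $j: T^*(G/N) \hookrightarrow \Spec(H^0 A)$, and $j_*$ is fully faithful for any open immersion. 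The key conceptual point is that idempotence of $A$ over $H^0 A$ is much weaker than discreteness and follows from quasi-affineness alone.

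For part (1), your weight-decomposition approach is also more fragile than needed: the twisted Beilinson--Bernstein equivalence $\D_{\lambda}(G/B) \simeq U\LG_{\chi_\lambda}\text{-mod}$ fails at singular $\lambda$ (global sections has a kernel there), and the monodromic statement you cite concerns \emph{generalized} central character, which does not repair the fixed-twist picture. The paper instead argues directly: $G/N$ is quasi-affine, so the global sections functor $\uHom(\D_{G/N}, -)$ is conservative and colimit-preserving, and Barr--Beck gives $\D(G/N) \simeq \GlobalDiffOp\text{-mod}$ in one step.
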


\begin{Remark}\label{Universal Flag Variety is Quasiaffine}
It is known that $G/N$ is a locally closed subscheme of $\mathbb{A}^n_k$ for some $n \in \mathbb{Z}^{\geq 0}$, see, for example, \cite[Section 29.2]{RomanovWilliamsonLanglandsCorrespondenceandBezrukavnikovsEquivalence}. In particular, $G/N$ is quasi-affine, and so we may realize $G/N$ as an open subset of its affine closure $\overline{G/N}$. Furthermore, since $\overline{G/N}$ is known to be normal (which follows directly from the smoothness of $G$ and \cite[Theorem 18.4]{GrosshansAlgebraicHomogeneousSpacesandInvariantTheory}, for example), we have an equivalence $\text{H}^0(\D_{\overline{G/N}}) \cong \ClGlobalDiffOp$. However, this affine closure is not smooth if $G$ is semisimple and not a product of copies of $\SL_2$. In particular, the category $\D(\overline{G/N}) := \IndCoh(\overline{G/N}_{dR})$ need not be equivalent to the category of right modules for the ring $\ClGlobalDiffOp$, and we do not know if there is such an equivalence.  
\end{Remark}

\begin{proof}[Proof of \cref{FullyFaithfulDModLemma}]
To show (1), we note that since $G/N$ is quasi-affine (see \cref{Universal Flag Variety is Quasiaffine}), the global sections functor $\text{Hom}_{\D(G/N)}(\D_{G/N}, -)$ is conservative. Therefore, since the global sections functor also commutes with colimits, by Barr-Beck we have a canonical equivalence $\D(G/N) \xrightarrow{\sim} \Gamma(\D_{G/N})\text{-mod}$. 

Now, to show (2), consider the map of rings $\ClGlobalDiffOp \to \GlobalDiffOp$. We obtain a standard adjunction 

$$\GlobalDiffOp \otimes_{\ClGlobalDiffOp} (-) : \ClGlobalDiffOp\text{-Mod} \rightleftarrows \GlobalDiffOp\text{-Mod}:  \text{oblv}$$

\noindent and therefore it suffices to show that the counit of the adjunction is an equivalence on compact generators, i.e. the natural map
\raggedbottom
\[\GlobalDiffOp \otimes_{\ClGlobalDiffOp} \GlobalDiffOp \to \GlobalDiffOp\]

\noindent is an equivalence. Note this map is filtered, and so in particular to show this map is an isomorphism we may show the induced map on associated graded is an isomorphism. However, the associated graded of $\GlobalDiffOp$ is the (not necessarily classical) ring $A$ of global functions on the cotangent bundle $T^*(G/N)$, and the associated graded of $\ClGlobalDiffOp$ is the ring $\text{H}^0(A)$ by \cite[Corollary 6.3.1]{GinRic}. In particular, we wish to show that the natural map:
\raggedbottom
\[A \otimes_{\text{H}^0A} A \to A\]

\noindent is an equivalence. 

First, note the fact that $G/N$ is quasi-affine (\cref{Universal Flag Variety is Quasiaffine}) also implies that $T^*(G/N)$ is quasi-affine, since the map $T^*(G/N) \to G/N$ is affine (as $T^*(G/N)$ is a vector bundle as $G/N$ is smooth), affine morphisms are quasi-affine \cite[Lemma 29.13.3]{StacksProject}, and so the composite morphism $T^*(G/N) \to G/N \to \ast$ is quasi-affine since the composite of quasi-affine morphisms is quasi-affine \cite[Lemma 29.13.4]{StacksProject}. In particular, $T^*(G/N)$ is an affine open subset of $\text{Spec}(\text{H}^0(A))$ \cite[Lemma 29.13.3]{StacksProject}.  Let $j$ denote this open embedding. Since the terminal map $T^*(G/N) \to \ast$ is quasi-affine, by \cite[Chapter 3, Proposition 3.3.3]{GaRoI} we obtain an equivalence $\QCoh(T^*(G/N)) \xrightarrow{\sim} \QCoh(\text{Spec}(A)) := A\text{-Mod}$.  Then, in particular, our natural map may be identified with the counit of the adjunction
\raggedbottom
\[j^*j_*(A) \to A\]

\noindent applied to the compact generator $A \in A\text{-Mod}$. However, $j_*$ is fully faithful (see, for example, the proof of \cite[Chapter 3, Proposition 3.3.3]{GaRoI}) and so this map is an equivalence. 
\end{proof}

\subsubsection{The Gelfand-Graev Action and the Whittaker Subcategory}Recall that $\D(G/N)^{N^-, \psi}$ is a full subcategory of $\D(G/N)$, see \cref{Unipotent Implies Oblv FF}. We record one result of interest with respect to the Gelfand-Graev action, see \cite[Proposition 5.5.2]{Gin}:

\begin{Proposition}\label{WActionOnWhittaker}
The Gelfand-Graev functors on $\D(G/N)^{N^-, \psi}$ are compatible with the $(W, \cdot)$-action on $\D(T)$. In particular, this category is a full $W$-subcategory of $H^0(\Gamma_{\D_{G/N}})\text{-mod}$. 
\end{Proposition}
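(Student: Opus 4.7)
The plan is to deduce the proposition from \cite[Proposition 5.5.2]{Gin} together with the Ginzburg-Kazhdan isomorphism of \cref{GinzburgKazhdanRingIso}. Via the fully faithful embedding $\D(G/N) \hookrightarrow \ClGlobalDiffOp\text{-mod}$ of \cref{FullyFaithfulDModLemma}, the Gelfand-Graev $W$-action on the algebra $\ClGlobalDiffOp$ induces a $W$-action on this ambient category, and we need to show it preserves the Whittaker subcategory and that the restricted action agrees, under \cref{SupportOfWhittakerSheaves}, with the dot-action on $\D(T)$.

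First I would verify that $\D(G/N)^{N^-, \psi}$ is preserved by the Gelfand-Graev $W$-action, which gives the \lq in particular\rq{} clause. The key observation is that by construction (and as recorded in \cref{GinzburgKazhdanRingIso}), the Gelfand-Graev action acts only through the $\Gamma(\D_T)$-tensor factor on the right-hand side of \labelcref{GiKaRingIso}, fixing $\Gamma(\D_{G/_{\psi}N^-})$. Since $\Gamma(\D_T)$ records the \emph{right} $T$-action on $G/N$, the Gelfand-Graev action commutes with the \emph{left} $G$-action and in particular with the left $N^-$-action, so the left $(N^-,\psi)$-Whittaker condition is $W$-invariant.

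Next I would identify the restricted action with the dot-action on $\D(T)$. Under the equivalence of \cref{SupportOfWhittakerSheaves}, $\D(G/N)^{N^-,\psi} \simeq \D(T)$ as right $T$-categories; by inspection this equivalence identifies the residual action of $\ClGlobalDiffOp$ (after imposing the Whittaker quotient on the $\Gamma(\D_{G/_{\psi}N^-})$-factor, which is canonically $Z\LG$) with the natural action of $\Gamma(\D_T)$ on $\D(T)$. Hence the induced Weyl group action on $\D(T)$ is the one coming from the $W$-action on $\Gamma(\D_T)$ featured in \labelcref{GiKaRingIso}.

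The main obstacle is matching this $W$-action on $\Gamma(\D_T)$ with the \emph{dot}-action on $\D(T)$ rather than the naive action. The $W$-action in \labelcref{GiKaRingIso} is pinned down by the requirement that its fixed subalgebra be the image of $Z\LG$ under the Harish-Chandra isomorphism $Z\LG \xrightarrow{\sim} \Gamma(\D_T)^W$, which has a built-in $\rho$-shift; tracing through this shift on Whittaker modules is precisely the explicit computation of the Gelfand-Graev involutions on the Whittaker vector carried out in \cite[Proposition 5.5.2]{Gin}, which yields the dot-action. Our proof is therefore essentially an unpacking of Ginzburg's calculation in the categorical framework set up in this paper.
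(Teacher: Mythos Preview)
Your proposal is correct and takes essentially the same approach as the paper: both rely on \cite[Proposition 5.5.2]{Gin} as the core input. In fact the paper does not give an independent proof at all---it simply records the proposition as a citation of Ginzburg's result, and the accompanying remark only addresses the passage from the classical derived category $\text{ho}(\D(T))$ to the DG category $\D(T)$ via the observation that $\D(T)$ is the derived category of its heart. Your write-up is thus a more explicit unpacking of the same cited result; the additional argument you give for why the Gelfand-Graev action preserves the Whittaker subcategory (via commutation with the left $G$-action in \labelcref{GiKaRingIso}) is correct and is implicit in Ginzburg's setup, but the paper does not spell it out.
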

  
\begin{Remark}
Since the category $\D(G/N)^{N^-, \psi} \xrightarrow{\sim} \D(T)$ is the derived category of its heart, the result of \cref{WActionOnWhittaker} in \cite[Proposition 5.5.2]{Gin}, stated for the classical derived category $\text{ho}(\D(T))$, immediately gives $W$-equivariance of the equivalence of DG categories $\D(G/N)^{N^-, \psi} \xrightarrow{\sim} \D(T)$ as well. 
\end{Remark} 
 \subsubsection{Reduction to Simply Connected Derived Subgroup Case}\label{Reduction to tildeG Subsubsection}
Let $\tilde{G} := G_{sc} \times Z(G)^{\circ}$, where $G_{sc}$ denotes the simply connected cover of the semisimple group $[G, G]$ and $Z(G)^{\circ}$ denotes the connected component of the identity. The following result is well-known; however, we were unable to locate a reference for the exact formulation we will use, so we provide a proof here: 

\begin{Lemma}\label{Split Reductive Group Isogeny Lemma}
The canonical map $\tilde{G} \to G$ has kernel $Z$ for $Z$ a finite central subscheme, and thus induces an isomorphism $\tilde{G}/Z \xrightarrow{\sim} G$.  Furthermore, $Z(G)^{\circ}$ is a split torus, so any reductive group in particular admits a central isogeny from a group which is the product of a semisimple simply connected group and a split torus. 
\end{Lemma}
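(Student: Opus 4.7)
The plan is to reduce to the classical structure theorem for connected reductive groups, then combine with the simply connected cover construction and an argument that the torus factor is split.

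First, I would recall (or cite from, e.g., Milne's \emph{Algebraic Groups}) the structure theorem stating that for any connected reductive group $G$, the derived subgroup $[G,G]$ is semisimple, the identity component $Z(G)^{\circ}$ of the center is a torus, and the multiplication map
\[
\mu: [G,G] \times Z(G)^{\circ} \longrightarrow G
\]
is surjective with finite central kernel $N$ (concretely, $N$ is identified with the finite subgroup scheme $[G,G] \cap Z(G)^{\circ}$ antidiagonally embedded). Second, I would invoke the construction of the simply connected cover: there is a central isogeny $\pi : G_{sc} \to [G,G]$ with finite central kernel $K$. Taking the product with the identity on $Z(G)^{\circ}$ yields $\pi \times \mathrm{id} : \tilde{G} \to [G,G] \times Z(G)^{\circ}$, and the composite $\mu \circ (\pi \times \mathrm{id}) : \tilde{G} \to G$ is the canonical map. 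Its kernel $Z$ fits into an extension of $N$ by $K$, hence is finite. Centrality is the routine check: if $(g, z) \in Z$, then $\pi(g) = z^{-1} \in Z(G)^{\circ} \cap [G,G] \subseteq Z([G,G])$, so $g \in \pi^{-1}(Z([G,G])) = Z(G_{sc})$, and $z$ is automatically central in $\tilde{G}$ since $Z(G)^{\circ}$ is a direct factor. Surjectivity is immediate from surjectivity of $\mu$ and $\pi$. Hence $\tilde{G} \to G$ is a central isogeny, and the induced map $\tilde{G}/Z \to G$ is an isomorphism by the first isomorphism theorem for affine group schemes.

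Third, to see that $Z(G)^{\circ}$ is a split torus I would use the splitness hypothesis on $G$ (fixed in \cref{Representation Theoretic Notation Section}). Since $Z(G)^{\circ}$ is contained in every maximal torus of $G$ (a standard fact for connected reductive groups: the center is the intersection of all maximal tori), it is a closed connected subgroup of the split maximal torus $T$. A closed connected subgroup of a split torus is itself a split torus, since the quotient of $\characterlatticeforT$ by the subgroup of characters trivial on $Z(G)^{\circ}$ is a finitely generated free abelian group (being a subgroup of a free abelian group) and is identified with $X^{\bullet}(Z(G)^{\circ})$. This shows $Z(G)^{\circ}$ is split, so $\tilde{G} = G_{sc} \times Z(G)^{\circ}$ is a product of a simply connected semisimple group with a split torus.

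The main obstacle is largely bookkeeping rather than a genuine mathematical difficulty: one needs to be careful about what is meant by \emph{central isogeny} in the scheme-theoretic sense (as opposed to on $k$-points) and to verify that the kernel really is finite as a group scheme, not just finite on $k$-points. This is handled by noting that both $K$ and $N$ are finite group schemes (as kernels of isogenies of smooth connected groups), and that an extension of finite group schemes is finite. The splitness assertion is the other place where some care is needed, but it follows cleanly from the assumption that $G$ is split, since then the chosen maximal torus is split and the result is then a statement about lattices.
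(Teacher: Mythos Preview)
Your proof is correct and proceeds along broadly the same structural lines as the paper's, but the details differ in two places worth noting.

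For the isogeny claim, the paper does not build the map as a composite of $\pi \times \mathrm{id}$ and $\mu$ and then analyze the kernel as an extension. Instead it quotes a packaged structural result (Milne, Remark~19.30) expressing $G$ directly as $(G_{sc} \times Z(G))/\tilde{F}$ for an explicitly described finite central $\tilde{F}$, and then restricts to the identity component $Z(G)^{\circ}$ using a general isomorphism theorem (Milne, Theorem~5.82). Your approach is slightly more hands-on and has the virtue of making the extension structure $1 \to K \to Z \to N \to 1$ of the kernel explicit; the paper's is shorter but leans on a more specific citation.

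For the split torus claim, the paper takes a genuinely different route: it argues that $Z(G)^{\circ}$ is smooth (characteristic zero), connected, and diagonalizable (since $Z(G)$ is diagonalizable for reductive $G$, and subgroups of diagonalizable groups are diagonalizable), and then invokes the classification of smooth connected diagonalizable groups. Your argument instead uses the split hypothesis on $G$ directly, embedding $Z(G)^{\circ}$ into the fixed split maximal torus $T$ and concluding via a character-lattice argument. Your route is arguably the more natural one given that splitness of $G$ is part of the standing hypotheses, and it avoids invoking the diagonalizability of centers of reductive groups; the paper's route, on the other hand, does not rely on identifying $Z(G)^{\circ}$ as a subgroup of $T$.
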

\begin{proof}
By \cite[Remark 19.30]{MilneAlgebraicGroupsBook}, we may write $G$ as the quotient $(G_{sc} \times Z(G))/\tilde{F}$, where $\tilde{F}$ is the finite group scheme given by the scheme-theoretic image of the map $Z(G') \to G_{sc} \times Z(G)$ defined by $\xi \mapsto (\xi, \xi^{-1})$.  Now let $F$ denote the scheme-theoretic image of the closed subscheme $Z(G_{sc}) \times_{Z(G)} Z(G)^{\circ}$ under the above map to $G' \times Z(G)$. Then the map $G_{sc} \times Z(G)^{\circ} \to (G' \times Z(G))/\tilde{F}$ induces an isomorphism $(G_{sc} \times Z(G)^{\circ})/F \xrightarrow{\sim} (G_{sc} \times Z(G))/\tilde{F}$ (see \cite[Theorem 5.82]{MilneAlgebraicGroupsBook} with $H := G_{sc} \times Z(G)^{\circ}$ and $N := \tilde{F}$). 

Let $\mathscr{T} := Z(G)^{\circ}$. Then $\mathscr{T}$ is smooth, since $k$ has characteristic zero, and connected by assumption. Furthermore, $\mathscr{T}$ is diagonalizable since $Z(G)$ is \cite[Proposition 21.8]{MilneAlgebraicGroupsBook}, and subgroups of diagonalizable groups are diagonalizable \cite[Theorem 12.9(c)]{MilneAlgebraicGroupsBook}. Therefore, we see that $\mathscr{T}$ is a smooth connected diagonalizable group. We therefore see that $\mathscr{T}$ is a split torus by \cite[Chapter 12, Section e]{MilneAlgebraicGroupsBook}. 
\end{proof}

We let $\tilde{G}$ be as above and let $\phi: \tilde{G} \to G$ denote the central isogeny of \cref{Split Reductive Group Isogeny Lemma}. Let $\tilde{B} := \phi^{-1}(B)$ and $\tilde{T} := \phi^{-1}(T)$. Furthermore, let $\tilde{N} := [\phi^{-1}(B), \phi^{-1}(B)]$ denotes the unipotent radical of $\tilde{B}$ (which is not necessarily the same group as $\phi^{-1}([B, B])$), and let $\tilde{N}^- := [\phi^{-1}(B^-), \phi^{-1}(B^-)]$. Given any parabolic subgroup $P$ of $G$, we set $\tilde{P} := \phi^{-1}(G)$ to be the corresponding parabolic subgroup of $\tilde{G}$, and, given a $P$, we set $\tilde{Q} := [\tilde{P}, \tilde{P}]$. If $P$ is the rank-one parabolic subgroup $P_{\alpha}$, we will also denote this subgroup by $\tilde{P}_{\alpha}$ and $\tilde{Q}_{\alpha} := [\tilde{P}_{\alpha}, \tilde{P}_{\alpha}]$.  This notation is justified by the following:

\begin{Proposition}\label{Reduction to Split Simply Connected SS Group times Split Torus} With the above notation, we have the following:
\begin{enumerate}
    \item \cite[Theorem 22.6]{BorelLinearAlgebraicGroups} The group $\tilde{B}$ is a Borel subgroup of $\tilde{G}$ containing a maximal torus $\tilde{T}$, and $\tilde{B}$ (respectively $\tilde{T}, \tilde{P}$) is the unique Borel (respectively, torus, parabolic subgroup) for which the image under $\phi$ is $B$ (respectively, $T$, $P$). Furthermore, $\phi$ induces an isomorphism of Weyl groups. 
    \item The group $\tilde{N}$ is the unipotent radical of $\tilde{B}$, and $\phi$ induces an isomorphism $\phi|_{\tilde{N}}: \tilde{N} \xrightarrow{\sim} N$. 
    \item The closed subgroup scheme $\tilde{N}$ factors through the subgroup scheme $G_{sc} \times 1$. 
    \item Let $\alpha: N \to \mathbb{G}_a$ be any character, and let $\tilde{\alpha}$ denote the additive character $\alpha\phi|_{\tilde{N}}$. Then the pullback functor $\D(G/_{\alpha}N) \xrightarrow{} \D(\tilde{G}/_{\tilde{\alpha}}\tilde{N})$ induces an equivalence of categories $\D(G/_{\alpha}N) \xrightarrow{\sim} \D(\tilde{G}/_{\tilde{\alpha}}\tilde{N})^Z$. 
\end{enumerate}
The analogous claims to these statements also hold when $\tilde{N}$ is replaced with $\tilde{N}^-$ and $\tilde{B}$ with $\tilde{B}^- := \phi^{-1}(B^-)$. 
\end{Proposition}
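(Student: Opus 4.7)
The plan is to dispose of parts (1)--(3) with standard structure theory of reductive groups and the direct product decomposition $\tilde{G} = G_{sc} \times Z(G)^\circ$ from \cref{Split Reductive Group Isogeny Lemma}, and then prove (4) via finite étale descent for $\D$-modules along the $Z$-torsor $\tilde{G} \to G$, carefully tracking the twisting by characters.

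First, I would dispose of (1) by invoking the cited result \cite[Theorem 22.6]{BorelLinearAlgebraicGroups}, which establishes the bijection between Borel subgroups (respectively maximal tori, parabolic subgroups) of $\tilde{G}$ and those of $G$ under a central isogeny, along with the induced isomorphism of Weyl groups. For (2) and (3), I would use the product structure $\tilde{G} = G_{sc} \times Z(G)^\circ$ directly: since $Z(G)^\circ$ is a torus, every Borel subgroup of $\tilde{G}$ decomposes as $\tilde{B} = B_{sc} \times Z(G)^\circ$ for some Borel $B_{sc} \subseteq G_{sc}$, and taking commutator subgroups yields $\tilde{N} = [\tilde{B}, \tilde{B}] = [B_{sc}, B_{sc}] \times 1$. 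This simultaneously identifies $\tilde{N}$ as the unipotent radical of $\tilde{B}$ and exhibits it inside $G_{sc} \times 1$, giving (3). For the isomorphism $\phi|_{\tilde{N}} : \tilde{N} \xrightarrow{\sim} N$ in (2), I would note that $\phi(\tilde{N}) = \phi([\tilde{B}, \tilde{B}]) = [B, B] = N$ since $\phi$ is surjective, while the kernel $\tilde{N} \cap Z$ is trivial because $Z$ is central, hence contained in $\tilde{T}$, and $\tilde{T} \cap \tilde{N} = 1$ by the Levi decomposition of $\tilde{B}$.

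The main content is (4). My approach would be to first observe that $\tilde{G} \to G$ is a $Z$-torsor (hence finite étale in characteristic zero), and that since $Z$ is central in $\tilde{G}$ and $Z \cap \tilde{N} = 1$, the induced quotient map $\tilde{G}/\tilde{N} \to G/N$ is again a $Z$-torsor. Descent for $\D$-modules along this cover, via \cref{Invariants of G Acting on Sheaves on X is Invariants on X Mod G} applied to the discrete group ind-scheme $Z$, then yields an equivalence $\D(G/N) \xrightarrow{\sim} \D(\tilde{G}/\tilde{N})^Z$. To incorporate the twisting, I would observe that because $Z$ is central in $\tilde{G}$, the right actions of $Z$ and $\tilde{N}$ on $\tilde{G}$ commute, so one may take $(\tilde{N}, \tilde{\alpha})$-twisted invariants and $Z$-invariants in either order. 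Under the isomorphism $\phi|_{\tilde{N}} : \tilde{N} \xrightarrow{\sim} N$ established in (2), the character $\tilde{\alpha} = \alpha \phi|_{\tilde{N}}$ corresponds to $\alpha$, giving the claimed equivalence $\D(G/_{\alpha}N) \xrightarrow{\sim} \D(\tilde{G}/_{\tilde{\alpha}}\tilde{N})^Z$.

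The step I expect to require the most care is making this last compatibility precise at the DG-categorical level, since one must verify that the equivalence intertwines the twisting structures. For this I would appeal to the functorial definition of $(H, \chi)$-twisted invariants from \cref{Definition of Invariants Given by a Character}, noting that pullback along $\phi$ automatically carries the character sheaf $\mathcal{L}_{\alpha}$ on $N$ to $\mathcal{L}_{\tilde{\alpha}}$ on $\tilde{N}$ by construction. Finally, the analogous claims for $\tilde{N}^-$ and $\tilde{B}^-$ follow by running the entire argument with the opposite Borel $B^-$ replacing $B$ throughout: $\tilde{B}^- = \phi^{-1}(B^-)$ is again a Borel by (1), and each step above carries through unchanged.
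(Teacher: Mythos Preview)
Your proposal is correct and follows essentially the same approach as the paper. The only stylistic differences are that for (2)--(3) the paper argues via the observation that $\ker\phi$ consists of semisimple elements (so $\tilde{N}\cap Z$ is both unipotent and semisimple, hence trivial), whereas you use the product decomposition and the Levi decomposition $\tilde{T}\cap\tilde{N}=1$; and for (4) the paper writes the chain of equivalences starting from $\D(G)^{N,\alpha}$ and invoking \cref{Inv=Coinv}, while you phrase it as descent along the $Z$-torsor $\tilde{G}/\tilde{N}\to G/N$ via \cref{Invariants of G Acting on Sheaves on X is Invariants on X Mod G}---these are equivalent formulations of the same argument.
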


\begin{proof}
As noted above, the first claim is precisely \cite[Theorem 22.6]{BorelLinearAlgebraicGroups}, using the fact that central isogenies are in particular surjective. The first part of claim (2) follows by definition, and the second part follows from the fact the kernel of the map $\phi|_{\tilde{N}}$ in particular lies in the kernel of $\phi$, which contains only semisimple elements. Thus any element in the kernel of $\phi|_{\tilde{N}}$ is both semisimple and unipotent, and thus trivial. This also shows the third claim, since there are no nontrivial unipotent elements of a torus. 

Let $Z$ denote the kernel of $\phi$. Then, using the fact that $\phi$ induces an isomorphism $\tilde{G}/Z \xrightarrow{\sim} G$ and \cref{Inv=Coinv}, we obtain (4) via:
\raggedbottom 
\[\D(G/_{\alpha}N) \simeq \D(G)^{N, \alpha} \simeq \D((G_{sc} \times Z(G)^{\circ})/Z)^{\tilde{N}, \tilde{\alpha}}\] \[\simeq D(G_{sc} \times Z(G)^{\circ})^{(\tilde{N}\times Z, \tilde{N}, \tilde{\alpha} \times 1)} \simeq D(G_{sc}/_{\tilde{\alpha}}\tilde{N} \times Z(G)^{\circ})^{Z}\]

\noindent where the final step uses the fact that any element of $\tilde{N}$ lies in $G_{\mathrm{sc}} \times 1$. 
\end{proof}

\begin{Remark} The result of \cref{Reduction to Split Simply Connected SS Group times Split Torus} in particular implies that we have a canonical induced nondegenerate character given by the composite $\tilde{N}^- \xrightarrow{\phi|_{\tilde{N}^-}} N \xrightarrow{\psi} \mathbb{G}_a$. We will lightly abuse notation and also denote this character by $\psi$. 
\end{Remark}
\renewcommand{\E}{\mathcal{E}}
\renewcommand{\G}{\mathbb{G}}
\newcommand{\Gmalpha}{\mathbb{G}_m^{\alpha}}
\newcommand{\Gsmall}{G^{\langle s \rangle}}
    
\subsubsection{Symplectic Fourier Transform Construction and Reminders}\label{Kazhdan-Laumon SFT Construction Reminder}
In this section, we recall the construction of the symplectic Fourier transformations associated to each simple reflection $s$ and recall some basic properties of the various Fourier transformations which will be used later. Let $P_{\alpha}$ denote the associated rank-one parabolic subgroup, and let $\alpha: T \to \G_m$ denote the associated root. In what follows, we will also use the notation $P_s$ for $P_{\alpha}$ depending on whether we wish to emphasize that the parabolic subgroup is associated to a simple reflection or a root. We will use similar notation for other objects that depend on a single root such as $Q_{\alpha} := Q_s := [P_s, P_s]$.

Following \cite{KaLa} and \cite{Pol}, we can construct a rank-two symplectic vector bundle on $G/[P_s, P_s]$ \textit{in the case} $[G, G]$ \textit{is simply connected} as follows. 

Let $N_{w} := N \cap \dot{w}^{-1}\dot{w_0}N\dot{w_0}\dot{w}$, where, for each $w \in W$, $\dot{w}$ is some arbitrarily chosen lift of $w$. Choose a Levi decomposition of the parabolic $P_s \cong L_s \ltimes N_{w_0s}$ and set $Q_s := [P_s, P_s]$ and $M_s := [L_s, L_s]$. Our choice of $\psi$ in particular determines an isomorphism $\mathbb{G}_a \xrightarrow{\sim} N_s$, so we can identify $M_s \cong \SL_{2, \alpha}$. With this choice, set $V_s := G/N_{w_0s} \times^{\SL_{2, \alpha}} \mathbb{A}^2$. Then the projection map $V_s \to G/[P_s, P_s]$ can be upgraded to the structure of a rank-two symplectic vector bundle, and the vector bundle $V_s$ has complement to the zero section $G/Q_{\alpha} \xhookrightarrow{z} V_s$ given by the universal flag variety $G/N \xhookrightarrow{j} V_s$ \cite{KaLa}. Furthermore, the composite $\pi j$ may be identified with the quotient map $q: G/N \to G/Q_{\alpha}$. 

\begin{Example}\label{Example for SL2}
    If $G = \SL_2$ and $N$ is the group of strictly upper triangular matrices, then the first column gives an isomorphism $G/N \cong \A^2\setminus 0$, and $V_s \cong \A^2$ is its affine closure.
\end{Example}

Let \[\mathbb{F}: \D(V_s) \to \D(V_s)\] denote the symplectic Fourier transform associated to the symplectic vector bundle $V_s$ with respect to the symplectic form $\omega: V_s \xrightarrow{\sim} V_s^{\vee}$ and let \[F_s := j^!\mathbf{F}j_{*, dR}: \D(G/N) \to \D(G/N)\] denote its restriction to $\D(G/N)$. To emphasize which vector bundle $\mathcal{V}$ we are taking the Fourier transform with respect to, we will sometimes write $\mathbb{F}_{\mathcal{V}}: \D(\mathcal{V}) \to \D(\mathcal{V}^{\vee})$ for the usual Fourier transform. Since $F_s$ is a $G$-equivariant functor, by \cref{UniversalCaseRemark}, $F_s$ is naturally isomorphic to convolution with the sheaf $F_s(\delta_{1N})$. This sheaf is known as the \textit{Kazhdan-Laumon sheaf} for $s$, see \cite{KaLa}, \cite{Pol}.

Observe that, as a scheme with an action of $N$, the rank-two vector space $V_s$ of \cref{Example for SL2} contains a distinguished line: namely, the line which is invariant under the left $N$-action. We now define a similar line bundle $\mathcal{L}_s$ over $G/Q_s$ and a map of $G/Q_s$-schemes $\ell_s: \mathcal{L}_s \to V_s$ whose fiber at every $k$-point of $G/Q_s$ yields the inclusion of a one-dimensional vector space into a two dimensional vector space in an analogous fashion. To do this, let $\Gsmall$ denote the locally closed subscheme of $G$ given by those Bruhat cells labelled by those $w \in W$ which are minimal in their right $\langle s \rangle$ cosets. Note that the notation above give an identification $N_{w_0s}\backslash (B \cap Q_s) \cong B_{\alpha}$, a Borel subgroup of $\SL_{2, s}$. Set $\mathcal{L}_s := \Gsmall/N_{w_0s} \times^{B_{\alpha}} \A^1$. Then we similarly see that we have a map $\mathcal{L}_s \to G/[P_s, P_s]$ and the locally closed embedding $\Gsmall \to G$ induces our desired map $\ell_s: \mathcal{L}_s \to V_s$ over $G/[P_s, P_s]$. 

We record two observations in the following remark.

\begin{Remark}\label{Dual to Rank One Vector Bundle is Projection Remark} By light abuse of notation, let $\omega$ denote the isomorphism $V_s \xrightarrow{\sim} V_s^{\vee}$. Duality gives a canonical map $\ell_s^{\vee}: V_s^{\vee} \xrightarrow{} \mathcal{L}_s^{\vee}$. Furthermore, note that the composite $\ell_s^{\vee}\omega \ell_s$ vanishes. Therefore there exists an isomorphism of schemes $a$ over $G/Q_s$ making the following diagram commute:
\raggedbottom
\begin{equation*}
  \xymatrix@R+2em@C+2em{
V_s \ar[r]^{\omega}  \ar[d]^{p} & V_s^{\vee} \ar[d]^{\ell_s^{\vee}} \\
V_s/\mathbb{G}_a \ar[r]^{a} & \mathcal{L}_s^{\vee}
  }
 \end{equation*}
 
 \noindent where $a$ is an isomorphism since $V_s$ has rank-two. Dually, there is an isomorphism of schemes $\omega|_{\mathcal{L}_s}$ over $G/Q_s$ making the following diagram commute:
\raggedbottom
\begin{equation}\label{Dual Morphism to Ell_s is projection dual under symplectic identification}
  \xymatrix@R+2em@C+2em{
\mathcal{L}_s \ar[r]^{\omega|_{\mathcal{L}_s}}  \ar[d]^{\ell_s} & (V_s/\mathbb{G}_a)^{\vee} \ar[d]^{p^{\vee}} \\
V_s \ar[r]^{\omega} & V_s^{\vee}
  }
 \end{equation}
\end{Remark}

We now recall a result on the usual Fourier transform, which can be proven by taking Verdier dual versions of all the functors in the proof of \cite[Th\'eor\`em 1.2.2.4]{Lau}:

\begin{Proposition}\label{Functoriality of Fourier Transform}
Assume $f: E \to V$ is a morphism of vector bundles, where $E$ has rank $e$ and $V$ has rank $r$. Then there is an isomorphism of functors $\mathbb{F}_{V}(f_{*, dR}(-)) \simeq  f^{\vee, !}\mathbb{F}_E(-)[e - r]$, where $f^{\vee}: V^{\vee} \to E^{\vee}$ is the induced morphism. 
\end{Proposition}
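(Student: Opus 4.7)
The plan is to dualize the classical functoriality of the Fourier transform along a morphism of vector bundles (the argument in the proof of \cite[Th\'eor\`em 1.2.2.4]{Lau}) from the $(f_!, f^*)$-formulation to the $(f_{*, dR}, f^!)$-formulation appropriate to right $\D$-modules here. The entire argument reduces to base change and the projection formula applied to the Fourier kernel.

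First I would recall the standard description of the Fourier transform as pull-push along the evaluation pairing: for a vector bundle $\pi_W: W \to X$, with projections $p_W, p_{W^\vee}$ from $W \times_X W^\vee$ and evaluation $\operatorname{ev}_W: W \times_X W^\vee \to \mathbb{G}_a$, one has (up to the cohomological shift $[\operatorname{rk}(W)]$ built into the normalization)
$$\mathbb{F}_W(\mathcal{G}) \simeq p_{W^\vee, *, dR}\bigl(p_W^!\mathcal{G} \otimes^! \operatorname{ev}_W^!\mathcal{L}_\psi\bigr).$$
Next, introduce the auxiliary scheme $Y := E \times_X V^\vee$ together with its two natural morphisms $\tilde{f} := f \times \operatorname{id}_{V^\vee}: Y \to V \times_X V^\vee$ and $\tilde{f}^\vee := \operatorname{id}_E \times f^\vee: Y \to E \times_X E^\vee$. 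These fit into two Cartesian squares: the first realizes $Y$ as the base change of $p_V$ along $f$, and the second realizes $Y$ as the base change of $p_{E^\vee}$ along $f^\vee$. The key geometric input is the identity
$$\operatorname{ev}_V \circ \tilde{f} = \operatorname{ev}_E \circ \tilde{f}^\vee,$$
which is simply the defining property of the dual morphism $f^\vee$.

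With these preliminaries in place, the proof is a direct pull-push computation. Starting from $\mathbb{F}_V(f_{*, dR}\mathcal{G})$, base change in the first Cartesian square rewrites $p_V^!\circ f_{*, dR} \simeq \tilde{f}_{*, dR}\circ q_E^!$, where $q_E: Y \to E$ is the projection. The projection formula for $\otimes^!$ then pulls $\tilde{f}_{*, dR}$ outside at the cost of replacing the Fourier kernel by $\tilde{f}^!\operatorname{ev}_V^!\mathcal{L}_\psi$; by the compatibility above, this equals $\tilde{f}^{\vee, !}\operatorname{ev}_E^!\mathcal{L}_\psi$. Writing also $q_E^! = \tilde{f}^{\vee, !} p_E^!$ and using that $!$-pullback commutes with $\otimes^!$, one factors $\tilde{f}^{\vee, !}$ out of the integrand. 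A final application of base change in the second Cartesian square moves $\tilde{f}^{\vee, !}$ past the remaining pushforward, producing exactly $f^{\vee, !}\mathbb{F}_E(\mathcal{G})$.

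The content is conceptually immediate; the main obstacles are bookkeeping. One must verify that base change and the projection formula are available in the relevant generality for right $\D$-modules (which they are, since all schemes and morphisms involved are between smooth varieties of finite type), and, more subtly, one must track cohomological shifts. None of the intermediate steps (base change, projection formula, compatibility of $!$-pullback with $\otimes^!$) contribute a shift, so the entire shift $[e - r]$ is attributable to the differing normalizations $[\operatorname{rk}(E)]$ and $[\operatorname{rk}(V)]$ built into the respective Fourier transforms $\mathbb{F}_E$ and $\mathbb{F}_V$.
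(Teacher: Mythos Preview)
Your proof is correct and is precisely what the paper intends: the paper does not give a detailed argument but simply remarks that the result ``can be proven by taking Verdier dual versions of all the functors in the proof of \cite[Th\'eor\`em 1.2.2.4]{Lau},'' and your pull--push computation via the auxiliary correspondence $Y = E \times_X V^\vee$, base change, the projection formula, and the identity $\operatorname{ev}_V \circ \tilde{f} = \operatorname{ev}_E \circ \tilde{f}^\vee$ is exactly that dualization carried out in full.

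One minor remark: in your write-up you compute $\mathbb{F}_V(f_{*,dR}\mathcal{G}) \simeq f^{\vee,!}\mathbb{F}_E(\mathcal{G})$, whereas the statement as printed has the subscripts $E$ and $V$ on the Fourier transforms reversed. Your version is the one that type-checks (for $\mathcal{G} \in \D(E)$, both sides live on $V^\vee$) and is the one actually invoked in the subsequent applications in the paper (e.g.\ the computation $\mathbb{F}_{V_s}\ell_{s,*,dR} \simeq \ell_s^{\vee,!}\mathbb{F}_{\mathcal{L}_s}[1]$), so this is a typographical slip in the displayed statement rather than an error on your part.
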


\subsubsection{The Symplectic Fourier Transformations Preserve the Universal Nondegenerate Category}
As in \cref{Reduction to tildeG Subsubsection}, we assume $G$ is an arbitrary (split) reductive group. Using the notation of \cref{Reduction to tildeG Subsubsection} and \cref{Kazhdan-Laumon SFT Construction Reminder}, we show that the symplectic Fourier transformations of \cref{Kazhdan-Laumon SFT Construction Reminder} induce endofunctors on the nondegenerate and degenerate subcategories of $\D(G/N)$. We first show this claim for the nondegenerate subcategory in the special case when $[G, G]$ is simply connected; in other words, we show:

\begin{Lemma}\label{Nondeg and Zero Section of Vector Bundle Don't Talk}
Assume that $\F \in \D(\tilde{G}/\tilde{N})_{\text{nondeg}}$. Then the canonical map $\mathbf{F}j_{*, dR}\F \xrightarrow{} j_{*, dR}j^!\mathbf{F}j_{*, dR}\F$ is an isomorphism. 
\end{Lemma}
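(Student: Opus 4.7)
My plan is to prove the equivalent vanishing statement $z^!\mathbf{F}j_{*,dR}\F \simeq 0$, where $z : \tilde G/\tilde Q_\alpha \hookrightarrow V_s$ is the zero section complementary to $j$. This reduction is immediate from the recollement associated to the complementary open/closed decomposition $\tilde G/\tilde N \overset{j}{\hookrightarrow} V_s \overset{z}{\hookleftarrow} \tilde G/\tilde Q_\alpha$, since the fibre sequence $z_{*,dR}z^! \to \operatorname{id} \to j_{*,dR}j^!$ on $\D(V_s)$ identifies the cofibre of the canonical map in the lemma with $z_{*,dR}z^!(\mathbf{F}j_{*,dR}\F)[1]$.

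Next I would reduce this vanishing to the assertion $q_{*,dR}\F \simeq 0$, where $q : \tilde G/\tilde N \to \tilde G/\tilde Q_\alpha$ is the smooth quotient map. Because $\omega$ is a bundle isomorphism over $\tilde G/\tilde Q_\alpha$, it identifies the two zero sections, so $z^!\mathbf{F} = (z^\vee)^!\mathbb{F}_{V_s}$, where $z^\vee$ is the zero section of $V_s^\vee$. I would then apply \cref{Functoriality of Fourier Transform} to $z^\vee$, viewed as a morphism from the rank-$0$ bundle $\tilde G/\tilde Q_\alpha$ into the rank-$2$ bundle $V_s^\vee$, noting that Fourier on a rank-$0$ bundle is the identity, and use that $\mathbb{F}_{V_s}$ is an equivalence to rearrange the resulting adjunctions. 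This yields a natural isomorphism $(z^\vee)^!\mathbb{F}_{V_s}(\mathcal{G}) \simeq \pi_{*,dR}(\mathcal{G})[\text{shift}]$ of functors $\D(V_s)\to\D(\tilde G/\tilde Q_\alpha)$, where $\pi : V_s\to\tilde G/\tilde Q_\alpha$. Since $\pi\circ j = q$, the whole problem collapses to showing $q_{*,dR}\F \simeq 0$.

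To conclude, I would exploit the adjunction $(q^!, q_{*,dR})$, available since $q$ is smooth; smoothness also makes $q^!$ a shift of a $t$-exact functor and so preserves eventually coconnective objects. For any $\mathcal{H}\in\D(\tilde G/\tilde Q_\alpha)^+$, the pullback $q^!\mathcal{H}$ is an eventually coconnective $\tilde Q_\alpha$-monodromic sheaf on $\tilde G/\tilde N$, hence lies in $\ker(\Avpsi)$ by \cref{Avpsi Vanishes on Qalpha Monodromic Objects} and therefore in $\D(\tilde G/\tilde N)_{\text{deg}}$. Since $\F$ is by hypothesis right-orthogonal to this subcategory, $\uHom(q^!\mathcal{H},\F)\simeq 0$, and by adjunction $\uHom(\mathcal{H}, q_{*,dR}\F)\simeq 0$ for all eventually coconnective $\mathcal{H}$. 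Writing a general $\mathcal{H}$ as $\operatorname{colim}_n\tau^{\leq n}\mathcal{H}$ via right-completeness of the $t$-structure on $\D(\tilde G/\tilde Q_\alpha)$ and passing to limits of mapping spaces gives $\uHom(\mathcal{H}, q_{*,dR}\F)\simeq 0$ for every $\mathcal{H}$, forcing $q_{*,dR}\F \simeq 0$. The main technical subtlety I anticipate lies in Step 2, tracking the Fourier identification $(z^\vee)^!\mathbb{F}_{V_s}\simeq\pi_{*,dR}[\text{shift}]$ carefully—making the shift precise, confirming the rank-$0$ Fourier transform acts as the identity, and applying \cref{Functoriality of Fourier Transform} in the correct direction—after which the remainder of the argument is formal.
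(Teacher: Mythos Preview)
Your proposal is correct and follows essentially the same strategy as the paper, just organized from the adjoint side: the paper shows $\uHom(z_{*,dR}\mathcal{G},\mathbf{F}j_{*,dR}\F)\simeq 0$ by moving $\mathbf{F}$ across (it is its own adjoint) and invoking the identity $\mathbf{F}z_{*,dR}\simeq \pi^![-2]$ from \cref{Functoriality of Fourier Transform} applied to $z$, then uses $(j^!,j_{*,dR})$ to land on $\uHom(q^!\mathcal{G}[-2],\F)\simeq 0$, which vanishes because $q^!\mathcal{G}$ is degenerate for compact $\mathcal{G}$. Your route through $z^!\mathbf{F}\simeq \pi_{*,dR}[\text{shift}]$ and $q_{*,dR}\F\simeq 0$ is the same computation read through the right adjoints; the paper's version is slightly more direct since it applies \cref{Functoriality of Fourier Transform} immediately to $z$ rather than to $z^\vee$ and then dualizing, and it uses compact generation of $\D(\tilde G/\tilde Q_\alpha)$ in place of your right-completeness argument.
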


\begin{proof}
Let $\mathcal{G} \in \D(\tilde{G}/\tilde{Q}_{\alpha})$. We may equivalently show that $\uHom_{\D(V_s)}(z_{*, dR}(\mathcal{G}), \F) \simeq 0$. The category $\D(\tilde{G}/\tilde{Q}_{\alpha})$ is compactly generated \cite[Corollary 3.3.3]{GaiRozCrystals}, and therefore it suffices to show this vanishing when $\mathcal{G}$ is compact. We also have
\raggedbottom
\[\uHom_{\D(V_s)}(z_{*, dR}(\mathcal{G}), \mathbf{F}j_{*, dR}(\F)) \simeq \uHom_{\D(V_s)}(\mathbf{F}z_{*, dR}(\mathcal{G}), j_{*, dR}(\F))\] \[\simeq \uHom_{\D(V_s)}(\pi^!(\mathcal{G})[-2], j_{*, dR}(\F))\]

\noindent where $\pi: V_s \to \tilde{G}/\tilde{Q}_{\alpha}$ is the projection map, $z: \tilde{G}/\tilde{Q}_{\alpha} \xhookrightarrow V_s$ is the zero section as above, and the second equivalence follows by applying \cref{Functoriality of Fourier Transform} to the inclusion map of the zero section. Therefore, if $q: \tilde{G}/\tilde{N} \to \tilde{G}/\tilde{Q}_{\alpha}$ is the quotient map, we see that the above expression is equivalent to $\uHom_{\D(\tilde{G}/\tilde{N})}(q^!(\mathcal{G})[-2], \F)$. However, the compact objects of $\D(\tilde{G}/\tilde{Q}_{\alpha})$ are cohomologically bounded (this holds if $\tilde{G}/\tilde{Q}_{\alpha}$ is replaced with any smooth, classical scheme $X$ and is an immediate consequence of the $t$-exactness of the composite $\text{oblv}: \D(X) \to \IndCoh(X) \xrightarrow{\sim} \QCoh(X)$ of conservative functors, \cite[Proposition 4.2.11]{GaiRozCrystals}) and so $q^!(\mathcal{G})$ is in particular eventually coconnective since $q^!$ is $t$-exact up to shift. Furthermore, $q^!(\mathcal{G})[-2]$ is in the kernel of $\Avpsi$ by \cref{Avpsi Vanishes on Qalpha Monodromic Objects}, so $q^!(\mathcal{G})[-2]$ is a generator of $\D(\tilde{G}/\tilde{N})_{\text{deg}}$. Therefore, $\uHom_{\D(\tilde{G}/\tilde{N})}(q^!(\mathcal{G})[-2], \F) \simeq 0$. 
\end{proof}

First, note that the symplectic Fourier transform is $t$-exact \cite[Theorem 1.2.1(iii)]{KaLa}, so the action of $\langle s \rangle$ on $\D(V_s)$ reduces to the action of $\langle s \rangle$ on the abelian subcategory $\D(V_s)^{\heartsuit}$, since $\D(V_s)$ is the derived category of its heart. This action is computed in \cite[Section 4]{Pol}. Statement (1) in the following proposition may be viewed as a derived, $D$-module variant \cite[Lemma 6.1.1]{Pol}:

\newcommand{\DNQalphanondeg}{\D(G/N)_{Q_{\alpha}\text{-nondeg}}}
\newcommand{\DNQalphainvariant}{\D(G/N)_{Q_{\alpha}\text{-nondeg}}^{\langle s \rangle}}
\newcommand{\DNQalphascnondeg}{\D(\tilde{G}/\tilde{N})_{\tilde{Q}_{\alpha}\text{-nondeg}}}
\newcommand{\DNQalphascinvariant}{\D(\tilde{G}/\tilde{N})_{\tilde{Q}_{\alpha}\text{-nondeg}}^{\langle s \rangle}}
\begin{Proposition}\label{Avpsi factors through order two coinvariants} We temporarily use the notation $\DNQalphascnondeg$ to denote the full subcategory of objects $\F \in \D(\tilde{G}/\tilde{N})$ for which $q_{*, dR}(\F)$ vanishes, which we may equivalently view as a quotient category by \cref{Intro to t-Structures on Quotient Categories}.
 \begin{enumerate}
\item The category $\DNQalphascnondeg \xhookrightarrow{j_{*, dR}} \D(V_s)$ is closed under the action of the order two group $\langle s \rangle$. 
 \item The functor $\AvN$ lifts to a functor $\AvN:\D(\tilde{G}/_{\psi}\tilde{N}^-) \to \DNQalphascinvariant$. 
    \item The functor $\Avpsi: \D(\tilde{G}/\tilde{N}) \to \D(\tilde{G}/_{\psi}\tilde{N}^-)$ has the property that, for all $\F \in \D(\tilde{G}/\tilde{N})$, there is a canonical isomorphism $\Avpsi(\F) \simeq \Avpsi(F_{s}(\F))$. 
    
\end{enumerate}
\end{Proposition}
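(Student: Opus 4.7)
My plan is to address the three claims in turn, with (1) following from a recollement-Fourier calculation, (2)(a) from a short adjunction argument, and (2)(b) reducing to (3) which is the main point.

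For (1), I apply $\pi_{*,dR}$ to the standard recollement cofiber sequence
\[
z_{*,dR} z^! (\mathbf{F} j_{*,dR}\F) \to \mathbf{F} j_{*,dR}\F \to j_{*,dR} F_s \F
\]
on $\D(V_s)$ and use $\pi z = \id$ and $\pi j = q$ to extract a cofiber sequence
\[
z^!(\mathbf{F} j_{*,dR}\F) \to \pi_{*,dR}(\mathbf{F} j_{*,dR}\F) \to q_{*,dR}(F_s \F).
\]
The two key inputs are the Fourier-swap identities $z^! \mathbf{F} \simeq \pi_{*,dR}[-2]$ and $\pi_{*,dR}\mathbf{F} \simeq z^![-2]$, obtained from \cref{Functoriality of Fourier Transform} applied to the zero-section inclusion $z: \tilde{G}/\tilde{Q}_\alpha \hookrightarrow V_s$, combined with the symplectic identification $\omega: V_s \xrightarrow{\sim} V_s^\vee$. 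The first term then becomes $q_{*,dR}(\F)[-2] = 0$ by hypothesis, and the second becomes $z^! j_{*,dR}(\F)[-2] = 0$ by complementary support, yielding $q_{*,dR}(F_s \F) = 0$.

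For (2)(a), I pass to left adjoints: $q_{*,dR} \AvN$ admits left adjoint $\Avpsi\, q^*[\text{shift}]$ (using \cref{BBMAdjointTheorem} for $\AvN$ and smoothness of $q$). For any $\mathcal{F} \in \D(\tilde{G}/\tilde{Q}_\alpha)$, $q^*\mathcal{F}$ is right $\tilde{Q}_\alpha$-monodromic, so by the right-action analogue of \cref{Avpsi Vanishes on Qalpha Monodromic Objects} its Whittaker averaging vanishes. Hence $q_{*,dR}\AvN \simeq 0$, so $\AvN$ factors through $\DNQalphascnondeg$.

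For (2)(b) and (3), part (1) together with $\mathbf{F}^2 \simeq \id$ and \cref{Nondeg and Zero Section of Vector Bundle Don't Talk} realize $F_s$ as a self-adjoint involution of $\DNQalphascnondeg$. The adjunction $\Avpsi \dashv \AvN$ then gives the chain $\Hom(\Avpsi F_s \F, \G) \simeq \Hom(F_s\F, \AvN \G) \simeq \Hom(\F, F_s \AvN \G)$, from which by Yoneda (3) is equivalent to (2)(b), namely $F_s \AvN \G \simeq \AvN \G$ for all $\G \in \D(\tilde{G}/_{-\psi}\tilde{N}^-)$. By \cref{Easier Classification of Kernel Properties} $\AvN \G$ is nondegenerate, so \cref{Nondeg and Zero Section of Vector Bundle Don't Talk} gives $\mathbf{F} j_{*,dR}\AvN \G \simeq j_{*,dR} F_s \AvN \G$, reducing everything to showing that $j_{*,dR}\AvN \G$ is canonically $\mathbf{F}$-invariant on $V_s$.

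I expect this last $\mathbf{F}$-invariance to be the principal obstacle. My plan of attack is to use Polishchuk's analysis in \cite{Pol} identifying the symplectic Fourier transform on the rank-two bundle $V_s$ with the action of the nontrivial Weyl element of $W(\tilde{M}_s)$ on fibers via the Weil representation; combined with the vanishing $\D(\tilde{G}/\tilde{Q}_\alpha)^{\tilde{N}^-, \psi} = 0$ (obtained from the $\tilde{Q}_\alpha$-analogue of \cref{NoMonodromicWhittaker}, since every Bruhat stabilizer contains a negative-root subgroup with nontrivial $\psi$), this reduces the claim to showing that the image of $\AvN$ is canonically Weyl-invariant at the level of the Whittaker category on $V_s$. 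Reducing to the rank-one case via the $\tilde{M}_s$-fibration should allow the invariance to be extracted from the behavior of $\AvN$ on $\SL_2$-Whittaker sheaves together with the Gelfand--Graev compatibility of \cref{WActionOnWhittaker}; the careful construction of the canonical isomorphism (which a priori may involve Gelfand--Graev normalizations) is where the bulk of the technical work lies.
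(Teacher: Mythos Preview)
Your argument for (1) is correct and essentially equivalent to the paper's, though phrased dually: the paper observes that the subcategory of $\D(V_s)$ generated by the essential images of $z_{*,dR}$ and $\pi^!$ is $\mathbf{F}$-stable (since $\mathbf{F}z_{*,dR} \simeq \pi^![-2]$), and then identifies $j_{*,dR}(\DNQalphascnondeg)$ as its right orthogonal, which is therefore also $\mathbf{F}$-stable. Your direct cofiber-sequence computation extracts the same content.

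For (2) and (3), however, you are working much harder than necessary and have missed the key simplification. The paper does \emph{not} attempt to verify $\mathbf{F}$-invariance of $j_{*,dR}\AvN\G$ on $V_s$ directly via any Polishchuk-type fiberwise analysis. Instead it argues as follows. The functor $\AvN: \D(\tilde{G}/_{-\psi}\tilde{N}^-) \to \DNQalphascnondeg$ is $\tilde{G}$-equivariant, so by \cref{UniversalCaseRemark} it is completely determined by its integral kernel, an object of $(\DNQalphascnondeg)^{\tilde{N}^-,\psi} \subset \D(\tilde{N}^-_{\psi}\backslash \tilde{G}/\tilde{N}) \simeq \D(\tilde{T})$. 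Direct computation identifies this kernel with $\delta_1$ (up to shift). Now \cref{WActionOnWhittaker} says the Gelfand--Graev $\langle s\rangle$-action on $\D(\tilde{N}^-_{\psi}\backslash \tilde{G}/\tilde{N})$ matches the standard $(W,\cdot)$-action on $\D(\tilde{T})$, under which $\delta_1$ manifestly lifts to the $\langle s\rangle$-invariants. This single observation gives the coherent lift of $\AvN$ to $\DNQalphascinvariant$, establishing (2) in one stroke.

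Claim (3) then follows formally: $\Avpsi$ factors through $\DNQalphascnondeg$ by \cref{Avpsi Vanishes on Qalpha Monodromic Objects}, and since it is the left adjoint of the $\tilde{G}\times\langle s\rangle$-equivariant functor $\AvN$, it is itself $\tilde{G}\times\langle s\rangle$-equivariant by \cref{Adjoint Functor is Automatically G Equivariant}, hence factors through coinvariants $\simeq$ invariants. Your Yoneda reduction of (3) to the pointwise statement $F_s\AvN\G \simeq \AvN\G$ is correct, but note that this pointwise statement is strictly weaker than the coherent lift asserted in (2); the integral-kernel argument gives the coherent version directly and makes the entire ``bulk of technical work'' you anticipated unnecessary.
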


\begin{proof}
Consider the full subcategory of $\D(V_s)$ generated by elements in the essential image of $z_{*, dR}$ and $\pi^!$. Since $\pi^![-2] \simeq \mathbf{F} z_{*, dR}$ (\cref{Functoriality of Fourier Transform}), this subcategory is closed under the symplectic Fourier transform. The right orthogonal complement can be identified with those objects $\F \in \D(V_s)$ for which $z^!(\F) \simeq 0$ and $\pi_{*, dR}(\F) \simeq 0$. The first condition is equivalent to the condition that the canonical map $\F \xrightarrow{} j_{*, dR}(j^!(\F))$ is an equivalence, and therefore this subcategory may be identified with the full subcategory of those $\F' \in \D(\tilde{G}/\tilde{N})$ such that $\pi_{*, dR}(j_{*, dR}\F') \simeq q_{*, dR}(\F')$ vanishes, i.e. $\DNQalphascnondeg$. 

For claim (2), note that $\DNQalphascnondeg$ is a full $\tilde{G}$-subcategory by construction, and therefore it is a full $\tilde{G} \times \langle s \rangle$ subcategory. Therefore the functor $\AvN$ is given by an integral kernel in $\D(\tilde{N}^-_{\psi}\backslash \tilde{G}/\tilde{N}) \simeq \D(\tilde{T})$. Direct computation shows that the kernel is given by $\delta_1 \in \D(\tilde{T})$ up to cohomological shift. 
Since the isomorphism $\D(\tilde{N}^-_{\psi}\backslash \tilde{G}/\tilde{N}) \simeq \D(\tilde{T})$ is compatible with the Gelfand-Graev action (see \cref{WActionOnWhittaker}), we see that we may lift this kernel to an object of $\D(\tilde{N}^-_{\psi}\backslash \tilde{G}/\tilde{N})^{\langle s \rangle}$, and therefore we have obtained our lift. 

Claim (3) follows since the left adjoint $\Avpsi: \D(\tilde{G}/\tilde{N}) \to \D(\tilde{G}/_{\psi}\tilde{N}^-)$ factors through $\DNQalphascnondeg$ by \cref{Avpsi Vanishes on Qalpha Monodromic Objects} and the adjoint is necessarily $\tilde{G} \times \langle s_{\alpha} \rangle$ linear by \cref{Adjoint Functor is Automatically G Equivariant} and (2), so that in particular $\Avpsi$ factors through the coinvariant category. Equivalently, by \cref{Inv=Coinv}, we have that $\Avpsi$ factors through the category of invariants $\DNQalphascinvariant$.  
\end{proof}

Next, observe that, since the functor $F_s: \D(\tilde{G}/\tilde{N}) \to \D(\tilde{G}/\tilde{N})$ is equivariant with respect to the $G$-action, it is in particular $Z$-equivariant, where $Z$ is the kernel of the map $\tilde{G} \to G$ as in \cref{Reduction to tildeG Subsubsection}. Therefore, $F_s$ induces an endofunctor of $\D(G/N)$, given by the composite \[\D(G/N) \xrightarrow{\sim} \D(\tilde{G}/\tilde{N})^Z \xrightarrow{F_s} \D(\tilde{G}/\tilde{N})^Z \xleftarrow{\sim} \D(G/N)\] the equivalence of \cref{Invariants of G Acting on Sheaves on X is Invariants on X Mod G}, $F_s$, and the inverse of this equivalence. By a light abuse of notation, we also denote this functor by $F_s$.

\begin{Corollary}\label{Fs preserves kernel of avpsi}
The functor $F_s$ preserves the category $\D(G/N)_{\text{deg}}$. 
\end{Corollary}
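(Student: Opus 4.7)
The plan is to verify three elementary properties of $F_s$ and combine them with \cref{Avpsi factors through order two coinvariants}(3). First, I would reduce to the case where $[G,G]$ is simply connected so that $F_s$ is defined directly on $\D(G/N)$: via the descent equivalence $\D(G/N) \simeq \D(\tilde{G}/\tilde{N})^{Z}$ of \cref{Reduction to Split Simply Connected SS Group times Split Torus}(4), the degenerate subcategory, the functor $F_s$, and the functor $\Avpsi$ all transport compatibly, so this poses no further difficulty.

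Next, I will check that $F_s = j^! \mathbf{F} j_{*, dR}$ is continuous and left $t$-exact. Continuity follows because each of the three composed functors is continuous; in particular $j_{*, dR}$ is the right adjoint in the standard adjoint triple $j_! \dashv j^! \dashv j_{*, dR}$ associated to the open embedding $j: G/N \hookrightarrow V_s$, and this right adjoint is continuous in the $D$-module setting. Left $t$-exactness follows from the $t$-exactness of $j^!$ (open embedding), the $t$-exactness of $\mathbf{F}$ (recalled just before \cref{Avpsi factors through order two coinvariants}), and the fact that $j_{*, dR}$, being right adjoint to the $t$-exact functor $j^!$, is left $t$-exact. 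In particular $F_s$ carries $\D(G/N)^{+}$ into itself.

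The proof then finishes as follows. By \cref{Avpsi factors through order two coinvariants}(3) there is a canonical isomorphism $\Avpsi(F_s \F) \simeq \Avpsi(\F)$, so $F_s$ preserves the kernel of $\Avpsi$ on objects. Combined with left $t$-exactness, $F_s$ sends any eventually coconnective object in $\ker \Avpsi$ to an object of the same kind; such objects generate $\D(G/N)_{\text{deg}}$ under colimits by definition, and since $F_s$ is continuous it preserves the full subcategory they generate. I do not expect any significant obstacle: the nontrivial input is the identity $\Avpsi F_s \simeq \Avpsi$ supplied by \cref{Avpsi factors through order two coinvariants}(3), and everything else reduces to formal facts about the six-functor formalism and the closure of $\D(G/N)_{\text{deg}}$ under colimits.
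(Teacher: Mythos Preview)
Your proof is correct and follows essentially the same approach as the paper: reduce to the simply connected case via $Z$-invariants, show $F_s$ preserves eventually coconnective objects, and invoke \cref{Avpsi factors through order two coinvariants}(3) to conclude it preserves the generators of $\D(G/N)_{\text{deg}}$. The only cosmetic difference is that the paper cites finite cohomological amplitude of $j_{*,dR}$ (via \cite[Proposition 1.5.29]{HTT}) rather than left $t$-exactness, but your weaker observation already suffices; you are also slightly more explicit than the paper about why continuity of $F_s$ passes the conclusion from the generators to the full subcategory.
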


\begin{proof}
The forgetful functor $\text{oblv}^Z: \D(G/N) \to \D(\tilde{G}/\tilde{N})$ (see \cref{Reduction to Split Simply Connected SS Group times Split Torus}) is $t$-exact because $Z$ acts on $G$ compatibly with the $t$-structure, and is conservative. Therefore, the forgetful functor $\text{oblv}^Z$ reflects the property of being an eventually coconnective object in the kernel of the associated $\Avpsi$ functor, and thus we may prove this claim for those groups with $[G, G]$ simply connected, so that $F_s$ acts by a symplectic Fourier transform. 

In this case, note that $F_{s}$ preserves the eventually coconnective subcategory because $j_{*, dR}$ has finite cohomological amplitude \cite[Proposition 1.5.29]{HTT}, the symplectic Fourier transform is $t$-exact \cite[Theorem 1.2.1(iii)]{KaLa}, and, if $j$ is \textit{any} open embedding of smooth schemes, $j^!$ is $t$-exact. Therefore $F_s$ preserves the generating set of $\D(G/N)_{\text{deg}}$ by claim (3) of \cref{Avpsi factors through order two coinvariants}.
\end{proof}

\begin{Corollary}
Assume $\mathcal{F} \in \D(\tilde{G}/\tilde{N})_{\text{nondeg}}$ and write $\mathbf{F}j_{*, dR}\mathcal{F} \simeq j_{*, dR}(\F')$ for some $\F' \in \D(\tilde{G}/\tilde{N})$ via \cref{Nondeg and Zero Section of Vector Bundle Don't Talk}. Then $\F' \in \D(\tilde{G}/\tilde{N})_{\text{nondeg}}$.
\end{Corollary}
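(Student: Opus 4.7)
First identify $\F' \simeq F_s(\F)$: applying $j^!$ to both sides of $\mathbf{F}j_{*,dR}\F \simeq j_{*,dR}(\F')$ and using the fully faithfulness of $j_{*,dR}$ gives this identification. The task is thus to show $F_s$ preserves $\D(\tilde{G}/\tilde{N})_{\text{nondeg}}$. As this subcategory is the right orthogonal to $\D(\tilde{G}/\tilde{N})_{\text{deg}}$, this amounts to verifying that $\uHom(\mathcal{G}, F_s(\F)) \simeq 0$ for every $\mathcal{G} \in \D(\tilde{G}/\tilde{N})_{\text{deg}}$. The composite $F_s = j^! \circ \mathbf{F} \circ j_{*,dR}$ has left adjoint $L(F_s) = j^! \circ \mathbf{F}^{-1} \circ j_{!,dR}$ (the composite of the left adjoints of its constituents, which exist because $\mathbf{F}$ is an equivalence and $j$ is an open embedding), and the $(L(F_s), F_s)$-adjunction converts the desired vanishing into $\uHom(L(F_s)(\mathcal{G}), \F) \simeq 0$. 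Since $\F$ is right-orthogonal to the degenerate category, the problem reduces to showing that $L(F_s)$ preserves $\D(\tilde{G}/\tilde{N})_{\text{deg}}$.

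For this, apply the standard recollement cofiber sequence in $\D(V_s)$ arising from the open-closed decomposition $(j, z)$: for any $\mathcal{G} \in \D(\tilde{G}/\tilde{N})$,
\[j_{!,dR}(\mathcal{G}) \to j_{*,dR}(\mathcal{G}) \to z_{*,dR}(\mathcal{C}),\]
where $\mathcal{C} := z^{*}j_{*,dR}(\mathcal{G}) \in \D(\tilde{G}/\tilde{Q}_{\alpha})$. Applying the exact functor $j^! \mathbf{F}^{-1}$ produces a cofiber sequence
\[L(F_s)(\mathcal{G}) \to F_s^{-}(\mathcal{G}) \to j^!\mathbf{F}^{-1}z_{*,dR}(\mathcal{C})\]
in $\D(\tilde{G}/\tilde{N})$, where $F_s^{-} := j^! \circ \mathbf{F}^{-1} \circ j_{*,dR}$ is the variant of $F_s$ using the inverse symplectic Fourier transform. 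Since $\D(\tilde{G}/\tilde{N})_{\text{deg}}$ is a stable subcategory, it suffices to check that both the middle and rightmost terms lie in it.

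For the rightmost term, the analog of the identification exploited in the proof of \cref{Nondeg and Zero Section of Vector Bundle Don't Talk} (via \cref{Functoriality of Fourier Transform}) yields $\mathbf{F}^{-1}z_{*,dR}(\mathcal{C}) \simeq \pi^{!}(\mathcal{C})[-2]$, so $j^!\mathbf{F}^{-1}z_{*,dR}(\mathcal{C}) \simeq q^{!}(\mathcal{C})[-2]$ is $\tilde{Q}_{\alpha}$-monodromic; writing $\mathcal{C}$ as a filtered colimit of eventually coconnective objects (using right-completeness of the $t$-structure on $\D(\tilde{G}/\tilde{Q}_{\alpha})$) and using that $q^!$ is $t$-exact up to shift realizes this term as a colimit of eventually coconnective $\tilde{Q}_{\alpha}$-monodromic objects, hence it lies in $\D(\tilde{G}/\tilde{N})_{\text{deg}}$ by \cref{Avpsi Vanishes on Qalpha Monodromic Objects}. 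For the middle term, the argument of \cref{Fs preserves kernel of avpsi} applies to $F_s^{-}$ mutatis mutandis: $\mathbf{F}^{-1}$ is again $t$-exact, and the analog of \cref{Avpsi factors through order two coinvariants}(3) for $F_s^{-}$ holds because the order-two $\langle s \rangle$-action on the relevant subcategory is realized identically by $\mathbf{F}$ and $\mathbf{F}^{-1}$, these differing only by a central automorphism of $V_s$ and a cohomological shift. The principal technical obstacle is precisely this last equivariance point; it ultimately reduces to verifying the involutive nature of the symplectic Fourier transform and observing that the resulting central automorphism preserves $\D(\tilde{G}/\tilde{N})_{\text{deg}}$.
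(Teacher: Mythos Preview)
Your argument is correct in outline but substantially more roundabout than the paper's. The simplification you miss is that the symplectic Fourier transform $\mathbf{F}$ is \emph{self-adjoint} (equivalently involutive, $\mathbf{F}^{-1} \simeq \mathbf{F}$). The paper exploits this to bypass $L(F_s)$ entirely: for $\mathcal{G} \in \D(\tilde{G}/\tilde{N})_{\text{deg}}$ one computes
\[
\uHom(\mathcal{G}, \F') \simeq \uHom(j_{*,dR}\mathcal{G}, \mathbf{F}j_{*,dR}\F) \simeq \uHom(\mathbf{F}j_{*,dR}\mathcal{G}, j_{*,dR}\F) \simeq \uHom(F_s(\mathcal{G}), \F) \simeq 0
\]
via fully faithfulness of $j_{*,dR}$, self-adjointness of $\mathbf{F}$, the $(j^!, j_{*,dR})$ adjunction, and finally \cref{Fs preserves kernel of avpsi}. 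No $j_!$, no recollement cofiber sequence, no auxiliary $F_s^-$ appears. Your route forces $j_{!,dR}$ into the picture by taking the abstract left adjoint of the composite, and then must reconcile it with $j_{*,dR}$; but once you acknowledge $\mathbf{F}^{-1} = \mathbf{F}$, your $F_s^-$ is literally $F_s$ and the whole cofiber-sequence analysis collapses to the paper's one-line computation. A minor correction along the way: to realise $\mathcal{C}$ as a colimit of eventually coconnective objects you should invoke compact generation of $\D(\tilde{G}/\tilde{Q}_\alpha)$ (compact objects are bounded), not right-completeness, which only furnishes approximation by the eventually \emph{connective} truncations $\tau^{\leq n}\mathcal{C}$.
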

\renewcommand{\G}{\mathcal{G}}
\begin{proof}
Now assume $\G \in \D(\tilde{G}/\tilde{N})_{\text{deg}}$. Then 
\raggedbottom
\[\uHom_{\D(\tilde{G}/\tilde{N})}(\G, \F') \xrightarrow{\sim} \uHom_{\D(V_s)}(j_{*, dR}(\mathcal{G}), \mathbf{F}j_{*, dR}(\F)) \simeq \uHom_{\D(V_s)}(\mathbf{F}j_{*, dR}(\mathcal{G}), j_{*, dR}(\F))\]

\noindent where the first equivalence follows since $j_{*, dR}$ is an open embedding and therefore fully faithful, the second follows since $\mathbf{F}$ is canonically its own adjoint. By adjunction, \[\uHom_{\D(V_s)}(\mathbf{F}j_{*, dR}(\mathcal{G}), j_{*, dR}(\F)) \simeq \uHom_{\D(V_s)}(j^!\mathbf{F}j_{*, dR}(\mathcal{G}), \F) = \uHom_{\D(\tilde{G}/\tilde{N})}(F_s(\G), \F)\] and this space vanishes by \cref{Fs preserves kernel of avpsi}.
\end{proof}

\renewcommand{\G}{\mathbb{G}}
Since each $F_w$ for a given $w$ may be written as a composite of symplectic Fourier transformations, we see that $F_w$ preserves the degenerate and nondegenerate subcategories. We obtain the following theorem for the case $G = \tilde{G}$, and derive the general case from this one:

\begin{Corollary} \label{Nondegeneracy is Closed Under Actions}
The full subcategory $\D(G/N)_{\text{nondeg}}$ of $H^0(\Gamma(\D_{G/N}))\text{-mod}$ is preserved by the action of $T$ and the Gelfand-Graev action. In particular, the category $\D(G/N)_{\text{nondeg}}$ obtains a $G \times (T \rtimes W)$-action, where the $W$-action is given by the Gelfand-Graev action, and in particular each simple reflection acts by $F_s$.
\end{Corollary}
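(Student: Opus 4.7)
The plan is to handle the case $G = \tilde G := G_{sc} \times Z(G)^\circ$ first and then descend to arbitrary reductive $G$. For $\tilde G$, the $\tilde T \rtimes W$-action on $H^0\Gamma(\D_{\tilde G/\tilde N})\text{-mod}$ is supplied by \cref{Big Action on Modules for Ring of Differential Operators of G Mod N}, and \cref{FullyFaithfulDModLemma} embeds $\D(\tilde G/\tilde N)$ fully faithfully in this module category. The task is therefore to show that both $\D(\tilde G/\tilde N)_{\text{deg}}$ and $\D(\tilde G/\tilde N)_{\text{nondeg}}$ are stable under the $\tilde T$-action and under the Gelfand-Graev action of each simple reflection.

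Stability under $\tilde T$ is automatic: the right $\tilde T$-action on $\D(\tilde G/\tilde N)$ coming from Gelfand-Graev is the residual $\tilde T$-action inside the full $\tilde G$-action, and both $\D(\tilde G/\tilde N)_{\text{deg}}$ and $\D(\tilde G/\tilde N)_{\text{nondeg}}$ are defined as $\mathcal H_N$-stable (in particular $\tilde G$-stable) subcategories by \cref{Definition of D(G/N)deg} and \cref{Continuous Right Adjoint on N Invariants Implies HN Linear}. For the Weyl group, it suffices to treat each simple reflection $s$ separately since the $s_\alpha$ generate $W$. The critical input, going back to Kazhdan--Laumon and Polishchuk, is that the Gelfand-Graev action of $s$ on $\D(\tilde G/\tilde N)$ is realized by the functor $F_s$ of \cref{Kazhdan-Laumon SFT Construction Reminder}; granting this identification, closure of the degenerate subcategory follows from \cref{Fs preserves kernel of avpsi}, and closure of the nondegenerate subcategory follows from the corollary immediately preceding the statement. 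Hence the $(\tilde T \rtimes W)$-action preserves $\D(\tilde G/\tilde N)_{\text{nondeg}}$, combining with the $\tilde G$-action to give the desired $\tilde G \times (\tilde T \rtimes W)$-action.

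To pass from $\tilde G$ to arbitrary $G$, invoke \cref{Reduction to Split Simply Connected SS Group times Split Torus}(4) to identify $\D(G/N) \simeq \D(\tilde G/\tilde N)^Z$ for the finite central kernel $Z$ of $\phi: \tilde G \to G$. The Weyl groups of $G$ and $\tilde G$ coincide by \cref{Reduction to Split Simply Connected SS Group times Split Torus}(1), and $Z \subseteq Z(\tilde G)$ is in particular $W$-fixed pointwise and central inside $\tilde T$; hence the $\tilde T \rtimes W$-action on $\D(\tilde G/\tilde N)$ commutes with the $Z$-action, descends to the $Z$-invariants, and factors through the quotient $\tilde T/Z \simeq T$. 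Moreover, the left Whittaker averaging functor commutes with $\phi^!$ in the evident sense, so the nondegenerate subcategory of $\D(G/N)$ is exactly the $Z$-invariants of $\D(\tilde G/\tilde N)_{\text{nondeg}}$; closure at the $\tilde G$-level therefore implies closure at the $G$-level, producing the $G \times (T \rtimes W)$-action on $\D(G/N)_{\text{nondeg}}$.

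The main obstacle is the identification of the Gelfand-Graev simple reflection action with the functor $F_s$, which is the content of Polishchuk's work together with the Ginzburg-Kazhdan isomorphism recalled in \cref{GinzburgKazhdanRingIso}; once this identification is granted, every remaining step is a formal consequence of the stability results already established and the descent along the central isogeny $\tilde G \to G$.
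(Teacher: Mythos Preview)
Your proposal is correct and follows essentially the same approach as the paper: establish for $\tilde G$ that each $F_s$ preserves the degenerate subcategory (\cref{Fs preserves kernel of avpsi}) and the nondegenerate subcategory (the unnamed corollary just before \cref{Nondegeneracy is Closed Under Actions}), then descend along the central isogeny via $Z$-invariants using \cref{Reduction to Split Simply Connected SS Group times Split Torus}. You are also right to flag the identification of the Gelfand--Graev action of a simple reflection with the symplectic Fourier transform $F_s$ as the key external input; the paper handles this by citing \cite[Section 4]{Pol} rather than proving it in situ.
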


\begin{proof}
We have shown that $W$ acts on $\D(\tilde{G}/\tilde{N})$ in a manner which, in the notation of \cref{Reduction to Split Simply Connected SS Group times Split Torus}, commutes with the $Z$-action. Taking the $Z$-invariants and again using \cref{Reduction to Split Simply Connected SS Group times Split Torus}, we obtain our claim for general $G$. 
\end{proof}

\begin{Remark} See also \cite{BraKazSchwartz}, where an analytic version of this $W$-action on the Schwartz space of the universal flag variety is constructed.
\end{Remark}

\subsubsection{Weyl Group Action on Nondegenerate Categories}
Using \cref{Nondegeneracy is Closed Under Actions}, by the definition of nondegenerate $G$-categories (\cref{NondegenerateDefinition}) we obtain:

    \begin{Corollary}\label{Corollary on Cats}
    For any nondegenerate $G$-category $\C$, the category $\C^{N}$ admits a canonical $T \rtimes W$ action. 
    \end{Corollary}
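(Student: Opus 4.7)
The plan is to deduce this Corollary as an essentially formal consequence of the preceding Corollary \ref{Nondegeneracy is Closed Under Actions}, together with the defining property of a nondegenerate $G$-category. Since $\C$ is nondegenerate, Definition \ref{NondegenerateDefinition} gives a canonical equivalence
\[
\C^N \;\simeq\; \D(N\backslash G)_{\text{nondeg}} \otimes_{G} \C.
\]
The first step is therefore simply to record this identification, viewing the right-hand side as the target onto which we will transport a $T \rtimes W$-action.

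Next, I would invoke Corollary \ref{Nondegeneracy is Closed Under Actions} for the universal category. That result endows $\D(N \backslash G)_{\text{nondeg}}$ (viewed now as a right $G$-category, which is the version relevant for forming $-\otimes_G \C$) with a commuting action of $T \rtimes W$, where the subgroup $T$ acts via the residual left $T$-action on $N \backslash G$ and $W$ acts by Gelfand--Graev. The key point, which is already built into Corollary \ref{Nondegeneracy is Closed Under Actions}, is that this $T \rtimes W$-action commutes with the right $G$-action, because the Gelfand--Graev action was constructed on $\D(G/N)_{\text{nondeg}}$ as a category with a $G, T \rtimes W$-action (and one then passes to the opposite handedness, which is symmetric).

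Because the $T \rtimes W$-action on $\D(N\backslash G)_{\text{nondeg}}$ commutes with its right $G$-action, tensoring over $G$ with $\C$ produces a well-defined $T \rtimes W$-action on the relative tensor product $\D(N\backslash G)_{\text{nondeg}} \otimes_G \C$. Under the identification above, this transports to a $T \rtimes W$-action on $\C^N$. Functoriality in $\C$ is inherited from functoriality of the tensor product, and restricting to $T \leq T \rtimes W$ recovers the standard $T$-action on $\C^N$ coming from $T \subseteq B$, since this is already what Corollary \ref{Nondegeneracy is Closed Under Actions} produces on the universal category.

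The content of this corollary is therefore entirely contained in Corollary \ref{Nondegeneracy is Closed Under Actions}; no further obstacle arises beyond being careful about left-versus-right conventions and checking that the $T$-factor of $T \rtimes W$ acts as expected. If I were worried about any step, it would be verifying that the $T$-part of the Gelfand--Graev $T \rtimes W$-action on the universal category $\D(N\backslash G)_{\text{nondeg}}$ agrees with the residual $T$-action induced by the left $B$-action on $N\backslash G$; but this is immediate from the explicit construction of the Gelfand--Graev action through $\text{H}^0\Gamma(\D_{G/N})$ in Section \ref{Gelfand-Graev Action Construction}, where the $T$-factor is built in from the beginning as the honest torus action.
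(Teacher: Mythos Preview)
Your proposal is correct and follows essentially the same approach as the paper: the paper's proof is a single sentence invoking Corollary \ref{Nondegeneracy is Closed Under Actions} together with the definition of nondegenerate $G$-categories, and you have spelled out exactly what that sentence means. Your additional remarks on left/right conventions and on the $T$-factor of the action are reasonable sanity checks but are not needed for the argument.
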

    
    We also have the following:
    
    \begin{Corollary}\label{Corollary on T,w Cats}
    For any nondegenerate $G$-category $\C$, the category $\C^{N, (T,w)}$ is a module for the monoidal category $\IndCoh(\LTd \times_{\LTd/\Wext} \LTd)$, given a monoidal structure by the convolution formalism. In particular, $\C^{N, (T,w)}$ is acted on by the group $\Wext$. 
    \end{Corollary}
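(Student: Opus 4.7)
The plan is to leverage the $T \rtimes W$-action on $\C^N$ produced by \cref{Corollary on Cats}, pass to weak $T$-invariants, and then combine the resulting $W$-action with the inherent $\characterlatticeforT$-translation symmetry on $\IndCoh(\LTd)$-module categories to obtain the desired $\Wext$-equivariant module structure.

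First, I would invoke \cref{Corollary on Cats} to equip $\C^N$ with a strong $(T \rtimes W)$-action. Applying the functor $(-)^{T,w}$, which is functorial in $T$-categories, to this action produces a $W$-action on $\C^{N,(T,w)}$ that is compatible with its $\IndCoh(\LTd)$-module structure. Here $W$ acts on $\LTd$ dually to its natural action on $T$, and the compatibility is automatic since the Gelfand-Graev $W$-action on $\C^N$ consists of $T$-equivariant auto-equivalences (by construction of the semidirect-product structure).

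Next, I would observe that the Mellin transform identification $\D(T) \simeq \IndCoh(\LTd/\characterlatticeforT)$ used in \cref{Character Lattice of T Invariants Is C} equips every $\IndCoh(\LTd)$-module category with a canonical $\characterlatticeforT$-action by translation. Under $\Wext \simeq \characterlatticeforT \rtimes W$, the $W$- and $\characterlatticeforT$-actions on $\C^{N,(T,w)}$ should assemble, via the semidirect-product compatibility, into a $\Wext$-action compatible with the $\IndCoh(\LTd)$-structure; in particular $\C^{N,(T,w)}$ acquires an action of the group $\Wext$. I would then translate this equivariance into a module structure over the convolution monoidal category $\IndCoh(\LTd \times_{\LTd\sslash\Wext} \LTd)$ via the action-groupoid formalism: the data of an $\IndCoh(\LTd)$-module with a compatible $\Wext$-equivariance (whose coarse quotient is $\LTd\sslash\Wext$) is equivalent to the data of a module for $\IndCoh(\LTd \times_{\LTd\sslash\Wext} \LTd)$ equipped with convolution.

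The main technical step will be verifying that the $W$- and $\characterlatticeforT$-actions genuinely assemble into a $\Wext$-action rather than merely a $W \times \characterlatticeforT$-action. This should follow from tracing through the semidirect-product compatibility between the Gelfand-Graev $W$-action and the translation $\characterlatticeforT$-action on $\LTd$; alternatively, one can argue by universality, first establishing the structure for $\C = \D(G/N)_{\text{nondeg}}$ where the action of $\IndCoh(\LTd \times_{\LTd\sslash\Wext} \LTd)$ can be exhibited directly from the Gelfand-Graev construction of \cref{Gelfand-Graev Action Construction}, and then transporting it to an arbitrary nondegenerate $\C$ via the Morita-style equivalence of \cref{BZGO} by tensoring along $\C^{N,(T,w)} \simeq \D(G/N)^{T,w}_{\text{nondeg}} \otimes_{\mathcal{H}_{N,(T,w)}} \C^{N,(T,w)}$.
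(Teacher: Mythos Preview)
Your outline is plausible but takes a more hands-on route than the paper. The paper's proof is a short Morita-style argument that applies to \emph{any} category $\D$ with a $T \rtimes W$-action, not just those of the form $\C^N$: one writes $\D^{T,w} \simeq \D(T \rtimes W)^{T_\ell,w} \otimes_{\D(T \rtimes W)} \D$, so $\D^{T,w}$ is automatically a module over the endomorphism category $\uEnd_{\D(T \rtimes W)}(\D(T \rtimes W)^{T,w})$. Applying the equivalence $(-)^W$ (an instance of \cref{BZGO} for the finite group $W$) identifies this with $\uEnd_{\D(T)^W}(\D(T)^{T,w})$, and the Mellin transform turns that into $\uEnd_{\IndCoh(\LTd/\Wext)}(\IndCoh(\LTd))$; the convolution formalism of \cite[Chapter 5, Section 5]{GaRoI} then supplies the monoidal functor from $\IndCoh(\LTd \times_{\LTd/\Wext} \LTd)$. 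The $\Wext$-action is read off afterward from the monoidal map $\IndCoh(\Wext) \to \IndCoh(\LTd \times_{\LTd/\Wext} \LTd)$, noting that weak and strong actions of $\Wext$ coincide since $\Wext \to \Wext_{dR}$ is an isomorphism.

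The contrast with your proposal is that the paper never decomposes $\Wext$ into $W$ and $\characterlatticeforT$ only to reassemble: it keeps the $(T \rtimes W)$-module structure intact and lets the bimodule/endomorphism machinery deliver the convolution-category action in one step. This entirely bypasses your ``main technical step'' of manufacturing the semidirect-product coherences for $\Wext = \characterlatticeforT \rtimes W$ out of separate $W$- and $\characterlatticeforT$-actions, which in the $\infty$-categorical setting is genuine bookkeeping rather than a formality. Your fallback through the universal case plus \cref{BZGO} would also succeed, but it is still a detour compared to the paper's two-line argument. One small notational caution: the statement is about the \emph{stack} quotient $\LTd/\Wext$, so that $\LTd \times_{\LTd/\Wext} \LTd$ is the action groupoid $\Wext \times \LTd$ and your step~4 equivalence is then correct; your write-up uses $\LTd\sslash\Wext$, which in this paper denotes a different (coarse) quotient and would change the meaning of the fiber product.
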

    
    \begin{proof}
    We claim, in fact, for any category $\D$ with a $T \rtimes W$ action, the category $\D^{T,w}$ acquires an action of $\Wext$. This follows since $\D^{T, w} \simeq \D(T \rtimes W)^{T_{\ell}, w} \otimes_{\D(T \rtimes W)} \D$ and because of an equivalence of categories
    \raggedbottom
    \[\uEnd_{T \rtimes W} (\D(T \rtimes W)^{T,w}) \xrightarrow{\sim} \uEnd_{\D(T)^W}(\D(T)^{T, w}) \simeq \uEnd_{\IndCoh(\LTd/\Wext)}(\IndCoh(\LTd))\]
    
    \noindent where the first functor is given by $(-)^W: \D(T \rtimes W)\text{-mod}(\DGCatContk) \to \D(T \rtimes W)^{W \times W}\text{-mod}(\DGCatContk)$, which is an equivalence by \cref{BZGO}, and the second equivalence is given by the Mellin transform. The convolution formalism \cite[Chapter 5, Section 5]{GaRoI} therefore yields a monoidal functor from $\IndCoh(\LTd \times_{\LTd/\Wext} \LTd)$ to this endomorphism category. The second claim follows since we have a monoidal functor $\IndCoh(\Wext) \to \IndCoh(\LTd \times_{\LTd/\Wext} \LTd)$, which yields a weak $\Wext$ action. This weak action automatically upgrades to a strong action since the canonical map $\Wext \to \Wext_{dR}$ is an isomorphism, which in turn follows because the de Rham functor commutes with colimits and $\Wext$ is a colimit of points.  
    \end{proof}

\subsubsection{Symplectic Fourier Transformations and Stalks}
  For the remainder of this section, fix a simple reflection $s \in W$ and write $W^{s} := \{w \in W : \ell(w) \leq \ell(ws)\}$ so that $W = W^s \sqcup W^ss$. For any subset $R$ of the Weyl group, set $X_R \xhookrightarrow{l_R} G/N$ to denote the union of the cells $NwB/N$ for every $w \in R$. In particular, this variety need not be closed. Let $\mathbb{G}_m^{\alpha}$ denote the image of the coroot $\alpha^{\vee}: \mathbb{G}_m \to G$. 
    
\begin{Proposition}\label{SwapsSupport} Fix an $\F \in \D(G/N)$. 
\begin{enumerate}
    \item Assume that the averaging with respect to the right $\mathbb{G}_m^{\alpha}$-action vanishes on $\F$. Then if $l_{W^ss}^!(\F) = 0,$ we have $l_{W^s}^!(F_s(\F)) = 0$.
    \item Assume that $\F$ is left $N$-equivariant (which, by \cref{Unipotent Implies Oblv FF}, is a property and not additional structure) and has the property that the pushforward to $G/Q_s$ vanishes. If $l_{W^s}^!(\F) \simeq 0$, then either $\F \simeq 0$ or $l_{W^s}^!(F_s(\F))$ is nonzero. 
\end{enumerate}

\end{Proposition}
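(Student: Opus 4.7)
The plan is to compute $l_{W^s}^! F_s(\F)$ directly using the geometry of the symplectic Fourier transform. Starting from the Cartesian square with corners $X_{W^s},\,G/N,\,\mathcal{L}_s,\,V_s$, one has $l_{W^s}^! F_s(\F) = i_0^! \ell_s^! \mathbf{F}\, j_{*,dR}\F$. Applying \cref{Dual to Rank One Vector Bundle is Projection Remark} to factor $\ell_s^! \omega^! \simeq (\omega|_{\mathcal{L}_s})^! (p^\vee)^!$, and then \cref{Functoriality of Fourier Transform} applied to the projection $p\colon V_s \to V := V_s/\mathcal{L}_s$, yields (up to a cohomological shift) an identification
\[ l_{W^s}^! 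F_s(\F) \;\simeq\; i_0^!(\omega|_{\mathcal{L}_s})^!\,\mathbb{F}_V\bigl((pj)_{*,dR}\F\bigr). \]
Writing $z'\colon G/Q_\alpha \hookrightarrow V$ and $z''\colon G/Q_\alpha \hookrightarrow V^\vee$ for the zero sections and $\pi^{(V)}\colon V \to G/Q_\alpha$ for the base projection, standard Fourier yoga on the rank $1$ bundle $V \to G/Q_\alpha$ identifies the vanishing of $l_{W^s}^! F_s(\F)$ with the condition that $(pj)_{*,dR}\F$ be $!$-pulled back from $G/Q_\alpha$ along $\pi^{(V)}$.

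For claim $(1)$, the hypothesis $l_{W^s s}^!\F = 0$ gives $\F \simeq (l_{W^s})_{*,dR}(l_{W^s})^!\F$. Since $pj \circ l_{W^s}$ factors through $z'$ via the torsor projection $\tilde\pi\colon X_{W^s} \to G^{\langle s\rangle}/Q_\alpha$, we have $(pj)_{*,dR}\F \simeq (z')_{*,dR}\,\tilde\pi_{*,dR}(l_{W^s}^!\F)$ (up to a locally closed inclusion on the base). Since $\tilde\pi$ is a $\mathbb{G}_m^\alpha$-torsor, $\tilde\pi_{*,dR}$ coincides (up to shift) with the $\mathbb{G}_m^\alpha$-averaging functor; combined with $\mathbb{G}_m^\alpha$-equivariance of $l_{W^s}$ and the hypothesis $\mathrm{Av}_*^{\mathbb{G}_m^\alpha}(\F) = 0$, this forces $(pj)_{*,dR}\F = 0$, and so $l_{W^s}^! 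F_s(\F) = 0$.

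For claim $(2)$, I argue by contradiction: if $l_{W^s}^! F_s(\F) = 0$, then $(pj)_{*,dR}\F \simeq (\pi^{(V)})^!\mathcal{N}$ for some $\mathcal{N} \in \D(G/Q_\alpha)$. Applying $(z')^!$ and invoking base change along the zero-section Cartesian square, we find $(z')^!(pj)_{*,dR}\F \simeq \tilde\pi_{*,dR}(l_{W^s}^!\F)$, which vanishes by hypothesis. Combined with $(z')^! (\pi^{(V)})^! \simeq \mathrm{id}$, this forces $\mathcal{N} = 0$, and hence $(pj)_{*,dR}\F = 0$.

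The main obstacle is then to deduce $\F = 0$ from the joint vanishing of $(pj)_{*,dR}\F$ and $l_{W^s}^!\F$, using the left $N$-equivariance of $\F$. The plan is to exploit that $pj$ is $G$-equivariant, so $(pj)_{*,dR}\F \in \D(V)^N$, and to analyze the orbits of left $N$ on $(pj)^{-1}(V \setminus z')$: there the fibers of $pj$ coincide with orbits of an appropriate unipotent subgroup of $N$, which allows one to promote the conservativity of $(pj)_{*,dR}$ to the $N$-equivariant subcategory of $\D(G/N)$ whose $!$-restriction to $X_{W^s}$ vanishes. Once this conservativity is established, $(pj)_{*,dR}\F = 0$ and $l_{W^s}^!\F = 0$ together give $\F = 0$, contradicting our assumption that $\F$ is nonzero.
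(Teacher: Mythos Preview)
Your proof of (1) is correct and follows the paper's argument essentially identically: both reduce $l_{W^s}^!F_s(\F)$ to a Fourier transform of $(pj)_{*,dR}\F = p_{*,dR}j_{*,dR}\F$ via the factorization in \cref{Dual to Rank One Vector Bundle is Projection Remark} and \cref{Functoriality of Fourier Transform}, and both conclude by showing $(pj)_{*,dR}\F=0$ using that $X_{W^s}\to G/Q_s$ is a $\mathbb{G}_m^\alpha$-torsor together with the averaging hypothesis.

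For (2), your argument is a valid reorganization of the paper's, resting on the same geometric input. The paper \emph{front-loads} the $N$-equivariance via \cref{N-Equivariant Sheaf on Vs with No Support on Ls is Ga Equivariant}: since $\ell_s^!(j_{*,dR}\F)=0$, that lemma gives $j_{*,dR}\F\simeq p^!\mathcal{G}$ for some $\mathcal{G}\in\D(V_s/\mathbb{G}_a)$, after which the Fourier computation directly shows that the vanishing of $l_{W^s}^!F_s(\F)$ forces $\mathcal{G}$ to be pulled back from $G/Q_s$, contradicting $l_{W^s}^!\F=0$ unless $\mathcal{G}=0$ (hence $\F=0$). You instead first derive $(pj)_{*,dR}\F=0$ from the contradiction hypothesis via your base-change step, and defer the $N$-equivariance to the end to conclude $\F=0$. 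Your final ``conservativity'' step is exactly equivalent to \cref{N-Equivariant Sheaf on Vs with No Support on Ls is Ga Equivariant}: both amount to showing that on each Bruhat cell in $X_{W^ss}$, the $\mathbb{G}_a$-translation action by $\mathcal{L}_s$ coincides with left multiplication by a one-dimensional subgroup of $N$ (this subgroup varies with the cell --- read your phrase ``an appropriate unipotent subgroup'' accordingly, and carry out the analysis cell by cell as the paper does). The paper's ordering makes the last implication $\mathcal{G}=0\Rightarrow\F=0$ immediate; yours treats (1) and (2) more uniformly through $(pj)_{*,dR}\F$. Either way the cell-by-cell orbit analysis is the crux, and you have correctly identified it.
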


We prove this after proving the following lemma, which will be used only in the proof of (2): 

\begin{Lemma}\label{N-Equivariant Sheaf on Vs with No Support on Ls is Ga Equivariant}
Assume $\F \in \D(V_s)^N$ and let $\ell_s: \mathcal{L}_s \xhookrightarrow{} V_s$ denote the map and line bundle in \cref{Kazhdan-Laumon SFT Construction Reminder}. Then if $\ell_s^!(\F) \simeq 0$, $\F$ is $\mathbb{G}_a$-equivariant.
\end{Lemma}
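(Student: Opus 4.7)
The plan is first to use the vanishing $\ell_s^!(\F) \simeq 0$ to reduce to proving $\mathbb{G}_a$-equivariance on the open complement of $\mathcal{L}_s$ in $V_s$. Specifically, the cofiber sequence for the closed embedding $\ell_s$ and complementary open immersion $j: V_s \setminus \mathcal{L}_s \hookrightarrow V_s$, together with $\ell_s^!(\F) \simeq 0$, gives $\F \xrightarrow{\sim} j_{*, dR}(j^!\F)$. Since $V_s \setminus \mathcal{L}_s$ is $\mathbb{G}_a$-stable (translation by $\mathcal{L}_s$ preserves the complement of $\mathcal{L}_s$) and $j_{*, dR}$ preserves $\mathbb{G}_a$-equivariance, it suffices to establish $\mathbb{G}_a$-equivariance for $j^!\F$.

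Next, I would exploit the containment $N \subset Q_s$, which follows from $N \subset B \subset P_s$ and the observation that $\mathfrak{n} \subset \mathfrak{m}_s + \mathfrak{n}_{w_0 s} = \mathfrak{q}_s$. Consequently $N$ fixes $eQ_s \in G/Q_s$, preserves the fiber $V_{s, eQ_s}$, and acts on that fiber through the quotient $N \twoheadrightarrow N/N_{w_0 s} \cong N_\alpha$ (since $N_{w_0 s}$ acts trivially on $V_{s, eQ_s}$ via $Q_s \twoheadrightarrow M_s = \SL_{2, \alpha}$). In the standard $\SL_{2, \alpha}$-coordinates $(x, y)$ on $V_{s, eQ_s} \cong \mathbb{A}^2$, in which $\mathcal{L}_{s, eQ_s} = \{y = 0\}$, the $N_\alpha$-action is $(x, y) \mapsto (x + ay, y)$ whereas the $\mathbb{G}_a$-action is $(x, y) \mapsto (x + t, y)$; on the locus $\{y \neq 0\}$ both actions are free with identical orbits, so the full subcategories of $N_\alpha$-equivariant and $\mathbb{G}_a$-equivariant $D$-modules in $\D(V_{s, eQ_s} \setminus \mathcal{L}_{s, eQ_s})$ coincide. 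This establishes $\mathbb{G}_a$-equivariance of $j^!\F$ restricted to the base fiber.

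The main obstacle is globalizing from the fiber over $eQ_s$ to all of $V_s \setminus \mathcal{L}_s$, since the $N$-action on $V_s$ does not preserve arbitrary fibers of $V_s \to G/Q_s$. To circumvent this, I would pass to the faithfully flat cover $V_s \times_{G/Q_s} G \cong G \times V_{s, eQ_s}$ on which $V_s$ becomes a trivial vector bundle. The pullback $\tilde{\F}$ acquires a $Q_s$-descent structure in addition to its $N$-equivariance (now realized as left $N$-translation on the $G$-factor), and the pullback of the $\mathbb{G}_a$-action becomes translation along $\mathcal{L}_{s, eQ_s}$ on the $V_{s, eQ_s}$-factor. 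Crucially, the $\mathcal{L}_s$-translation commutes with the $G$-action on $V_s$ (as $\mathcal{L}_s$ is a $G$-invariant sub-bundle, so $g(v + \lambda) = gv + g\lambda$), so $\mathbb{G}_a$-equivariance is preserved under descent. The technical heart of the argument is combining the $Q_s$-descent datum, which encodes the $N_\alpha \subset Q_s$-action on $V_{s, eQ_s}$, with the left $N$-equivariance of $\tilde{\F}$ and the base fiber analysis to verify $\mathbb{G}_a$-equivariance of $\tilde{\F}$ on all of $G \times (V_{s, eQ_s} \setminus \mathcal{L}_{s, eQ_s})$, from which the desired equivariance on $V_s$ follows by descent.
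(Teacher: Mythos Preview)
Your reduction to the open complement $V_s \setminus \mathcal{L}_s$ and the fiber-over-$eQ_s$ analysis are both fine, but the globalization step you flag as the ``technical heart'' is a genuine gap, and filling it is not formal. On the cover $G \times (V_{s,eQ_s}\setminus \mathcal{L}_{s,eQ_s})$ the left $N$-action moves only the $G$-coordinate while the $\mathbb{G}_a$-action moves only the fiber coordinate, so left $N$-equivariance by itself says nothing about $\mathbb{G}_a$-equivariance. To link them through the $Q_s$-descent datum in the way you suggest, you must be able to undo the right translation $g\mapsto gn_a^{-1}$ (for $n_a\in N_\alpha\subset Q_s$) by a \emph{left} $N$-translation; this requires $gN_\alpha g^{-1}\subset N$ for a representative $g$ of each right $Q_s$-coset. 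That is not automatic: it holds precisely for $g=n\dot w$ with $w\in W^s$, because then $w\alpha>0$ and $\dot w N_\alpha \dot w^{-1}=N_{w\alpha}\subset N$. In other words, the missing input is exactly the parabolic Bruhat decomposition of $G/Q_s$.

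The paper's proof goes there directly and thereby avoids the descent packaging. It stratifies $V_s$ by the cells $N_w\dot w P_s/N_{w_0 s}\times^{\SL_{2,\alpha}}\mathbb{A}^2$ for $w\in W^s$, uses $\ell_s^!\F\simeq 0$ to pass to the open piece of each cell, and observes that on that open piece the right $\mathbb{G}_a$-action literally agrees with left multiplication by a one-dimensional subgroup of $N$. Since $\mathbb{G}_a$-equivariance (equivalently, $\mathbb{G}_a$-monodromicity, as $\mathbb{G}_a$ is unipotent) is closed under colimits, it can be checked after $!$-restriction to the strata, and the argument finishes in a few lines. Once your globalization is completed it collapses to this same cell-by-cell identification, so the paper's route is both shorter and the one you would end up with anyway.
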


\begin{proof}
Consider the decomposition 
\raggedbottom
\[V_s \cong \bigsqcup_{w \in W^s}(N_w\dot{w}P_s/N_{w_0s} \mathop{\times}\limits^{\text{SL}_{2, \alpha}} \mathbb{A}^2)\]

\noindent induced by the parabolic Bruhat decomposition, where $N_w \leq N$ is some closed subgroup depending on $w$. The condition of being $\mathbb{G}_a$-equivariant is closed under colimits, since the forgetful functor commutes with colimits. Therefore we may check that the !-restriction to each locally closed subscheme $C^s(w)$ of this decomposition is $\mathbb{G}_a$-equivariant. Furthermore, by assumption that $\ell_s^!(\F) \simeq 0$, we may check this upon further !-restriction to the open subset of $C^s_0(w)$. However, on this subset, the right $\mathbb{G}_a$-action agrees with left multiplication by some one-dimensional subgroup of $N$. Therefore, the property of $N$-invariance immediately gives the property of right $\mathbb{G}_a$-invariance, where again by \cref{Unipotent Implies Oblv FF} this is a property by the fact that $\mathbb{G}_a$ is unipotent.
\end{proof}

\begin{proof}[Proof of \cref{SwapsSupport}]
We use the notation of \cref{Kazhdan-Laumon SFT Construction Reminder}. 

To prove the first claim, assume that $\F$ is such that $\text{Av}_*^{\mathbb{G}_m^{\alpha}}(\F) \simeq 0$ and that $l_{s, *, dR}l_s^!(\F) \xrightarrow{\sim} \F$. Let $\F' := j_{*, dR}(\F)$ where $j: G/N \xrightarrow{} V_s$ is the open embedding. We now claim that $\text{Av}_*^{\mathbb{G}_a}(\F') \simeq 0$ as well. To see this, note the following diagram commutes:
\raggedbottom
\begin{equation}\label{Inclusion of Complement of Zero Section Into Line Bundle over G Mod Qs}
  \xymatrix@R+2em@C+2em{
 U \ar[r]^{\mathring{j}}  \ar[dr] & \mathcal{L}_s \ar[d]^{p|_{\mathcal{L}_s}} \\
 & G/Q_s
  }
 \end{equation}

\noindent where $U := \mathcal{L}_s\backslash (G/Q_s)$ is the complement of the zero section in $\mathcal{L}_s$ and both downward pointing arrows are the structural maps. Pushing forward by the left arrow gives the $\mathbb{G}_m$-averaging and the pushforward by the right arrow gives the $\mathbb{G}_a$-averaging. Therefore we see that
\raggedbottom
\[\ell_s^!(\mathbf{F}(j_{*, dR}(\F))) \simeq (\omega \ell_s)^!\mathbb{F}(\F') \simeq \omega|_{\mathcal{L}_s}^!p^{\vee, !} \mathbb{F}(\F')[1] \simeq \omega|_{\mathcal{L}_s}^!\mathbb{F}_{\mathcal{L}_s}(\text{Av}_*^{\mathbb{G}_a}\F')[1] \simeq 0\]

\noindent where the first step uses the definition of the symplectic Fourier transform and of $\F'$, the second step uses the functoriality of !-pullback and the commutativity of \labelcref{Dual Morphism to Ell_s is projection dual under symplectic identification}, and the third step uses \cref{Functoriality of Fourier Transform} and, in particular, $\mathbb{F}_{\mathcal{L}_s}: \D(V_s/\mathbb{G}_a) \xrightarrow{\sim} \D((V_s/\mathbb{G}_a)^{\vee})$ is the Fourier transform. Because $l_{W^s}^!$ is given by the composite $\mathring{j}^!\ell_s^!$, we obtain our first claim. 

To prove the second claim, assume $\F$ is $N$-equivariant, $l_{W^s}^!(\F) \simeq 0$, the de Rham pushforward of $\F$ to $G/Q_s$ vanishes, and $\F$ is nonzero.  We first note that $j_{*, dR}(\F)$ is nonzero (since $\F$ is and $j_{*, dR}$ is fully faithful) as well as left $N$-equivariant since $j: G/N \xhookrightarrow{} V_s$ is left $G$-equivariant, and the assumption that $l^!_{W^s}(\F) \simeq 0$ implies that $\ell_s^!\F' \simeq 0$ as well. Therefore, by \cref{N-Equivariant Sheaf on Vs with No Support on Ls is Ga Equivariant}, we have that $\F' \simeq p^!(\mathcal{G})$ for some \textit{nonzero} $\mathcal{G} \in \D(V_s/\mathbb{G}_a)$. In particular, we see that
\raggedbottom
\begin{equation}\label{Fourier isos}\ell_s^!\mathbf{F}(\F') \simeq (\omega \ell_s)^!\mathbb{F}(\F') \simeq \omega|_{\mathcal{L}_s}^!p^{\vee, !}\mathbb{F}(\F') \simeq \omega|_{\mathcal{L}_s}^!\mathbb{F}_{\mathcal{L}_s}(p_{*, dR}(\F')) \simeq \omega|_{\mathcal{L}_s}^!\mathbb{F}_{\mathcal{L}_s}(\mathcal{G})\end{equation}

\noindent where the first step is the definition of $\mathbb{F}$, the second step follows by the functoriality of !-pullback and the commutativity of \labelcref{Dual Morphism to Ell_s is projection dual under symplectic identification}, and the third follows from \cref{Functoriality of Fourier Transform}, and the fourth follows from \cref{Unipotent Implies Oblv FF}. 
Note also that, if $z: G/Q_s \xhookrightarrow{} V_s$ denotes the zero section of $V_s$ and $\tilde{z}: G/Q_s \xhookrightarrow{} V_s/\mathbb{G}_a$ denotes the zero section of $V_s/\mathbb{G}_a$, then 
\raggedbottom
\[z^!\omega|_{\mathcal{L}_s}^!\mathbb{F}_{\mathcal{L}_s}(\mathcal{G}) \simeq \tilde{z}^!(\mathbb{F}_{\mathcal{L}_s}(\mathcal{G})) \simeq \tilde{t}_{*, dR}(\mathcal{G}) \simeq t_{*, dR}(\F) \simeq 0\]

\noindent where $\tilde{t}$ and $t$ are the respective maps to the base $G/Q_s$. Here, the first step uses the fact that the relevant map of zero sections commutes, the second equivalence follows from \cref{Functoriality of Fourier Transform}, the third equivalence is by the fact that $\mathcal{G} \simeq p_{*, dR}(\F')$, and the final follows by assumption on $\F$. If $\jmath$ denotes the complement of the zero section in $\mathcal{L}_s$, then we have equivalences \[\omega|_{\mathcal{L}_s}^!\mathbb{F}_{\mathcal{L}_s}(\mathcal{G}) \simeq \ell_s^!\mathbf{F}(\F')  \xrightarrow{\sim} \jmath_{*, dR}\jmath^!\ell_s^!\mathbf{F}(\F') \simeq \jmath_{*, dR}l_s^!(F_s(\F))\] from \labelcref{Fourier isos} and the fact that $z^!\omega|_{\mathcal{L}_s}^!\mathbb{F}_{\mathcal{L}_s}(\mathcal{G}) \simeq 0$. Since $\jmath_{*, dR}l_s^!(F_s(\F))$ is nonzero and $\jmath_{*, dR}$ is fully faithful, we deduce that $l_s^!(F_s(\F))$ is nonzero, as required. 
\end{proof}
    \begin{Corollary}\label{WGenerators}
    The category $\CatN$ is generated as a $W$-category by the essential image of the functor $\D(B/N)^N \to \CatN$ given by composing the pushforward of the inclusion map $N\backslash B/N \xhookrightarrow{i} N\backslash G/N$ with the quotient functor. 
    \end{Corollary}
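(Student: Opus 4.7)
The plan is to induct on Bruhat length, using \cref{SwapsSupport} as the main lever. Let $\C \subseteq \CatN$ denote the full $W$-stable subcategory closed under colimits and containing the essential image of $i_*$; I will show $\C = \CatN$. For each $\ell \geq 0$, let $P(\ell)$ be the claim that every $\F \in \CatN$ whose $!$-restriction to each Bruhat cell $N\dot w B/N$ with $\ell(w) > \ell$ vanishes lies in $\C$; since $W$ is finite, $P(\ell(w_0))$ yields the conclusion.

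The base $P(0)$ is immediate, since such an $\F$ is supported on $N\backslash B/N$ and hence lies in the essential image of $i_*$. For the inductive step $P(\ell-1) \Rightarrow P(\ell)$, given such $\F \in \CatN$ supported on cells of length $\leq \ell$, the cofiber sequence coming from the open-closed decomposition into cells of length $\leq \ell - 1$ versus cells of length exactly $\ell$, together with the inductive hypothesis, reduces matters to the case that $\F$ is supported only on cells of length $\ell$. Since such cells are pairwise disjoint and each is open in their union, $\F$ splits as a finite direct sum $\bigoplus_{\ell(w) = \ell} \F_w$ with $\F_w$ supported on the single cell $N\dot w B/N$; it suffices to treat each summand.

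For a given $\F_w$ with $\ell(w) = \ell \geq 1$, I choose a simple reflection $s$ with $\ell(ws) = \ell - 1$, so $w \in W^ss$. The cells $N\dot w B/N$ and $N\dot{ws}B/N$ both lie over a common Bruhat cell of $G/Q_s$ under the projection $G/N \to G/Q_s$ (because $w$ and $ws$ represent the same coset in $W/\langle s \rangle$), and within the $\mathbb{A}^2 \setminus \{0\}$-fiber of this projection they form the two strata of complementary dimension. Applying \cref{SwapsSupport}(1) together with $F_s \circ F_s \simeq \id$ on $\CatN$ (valid by \cref{Nondegeneracy is Closed Under Actions}), and refining via a fiberwise analysis of the symplectic Fourier transform (which on $\mathbb{A}^2 \setminus \{0\}$ exchanges the two strata), yields that $F_s(\F_w)$ is supported on $N\dot{ws}B/N$, a cell of length $\ell - 1$. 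By $P(\ell - 1)$, $F_s(\F_w) \in \C$, and $W$-stability then gives $\F_w = F_s(F_s(\F_w)) \in \C$.

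The main technical hurdle is the geometric refinement pinning the support of $F_s(\F_w)$ to the single cell $N\dot{ws}B/N$ rather than to an arbitrary union of cells indexed by $W^s$: this requires combining the fiberwise description of the symplectic Fourier transform on $V_s$ with the support-vanishing statement of \cref{SwapsSupport}(1), and in particular verifying the $\mathbb{G}_m^{\alpha}$-average vanishing hypothesis there, which should follow from the nondegeneracy assumption on $\F$ ruling out the relevant monodromic contributions.
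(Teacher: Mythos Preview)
Your inductive strategy is exactly the one the paper has in mind: both argue by induction on Bruhat length using \cref{SwapsSupport}. The issue is in how you justify the key step, namely that $F_s(\F_w)$ lands on the single shorter cell $N\dot{ws}B/N$.

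You invoke \cref{SwapsSupport}(1), which runs the wrong way: it takes sheaves supported on $X_{W^s}$ (the shorter cells) to sheaves supported on $X_{W^ss}$ (the longer ones), under the $\mathbb{G}_m^{\alpha}$-average-vanishing hypothesis. You then try to reverse this via $F_s^2 \simeq \id$ and claim that nondegeneracy supplies the vanishing of $\text{Av}_*^{\mathbb{G}_m^{\alpha}}$. This last claim is false: nondegeneracy kills $Q_{\alpha}$-monodromic objects, but $\mathbb{G}_m^{\alpha}$-equivariance is a strictly weaker condition (since $\mathbb{G}_m^{\alpha} \subset Q_{\alpha}$, not the reverse). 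For instance, $J^!(j_w)_*\omega_T$ has nonzero $\mathbb{G}_m^{\alpha}$-average but need not be degenerate.

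The correct mechanism is the one hidden in the proof of \cref{SwapsSupport}(2), and it requires only left $N$-equivariance, not nondegeneracy. For $\F$ supported on $X_{W^ss}$ one has $\ell_s^!(j_*\F) = 0$ (the zero-section contribution dies by $i_Z^! j_* = 0$, and the $X_{W^s}$-contribution dies by hypothesis), so by \cref{N-Equivariant Sheaf on Vs with No Support on Ls is Ga Equivariant} the extension $j_*\F$ is $\mathbb{G}_a$-equivariant, i.e.\ $j_*\F \simeq p^!\mathcal{G}$. Fourier transform then exchanges ``constant along $\mathbb{G}_a$'' with ``supported on the annihilator,'' which under the symplectic identification is $\mathcal{L}_s$. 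Hence $\mathbf{F}(j_*\F)$ is supported on $\mathcal{L}_s$, and restricting to $G/N$ shows $F_s(\F)$ is supported on $X_{W^s}$. Combined with the fact that $F_s$ preserves the fiber over $G/Q_s$, this pins $F_s(\F_w)$ to the single cell $ws$, exactly as you wanted. Your induction then closes using $F_s^2 \simeq \id$ on $\CatN$ and $J^! F_s \simeq F_s J^!$ (which follows from $F_s$ preserving the degenerate subcategory, \cref{Fs preserves kernel of avpsi}).
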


    \begin{proof}
        Fix some nonzero $\F \in \D(N\backslash G/N)_{\mathrm{nondeg}}$. If $Z := \mathrm{ker}(\tilde{G} \to G)$, then since these terms are all compatible with the $Z$-action, we may reduce to the case where $G$ is the product of a semisimple simply connected group and a torus. In this case, we will prove the claim by induction on the minimal $w$ such that restriction to the closure of the cell $N\backslash NwB/N$ is nonzero. For our base case, if the restriction to $N\backslash B/N$ is nonzero, then $\uHom(i_{*, dR}(\mathcal{G}), \F)$ is nonzero for some $\mathcal{G} \in \D(N\backslash B/N)$. Inductively, if we choose a minimal $w \neq 1$, then there exists some simple reflection $s := s_{\alpha}$ such that $sw < w$. In this case, we may apply \cref{SwapsSupport} as the condition of nondegeneracy guarantees that the pushforward to $G/Q_s$ vanishes. Therefore $l^!_{W^s}(F_s(\F))$ is nonzero, and so by induction there exists some $w' \in W$ and $\mathcal{G} \in \D(N\backslash B/N)$ such that $\uHom(F_{w'}(\mathcal{G}), F_s(\F))$ is nonzero. Since $\uHom(F_{w'}(\mathcal{G}), F_s(\F)) \simeq \uHom(F_sF_{w'}(\mathcal{G}), \mathcal{F}) \simeq \uHom(F_{sw'}(\mathcal{G}), \mathcal{F})$ we see that $\uHom(F_{sw'}(\mathcal{G}), \mathcal{F})$ is nonzero, as desired.
    \end{proof}
    \begin{Corollary}\label{WaffGenerators}
    The category $\CatTwTw$ is generated by the objects in the set $\{w\delta : w \in \Wext\}$, where $\delta$ is the monoidal unit of $\CatTwTw$.
    \end{Corollary}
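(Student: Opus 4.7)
The plan is to deduce this corollary from \cref{WGenerators} by passing to $(T \times T, w)$-invariants. By \cref{WGenerators}, $\CatN$ is generated as a $W$-category by the essential image of the pushforward $i_*: \D(T) \simeq \D(B/N)^N \to \CatN$, where $i: T \simeq N \backslash B/N \hookrightarrow N \backslash G/N$. Applying the $(T \times T, w)$-invariance functor, which by \cref{Inv=Coinv}(2) is equivalent to the coinvariants construction and therefore commutes with colimits, and which intertwines the Gelfand-Graev $W$-action (since $W$ normalizes $T$), we obtain that $\CatTwTw$ is generated as a $W$-category by the essential image of $i_*^{T \times T, w}: \D(T)^{T \times T, w} \to \CatTwTw$. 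By \cref{Corollary on T,w Cats}, this $W$-action canonically extends to an action of $\Wext = W \ltimes \Lambda$.

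It remains to show that $\D(T)^{T \times T, w}$ is generated as a cocomplete category by the $\Lambda$-orbit of its monoidal unit $\delta_1^{T \times T, w}$ (whose image in $\CatTwTw$ under $i_*^{T \times T, w}$ is precisely $\delta$). Using that the two-sided $T \times T$-action on $T$ factors through the abelian group multiplication, a Mellin-transform computation identifies $\D(T)^{T \times T, w}$ with a category of sheaves on a disjoint union of copies of $\LTd$ indexed by the relevant lattice, in which the monoidal unit corresponds to $\omega_{\LTd}$ on the distinguished component and the $\Lambda$-part of $\Wext$ acts by translating between components. Since $\omega_{\LTd}$ is a compact generator of $\IndCoh(\LTd)$ (using the smoothness of $\LTd$ and the equivalence $\Upsilon_{\LTd}: \QCoh(\LTd) \xrightarrow{\sim} \IndCoh(\LTd)$ noted in \cref{CategoricalPreliminaries}), the desired generation statement follows.

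Combining the two steps, $\CatTwTw$ is generated as a $\Wext$-category by $\delta$, which is exactly the statement that $\{w\delta : w \in \Wext\}$ generates $\CatTwTw$ as a cocomplete category. The main technical point to verify carefully is that the $(T \times T, w)$-invariance functor preserves generation as a $W$-category; this follows from its identification with coinvariants (hence its continuity) combined with the $(T \times T)$-equivariance of $i_*$, so that the image of $i_*\D(T)$ under this functor agrees with $i_*^{T \times T, w}\bigl(\D(T)^{T \times T, w}\bigr)$.
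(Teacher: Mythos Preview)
Your approach is essentially the same as the paper's: both reduce via \cref{WGenerators} to the statement that the Harish-Chandra category $\D(T)^{T\times T,w}$ is generated by the lattice translates of its monoidal unit, and then combine with the $W$-action. The paper's proof is a one-liner asserting exactly this, whereas you spell out the Mellin-transform identification and the continuity argument for passing to weak invariants; your version is more detailed but not different in substance.

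One correction: you write $\Wext = W \ltimes \Lambda$ with $\Lambda$ the weight lattice, but in this paper the lattice part of $\Wext$ is the character lattice $\characterlatticeforT$ of $T$ (as the paper's own proof makes explicit, using $\{\lambda\delta' : \lambda \in \characterlatticeforT\}$). These differ when $[G,G]$ is not simply connected, and it is $\characterlatticeforT$ that indexes the components after Mellin transform. Also, your appeal to \cref{Corollary on T,w Cats} is slightly off-target, since that corollary concerns a single $(T,w)$-invariants rather than $(T\times T,w)$; the lattice action you need here is more directly the residual $\characterlatticeforT$-action coming from the Mellin transform on the Harish-Chandra category, not the abstract $\Wext$-action of that corollary.
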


    \begin{proof}
    This follows from the fact that the set $\{\lambda \delta' : \lambda \in \characterlatticeforT\}$ is a set of compact generators in the Harish-Chandra category $\D(T)^{\text{(}T \times T\text{, w)}}$, where $\delta'$ denotes the monoidal unit, and \cref{WGenerators}.
    \end{proof}
    
\subsubsection{Triviality of Gelfand-Graev Action on Equivariant Categories}
In this section, we make precise and prove in \cref{Action of Order Two Simple Reflection Group on Associated G_m Invariants is Trivial} that the Gelfand-Graev action is trivial on equivariant categories associated to $\D(G/N)_{\text{nondeg}}$. Recall the map of vector bundles $\mathcal{L}_s \xrightarrow{\ell_s} V_s$ as in \cref{Kazhdan-Laumon SFT Construction Reminder}, a morphism of vector bundles over the base $S := G/[P_s, P_s]$. We now prove a variation of \cite[Proposition 1.2.3.1]{Lau} in our context. 

\newcommand{\linebundleofVs}{\mathcal{L}_s}
\begin{Corollary}\label{Constant Along A1 Means Fourier Transform Is}
Let $t: \linebundleofVs \to S$ be the terminal map of $S$-schemes. Then we have a canonical $B \times T$-equivariant isomorphism of functors $\omega^!\mathbb{F}_{V_{s}}\ell_{s, *, dR}t^! \simeq \ell_{s, *, dR}t^!$.
\end{Corollary}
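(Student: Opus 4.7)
The plan is to chain Fourier functoriality and base-change identities to compute $\omega^!\mathbb{F}_{V_s}(\ell_{s,*,dR}t^!\mathcal{G})$ for $\mathcal{G} \in \D(S)$. Since $\ell_s: \linebundleofVs \hookrightarrow V_s$ is a morphism of vector bundles over $S$ of ranks $1$ and $2$, \cref{Functoriality of Fourier Transform} gives $\mathbb{F}_{V_s}\ell_{s,*,dR}(-) \simeq \ell_s^{\vee,!}\mathbb{F}_{\linebundleofVs}(-)[-1]$, where $\ell_s^\vee: V_s^\vee \twoheadrightarrow \linebundleofVs^\vee$ is the dual surjection. Applying \cref{Functoriality of Fourier Transform} instead to the zero section $\iota: S \hookrightarrow \linebundleofVs^\vee$ (of ranks $0$ and $1$) yields $\mathbb{F}_{\linebundleofVs^\vee}(\iota_{*,dR}(-)) \simeq t^!(-)[-1]$; applying Fourier inversion on the rank-$1$ bundle $\linebundleofVs$ (noting that $t^!\mathcal{G}$ is fiberwise $(-1)$-invariant) converts this to $\mathbb{F}_{\linebundleofVs}(t^!\mathcal{G}) \simeq \iota_{*,dR}(\mathcal{G})[1]$. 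The shifts then cancel, yielding $\mathbb{F}_{V_s}\ell_{s,*,dR}t^!(\mathcal{G}) \simeq \ell_s^{\vee,!}\iota_{*,dR}(\mathcal{G})$.

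Since $V_s^\vee \times_{\linebundleofVs^\vee} S = \ker(\ell_s^\vee) = (V_s/\linebundleofVs)^\vee$, the induced square with $\ell_s^\vee, \iota, q^\vee$, and $\tilde{t}$ is Cartesian, where $q^\vee: (V_s/\linebundleofVs)^\vee \hookrightarrow V_s^\vee$ and $\tilde{t}: (V_s/\linebundleofVs)^\vee \to S$ are the canonical maps, so base change gives $\ell_s^{\vee,!}\iota_{*,dR} \simeq q^\vee_{*,dR}\tilde{t}^!$. Next, by \cref{Dual to Rank One Vector Bundle is Projection Remark}, the Lagrangian identity $\omega\ell_s = q^\vee\omega|_{\linebundleofVs}$ fits into a Cartesian square (since $\omega$ is an isomorphism which identifies $\linebundleofVs$ with its annihilator $(V_s/\linebundleofVs)^\vee$), and base change yields $\omega^!q^\vee_{*,dR} \simeq \ell_{s,*,dR}\omega|_{\linebundleofVs}^!$. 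Since $\omega|_{\linebundleofVs}$ is an isomorphism over $S$, we have $\tilde{t}\omega|_{\linebundleofVs} = t$, so $\omega|_{\linebundleofVs}^!\tilde{t}^! = t^!$. Concatenating all identities yields the desired $\omega^!\mathbb{F}_{V_s}\ell_{s,*,dR}t^! \simeq \ell_{s,*,dR}t^!$. The $B \times T$-equivariance is automatic, since all morphisms $\ell_s, \omega, q^\vee, \omega|_{\linebundleofVs}, \tilde{t}, t$ arise from the $B \times T$-equivariant structure of $V_s$ as a vector bundle over $S$.

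The main obstacle will be the careful bookkeeping of cohomological shifts — specifically, the shift $[-1]$ arising from Fourier functoriality on the codimension-one embedding $\ell_s$ must cancel precisely with the $[1]$ shift arising from the Fourier transform of the ``constant-along-fibers'' sheaf $t^!\mathcal{G}$. This cancellation is a manifestation of the Lagrangian nature of $\linebundleofVs \subset V_s$: under the symplectic Fourier transform $\mathbf{F} = \omega^!\mathbb{F}_{V_s}$, the subcategory of $\D(V_s)$ spanned by sheaves of the form $\ell_{s,*,dR}t^!\mathcal{G}$ is not merely preserved but fixed.
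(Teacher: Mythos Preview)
Your proof is correct and follows essentially the same route as the paper's. Both arguments apply \cref{Functoriality of Fourier Transform} to convert $\mathbb{F}_{V_s}\ell_{s,*,dR}$ into $\ell_s^{\vee,!}\mathbb{F}_{\mathcal{L}_s}$, then use the standard identification of the Fourier transform of the constant sheaf $t^!$ with the skyscraper at the zero section, and finally invoke base change along the Cartesian geometry recorded in \cref{Dual to Rank One Vector Bundle is Projection Remark}. The only cosmetic difference is that the paper packages the last step into a single two-square Cartesian diagram and base-changes across the composite, whereas you separate it into two Cartesian squares (one for $\ell_s^{\vee,!}\iota_{*,dR} \simeq q^\vee_{*,dR}\tilde t^!$ and one for $\omega^! q^\vee_{*,dR} \simeq \ell_{s,*,dR}\omega|_{\mathcal{L}_s}^!$); your $q^\vee$ is the paper's $p^\vee$ and your $\iota$ is the paper's $\tilde z$. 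Your explicit remark that the shifts $[-1]$ and $[+1]$ cancel because $\mathcal{L}_s$ is Lagrangian is a nice conceptual addition that the paper leaves implicit.
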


\begin{proof}
Since our vector bundle is a rank-two symplectic vector bundle, we obtain maps such that each square of the following diagram is Cartesian: 

\begin{equation*}
  \xymatrix@R+2em@C+2em{
  \linebundleofVs  \ar[r]^{\ell_s} \ar[d]^{t} & V_s \ar[r]^{\omega} \ar[d]^{p} & V_s^{\vee} \ar[d]^{\ell_s^{\vee}}\\
  S  \ar[r]^{z} & V_s/\mathbb{G}_a \ar[r]^{\sim} & \linebundleofVs^{\vee}
  }
 \end{equation*}
 
\noindent where we use the notation of \cref{Dual to Rank One Vector Bundle is Projection Remark}. Therefore, setting $\tilde{z}$ to denote the composite of the two lower horizontal arrows, we obtain isomorphisms via various base change morphisms
\raggedbottom
\[\omega^!\mathbb{F}_{V_{s}}\ell_{s, *, dR}t^! \simeq \omega^!\ell_s^{\vee, !}\mathbb{F}_{\linebundleofVs}t^![1] \simeq \omega^!\ell_s^{\vee, !}\tilde{z}_{*, dR} \simeq \ell_{s, *, dR}t^!\]

\noindent where, specifically, the first two isomorphisms are given by the base changes required in constructing the functor of \cref{Functoriality of Fourier Transform} and the third is given by base changing along our Cartesian diagram above. 
\end{proof}

\newcommand{\sheafatoriginofmoddingoutbyoneGmalpha}{\delta_{1H}}
\begin{Proposition}\label{SFT for Each Simple Reflection Can Be Identified with Identity}
Fix a simple reflection $s \in W$, and, as above, let $F_s: \D(G/N)_{\text{nondeg}} \to \D(G/N)_{\text{nondeg}}$ denote the endofunctor given by the $W$-action above indexed by some simple root $\alpha$ (which, for $[G, G]$ simply connected, is the symplectic Fourier transformation given by $s$). Then the induced map on right $\mathbb{G}_m^{\alpha}$-equivariant objects $F_s: \D(G/N)^{\mathbb{G}_m^{\alpha}}_{\text{nondeg}} \to \D(G/N)^{\mathbb{G}_m^{\alpha}}_{\text{nondeg}}$ can be identified with the identity functor. 
\end{Proposition}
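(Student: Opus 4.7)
The plan is to reduce the statement to \cref{Constant Along A1 Means Fourier Transform Is} by showing that, for nondegenerate $\mathbb{G}_m^{\alpha}$-equivariant $\F$, the pushforward $j_{*, dR}\F$ agrees with $\ell_{s, *, dR} t^!(\mathcal{G})$ in the nondegenerate quotient, for a suitable $\mathcal{G} \in \D(S)$. The preceding corollary then supplies $\mathbf{F}(j_{*, dR}\F) \simeq j_{*, dR}\F$, and applying $j^!$ (using $j^!j_{*, dR} \simeq \mathrm{id}$ because $j$ is an open embedding) yields $F_s(\F) \simeq \F$.

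First I would reduce via \cref{Reduction to Split Simply Connected SS Group times Split Torus}(4) to the case that $[G, G]$ is simply connected, using that $Z := \ker(\tilde{G} \to G)$ acts compatibly with the $t$-structure and respects the symplectic Fourier construction, mirroring the reduction used in \cref{Fs preserves kernel of avpsi}. Under the open embedding $j: G/N = V_s \setminus (G/Q_{\alpha}) \hookrightarrow V_s$, the right $\mathbb{G}_m^{\alpha}$-action on $G/N$ coincides with fiberwise $\mathbb{G}_m$-scaling on $V_s$; this is a direct check using $V_s = G/N_{w_0 s} \mathop{\times}\limits^{M_s} \mathbb{A}^2$ together with the standard scaling action of $\mathbb{G}_m^{\alpha}\leq M_s$ on $M_s/\mathbb{G}_a^{+\alpha} \cong \mathbb{A}^2\setminus\{0\}$. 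Hence $j_{*, dR}\F$ is $\mathbb{G}_m$-equivariant for scaling on $V_s$.

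The central step is to identify $j_{*, dR}\F$ with $\ell_{s, *, dR} t^!(\mathcal{G})$ modulo degenerate objects, with $\mathcal{G}$ a canonical choice (for example $\mathcal{G} := t_{*, dR}\ell_s^!j_{*, dR}\F$ up to an appropriate cohomological shift). I would argue that the cofiber of the natural adjunction map $\ell_{s, *, dR} t^!(\mathcal{G}) \to j_{*, dR}\F$ lies in $\D(V_s)_{\text{deg}}$: the !-restriction of $j_{*, dR}\F$ to the open complement $V_s\setminus \mathcal{L}_s$ (which fibers over $G/Q_{\alpha}$ as $\mathbb{A}^1 \times \mathbb{G}_m$) carries $\mathbb{G}_m^{\alpha}$-scaling-equivariance, and an argument in the spirit of \cref{N-Equivariant Sheaf on Vs with No Support on Ls is Ga Equivariant} should force additional monodromy along the root subgroup $\mathbb{G}_a^{-\alpha} \leq Q_{\alpha}$. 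Once monodromic along a root subgroup of $Q_{\alpha}$, the object is $Q_{\alpha}$-monodromic and therefore degenerate by \cref{Avpsi Vanishes on Qalpha Monodromic Objects}. With this identification in hand, \cref{Constant Along A1 Means Fourier Transform Is} completes the proof.

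The main obstacle I foresee is exactly this third step: converting the negative condition of nondegeneracy into the positive structural condition of $\mathcal{L}_s$-support modulo degenerate corrections. I expect it to require a Bruhat-cell induction parallel to but in the opposite direction of \cref{SwapsSupport}(2), tracking carefully how $\mathbb{G}_m^{\alpha}$-equivariance together with nondegeneracy propagates monodromy through the Bruhat stratification of $V_s\setminus \mathcal{L}_s$, and likely drawing on the $t$-exactness and $\mathcal{H}_N$-linearity of the localization functor $J^!$ from \cref{Intro to t-Structures on Quotient Categories}.
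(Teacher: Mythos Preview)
Your reduction to the simply connected case and your intention to invoke \cref{Constant Along A1 Means Fourier Transform Is} are both correct and match the paper. The divergence is in the intermediate step.

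The paper does not attempt to show, for a general $\F \in \D(G/N)^{\mathbb{G}_m^{\alpha}}_{\text{nondeg}}$, that $j_{*,dR}\F$ is supported on $\mathcal{L}_s$ modulo degenerate objects. Instead it invokes \cref{UniversalCaseRemark}: since $F_s$ is a $G$-equivariant endofunctor, the functor on $\D(G/H)_{\text{nondeg}}$ (with $H := \mathbb{G}_m^{\alpha}N$) is entirely determined by its value on the single generator $J^!(\delta_{1H})$. Thus it suffices to exhibit an isomorphism $F_s(J^!(\delta_{1H})) \cong J^!(\delta_{1H})$. For this one object the geometry is transparent: $\delta_{1H}$ is the pushforward of $\omega_{H/N}$ along $H/N \cong \mathbb{G}_m^{\alpha} \hookrightarrow G/N$, and the closure $\overline{H/N}$ inside $V_s$ is literally a fiber of the line bundle $\mathcal{L}_s$. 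Since the difference between $\omega_{H/N}$ and $\omega_{\overline{H/N}}$ is supported on the zero section (hence killed by $J^!$), one has $J^!(i_{*,dR}\omega_{H/N}) \simeq J^!(\tilde{i}_{*,dR}\omega_{\overline{H/N}})$, and now \cref{Constant Along A1 Means Fourier Transform Is} applies directly to $\tilde{i}_{*,dR}\omega_{\overline{H/N}} = \ell_{s,*,dR}t^!(\text{skyscraper})$.

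Your approach, by contrast, tries to prove the support-on-$\mathcal{L}_s$ statement for arbitrary $\F$, which is exactly the step you flag as the obstacle. The proposed mechanism---that $\mathbb{G}_m^{\alpha}$-equivariance on the complement $V_s \setminus \mathcal{L}_s$ forces $\mathbb{G}_a^{-\alpha}$-monodromy---is not right as stated: scaling equivariance alone does not produce additive monodromy, and the analogy with \cref{N-Equivariant Sheaf on Vs with No Support on Ls is Ga Equivariant} breaks down because that lemma uses left $N$-equivariance, not right $\mathbb{G}_m^{\alpha}$-equivariance. The universal-case reduction bypasses this entirely and is the missing idea in your proposal.
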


\begin{proof} Since the $W$-action commutes with the central action, it suffices to prove this in the case where the canonical map $\tilde{G} \to G$ of \cref{Reduction to tildeG Subsubsection} is an isomorphism. Let $H := \mathbb{G}_m^{\alpha}N$. By \cref{UniversalCaseRemark}, it suffices to provide an isomorphism $F_s(J^!(\sheafatoriginofmoddingoutbyoneGmalpha)) \cong J^!(\sheafatoriginofmoddingoutbyoneGmalpha)$ in $\D(G/H)_{\text{nondeg}}^{H}$. Since the forgetful functor is fully faithful at the level of abelian categories (see, for example \cite[Section 10.3]{RaskinAffineBBLocalization}), we may equivalently show that $F_s(J^!(i_{*, dR}\omega_{H/N})) \cong J^!(i_{*, dR}\omega_{H/N})$, where by Kashiwara's lemma we identify $\omega_{H/N} \in \D(G/N)$. Note that we may identify $J^!(i_{*, dR}\omega_{H/N})$ with $J^!(\tilde{i}_{*, dR}(\omega_{\overline{H/N}}))$, where $\tilde{i}: \overline{H/N} \xhookrightarrow{} V_s$ is the inclusion of the closure of $H/N$ in $V_s$. Therefore, we see:
\raggedbottom
\[F_s(J^!(i_{*, dR}{\omega_{H/N}})) \xleftarrow{\sim} F_s(J^!(\tilde{i}_{*, dR}(\omega_{\overline{H/N}}))) \simeq J^!(\omega^!\mathbf{F}(\tilde{i}_{*, dR}(\omega_{\overline{H/N}}))) \simeq\] \[J^!(\tilde{i}_{*, dR}(\omega_{\overline{H/N}})) \simeq J^!(i_{*, dR}\omega_{H/N})\]

\noindent where the second equivalence follows from \cref{Nondegeneracy is Closed Under Actions} and the definition of the symplectic Fourier transform, the second to last equivalence is given by \cref{Constant Along A1 Means Fourier Transform Is}, since $\overline{H/N}$ is naturally a closed subscheme of $\linebundleofVs$.
\end{proof}

\begin{Corollary}\label{Action of Order Two Simple Reflection Group on Associated G_m Invariants is Trivial}
The $\langle s_{\alpha} \rangle$-action on $\D(G/N)^{\mathbb{G}_m^{\alpha}}_{\text{nondeg}}$ is the trivial action.
\end{Corollary}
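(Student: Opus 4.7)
The plan is to deduce the triviality of the $\langle s_\alpha\rangle$-action from \cref{SFT for Each Simple Reflection Can Be Identified with Identity}. That proposition already identifies the underlying endofunctor $F_s$ of $\C := \D(G/N)^{\mathbb{G}_m^\alpha}_{\text{nondeg}}$ with $\id_\C$, so the outstanding issue is whether this identification upgrades to an equivalence of $\langle s_\alpha\rangle$-categories. An action of the order-two group $\langle s_\alpha\rangle$ consists of an autoequivalence $\sigma$ together with a 2-isomorphism $\sigma^2 \simeq \id$ satisfying one coherence; triviality amounts to showing that the coherence datum $F_s^2 \simeq \id$ coming from $s_\alpha^2 = 1$ corresponds, under the identification $F_s \simeq \id_\C$, to the identity 2-cell on $\id_\C$.

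First I would reduce the coherence check to a single generator. By \cref{UniversalCaseRemark}, setting $H := \mathbb{G}_m^\alpha N$, the category $\C$ is generated as a $G$-category by the distinguished object $J^!(\delta_{1H})$, and both $F_s$ and its involutivity datum are $G$-linear. Thus it suffices to verify the compatibility on this single object.

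Next I would verify the compatibility on $J^!(\delta_{1H})$ directly. The isomorphism $F_s(J^!(\delta_{1H})) \simeq J^!(\delta_{1H})$ established in the proof of \cref{SFT for Each Simple Reflection Can Be Identified with Identity} is built from the equivalence $\omega^!\mathbf{F}\ell_{s,*,dR}t^! \simeq \ell_{s,*,dR}t^!$ of \cref{Constant Along A1 Means Fourier Transform Is}, which is itself assembled from canonical base-change 2-cells along a Cartesian diagram together with the Fourier inversion datum for $\mathbf{F}$. Iterating these identifications collapses to the canonical involutivity 2-cell for $\mathbf{F}^2 \simeq \id$ applied to objects of the form $\ell_{s,*,dR}t^!(-)$, which is precisely the involutivity datum assembled into the Weyl group action in the proof of \cref{Avpsi factors through order two coinvariants}(2).

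The main obstacle is the 2-categorical coherence bookkeeping: a pointwise isomorphism of endofunctors does not in general lift to an equivalence of categorical $\mathbb{Z}/2$-actions, so this compatibility has to be verified concretely rather than abstractly. Once reduced, via \cref{UniversalCaseRemark}, to a single object, the check amounts to identifying two canonical 2-cells both built from Fourier inversion and base change, which agree by naturality.
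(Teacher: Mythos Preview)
Your proposal correctly identifies the crux of the matter --- that \cref{SFT for Each Simple Reflection Can Be Identified with Identity} gives $F_s \simeq \id$ but one must still match the involutivity datum --- and the reduction to the generator $J^!(\delta_{1H})$ via \cref{UniversalCaseRemark} is the right first move. However, the final step has a genuine gap. You assert that iterating the base-change identifications of \cref{Constant Along A1 Means Fourier Transform Is} ``collapses to the canonical involutivity 2-cell for $\mathbf{F}^2 \simeq \id$'' and that this is ``precisely the involutivity datum assembled into the Weyl group action,'' but this is not justified. The $W$-action on $\D(G/N)_{\text{nondeg}}$ is defined via the Gelfand-Graev action on $H^0\Gamma(\D_{G/N})$-modules (\cref{Big Action on Modules for Ring of Differential Operators of G Mod N}, \cref{Nondegeneracy is Closed Under Actions}), so the involutivity datum $F_s^2 \simeq \id$ comes from the algebraic relation $s_\alpha^2 = 1$ through the Ginzburg--Kazhdan isomorphism, not directly from Fourier inversion. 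Identifying these two 2-cells is itself a nontrivial coherence check, and ``by naturality'' does not suffice. Your reference to \cref{Avpsi factors through order two coinvariants}(2) does not help: that statement lifts $\AvN$ to an invariant category but says nothing about the involutivity datum of the ambient action.

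The paper sidesteps this direct verification entirely. It first proves (\cref{Underlying Set of Endomorphisms of Delta Sheaf in Nondegenerate Quotient is Ground Field}) that $H^0\uEnd(J^!(\delta_{1H})) \cong k$; since $J^!(\delta_{1H})$ lies in the heart, this makes the endomorphism space of the identity functor discrete and one-dimensional. Consequently the $\langle s_\alpha\rangle$-action, viewed as a map $B\langle s_\alpha\rangle \to G\text{-Cat}^\sim$, is determined by its value on the 1- and 2-cells of $\mathbb{RP}^\infty$, and after identifying $F_s \simeq \id$ the remaining coherence datum is a single scalar $x \in k^\times$. The paper then pins down $x = 1$ by restricting to the full $\langle s_\alpha\rangle$-subcategory $\D(N^-_\psi\backslash G/N)^{\mathbb{G}_m^\alpha}$, where the action is already known to be trivial by \cref{WActionOnWhittaker}. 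This two-step strategy --- compute the endomorphism space to reduce coherence to a scalar, then evaluate on a subcategory where the answer is known --- is the missing idea in your approach.
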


As above, let $H := N\mathbb{G}_m^{\alpha}$. We prove this after first proving the following lemma:
\begin{Lemma}\label{Underlying Set of Endomorphisms of Delta Sheaf in Nondegenerate Quotient is Ground Field}
For $\delta_{1H}$ the monoidal unit of $\D(H\backslash G/H)$, then we have $H^0\uEnd_{\D(H\backslash G/H)_{\text{nondeg}}}(J^!(\delta_{1H})) \cong k$. 
\end{Lemma}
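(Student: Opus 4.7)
The plan is to transfer the computation to the ambient category $\mathcal{D}(H\backslash G/H)$ via the adjunction $J^! \dashv J_*$, compute endomorphisms there using the identity stratum, and detect endomorphisms of $J^!(\delta_{1H})$ by applying the Whittaker averaging.

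First, I would compute $H^0\uEnd_{\mathcal{D}(H\backslash G/H)}(\delta_{1H}) = k$. The object $\delta_{1H}$ is the pushforward along the closed immersion $i \colon BH = H\backslash H/H \hookrightarrow H\backslash G/H$ of the monoidal unit of $\mathcal{D}(BH)$. Since $i_{*, dR}$ is fully faithful for this closed immersion, we have
\[
\uEnd_{\mathcal{D}(H\backslash G/H)}(\delta_{1H}) \simeq \uEnd_{\mathcal{D}(BH)}(\mathbf{1}_{BH}) \simeq C^*(BH;k).
\]
Because $H = N\mathbb{G}_m^{\alpha}$ is connected, $H^0(BH;k) = k$.

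Next, I would verify that $J^!(\delta_{1H}) \neq 0$, equivalently that $\delta_{1H} \notin \mathcal{D}(H\backslash G/H)_{\text{deg}}$. Combining \cref{SupportOfWhittakerSheaves} with the residual $T_{\alpha}$-equivariance on the left gives an identification $\mathcal{D}(H\backslash G/_{-\psi}N^-) \simeq \mathcal{D}(T_{\alpha}\backslash T)$. Under this equivalence, $\text{Av}_!^{-\psi}(\delta_{1H})$ is the delta sheaf $\delta_0$ at the identity coset of $T_{\alpha}\backslash T$, which is nonzero; hence $\delta_{1H}$ is not degenerate.

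For the main argument, I use the adjunction to rewrite
\[
\uEnd_{\mathcal{D}(H\backslash G/H)_{\text{nondeg}}}(J^!\delta_{1H}) \simeq \uHom_{\mathcal{D}(H\backslash G/H)}(\delta_{1H}, J_*J^!\delta_{1H})
\]
and the fiber sequence $I_*I^!\delta_{1H} \to \delta_{1H} \to J_*J^!\delta_{1H}$ to get a canonical ring map
\[
k = H^0\uEnd(\delta_{1H}) \xrightarrow{J^!} H^0\uEnd(J^!\delta_{1H})
\]
sending $1 \mapsto 1$. Since the target contains the identity of the nonzero object $J^!\delta_{1H}$, this map is injective. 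For surjectivity, I would apply $\overline{\text{Av}}_!^{-\psi}$ to obtain a second ring map
\[
H^0\uEnd(J^!\delta_{1H}) \to H^0\uEnd_{\mathcal{D}(T_{\alpha}\backslash T)}(\delta_0) = k
\]
also sending $1 \mapsto 1$. The composite $k \to H^0\uEnd(J^!\delta_{1H}) \to k$ is the identity, and so both maps must be isomorphisms provided the second is injective.

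The main obstacle I expect is establishing that the Whittaker-averaging ring map $H^0\uEnd(J^!\delta_{1H}) \to k$ is injective. I would try to deduce this from the fact that, after passing to the nondegenerate quotient, the Whittaker adjunction $(\overline{\text{Av}}_!^{-\psi}, \overline{\AvN})$ becomes essentially fully faithful on the subcategory generated by $J^!\delta_{1H}$---concretely, by showing the unit map $J^!\delta_{1H} \to \overline{\AvN}\,\overline{\text{Av}}_!^{-\psi}(J^!\delta_{1H})$ induces an isomorphism on $H^0$ of the endomorphism rings, which would use the analogue of \cref{Easier Classification of Kernel Properties} in this setting together with the explicit computation $\overline{\text{Av}}_!^{-\psi}(J^!\delta_{1H}) = \delta_0$ from above.
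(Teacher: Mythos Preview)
Your overall framework—setting up ring maps $k \to H^0\uEnd(J^!\delta_{1H}) \to k$ with composite the identity—is sound, and your first two steps (computing $H^0\uEnd(\delta_{1H}) = k$ and checking $J^!\delta_{1H} \neq 0$) are correct. The genuine gap is exactly where you locate it: the injectivity of the Whittaker-averaging map $H^0\uEnd(J^!\delta_{1H}) \to k$. Your proposed fix, via the unit of the Whittaker adjunction on the nondegenerate quotient, is not carried out and is not obviously easier than the original statement.

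There is a much shorter completion you are missing. Since $J^!$ restricted to the hearts is a Serre quotient functor and $\delta_{1H}$ is simple with $J^!\delta_{1H} \neq 0$, the object $J^!\delta_{1H}$ is again simple in $\D(H\backslash G/H)_{\text{nondeg}}^{\heartsuit}$. By Schur's lemma $H^0\uEnd(J^!\delta_{1H})$ is therefore a division algebra over $k$, and any nonzero ring homomorphism out of a division algebra is injective. Your sandwich $k \hookrightarrow H^0\uEnd(J^!\delta_{1H}) \hookrightarrow k$ then forces $H^0\uEnd(J^!\delta_{1H}) = k$.

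The paper's argument is close in spirit but stays in the ambient category rather than the quotient. From the cofiber sequence $I_*I^!\delta_{1H} \to \delta_{1H} \to J_*J^!\delta_{1H}$, together with simplicity and nondegeneracy of $\delta_{1H}$, the unit gives an injection $\delta_{1H} \hookrightarrow \tau^{\leq 0}J_*J^!\delta_{1H}$ whose cokernel lies in the degenerate subcategory. Any degree-zero map $\delta_{1H} \to J_*J^!\delta_{1H}$ must factor through this inclusion: otherwise the induced map to the cokernel would be a nonzero map from a simple object, hence an injection, forcing $\delta_{1H}$ into the degenerate subcategory (which is closed under subobjects)—a contradiction. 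Both routes ultimately rest on the same two ingredients, simplicity of $\delta_{1H}$ and closure of the degenerate subcategory under subobjects in the heart, and neither requires analyzing the Whittaker functor at the level of endomorphisms.
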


\begin{proof}
We may equivalently show that the only maps $\delta_{1H} \to J_*J^!(\delta_{1H})$ are the scalar multiples of the unit of the adjunction $(J_*, J^!)$. Since the functor which forgets $H$-equivariance is fully faithful at the level of abelian categories (see, for example \cite[Section 10.3]{RaskinAffineBBLocalization}) we may equivalently show the space of maps $\text{Hom}_{\D(N\backslash G/H)^{\heartsuit}}(\delta_{1H}, Y)$ is one-dimensional, where $Y := \text{oblv}^H\tau^{\leq 0} J_*J^!(\delta_{1H})$. 

Let $\delta_{1H} \to Y$ denote any map. We note that $\delta_{1H}$ is simple (for example, it is the pushforward by a simple holonomic $\D$-module at a point). Furthermore, note that $\delta_{1H}$ is not degenerate, since it does not lie in the kernel of $\Avpsi$.  By the long exact sequence associated to the cofiber sequence $I_*I^!\delta_{1H} \to \delta_{1H} \to J_*J^!\delta_{1H}$, we see that the quotient $J_*J^!(\delta_{1H})/\delta_{1H}$ lies in the kernel of $J^!$. However, if the composite $\delta_{1H} \to Y \to Y/\delta_{1H}$ were nonzero, by the simplicity of $\delta_{1H}$ we would see that this map is an injection. This would, however, imply that $\delta_{1H}$ lies in $\D(G/H)_{\text{deg}}$, since this category is closed under subobjects--a contradiction. 
\end{proof}
\begin{proof}[Proof of \cref{Action of Order Two Simple Reflection Group on Associated G_m Invariants is Trivial}] 
Since the $W$-action commutes with the central action, it suffices to prove this in the case where the canonical map $\tilde{G} \to G$ of \cref{Reduction to tildeG Subsubsection} is an isomorphism. The action of $\langle s_{\alpha} \rangle$ on $\D(G/H)_{\text{nondeg}}$ is entirely determined by the map of pointed spaces $A: B\langle s_{\alpha} \rangle \to G\text{-Cat}^{\sim}$, where the basepoint goes to the category $\D(G/H)_{\text{nondeg}}$. We identify $B\langle s_{\alpha} \rangle \simeq \mathbb{R}P^{\infty}$ with its standard cell structure. By \cref{SFT for Each Simple Reflection Can Be Identified with Identity}, we see that the one-cell of $B\langle s_{\alpha} \rangle$ is sent to an object $I \in \uEnd_G(\D(G/H))$ equivalent to the identity. Since the identity of $\uEnd_G(\D(G/H)_{\text{nondeg}})$ is sent to the sheaf $J^!(\delta_{1H}) \in \D(H\backslash G/H)_{\text{nondeg}}$ under the equivalence $\uEnd_G(\D(G/H)_{\text{nondeg}}) \simeq \D(H\backslash G/H)_{\text{nondeg}}$ of \cref{Inv=Coinv}, we see that the endomorphisms of the identity functor are discrete. Therefore, our map $A$ is entirely determined by the object $I$ and the image of the two-cell, say $E \in \text{Hom}(\text{id}_{\D(G/H)_{\text{nondeg}}}, I^2)$. Moreover, any two such pairs $(I_1, E_1)$ and $(I_2, E_2)$ are equivalent if there exists some natural transformation $i: I_1 \xrightarrow{\sim} I_2$ such that $E_1i^2 \simeq E_2$. In particular, since we have shown that $F_s$ acts by the identity functor, it suffices to show that the canonical natural transformation 
\raggedbottom
\begin{equation}\label{Endofunctor to Compute if Z mod 2Z Action Given by Identity Is Trivial}\text{id}_{\D(G/H)_{\text{nondeg}}} \xrightarrow{E_s} F_sF_s \xrightarrow{F_sE_s^{-1}} F_s \xrightarrow{E_s^{-1}} \text{id}_{\D(G/H)_{\mathrm{nondeg}}}\end{equation}

\noindent is equal to the identity natural transformation of $G$-categories, where $E_s$ is the equivalence of \cref{SFT for Each Simple Reflection Can Be Identified with Identity}. By \cref{Underlying Set of Endomorphisms of Delta Sheaf in Nondegenerate Quotient is Ground Field}, we see that this endomorphism is given by scaling by some nonzero $x \in k$. However, the category $\D(G/H)_{\text{nondeg}}$ contains the full $\langle s_{\alpha} \rangle$-subcategory $\D(N^-_{\psi}\backslash G/N)^{\mathbb{G}_m^{\alpha}}$ by \cref{SupportOfWhittakerSheaves} and \cref{WActionOnWhittaker}. In particular, for objects of this category, the natural transformation of \labelcref{Endofunctor to Compute if Z mod 2Z Action Given by Identity Is Trivial} scales by 1. Therefore, we see that $x = 1$ and so the natural transformation of \labelcref{Endofunctor to Compute if Z mod 2Z Action Given by Identity Is Trivial} is the identity, and thus the $\langle s_{\alpha} \rangle$-action is trivial. 
\end{proof}
\section{Nondegenerate Category $\mathcal{O}$}\label{Computations in BGG Category O Section}
Fix some $\lambda \in \LTd(k)$. In this section, we study the category $\D(G/_{\lambda}B)^N$. A priori, this category is defined as a limit of DG categories. However, it turns out that this category is the derived category of its heart, which we show in \cref{Twisted D-Modules are Derived Category of Heart Section}. The category $\D(G/_{\lambda}B)^{N, \heartsuit}$ identifies with the ind-completion of the category $\mathcal{O}_{\lambda}$, the abelian subcategory of the BGG category $\mathcal{O}$ of objects\footnote{Note that $\O_{\lambda}$ is not cocomplete--its objects are, by definition, finitely generated $U\LG$-modules. It seems that these notations minimize conflicts with existing literature. Note in particular that what in the introduction we called the universal category $\mathcal{O}$, which we denoted $\LG\text{-Mod}^{N, (T,w)}$, is defined to be a DG category.} whose central character is given by $\chi_{\lambda}$, where $\chi$ denotes the Harish-Chandra map as in for example \cite[Chapter 1.9]{HumO}. 

By the Beilinson-Bernstein localization theorem \cite[Th\'eor\`eme Principal]{BB} (see also \cite{BBAProofOfJantzenConjectures}, \cite[Theorem 1.2.3]{ChenGaitsgory}) we then see that the category $\D(G/_{\lambda}B)^N$ is the (unbounded) derived category of the ind-completion of the BGG category $\mathcal{O}_{\lambda}$. Using this, we can provide an explicit description of the nondegenerate and degenerate subcategories associated to $\D(G/_{\lambda}B)^N$. We do so after reviewing a few results about the BGG category $\mathcal{O}$.


\begin{Remark} 
Note that, in particular, $\mathcal{O}_{\lambda}$ is not a block if $\lambda$ is not integral. However, the fact that $\mathcal{O}_{\lambda}$ is generated by projective modules implies that one has the block decomposition 
\raggedbottom
\[\D(G/_{\lambda}B)^N \simeq \oplus_{\lambda' \in S} \D(G/_{\lambda}B)^{(T, \mathcal{L}_{[\lambda']})\text{-mon}}\]

\noindent where $S$ denotes the set of elements in $\LTd/\Lambda(k)$ in the $W(k) = W$ orbit of $\lambda$ (for a direct explanation of this in terms of sheaves on $\D(G/_{\lambda}B)$, see \cite[Lemma 2.10]{LusYun}). In particular, this set is finite. 
\end{Remark}

\begin{Remark}
Often, the results and definitions for the BGG (abelian) category $\mathcal{O}$ are only stated for semisimple Lie algebras. However, once a central character $\lambda: Z(\LG) \to k$ is fixed, we note that the category $\mathcal{O}^{\LG}_{\lambda}$, defined with the same axioms in \cite[Section 1.1]{HumO} but for our reductive Lie algebra $\LG$, is equivalent to the category of representations at a given central character for the associated semisimple Lie algebra $\mathcal{O}^{\LG'}_{\lambda}$, where $\LG' := \text{Lie}([G, G])$. 

This follows since $U\LG \simeq U\LG' \otimes U(\text{Lie}(Z(G)^{\circ}))$ by \cref{Split Reductive Group Isogeny Lemma}, and so the generalized central character requirement, as well as the axiom that the maximal torus acts by a character on any object of $\mathcal{O}^{\LG}_{\lambda}$ implies that the action of $U(\text{Lie}(Z(G)^{\circ}))$ is entirely determined by the central character. Therefore a $U(\LG)$-representation is entirely determined by its restriction to the $U\LG'$-factor, as desired. 

In particular, while some references below only refer to the BGG category $\mathcal{O}$ for representations of semisimple Lie algebras at a given central character, the results all hold mutatis mutandis for reductive Lie algebras.
\end{Remark}


\subsection{Soergel's Classification of $\mathcal{O}_{\lambda}$}\label{Soergel Classification of Olambda Subsection}
We now briefly recall Soergel's classification of $\mathcal{O}_{\lambda}$ for some field-valued point $\lambda$ of $\LTd$. To this end, following \cite[Section 3.5]{HumO} we will say that $\lambda$ is \textit{antidominant} if $\langle \lambda + \rho, \alpha^{\vee}\rangle$ is not a positive integer for all positive coroots $\alpha^{\vee}$. 

Fix some antidominant $\lambda'$ of $\LTd$ in the $W, \cdot$ orbit of $\lambda$, and let $I_{\lambda'}$ be the indecomposable injective hull of the simple indexed by $\lambda'$. Let  \[W_{[\lambda]} := \{w \in W : w\lambda - \lambda \in \rootlattice\}\] denote the \textit{integral Weyl group} associated to $\lambda$; we give equivalent descriptions of this group in \cite[Proposition 3.2]{GannonDescentToTheCoarseQuotientForPseudoreflectionAndAffineWeylGroups}.

\newcommand{\tildeV}{\tilde{\mathbb{V}}}
\newcommand{\tildeClambda}{\tilde{C}_{\lambda}}
\begin{Theorem}\label{Soergel Summary} \cite{Soe1} For $\lambda$ as above, we have the following: \begin{enumerate}
    \item (Endomorphismensatz) The canonical map $Z\LG \to \uEnd_{\D(N \backslash G/_{\lambda}B)}(I_{\lambda'}) \in \text{Vect}^{\heartsuit}$ surjects and has the same kernel as the surjective composite $Z\LG \xrightarrow{\chi} \Symt \to C_{\lambda'}$, where $C_{\lambda'} := \Symt/\Symt^{W_{[\lambda]}}_+$ is the coinvariant algebra associated to $W_{[\lambda]}$, and therefore induces an isomorphism of classical vector spaces $\uEnd_{\D(N \backslash G/_{\lambda}B)}(I_{\lambda'}) \cong C_{\lambda'}$. 
    \item (Struktursatz) Let $\underline{I} := \oplus_{\mu} I_{\mu}$ denote the direct sum of the indecomposable injective hulls of all simple objects indexed by antidominant $\mu$ of $\LTd$ in the $W, \cdot$ orbit of $\lambda$, and let $\tildeV_I: \D(N\backslash G/_{\lambda}B) \to \tildeClambda\text{-mod}$ denote the contravariant functor $\tildeV_I(-) = \uHom(-, \underline{I})$, where $\tildeClambda := \uEnd_{\mathcal{O}_{\lambda}}(\underline{I})$. Then the functor $\tildeV_I$ is fully faithful on injective objects in $\mathcal{O}_{\lambda}$.
\end{enumerate}
\end{Theorem}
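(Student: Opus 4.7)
Parts (1) and (2) are Soergel's Endomorphismensatz and Struktursatz, so the plan is to follow the strategy of \cite{Soe1}, noting that our formulation with the indecomposable injective hull $I_{\lambda'}$ is related by BGG duality to Soergel's original formulation in terms of the big projective covering the antidominant simple. Throughout, the reduction to the semisimple case noted in the remarks above lets us work with $[G,G]$ in place of $G$.

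For the Endomorphismensatz, I would proceed in three steps. First, since $I_{\lambda'}$ has generalized central character $\chi_{\lambda'}$ and the stabilizer of $\lambda'$ under the dot action is $W_{[\lambda]}$, the map $Z\LG \to \uEnd_{\mathcal{O}_\lambda}(I_{\lambda'})$ factors through $C_{\lambda'}$, giving the kernel statement \emph{once} the map $C_{\lambda'} \to \uEnd(I_{\lambda'})$ is shown to be injective. Second, to obtain this injectivity, I would use that the antidominant Verma $M(\lambda')$ (which is simple and embeds as the socle of $I_{\lambda'}$) carries a canonically free $C_{\lambda'}$-action via the central action; the induced map $C_{\lambda'} \to \uEnd(M(\lambda'))$ obtained by restriction along $M(\lambda') \hookrightarrow I_{\lambda'}$ is then already injective, and the factorization forces $C_{\lambda'} \hookrightarrow \uEnd(I_{\lambda'})$. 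Third, and most substantially, surjectivity proceeds by showing that any endomorphism of $I_{\lambda'}$ is determined by its restriction to the socle $M(\lambda')$, combined with producing enough central lifts of $C_{\lambda'}$-linear endomorphisms of the socle; the production of these lifts is Soergel's combinatorial heart, carried out by translation from a regular integral block where projective functors and the geometry of $G/B$ give explicit generators.

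For the Struktursatz, the plan is to establish fullness and faithfulness of $\tilde{\mathbb{V}}_I$ separately, working one indecomposable injective summand $I_\mu$ at a time for $\mu$ in the $W$-orbit of $\lambda$. Faithfulness on $\uHom(I_\mu, I_\nu)$ can be reduced, by translation functors and the fact that $\tilde{C}_\lambda$ acts freely on $\tilde{\mathbb{V}}_I(I_{\lambda'})$ (from part (1)), to the antidominant case; fullness then follows from a dimension count against BGG reciprocity, matching graded multiplicities of Vermas in $I_\mu$ with those of the corresponding standard modules over the Soergel endomorphism algebra $\tilde{C}_\lambda$.

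The main obstacle is the surjectivity in the Endomorphismensatz, which is precisely where Soergel's original argument becomes genuinely representation-theoretic rather than formal: one must construct central endomorphisms realizing every element of $C_{\lambda'}$, and this is the step that cannot be bypassed by pure adjunction formalism. Once (1) is in hand, (2) is largely a consequence of the $C_{\lambda'}$-module structure on morphism spaces between injectives together with the standard homological calculations in $\mathcal{O}_\lambda$.
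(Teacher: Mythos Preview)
The paper does not prove this theorem: it is stated with the citation \cite{Soe1} and no proof environment follows. The only content the paper adds is the subsequent Remark, which explains why the formulation here (in terms of the injective hull $I_{\lambda'}$ and the contravariant functor $\tilde{\mathbb{V}}_I$) is equivalent to Soergel's original formulation (in terms of the projective cover $P_{\lambda'}$ and the covariant functor $\mathbb{V}$), via the BGG duality $\mathbb{D}$ and the self-duality of the antidominant projective. Your overall plan---reduce to Soergel's original statement via duality, then invoke or rerun his argument---is therefore exactly what the paper does, and is the right approach.

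That said, your sketch of the injectivity step in the Endomorphismensatz contains a genuine error. You write that the antidominant Verma $M(\lambda')$ ``carries a canonically free $C_{\lambda'}$-action via the central action'' and that restriction along the socle inclusion $M(\lambda') \hookrightarrow I_{\lambda'}$ gives an injection $C_{\lambda'} \hookrightarrow \uEnd(M(\lambda'))$. But $M(\lambda')$ is simple (since $\lambda'$ is antidominant), so $\uEnd(M(\lambda')) \cong k$ by Schur, and the central action on $M(\lambda')$ factors through the \emph{character} $\chi_{\lambda'}$, not freely through $C_{\lambda'}$. The map $C_{\lambda'} \to \uEnd(M(\lambda'))$ is the augmentation, with kernel the maximal ideal---the opposite of injective. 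Soergel's actual argument for injectivity (equivalently, for the dimension bound $\dim \uEnd(I_{\lambda'}) \geq |W_{[\lambda]}|$) goes instead through the Verma flag of $P_{\lambda'}$: by BGG reciprocity every Verma in the block appears exactly once, so $\dim \uEnd(P_{\lambda'}) = \dim \uHom(P_{\lambda'}, P_{\lambda'}) = [P_{\lambda'} : L_{\lambda'}] = |W_{[\lambda]}| = \dim C_{\lambda'}$, and this dimension count together with the surjectivity gives the isomorphism. Your outline of the surjectivity step and of the Struktursatz is otherwise in line with Soergel's method.
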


\begin{Remark}\label{Equivalence of Summary of Soergels Theorem With What He Literally Wrote}
Soergel's Struktursatz is often phrased for a block indexed by antidominant $\lambda'$ of $\LTd$ as the claim that the functor $\mathbb{V} := \text{Hom}_{\mathcal{O}_{\lambda'}}(P_{\lambda'}, -)$, where $P_{\lambda'}$ is the indecomposable projective cover associated to $\lambda'$, is fully faithful on projective objects in the block of $\mathcal{O}$ containing $P_{\lambda'}$. However, this is equivalent to our formulation above, as we explain now. 

Recall that any indecomposable projective cover of some $L_{\lambda}$ labelled by an antidominant $\lambda$ is also its injective hull \cite[Theorem 4.11]{HumO}. Therefore, the duality functor $\mathbb{D}$ gives an isomorphism $\tildeV_I \simeq \tildeV_P \circ \mathbb{D}$, where $\tildeV_P := \uHom(\underline{P}_{\lambda}, -)$ for $\underline{P}_{\lambda}$ is the direct sum of the indecomposable projective cover associated to antidominant $k$-points of $\LTd$ whose images agree under the Harish-Chandra map. 

Furthermore, each block of $\mathcal{O}_{\lambda}$ contains a unique antidominant projective \cite[Chapter 4.9]{HumO}. We therefore have that, in the notation of the Struktursatz in \cref{Soergel Summary}, $\tildeClambda \cong \oplus_{\mu}\uEnd_{\mathcal{O}_{\lambda}}(I_{\mu}) \cong \oplus_{\mu}\uEnd_{\mathcal{O}_{\lambda}}(P_{\mu})$, and so, in particular, $\tildeV_P \cong \oplus_{\mu}\uHom(P_{\mu}, -)$.  
\end{Remark}
\renewcommand{\G}{\mathcal{G}}
Soergel's results will allow us to realize the category $\mathcal{D}(G/_{\lambda}B)^N$ as the category $R_{\lambda}\text{-mod}$ for some classical ring $R_{\lambda}$ as follows. Specifically, let $\G \in \mathcal{O}_{\lambda}$ denote the direct sum of all $|W|$-many indecomposable projectives in $\mathcal{O}_{\lambda}$. Then $\G$ is a compact generator for $\mathcal{D}(G/_{\lambda}B)^N \simeq \text{Ind}(\mathcal{D}^b(\mathcal{O}_{\lambda}))$, and in particular setting $R_{\lambda} := \uEnd_{\mathcal{D}(G/_{\lambda}B)^N}(\G) \simeq  \uEnd_{\mathcal{O}_{\lambda}}(\G)$, we see that $R_{\lambda}$ is concentrated in degree zero by the projectivity of $\G$ and so $\mathcal{D}(G/_{\lambda}B)^N \simeq R_{\lambda}\text{-mod}$. The projectivity of our generators gives that this equivalence is $t$-exact, and so we recover the equivalence of abelian categories $\mathcal{O}_{\lambda} \simeq R_{\lambda}\text{-mod}^{\heartsuit, c}$. Through the Struktursatz, one can provide an alternate description of the ring $R_{\lambda}$, which we will not use here (for an excellent recent survey of this in the case where $\lambda$ is integral, see \cite[Chapter 15]{EliasMakisumiThielWilliamsonIntroToSoergelBimodules}).

\subsection{The Functor $\Avpsi$ on $\mathcal{O}_{\lambda}$}\label{The Functor Avpsi On Olambda Subsection} Fix some $\lambda \in \LTd(k)$. 

\begin{Proposition}\label{Sym(t)-Action Factors Through Finite Subscheme}
The left action of $\IndCoh(\LTd/\characterlatticeforT)$ on the category $\D(N \backslash G/_{\lambda}B)$ factors through the action of $C_{\lambda}\text{-mod}$ via the monoidal functor 
\raggedbottom
\[\QCoh(\text{Spec}(C_{\lambda})) \xrightarrow{\Upsilon_{\text{Spec}(C_{\lambda})}} \IndCoh(\text{Spec}(C_{\lambda})) \xrightarrow{f^!} \IndCoh(\LTd/\characterlatticeforT)\]

\noindent where $C_{\lambda}$ is as in \cref{Soergel Summary} and $f$ denotes the composite $\text{Spec}(C_{\lambda}) \xhookrightarrow{} \LTd \to \LTd/\characterlatticeforT$.  
\end{Proposition}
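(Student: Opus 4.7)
My plan is to reduce the proposition to Soergel's Endomorphismensatz (\cref{Soergel Summary}(1)), which identifies $\uEnd_{\mathcal{O}_\lambda}(I_{\lambda'})$ with $C_\lambda$ via the Harish-Chandra composite.

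First, I would observe that the left action of $\IndCoh(\LTd/\characterlatticeforT)$ on $\D(N \backslash G/_{\lambda}B)$ is completely determined by the monoidal functor $\IndCoh(\LTd/\characterlatticeforT) \to \uEnd(\D(N\backslash G/_\lambda B))$, and equivalently by the induced map of commutative rings $\Symt \to Z(\D(N\backslash G/_\lambda B))$ obtained from the endomorphisms of the monoidal unit. The proposition is then equivalent to the assertion that this structure map factors through the natural surjection $\Symt \twoheadrightarrow C_\lambda = \Symt/\Symt^{W_{[\lambda]}}_+$, i.e.\ that the ideal $\Symt \cdot \Symt^{W_{[\lambda]}}_+$ acts by zero natural endomorphisms.

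Next, I would use Beilinson-Bernstein together with \cref{Twisted D-Modules are Derived Category of Heart Section} to identify $\D(N \backslash G/_{\lambda}B)$ with the ind-completion of the bounded derived category of the abelian BGG category $\mathcal{O}_\lambda$. Since the map in question lives in cohomological degree zero, the $H^0$ of the derived center reduces to the classical center $Z(\mathcal{O}_\lambda)$ of the abelian category, so it suffices to show that the resulting map $\Symt \to Z(\mathcal{O}_\lambda)$ factors through $C_\lambda$. I would establish that the restriction map $Z(\mathcal{O}_\lambda) \hookrightarrow \uEnd_{\mathcal{O}_\lambda}(I_{\lambda'})$ is injective, using that $I_{\lambda'}$ is a faithful module over the center of the block (its Verma filtration surjects onto every $M(\mu)$ and hence every simple, so any central element annihilating $I_{\lambda'}$ annihilates every object in the block). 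Combined with the Endomorphismensatz identification $\uEnd(I_{\lambda'}) \cong C_\lambda$ via $Z\LG \xrightarrow{\chi} \Symt \to C_\lambda$, this reduces the problem to verifying that the composite $\Symt \to Z(\mathcal{O}_\lambda) \hookrightarrow C_\lambda$ coincides with the natural quotient.

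The final compatibility would follow from a direct unravelling of the constructions: under Beilinson-Bernstein, the categorical left $T$-action on $\D(N\backslash G)$ corresponds to the action of $\Symt = U\LT$ on $\LG$-modules via the inclusion $\LT \subseteq \LG$, whose restriction to $Z\LG = \Symt^{W, \cdot}$ recovers precisely the central character action appearing in the Endomorphismensatz; by surjectivity of $Z\LG \to C_\lambda$, the full map $\Symt \to C_\lambda$ is then pinned down to be the natural quotient. The main obstacle is this last compatibility---matching the geometric left $T$-action with the natural $\Symt$-quotient on the algebraic side, which requires careful bookkeeping of the Mellin transform identifying $\IndCoh(\LTd/\Lambda) \simeq \D(T)$, the $\rho$-shift in the Harish-Chandra isomorphism, and the precise form of the faithfulness argument for $I_{\lambda'}$ at the derived level.
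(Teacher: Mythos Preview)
Your overall strategy—reducing to Soergel's description of the endomorphism algebra of a distinguished object—matches the paper's, but you invoke a different piece of the theorem and your final compatibility step has a genuine gap.

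The paper first uses the block decomposition to see that the $\IndCoh(\LTd/\characterlatticeforT)$-action factors through the disjoint union of formal neighborhoods at the finitely many $[\lambda']$ in the $W$-orbit, then takes $\mathcal{G}$ to be the sum of \emph{all} indecomposable projectives in $\mathcal{O}_\lambda$ (a projective generator), so that the induced $\Symt$-action on the category is entirely encoded in the ring map $\Symt \to \uEnd(\mathcal{G})$. The key input is then that $\tildeV$ is $\Symt$-linear; combined with the \emph{Struktursatz} (not the Endomorphismensatz), this gives a $\Symt$-compatible isomorphism $\uEnd(\mathcal{G}) \cong \uEnd_{C_\lambda}(\tildeV(\mathcal{G}))$, and since $\tildeV(\mathcal{G})$ is already a $C_\lambda$-module, the $\Symt$-action on it visibly factors through $C_\lambda$.

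Your route via a single antidominant injective $I_{\lambda'}$, the Endomorphismensatz, and a faithfulness argument is reasonable block-by-block, but the final paragraph does not close the argument. Knowing the restriction of $\Symt \to \uEnd(I_{\lambda'}) \cong C_\lambda$ to the subring $Z\LG = \Symt^{(W,\cdot)}$ does \emph{not} determine the full map, even though that restriction is surjective: for instance with $\Symt = k[x]$, $W = \mathbb{Z}/2$ acting by $x \mapsto -x$, and $C_\lambda = k[x]/(x^2)$, any map $x \mapsto ax$ restricts to the same (zero) map on $k[x^2]$. So surjectivity on $Z\LG$ cannot pin down the extension. Relatedly, the assertion that the categorical left $T$-action becomes ``$U\LT$ acting via $\LT \subseteq \LG$'' is not correct as stated—the strong $T$-action twists the $\LG$-module structure by $\mathrm{Ad}_t$, and its infinitesimal form is not pointwise multiplication by elements of $\LT$. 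What actually supplies the missing compatibility is precisely the $\Symt$-linearity of $\tildeV$ that the paper invokes; if you feed that into your argument (or switch to the full projective generator so that the Struktursatz applies directly), the gap closes.
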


\begin{proof}
The block decomposition $\D(G/_{\lambda}B)^N \simeq \oplus_{[\lambda'] \in S} \D(G/_{\lambda}B)^{(T, \mathcal{L}_{[\lambda']})\text{-mon}}$ discussed above implies that the action of 
\raggedbottom
$\IndCoh(\LTd/\characterlatticeforT)$ factors through the action of \[\IndCoh((\LTd/\characterlatticeforT)^{\wedge}_S) \simeq \IndCoh(\coprod_{\lambda'}(\LTd/\characterlatticeforT)^{\wedge}_{[\lambda']}) \simeq \IndCoh(\coprod_{\lambda'}\LT^{\ast, \wedge}_{\lambda'})\]

\noindent where the $\lambda'$ are arbitrarily chosen antidominant lifts of $[\lambda']$ and the second equivalence is a general fact about formal completions with actions of discrete groups, see \cite{GannonDescentToTheCoarseQuotientForPseudoreflectionAndAffineWeylGroups}. We forget this action to an action of $\Upsilon_{\LTd}: \QCoh(\LTd) \xrightarrow{\sim} \IndCoh(\LTd)$ by the pullback map induced by  $\coprod_{\lambda'}\LT^{\ast, \wedge}_{\lambda'} \to \LTd$. 

Let $\G$ be a direct sum of all indecomposable projective objects of $\mathcal{O}_{\lambda}$. Then $\mathcal{G}$ is a projective generator of $\mathcal{D}(G/_{\lambda}B)^N$, the action of $\QCoh(\LTd) \simeq \Symt\text{-mod}$ is determined by the map $\Symt \to \uEnd_{\mathcal{D}(G/_{\lambda}B)^N}(\G)$. The projectivity of $\G$ implies that this endomorphism ring is concentrated in degree zero. The functor $\tildeV$ can be canonically equipped with a $\Symt$-linear structure, and so in particular the following diagram commutes
\raggedbottom
\begin{equation*}
  \xymatrix@R+2em@C+2em{
\Symt \ar[d]^{\exists} \ar[r] &  \uEnd_{\D(N \backslash G/_{\lambda}B)}(\mathcal{G}) \ar[d]^{\sim}  \\
C_{\lambda} \ar[r]^{} & \uEnd_{C_{\lambda}\text{-mod}}(\tildeV(\mathcal{G}))
  }
 \end{equation*}
 \noindent where the right equivalence is given by the struktursatz. Therefore, the action of $\QCoh(\coprod_{\lambda'}\LT^{\ast, \wedge}_{\lambda'})$ factors through an action of $C_{\lambda}\text{-mod}$, and so the claim follows since $\Upsilon$ intertwines pullbacks for $\QCoh$ and $\IndCoh$ for laft prestacks \cite[Chapter 6, Section 3.3.5]{GaRoII}.  
\end{proof}
\renewcommand{\G}{\mathbb{G}}

\newcommand{\Oindad}{\mathcal{D}^{\lambda}(B\backslash G/N)_{\text{ad}}}
\newcommand{\Hll}{\mathcal{H}_{\lambda, -\lambda}}

\begin{Proposition}\label{Avpsi is Translation} Using the notation of \cref{Soergel Summary}, we have the following: \begin{enumerate}
    \item The induced functor $\text{Vect} \simeq \D(G/_{\lambda}B)^{N^{-}, \psi} \xrightarrow{\AvNshifted} \D(G/_{\lambda}B)^N$ sends $k \in \text{Vect}$ to an object isomorphic to $\underline{I}_{\lambda}$.
    \item We may identify the left adjoint to this composite functor (which by abuse of notation we also denote $\Avpsishifted$) with the functor $\mathbb{D}_{\text{Vect}}\tildeV \mathbb{D}$, where $\mathbb{D}$ denotes the Verdier duality functor on $\D(G/_{\lambda}B)^N$. 
\end{enumerate}

\end{Proposition}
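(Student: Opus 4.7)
The plan is to prove (1), from which (2) follows formally. For (1), set $X := \AvNshifted(k)$. My strategy is to show: (i) $X$ lies in the heart of $\D(G/_\lambda B)^N$; (ii) $X$ is injective there; (iii) $\operatorname{soc}(X) \cong \bigoplus_\mu L(\mu)$ over antidominant $\mu$ in the $W, \cdot$-orbit of $\lambda$, each with multiplicity one. Injectivity then forces $X \cong \bigoplus_\mu I_\mu = \underline{I}_\lambda$.

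For (i), $\D(G/_\lambda B)$ carries a compactly generated $t$-structure compatible with its $G$-action (\cref{If Smooth Group Acts on Scheme Induced Action on Sheaves is Compatible with t-Structure}), so $\AvNshifted$ is $t$-exact by \cref{G Category with Compactly Generated Compatible tStructure Means Avpsi Is T Exact Up to Shift}. For (ii), \cref{BBMAdjointTheorem} and \cref{BBMShiftedLeftAdjointIsExact} identify $\AvNshifted$ as the right adjoint of the $t$-exact functor $\Avpsishifted$, so it preserves injectives of the heart; since $k$ is injective in $\text{Vect} \simeq \D(G/_\lambda B)^{N^-,\psi}$, $X$ is injective. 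For (iii), adjunction yields $\Hom(L(\mu), X) \cong \uHom_{\text{Vect}}(\Avpsishifted L(\mu), k)$, and a Kostant-type calculation will show that $\Avpsishifted L(\mu) \cong k$ when $\mu$ is antidominant in $W \cdot \lambda$ and vanishes otherwise---via Beilinson-Bernstein (realizing $L(\mu)$ as an IC sheaf on a Schubert stratum) together with a stratified support analysis using \cref{SupportOfWhittakerSheaves}.

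For (2), adjunction together with (1) gives, for any $X \in \D(G/_\lambda B)^N$:
\[\tildeV_I(X) = \uHom(X, \underline{I}_\lambda) = \uHom(X, \AvNshifted(k)) \cong \mathbb{D}_{\text{Vect}}(\Avpsishifted X).\]
Since $\Avpsishifted$ sends compact objects to finite-dimensional vector spaces, on which $\mathbb{D}_{\text{Vect}}^2 = \text{id}$, combining with the identity $\tildeV_I = \tildeV \circ \mathbb{D}$ from \cref{Equivalence of Summary of Soergels Theorem With What He Literally Wrote} yields $\Avpsishifted = \mathbb{D}_{\text{Vect}} \circ \tildeV \circ \mathbb{D}$ on compact objects, and extending to all of $\D(G/_\lambda B)^N$ by continuity of $\Avpsishifted$.

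The chief obstacle is the Kostant-type computation of $\Avpsishifted L(\mu)$ for general, possibly singular or non-integral, $\lambda$. The regular integral case is the classical uniqueness-of-Whittaker-model statement in category $\mathcal{O}$; extending this to arbitrary $\lambda$ requires exploiting the block decomposition $\D(G/_\lambda B)^N \simeq \bigoplus_{[\lambda'] \in S} \D(G/_\lambda B)^{(T, \mathcal{L}_{[\lambda']})\text{-mon}}$, careful bookkeeping of the stabilizer $W_{[\lambda]}$, and reducing within each block to a support computation on the open $N^- B$-cell of $G/B$.
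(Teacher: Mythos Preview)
Your overall architecture for (1) matches the paper's exactly: show $X$ lies in the heart, show it is injective via the adjunction with the $t$-exact $\Avpsishifted$, then identify it among injectives by computing $\Hom(L(\mu),X) \cong \Avpsishifted L(\mu)$ for each simple. Part (2) is also derived the same way.

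The difference lies entirely in how $\Avpsishifted L(\mu)$ is computed. You propose to attack the simples directly via IC sheaves and a stratified support analysis, and you correctly flag this as the hard step, especially for singular or non-integral $\lambda$. The paper sidesteps this difficulty: it first computes $\Avpsishifted$ on \emph{Verma modules}. The Verma $\Delta_\lambda$ is supported on the closed cell, where $\Avpsishifted$ is visibly the identity on $\D(T)$, so $\Avpsishifted(\Delta_\lambda) \cong k$. For every other Verma $\Delta$, one uses the standard intertwining sheaves $\tilde{\Delta}$ of Lusztig--Yun \cite[Lemmas 3.4, 3.5]{LusYun}: convolution $-\star\tilde{\Delta}$ is an equivalence sending $\Delta_\lambda$ to $\Delta$, and since $\Avpsishifted$ is right $G$-equivariant it commutes with this convolution, forcing $\Avpsishifted(\Delta) \cong k$ for \emph{all} Vermas. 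Now for non-antidominant $\mu$, Verma's theorem exhibits $L(\mu)$ as a subquotient of a quotient $\Delta'/\Delta''$ with $\Delta'' \hookrightarrow \Delta'$ a nonzero Verma inclusion; $t$-exactness of $\Avpsishifted$ sends this inclusion to an isomorphism $k \xrightarrow{\sim} k$, so the quotient and hence $\Avpsishifted L(\mu)$ vanishes. Antidominant simples are already Vermas. This argument is uniform in $\lambda$ and requires no block-by-block analysis or IC-sheaf support computation, which is what your approach would need to supply.
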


\begin{proof}
The composite is $t$-exact by \cref{Ginzburgt-Exactness}, and so if we let $A$ denote the object $\AvNshifted(k)$, $A \in \D(G/_{\lambda}B)^{N, \heartsuit}$. Because the adjoint to $\AvNshifted$ is also $t$-exact by \cref{BBMShiftedLeftAdjointIsExact}, $A$ is an injective object. Every injective object of $\D(G/_{\lambda}B)^{N, \heartsuit}$ is a direct sum of indecomposable injective objects and therefore it suffices to compute $\uHom(L, A) \simeq \uHom(\Avpsishifted(L), k) \cong \Avpsishifted(L)$ for each simple object $L$. 

Direct computation shows that $\Avpsishifted: \D(G/N)^{N} \to \D(N^-_{\psi} \backslash G/N) \simeq \D(T)$ is the identity upon restriction to the torus. Therefore, we see that in our case, $\Avpsishifted$ sends the Verma $\Delta_{\lambda}$ to the one-dimensional vector space of $\text{Vect} \simeq \D(G/_{\psi} N^-)^{B_{[\lambda]}}$. For any other Verma module $\Delta$, there exists some bi-equivariant sheaf $\tilde{\Delta}$ for which $- \star \tilde{\Delta}$ is an equivalence and the sheaf $\Delta_{\lambda} \star \tilde{\Delta}$ is concentrated in degree zero and isomorphic to $\Delta$ \cite[Lemma 3.4, Lemma 3.5]{LusYun}. The residual equivariance from the fact that $\Avpsishifted: \D(N^-_{\psi}\backslash G) \to \D(N\backslash G)$ is $G$-equivariant therefore implies that the images of \textit{all} Verma modules are isomorphic to the unique one-dimensional vector space concentrated in degree zero. Furthermore, we have that $\Avpsishifted$ preserves projective objects since it is $t$-exact by \cref{BBMShiftedLeftAdjointIsExact} and all objects in $\text{Vect}^{\heartsuit}$ are projective. 

Now, assume we are given a simple $L(\mu) \in \mathcal{O}$ for which $\mu$ is not antidominant. Then $L(\mu)$ can be written as a subobject of a quotient of a Verma module by another Verma module by Verma's theorem \cite[Theorem 4.6]{HumO}. Therefore, by $t$-exactness of $\Avpsishifted$ and the fact that this injective map of Verma modules is sent to an isomorphism, $\Avpsi(L(\mu))$ vanishes.  Furthermore, the simples $L(\mu)$ corresponding to antidominant $\mu$ are isomorphic to their corresponding Verma modules, and therefore, by the above, $\Avpsi$ sends them to the one-dimensional vector space. Thus we see that $A$ is an injective object whose vector space of maps from antidominant simples is one-dimensional, and whose vector space of maps from any other simple is zero, implying claim (1). 

The above argument also implies that $\mathbb{D}_{\text{Vect}}\Avpsishifted \simeq \uHom(-, \underline{I}_{\lambda})$. Projectives associated to such antidominant objects of $\LTd(k)$ are self-dual \cite[Chapter 7.16]{HumO}, and the Beilinson-Bernstein localization intertwines the duality functor on $\mathcal{O}$ with Verdier duality. Therefore, we see that $\mathbb{D}_{\text{Vect}}\Avpsishifted \mathbb{D} \simeq \uHom(\underline{P}_{\lambda}, -)$. Since Verdier duality is an equivalence, we obtain (2). 
\end{proof}

\subsection{Nondegenerate Category $\mathcal{O}$}
We now proceed to explicitly classify the category $(\D(G/_{\lambda}B)_{\text{nondeg}})^N$, which we informally think of as the nondegenerate BGG category $\mathcal{O}_{\lambda}$ or, more accurately, the derived category of its ind-completion. We provide an alternate description of this category in \cref{Alternate Description of Nondegenerate Olambda Subsubsection} and use this to provide an explicit description of the nondegenerate category $\mathcal{O}_{\lambda}$. 

\newcommand{\DGNBlambdacecko}{\mathcal{D}(B_{\lambda}\backslash G/N)_{\text{cecko}}}
\newcommand{\DGNBlambdanoncecko}{\mathcal{D}(B_{\lambda}\backslash G/N)_{\text{non-cecko}}}
\subsubsection{Equivalence of Alternate Definition of Nondegenerate Category $\mathcal{O}_{\lambda}$}\label{Alternate Description of Nondegenerate Olambda Subsubsection}
In this section, we prove the following technical result which may be skipped at first pass. Let $\DGNBlambdacecko$ denote the full subcategory of $\D(B_{\lambda}\backslash G/N)$ generated under colimits by eventually coconnective objects in the kernel of the right Whittaker averaging functor $\Avpsi$. This category admits a fully faithful functor $\DGNBlambdacecko \xhookrightarrow{I_{*, \text{cecko}}} \D(B_{\lambda}\backslash G/N)_{\text{deg}}$. For any object $\mathcal{F} \in \D(B_{\lambda}\backslash G/N)$ which is a colimit of eventually coconnective objects in the kernel of $\text{Av}_!^{\psi}: \D(B_{\lambda}\backslash G/N) \to \D(B\backslash_{\lambda}G/_{\psi}N^-)$ has the property that $\text{oblv}^{B_{\lambda}}(\F)$ is a colimit of eventually coconnective objects in the kernel of $\text{Av}_!^{\psi}: \D(G/N) \to \D(G/_{\psi}N^-)$ since this forgetful functor is $t$-exact and continuous. A priori, this map need not be an equivalence. However, we have the following result: 

\begin{Proposition}\label{All Equivariant Objects Which Forget to Ceckos are Ceckos in the equivariant category}
The inclusion functor $\DGNBlambdacecko \xhookrightarrow{} (\D(G/N)_{\text{deg}})^{B_{\lambda}}$ is an equivalence.
\end{Proposition}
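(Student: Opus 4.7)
The inclusion $I_{*,\text{cecko}}$ is fully faithful by hypothesis, so only essential surjectivity requires proof. The strategy is to show that $(\D(G/N)_{\text{deg}})^{B_\lambda}$ is generated under colimits by objects of the form $\text{Av}_*^{B_\lambda}(C)$ with $C$ compact in $\D(G/N)_{\text{deg}}$, and to verify that each such $\text{Av}_*^{B_\lambda}(C)$ already belongs to $\DGNBlambdacecko$; since $\DGNBlambdacecko$ is closed under colimits by construction, this would yield the asserted equivalence.

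For the generation claim, $\D(G/N)_{\text{deg}}$ is compactly generated by \cref{Degenerate Inclusion Admits Continuous Right Adjoint}, and the forgetful functor $\text{oblv}^{B_\lambda}: (\D(G/N)_{\text{deg}})^{B_\lambda} \to \D(G/N)_{\text{deg}}$ is conservative. By the $(\text{oblv}^{B_\lambda}, \text{Av}_*^{B_\lambda})$-adjunction,
\[
\Hom_{(\D(G/N)_{\text{deg}})^{B_\lambda}}(\F, \text{Av}_*^{B_\lambda}(C)) \simeq \Hom_{\D(G/N)_{\text{deg}}}(\text{oblv}^{B_\lambda}(\F), C),
\]
the right orthogonal of the set $\{\text{Av}_*^{B_\lambda}(C) : C \text{ compact}\}$ in $(\D(G/N)_{\text{deg}})^{B_\lambda}$ vanishes, which in a presentable stable setting gives the required generation under colimits.

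For the second step, any compact $C \in \D(G/N)_{\text{deg}}$ is compact in $\D(G/N)$ by \cref{Subcategory Generated By Compact Objects Implies Inclusion Preserves Compact Objects} and hence eventually coconnective; by construction $C$ also lies in the kernel of $\Avpsi: \D(G/N) \to \D(G/_\psi N^-)$. The averaging functor $\text{Av}_*^{B_\lambda}$ has bounded cohomological amplitude by \cref{CohomologicalAmplitudesForGActingOnC}, so $\text{Av}_*^{B_\lambda}(C)$ remains eventually coconnective. Moreover, $\Avpsi$ arises from the right $G$-action on $G/N$ while $\text{Av}_*^{B_\lambda}$ arises from the left $B$-action twisted by $\lambda$, and these two actions commute on $\D(G/N)$; combined with the canonical left $G$-equivariance of $\Avpsi$ supplied by \cref{Adjoint Functor is Automatically G Equivariant}, this yields a natural equivalence $\Avpsi \circ \text{Av}_*^{B_\lambda} \simeq \text{Av}_*^{B_\lambda} \circ \Avpsi$. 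Applying this to $C$ gives $\Avpsi(\text{Av}_*^{B_\lambda}(C)) \simeq \text{Av}_*^{B_\lambda}(0) \simeq 0$, placing $\text{Av}_*^{B_\lambda}(C)$ in $\DGNBlambdacecko$.

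The main obstacle will be making the commutation of averagings in the second step rigorous as a natural equivalence of $\DGCatContk$-functors rather than just a pointwise identification; this should follow formally from viewing $\Avpsi$ as a morphism of $B_\lambda$-categories via \cref{Adjoint Functor is Automatically G Equivariant}, but one must verify that the right-Whittaker averaging genuinely respects the left $B_\lambda$-structures on both source and target before invoking compatibility of adjoints.
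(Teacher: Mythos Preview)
Your overall strategy is sound and, once a gap in the generation step is repaired, gives a valid proof that is more direct than the paper's. The paper instead passes to the right orthogonals $\DGNBlambdanoncecko$ and $(\D(G/N)_{\text{nondeg}})^{B_\lambda}$, matches these on eventually coconnective objects via the fact that $\text{oblv}^{B_\lambda}$ reflects the relevant property, and then invokes explicit eventually coconnective compact generators (the images of the antidominant simples) on both sides. Your route avoids both the detour through the nondegenerate side and the representation-theoretic identification of generators.

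The gap is in your justification of step 1. The displayed adjunction
\[
\Hom\bigl(\F, \text{Av}_*^{B_\lambda}(C)\bigr) \simeq \Hom\bigl(\text{oblv}^{B_\lambda}(\F), C\bigr)
\]
computes maps \emph{into} $\text{Av}_*^{B_\lambda}(C)$, so it bears on the \emph{left} orthogonal of $\{\text{Av}_*^{B_\lambda}(C)\}$, not the right orthogonal you claim. Even read that way, the vanishing of $\Hom(\text{oblv}^{B_\lambda}(\F), C)$ for all compact $C$ does not force $\text{oblv}^{B_\lambda}(\F) = 0$: compact objects detect via maps \emph{out of} them, not into them. So this adjunction does not establish generation under colimits.

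The conclusion of step 1 is nonetheless correct, for a different reason. By \cref{Twisted Inv=Twisted Coinv} (together with the statement of \cref{Inv=Coinv}), the equivalence $\C_{B_\lambda} \xrightarrow{\sim} \C^{B_\lambda}$ identifies $\text{Av}_*^{B_\lambda}: \C \to \C^{B_\lambda}$ with the coinvariants quotient $\C \to \C_{B_\lambda}$; being the structure map into a colimit in $\text{DGCat}_{\text{cont}}$, its essential image generates under colimits. Since $\text{Av}_*^{B_\lambda}$ is continuous and $\D(G/N)_{\text{deg}}$ is compactly generated (\cref{Degenerate Inclusion Admits Continuous Right Adjoint}), it follows that $(\D(G/N)_{\text{deg}})^{B_\lambda}$ is generated under colimits by $\{\text{Av}_*^{B_\lambda}(C): C \text{ compact}\}$. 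With this fix your step 2 goes through: each $\text{Av}_*^{B_\lambda}(C)$ is eventually coconnective by \cref{CohomologicalAmplitudesForGActingOnC}(1), and the commutation of $\text{Av}_*^{B_\lambda}$ with the right Whittaker averaging (which is a morphism of left $G$-categories---note $G/N$ carries only a residual right $T$-action, not a right $G$-action as you wrote, but your intended meaning is clear) follows from the same functoriality of coinvariants.
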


We will now reduce the proof of \cref{All Equivariant Objects Which Forget to Ceckos are Ceckos in the equivariant category} to \cref{Nonecko Category Olambda is Nondeg Olambda} after setting some notation. By right-completeness of the category $\D(B_{\lambda}\backslash G/N)_{\text{cecko}}$ with respect to its $t$-structure and \cref{Colim in AB5 is Colim of Image}, we can identify $\D(B_{\lambda}\backslash G/N)_{\text{cecko}}$ as the full subcategory generated by \textit{compact} objects in the kernel of the right Whittaker averaging functor, using the fact that $\Avpsi$ is $t$-exact up to cohomological shift i.e. \cref{BBMShiftedLeftAdjointIsExact}. Therefore, \cref{Subcategory Generated By Compact Objects Implies Inclusion Preserves Compact Objects} shows that the inclusion functor also preserves compacts, and thus the inclusion functor $I_{*, \text{cecko}}$ falls into the setup of \cref{Intro to t-Structures on Quotient Categories}. Let $\DGNBlambdanoncecko$ denote the resulting quotient category. Since we can identify $\DGNBlambdanoncecko$ as the kernel of the right adjoint to the inclusion functor $I^!_{\text{cecko}}$, the proof of \cref{All Equivariant Objects Which Forget to Ceckos are Ceckos in the equivariant category} immediately reduces to the following result:

\begin{Proposition}\label{Nonecko Category Olambda is Nondeg Olambda}
The canonical quotient functor $\DGNBlambdanoncecko \to (\D(G/N)_{\text{nondeg}})^{B_{\lambda}}$ is an equivalence. 
\end{Proposition}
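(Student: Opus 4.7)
The plan is to reduce the claim to showing an equality of subcategories of $\D(B_\lambda\backslash G/N)$, namely $\DGNBlambdacecko = (\D(G/N)_{\text{deg}})^{B_\lambda}$. Once we have this equality, both quotient categories in the statement are realized as Verdier quotients of $\D(B_\lambda\backslash G/N)$ by the same subcategory, and the canonical comparison functor induced by the inclusion $\DGNBlambdacecko \subseteq (\D(G/N)_{\text{deg}})^{B_\lambda}$ (which is clear from the previous paragraph of the paper) is immediately an equivalence.

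For the easy inclusion $\DGNBlambdacecko \subseteq (\D(G/N)_{\text{deg}})^{B_\lambda}$, I will check the containment on the generating set of $\DGNBlambdacecko$. Given $\mathcal{F} \in \D(B_\lambda \backslash G/N)^+$ with $\Avpsi(\mathcal{F}) = 0$, the forgetful functor $\text{oblv}^{B_\lambda}: \D(B_\lambda \backslash G/N) \to \D(G/N)$ is $t$-exact (the action being compatible with the $t$-structure) and commutes with $\Avpsi$, since $\Avpsi$ is the \emph{right} Whittaker averaging and hence is left-$G$-equivariant, commuting automatically with the left $B_\lambda$-equivariance structure. Therefore $\text{oblv}^{B_\lambda}(\mathcal{F})$ is an eventually coconnective object of $\D(G/N)$ in the kernel of $\Avpsi$, hence lies in $\D(G/N)_{\text{deg}}$, so $\mathcal{F}$ lies in $(\D(G/N)_{\text{deg}})^{B_\lambda}$. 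Closure of the latter under colimits then gives the inclusion.

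For the reverse inclusion, let $\mathcal{F} \in (\D(G/N)_{\text{deg}})^{B_\lambda}$. The $t$-structure on $\D(B_\lambda\backslash G/N)$ is right-complete, since it is inherited via the conservative $t$-exact functor $\text{oblv}^{B_\lambda}$ from the right-complete $t$-structure on $\D(G/N)$, using \cref{Conservative t Exact Functor to right-complete Implies right-complete and Dual Statement}. Hence $\mathcal{F} \simeq \mathrm{colim}_n \tau^{\leq n}\mathcal{F}$, and it suffices to prove that each truncation $\tau^{\leq n}\mathcal{F}$ lies in the generating set for $\DGNBlambdacecko$. By construction $\tau^{\leq n}\mathcal{F}$ is eventually coconnective. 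Applying the $t$-exact $\text{oblv}^{B_\lambda}$, we have $\text{oblv}^{B_\lambda}(\tau^{\leq n}\mathcal{F}) \simeq \tau^{\leq n}\text{oblv}^{B_\lambda}(\mathcal{F})$, which lies in $\D(G/N)_{\text{deg}}$ because the latter is closed under truncation by \cref{Inclusion of Degenerate Category of D(G/N) Is Closed Under Truncation Functors}. Since $\Avpsi$ is continuous and vanishes on the generators of $\D(G/N)_{\text{deg}}$, it vanishes on all of $\D(G/N)_{\text{deg}}$. The commutation $\text{oblv}^{B_\lambda} \circ \Avpsi \simeq \Avpsi \circ \text{oblv}^{B_\lambda}$ combined with the conservativity of $\text{oblv}^{B_\lambda}$ then forces $\Avpsi(\tau^{\leq n}\mathcal{F}) = 0$. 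Thus $\tau^{\leq n}\mathcal{F}$ lies in the generating set of $\DGNBlambdacecko$, and $\mathcal{F}$ lies in $\DGNBlambdacecko$ by closure under colimits.

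The main point to get right is the commutation of the forgetful functor $\text{oblv}^{B_\lambda}$ with $\Avpsi$; but this is essentially formal once one notes that the two structures (left $B_\lambda$-equivariance and the right Whittaker averaging functor) act on disjoint sides of $\D(B_\lambda\backslash G/N)$, so the entire argument beyond this point is a matter of combining right-completeness of the $t$-structure with \cref{Inclusion of Degenerate Category of D(G/N) Is Closed Under Truncation Functors} and the continuity of $\Avpsi$. No novel input regarding the twisted equivariance for $B_\lambda$ is needed.
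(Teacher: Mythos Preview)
There is a genuine gap in the reverse inclusion. Your claim ``By construction $\tau^{\leq n}\mathcal{F}$ is eventually coconnective'' is false: the truncation $\tau^{\leq n}$ produces objects bounded \emph{above} (in $\C^{\leq n}$), whereas in the paper's convention ``eventually coconnective'' means bounded \emph{below} (i.e.\ in $\C^+ = \bigcup_m \C^{\geq -m}$, see \cref{Categorical Conventions Subsubsection}). An arbitrary object of $(\D(G/N)_{\text{deg}})^{B_\lambda}$ can have cohomology in unboundedly negative degrees---for instance a direct sum of negative shifts of a fixed generator---and for such $\mathcal{F}$ no truncation $\tau^{\leq n}\mathcal{F}$ lies in $\D(B_\lambda\backslash G/N)^+$. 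So your argument does not exhibit $\mathcal{F}$ as a colimit of generators of $\DGNBlambdacecko$, and the equality $\DGNBlambdacecko = (\D(G/N)_{\text{deg}})^{B_\lambda}$ is not established. (Everything you say about $\Avpsi$ vanishing on $\tau^{\leq n}\mathcal{F}$ is correct; the issue is purely the boundedness direction.)

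The paper avoids this difficulty by not attempting to compare the degenerate subcategories directly. Instead it argues on the nondegenerate side: the forgetful functor reflects the property of being an eventually coconnective object in the kernel of $\Avpsi$, so one gets $\DGNBlambdacecko^+ \simeq ((\D(G/N)_{\text{deg}})^{B_\lambda})^+$ and hence, via \cref{Quotient Category Identification on Eventually Coconnective Subcategories}, that the quotient functor is an equivalence on eventually coconnective subcategories. The remaining work is to produce an eventually coconnective compact generator for each side---namely the image under $J^!$ of the sum of the antidominant simples---so that a functor which is fully faithful on $(-)^+$ and sends a compact generator to a compact generator is automatically an equivalence. This compact-generator step is exactly what bridges the gap your truncation argument cannot.
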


\begin{proof}
We first note that the forgetful functor $\D(B_{\lambda}\backslash G/N) \xrightarrow{\text{oblv}} \D(G/N)$ reflects the property of being an eventually coconnective object in the kernel of right Whittaker averaging $\text{Av}_!^{\psi}$. Therefore, we obtain a canonical equivalence $\DGNBlambdacecko^+ \xhookrightarrow{\sim} (\D(G/N)_{\text{deg}})^{B_{\lambda}, +}$. By \cref{Quotient Category Identification on Eventually Coconnective Subcategories}, we see that the quotient functor of \cref{Nonecko Category Olambda is Nondeg Olambda} is an equivalence on the eventually coconnective subcategories. 

We claim that both categories of \cref{Nonecko Category Olambda is Nondeg Olambda} have a compact generator which is eventually coconnective. For the category $\DGNBlambdanoncecko$, this is a direct consequence of the fact that $J^!$ kills any non-antidominant simple because non-antidominant simples are eventually coconnective objects of $\D(B_{\lambda}\backslash G/N)$ in the kernel of the right Whittaker averaging functor, and so $J^!(\oplus_a L_a)$ is a compact generator where $a$ varies over all antidominant weights in the $(W, \cdot)$-orbit of $\lambda$. An identical argument says the image under the quotient functor $J^!_{\text{non-cecko}}: \D(B_{\lambda}\backslash G/N) \to \DGNBlambdanoncecko$ of the antidominant simples form a set of compact generators for $\DGNBlambdanoncecko$. However, the image of any object in the heart is in particular eventually coconnective, we see that the quotient functor of \cref{Nonecko Category Olambda is Nondeg Olambda} is fully faithful and maps a compact generator to a compact generator, and thus is an equivalence.
\end{proof}

\subsubsection{Nondegeneracy at a Fixed Character - The Eventually Coconnective Case}
We now study the eventually coconnective objects of the nondegenerate and degenerate category $\mathcal{O}_{\lambda}$:

\begin{Lemma}\label{Equivalent Notions of Degeneracy for Ind Derived Category O}
The following not necessarily cocomplete DG categories are equivalent:
\begin{enumerate}
    \item The subcategory of $\D(G/N)^{B_{\lambda}, +}$ given by those objects $\F$ for which $\text{oblv}^{B_{\lambda}}(\F)$, which lies in $\D(G/N)^+$ since the $t$-structure on $\D(G/N)^{B_{\lambda}}$ is such that the forgetful functor is $t$-exact, lies in  $\D(G/N)_{\text{deg}}^+$.   
    \item The subcategory of $\D(G/N)^{B_{\lambda}, +}$ given by those objects $\F$ for which $\Avpsi(\F) \simeq 0$ in $\D(G/_{\psi}N^-)^{B_{\lambda}, +}$.
    \item The full subcategory of $\D(G/N)^{B_{\lambda}, +}$ generated under filtered colimits and extensions\footnote{In other words, if any two of the three elements in a cofiber sequence lie in our category, so too does the third.} by the simples $L_{\mu}$ such that $\mu$ lies in the ($W, \cdot$)-orbit of $\lambda$ and $\mu$ is not antidominant. 
\end{enumerate}
\end{Lemma}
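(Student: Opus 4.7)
The plan is to establish $(1) \Leftrightarrow (2)$ by a general categorical argument, and then to prove $(3) \Rightarrow (2)$ directly and $(2) \Rightarrow (3)$ by reducing to an analysis in the abelian heart $\D(G/N)^{B_\lambda, \heartsuit}$, which Beilinson-Bernstein localization and \cref{Twisted D-Modules are Derived Category of Heart Section} identify with the ind-completion of $\mathcal{O}_\lambda^{\heartsuit}$.

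I begin with $(1) \Leftrightarrow (2)$. The forgetful functor $\text{oblv}^{B_\lambda}$ is $t$-exact and conservative, and $\Avpsi$ commutes with it by $G$-equivariance, so the condition $\Avpsi(\F) \simeq 0$ in the $B_\lambda$-equivariant category is equivalent to $\Avpsi(\text{oblv}^{B_\lambda}(\F)) \simeq 0$ in $\D(G/_{-\psi}N^-)^+$. Combining this with the observation that an eventually coconnective object of $\D(G/N)$ lies in $\D(G/N)_{\text{deg}}^+$ if and only if it is annihilated by $\Avpsi$ (the \lq if\rq{} is by definition of $\D(G/N)_{\text{deg}}$, the \lq only if\rq{} uses the continuity of $\Avpsi$ and vanishing on the generators of $\D(G/N)_{\text{deg}}$) yields the desired equivalence. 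For $(3) \Rightarrow (2)$, \cref{Avpsi is Translation} shows that $\Avpsi$ annihilates every non-antidominant simple $L_\mu$, and then the continuity and $t$-exactness (up to shift) of $\Avpsi$ established in \cref{BBMShiftedLeftAdjointIsExact} imply $\Avpsi$ vanishes on the full subcategory generated by non-antidominant simples under filtered colimits and extensions.

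The substantive direction is $(2) \Rightarrow (3)$. Since the $t$-structure on $\D(G/N)^{B_\lambda}$ is right-complete (which one can deduce from the right-completeness of the $t$-structure on $\D(G/N)$ via \cref{Conservative t Exact Functor to right-complete Implies right-complete and Dual Statement}), the $t$-exactness of $\Avpsi$ ensures that any eventually coconnective $\F$ with $\Avpsi(\F) \simeq 0$ is an iterated extension of its shifted cohomology objects $H^i(\F)[-i]$, each of which is annihilated by $\Avpsi$. Thus I reduce to the case $\F \in \D(G/N)^{B_\lambda, \heartsuit} \simeq \text{Ind}(\mathcal{O}_\lambda^{\heartsuit})$. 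For compact $\F \in \mathcal{O}_\lambda^{\heartsuit}$, which has finite length, the exactness of $\Avpsi$ on the heart together with the computation $\Avpsi(L_\mu) \simeq k$ for antidominant $\mu$ and $0$ otherwise (from \cref{Avpsi is Translation}) gives that $\dim_k \Avpsi(\F)$ counts antidominant composition factors with multiplicity; so $\Avpsi(\F) = 0$ forces $\F$ to have only non-antidominant composition factors, exhibiting $\F$ as a finite iterated extension of non-antidominant simples.

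For a general $\F \in \text{Ind}(\mathcal{O}_\lambda^{\heartsuit})$, I write $\F \simeq \text{colim}_i \F_i$ with $\F_i$ compact, and then apply \cref{Colim in AB5 is Colim of Image} to replace each $\F_i$ by its image $\tilde{\F}_i$ in $\F$, which is again compact (using that $\mathcal{O}_\lambda^{\heartsuit}$ is closed under subobjects, since its objects have finite length). The transition maps in this filtration are injective, so applying the exact functor $\Avpsi$ yields a filtered diagram of injections of finite-dimensional vector spaces with colimit zero, forcing each $\Avpsi(\tilde{\F}_i)$ to vanish. The compact case then shows each $\tilde{\F}_i$ lies in (3), and closure of (3) under filtered colimits completes the argument. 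The main technical obstacle is this passage from compact objects to general ind-objects, where vanishing of $\Avpsi$ is not preserved termwise under an arbitrary filtered presentation; this is resolved precisely by the image-filtration technique combined with the exactness properties of $\Avpsi$.
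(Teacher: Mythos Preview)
Your proposal is correct and follows essentially the same approach as the paper's proof: both establish $(1)\Leftrightarrow(2)$ via the $t$-exactness and conservativity of $\text{oblv}^{B_\lambda}$ and its commutation with $\Avpsi$, obtain $(3)\Rightarrow(2)$ from \cref{Avpsi is Translation} and continuity, and prove $(2)\Rightarrow(3)$ by reducing to the heart via right-completeness and then writing a heart object as a union of compact subobjects (via \cref{Colim in AB5 is Colim of Image}) to force all composition factors to be non-antidominant. Your version of the last step is slightly more explicit---phrasing the argument as a filtered diagram of injections of finite-dimensional vector spaces with vanishing colimit---but this is the same content as the paper's contradiction argument.
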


\begin{proof}
The forgetful functor $\text{oblv}^{B_{\lambda}}$ is $t$-exact and conservative by assumption, and furthermore we have $\text{oblv}^{B_{\lambda}}\Avpsi \simeq \Avpsi\text{oblv}^{B_{\lambda}}$ since the functor $\Avpsi$ averages with respect to the right action. Therefore, the functor $\text{oblv}^{B_{\lambda}}$ reflects the property of being an eventually coconnective object of the kernel of the respective right Whittaker averaging $\Avpsi$. Therefore, by the definition of $\D(G/N)_{\text{deg}}$ (see \cref{Definition of D(G/N)deg}), we see the equivalences of the not necessarily cocomplete DG categories of (1) and (2).

It remains to show the equivalence of the not necessarily cocomplete DG categories of (2) and (3). By the identification $\Avpsi: \D(G/N)^{B_{\lambda}} \to \D(G/_{\psi}N^-)^{B_{\lambda}}$ with dual of the functor $\tildeV$ (\cref{Avpsi is Translation}), all of the simples $L_{\mu}$ as in (3) lie in the kernel of this functor. Thus, by exactness and continuity of $\Avpsi$, we see that the category of (3) lies in the subcategory of (2).

Conversely, assume we are given some eventually coconnective object $\F \in \D(G/N)^{B_{\lambda}, +}$. This category is the derived category of its heart by \cref{N Invariant Twisted D Modules on Flag Variety Is Derived Category of Heart Proposition} and the $t$-structure on $\D(G/N)^{B_{\lambda}}$ is right-complete by \cref{Conservative t Exact Functor to right-complete Implies right-complete and Dual Statement}, since it admits a $t$-exact, conservative functor $\text{oblv}^{B_{\lambda}}$ to a category $\D(G/N)$ with a right-complete $t$-structure. In particular, we can write $\F$ as a filtered colimit of objects with finitely many nonzero cohomology groups. In turn, this implies that we can write $\F$ as a filtered colimit of objects obtained by successive extensions (i.e. cofiber sequences) of objects in the heart. Because $\Avpsi$ also commutes with cohomological shifts, it suffices to show that if $M \in \D(G/N)^{B_{\lambda}, \heartsuit}$ lies in the kernel of $\Avpsi$, then $M$ is a successive extension of objects $L_{\mu}$ as in (3). 

The category $\D(G/N)^{B_{\lambda}, \heartsuit}$ can be identified with the ind-completion of the BGG category $\mathcal{O}_{\lambda}$ by \cref{Twisted D-Modules are Derived Category of Heart Section}; see \cref{Ind Completion of Abelian Category in Higher Categorical Setting Remains Abelian}. In particular, by \cref{Colim in AB5 is Colim of Image}, we may write $M$ as a union of its compact subobjects. Since $\mathcal{O}_{\lambda}$ is closed under direct summands, this implies that we may write $M$ as a union of objects of $\mathcal{O}_{\lambda}$. In particular, either $M$ lies in the abelian category generated by extensions of the simples as in (3) or there exists some compact subobject which has some $L_{\nu}$ as a composition factor, where $\nu$ is antidominant and in the $(W, \cdot)$-orbit of $\lambda$. In the latter case, though, we see by \cref{Avpsi is Translation} that $\Avpsi(L_{\nu})$ does not vanish. Therefore this latter case does not occur, so $M$ can be realized as a filtered colimit of successive extensions of simple objects labeled by non-antidominant weights, as desired. 
\end{proof}

\begin{Lemma}\label{Enhanced V is Monadic on Eventually Coconnective Derived Ind Category O}
The induced functor $\uHom(\underline{P}_{\lambda}, -): \D(G/_{\lambda}B)^{N, +}_{\text{nondeg}} \to \text{Vect}^+$ is monadic, and induces an equivalence 
\raggedbottom
\[\D(G/_{\lambda}B)^{N, +}_{\text{nondeg}} \xrightarrow{\tildeV} \oplus_w C_{w\lambda}\text{-mod}^+\]

\noindent where $w$ varies over those $w \in W$ such that $w\lambda$ is antidominant. 
\end{Lemma}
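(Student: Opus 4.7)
The plan is to apply Barr-Beck-Lurie monadicity to the functor $\uHom(\underline{P}_\lambda,-): \D(G/_\lambda B)^{N,+}_{\text{nondeg}} \to \text{Vect}^+$, after identifying it with a shift of the Whittaker averaging functor.

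First I would observe that $\underline{P}_\lambda$ canonically represents an object of the nondegenerate subcategory. Indeed, antidominant projectives are self-dual under Verdier duality \cite[Chapter 7.16]{HumO}, so $\underline{P}_\lambda \simeq \underline{I}_\lambda$, and by \cref{Avpsi is Translation}(1) we have $\underline{I}_\lambda \simeq \AvN[\dim N](k)$. Since $\AvN$ factors canonically through the nondegenerate subcategory by the analogue of \cref{Easier Classification of Kernel Properties} applied to $\D(G/_\lambda B)^N$, the adjunction $(\Avpsi[-\dim N], \AvN[\dim N])$ restricts to an adjunction between $\text{Vect}$ and $\D^N_{\text{nondeg}}$. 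This gives
\[\uHom_{\D^N_{\text{nondeg}}}(\underline{P}_\lambda, \F) \simeq \uHom_{\text{Vect}}(k, \Avpsi[-\dim N](\F)) \simeq \Avpsi[-\dim N](\F),\]
reducing the claim to monadicity of $\Avpsi[-\dim N]$ restricted to $\D^{N,+}_{\text{nondeg}}$.

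Next I would verify the hypotheses of Barr-Beck. Continuity is immediate since $\Avpsi$ is a left adjoint, and by \cref{BBMShiftedLeftAdjointIsExact} the shift $\Avpsi[-\dim N]$ is $t$-exact, so it preserves $(-)^+$ and has bounded cohomological amplitude. For conservativity on $\D^{N,+}_{\text{nondeg}}$: suppose $\F \in \D^{N,+}_{\text{nondeg}}$ satisfies $\Avpsi(\F) = 0$. Since $J^!$ is $t$-exact, its right adjoint $J_*$ is left $t$-exact, so $J_*(\F) \in \D(G/_\lambda B)^{N,+}$; this object lies in the eventually coconnective kernel of $\Avpsi$, which by \cref{Equivalent Notions of Degeneracy for Ind Derived Category O} coincides with $\D_{\text{deg}}^+$. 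Hence $\F \simeq J^!J_*(\F) = 0$.

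With these in hand, Barr-Beck-Lurie produces a monadic equivalence $\D(G/_\lambda B)^{N,+}_{\text{nondeg}} \simeq R\text{-mod}^+$, where the monad is computed by the endomorphism algebra $R := \uEnd_{\D^N_{\text{nondeg}}}(\underline{P}_\lambda) \simeq \Avpsi[-\dim N](\underline{P}_\lambda)$. To finish, I would compute $R$: because $\underline{P}_\lambda$ is projective in $\mathcal{O}_\lambda$ its self-Ext vanishes in positive degrees, so $R$ is concentrated in cohomological degree zero and coincides with its classical endomorphism ring. The block decomposition $\underline{P}_\lambda \simeq \oplus_w P_{w\lambda}$ (over $w$ with $w\lambda$ antidominant), combined with Soergel's Endomorphismensatz (\cref{Soergel Summary}(1)) applied to each summand, then identifies $R \simeq \oplus_w C_{w\lambda}$, yielding the claimed equivalence. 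The main obstacle will be applying Barr-Beck-Lurie cleanly in the eventually coconnective setting; the natural workaround is to first establish the equivalence between the ambient cocomplete DG categories, using that $J^!$ preserves compact objects (since $J_*$ is continuous) so that $\underline{P}_\lambda$ remains a compact generator of $\D^N_{\text{nondeg}}$, and then restrict to $(-)^+$ using $t$-exactness of the equivalence together with right-completeness of both sides (\cref{Right Completeness of Universal Nondegenerate G-Categories}).
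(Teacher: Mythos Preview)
Your overall strategy (Barr--Beck plus the Endomorphismensatz) matches the paper's, but your route to conservativity is genuinely different and cleaner. The paper works directly with $\uHom(\underline{P}_{\lambda},-)$: given nonzero $\F$ in $\D(G/_{\lambda}B)^{N,+}_{\text{nondeg}}$, it passes to $H^0(J_*\F)$ in the heart, writes this as an increasing union of objects of $\mathcal{O}_{\lambda}$ via \cref{Colim in AB5 is Colim of Image}, and argues that some such subobject must admit an antidominant simple as a subquotient (otherwise $J^!$ would annihilate $H^0(\F)$), so that $\uHom(\underline{P}_{\lambda},-)$ detects it by $t$-exactness. You instead identify the functor with $\Avpsi[-\text{dim}(N)]$ and observe that any eventually coconnective object annihilated by it lies in the degenerate subcategory by \cref{Equivalent Notions of Degeneracy for Ind Derived Category O}, hence is zero in the nondegenerate quotient. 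Your argument makes the role of the quotient construction transparent; the paper's is more self-contained in that it never needs the identification with $\Avpsi$.

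One point to tighten: you invoke ``the adjunction $(\Avpsi[-\text{dim}(N)],\AvN[\text{dim}(N)])$'', which under the usual convention has $\Avpsi$ on the left, but the step $\uHom(\AvN[\text{dim}(N)](k),\F)\simeq\Avpsi[-\text{dim}(N)](\F)$ requires $\AvN[\text{dim}(N)]$ to be a \emph{left} adjoint. That second adjunction does hold---\cref{BBMAdjointTheorem} identifies $\Avpsi$ with $\text{Av}_*^{\psi}[2\,\text{dim}(N)]$ on $\C^N$, so $\Avpsi[-\text{dim}(N)]$ is simultaneously a right adjoint---but you should make this ambidexterity explicit rather than cite the standard direction. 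Your proposed workaround for applying Barr--Beck on the not-necessarily-cocomplete category $(-)^+$ (establish the equivalence on the ambient cocomplete categories using compactness of $\underline{P}_{\lambda}$ in the quotient, then restrict by $t$-exactness) is sound; the paper applies Barr--Beck directly on $\D(G/_{\lambda}B)^{N,+}_{\text{nondeg}}$ without further comment.
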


\begin{proof}
We check the conditions of Barr-Beck. We have that all indecomposable projective covers of simples are in the abelian category $\mathcal{O}_{\lambda}$ itself, and therefore the direct sum of all indecomposable projective covers is compact in $\uHom(\underline{P}_{\lambda}, -)$ by \cref{Twisted D-Modules are Derived Category of Heart Section}, and in particular \cref{Ind of Bounded Derived is Bounded Derived of Ind for Abelian Cats with Finite Cohomological Dimension and Proj Generators}. Therefore this functor commutes with geometric realizations, and thus it remains to verify its conservativity. Assume $\F \in \D(G/_{\lambda}B)^{N, +}_{\text{nondeg}}$ is nonzero. By the exactness of $\uHom(\underline{P}_{\lambda}, -)$ (i.e. because this functor commutes with shifts), it suffices to assume that $\F \in \D(G/_{\lambda}B)_{\text{nondeg}}^{N, \geq 0}$ and that $H^0(\F) \cong \tau^{\leq 0}(\F)$ is nonzero. Note that this uses the fact that $\F$ is bounded-below--in any category $\D$ with a $t$-structure and some $\mathcal{G} \in \D^+$, we may choose a maximal $m$ such that $\mathcal{G} \in \D^{\geq m}$, so that in particular $\tau^{\geq m}\mathcal{G} \simeq H^0(\mathcal{G})$ does not vanish. 

Therefore, we similarly see that $H^0(J_*(\F))$ is nonzero, since $J^!$ is $t$-exact and so $J^!(H^0(J_*(\F))) \cong H^0(J^!J_*(\F)) \simeq H^0(\F)$, where the last step follows by the fully faithfulness of $J_*$. Then, since we have a right adjoint $J_*$ to a quotient functor $J^!$ we have seen is $t$-exact in \cref{Raskin t-ExactCatProp}, we have that $J_*(\F)$ lies in the subcategory $\D(G/_{\lambda}B)^{N, \geq 0}$ and $H^0(J_*(\F))$ is nonzero. 

By the $t$-exactness of $\uHom(\underline{P}_{\lambda}, -)$, we have that $\uHom(\underline{P}_{\lambda}, H^0(J_*(\F))) \simeq H^0\uHom(\underline{P}_{\lambda}, J_*(\F))$, and so in particular it suffices to show that $\uHom(\underline{P}_{\lambda}, H^0(J_*(\F)))$ is nonzero. However, $H^0(J_*(\F))$ is a nonzero object in the ind-completion of $\mathcal{O}_{\lambda}$. In particular, by \cref{Colim in AB5 is Colim of Image}, it can be written as an increasing union of objects of $\mathcal{O}_{\lambda}$. It cannot be the case that all objects in this increasing union have composition factors $L_{\mu}$ for $\mu$ not antidominant, for this would imply that $J^!(H^0(J_*(\F)))$ vanishes, which we have seen above cannot happen. Therefore, there exists some object $M \in \mathcal{O}_{\lambda}$ which is a subobject of $H^0(J_*(\F))$ and such that there exists some antidominant $\mu$ for which $L_{\mu}$ is a subquotient of $M$. By the $t$-exactness of $\uHom(\underline{P}_{\lambda}, -)$, we see that $\uHom(\underline{P}_{\lambda}, M)$ contains $\uHom(\underline{P}_{\lambda}, L_{\mu})$ as a subquotient, and therefore $\uHom(\underline{P}_{\lambda}, M)$ cannot vanish. Therefore, again using $t$-exactness of $\uHom(\underline{P}_{\lambda}, -)$, we see that $\uHom(\underline{P}_{\lambda}, H^0(J_*(\F))) \simeq H^0\uHom(\underline{P}_{\lambda}, J_*(\F))$ cannot vanish either, thus verifying the conservativity condition of Barr-Beck. Since the conditions of Barr-Beck apply and the left adjoint to $\uHom(\underline{P}_{\lambda}, -)$ is $\underline{P}_{\lambda} \otimes_k -$, we see that the Endomorphismensatz of \cref{Soergel Summary} gives our desired equivalence. 
\end{proof}

\subsubsection{Nondegenerate Category $\mathcal{O}_{\lambda}$ - The General Case}
We now use the above to give a coherent description of $\D(G/_{\lambda}B)^N_{\text{nondeg}}$:
\raggedbottom
\begin{Proposition}\label{Computation of Nondegenerate Category O at a Block}
There is an equivalence of categories $\D(G/_{\lambda}B)^N_{\text{nondeg}} \simeq \IndCoh(W \mathop{\times}\limits^{W_{[\lambda]}}\text{Spec}(C_{\lambda}))$. 
\end{Proposition}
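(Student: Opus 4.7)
The strategy is to extend the equivalence on eventually coconnective subcategories already established in \cref{Enhanced V is Monadic on Eventually Coconnective Derived Ind Category O} to the full DG categories. Both sides are compactly generated by objects lying in the heart of their $t$-structures, and both carry natural actions of $\IndCoh(\Spec(C_\lambda))$, so an enriched Barr-Beck argument should close the gap.

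First, I would identify the right-hand side $\IndCoh(W \times^{W_{[\lambda]}} \Spec(C_\lambda))$ with the direct sum $\bigoplus_{W/W_{[\lambda]}} \IndCoh(\Spec(C_\lambda))$, using that the twisted product is a finite disjoint union of copies of $\Spec(C_\lambda)$ indexed by cosets $W/W_{[\lambda]}$. These cosets are in bijection with the indexing set $\{w \in W : w\lambda \text{ is antidominant}\}$ appearing in \cref{Enhanced V is Monadic on Eventually Coconnective Derived Ind Category O}. Since $\QCoh^{+} \simeq \IndCoh^{+}$ for classical Noetherian schemes, the $+$-part of the desired equivalence reduces directly to that lemma.

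Next, I would upgrade $\tildeV$ to an $\IndCoh(\Spec(C_\lambda))$-linear functor using the action supplied by \cref{Sym(t)-Action Factors Through Finite Subscheme}, and apply $\IndCoh(\Spec(C_\lambda))$-enriched Barr-Beck-Lurie to the enhanced functor. Continuity is automatic from the existence of a left adjoint produced by \cref{Avpsi is Translation}. Conservativity follows by combining conservativity on the $+$-parts from \cref{Enhanced V is Monadic on Eventually Coconnective Derived Ind Category O} with the $t$-exactness of $\tildeV$ (which via \cref{Avpsi is Translation} identifies with $\Avpsishifted$ up to duality and hence is $t$-exact by \cref{BBMShiftedLeftAdjointIsExact}) and the right-completeness of the $t$-structure on the nondegenerate category from \cref{Right Completeness of Universal Nondegenerate G-Categories}.

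Barr-Beck then produces an equivalence between $\D(G/_{\lambda}B)^N_{\text{nondeg}}$ and the category of modules, in $\IndCoh(\Spec(C_\lambda))$, over the enriched endomorphism algebra of $\underline{P}_\lambda$. Soergel's Endomorphismensatz (\cref{Soergel Summary}(1)) identifies this algebra as the structure sheaf of $W \times^{W_{[\lambda]}} \Spec(C_\lambda)$ equipped with its natural $C_\lambda$-algebra structure; the conclusion then follows from the general identification of $\IndCoh$-modules over a finite commutative algebra with $\IndCoh$ of the relative spectrum. The main obstacle I expect is carefully lifting the classical Endomorphismensatz to the $\IndCoh(\Spec(C_\lambda))$-enriched setting, and verifying that the enriched endomorphism sheaf is concentrated in degree $0$ with the correct $C_\lambda$-algebra structure so that enriched Barr-Beck yields $\IndCoh(W \times^{W_{[\lambda]}} \Spec(C_\lambda))$ rather than a smaller variant.
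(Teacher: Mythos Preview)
Your conservativity argument has a genuine gap that cannot be repaired, and it points to a real phenomenon. Right-completeness is the wrong tool: from $t$-exactness and conservativity on the eventually coconnective part you deduce $\tau^{\geq m}X=0$ for all $m$, and to conclude $X=0$ you need \emph{left}-separatedness of the $t$-structure, not right-completeness. The paper only establishes right-completeness (\cref{Right Completeness of Universal Nondegenerate G-Categories}), and in fact left-separatedness \emph{fails} here whenever $W_{[\lambda]}\neq 1$. Indeed, if $\tildeV=\uHom(\underline{P}_\lambda,-)$ were conservative on the full nondegenerate category, then since $J^!(\underline{P}_\lambda)$ is compact and nondegenerate (it lies in the image of $\AvN$ by \cref{Avpsi is Translation} and \cref{Easier Classification of Kernel Properties}) with \emph{classical} endomorphism ring $\tilde{C}_\lambda$ (by projectivity), Barr--Beck would force $\D(G/_{\lambda}B)^N_{\text{nondeg}}\simeq \tilde{C}_\lambda\text{-mod}=\QCoh(\coprod_\mu\Spec(C_\mu))$, not $\IndCoh$. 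These differ exactly when $C_\lambda$ is non-regular; the kernel of $\Psi:\IndCoh\to\QCoh$ is nonzero and lies in $\cap_m\IndCoh^{<m}$, witnessing the failure of left-separatedness. So your functor $\tildeV$ genuinely misses objects.

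The paper's route avoids this entirely by changing the compact generator. Rather than the antidominant projectives $\underline{P}_\lambda$, it uses the antidominant \emph{simples}: the images $J^!(L_{w\cdot\lambda})$ form a compact generating set because the simples already generate $\D(G/_\lambda B)^N$ and $J^!$ kills the non-antidominant ones. The endomorphism ring of each $J^!(L_a)$ is then computed via the $+$-equivalence of \cref{Enhanced V is Monadic on Eventually Coconnective Derived Ind Category O}: under $\tildeV$ the simple goes to the residue field $k\in C_\lambda\text{-mod}$ (there being a unique such module by \cref{All Local Rings Which Vanish to High Power Have Trivial Subobject Lemma}), so $\uEnd(J^!(L_a))\simeq\uEnd_{C_\lambda\text{-mod}}(k)=:E_\lambda$. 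This is a genuinely derived ring, and it matches the endomorphisms of the skyscraper $i_*^{\IndCoh}(k)$, which compactly generates $\IndCoh(\Spec(C_\lambda))$. The conceptual point is that the $\QCoh$/$\IndCoh$ dichotomy corresponds precisely to choosing projectives (classical endomorphisms) versus simples (derived endomorphisms) as generators; your approach lands on the wrong side of this dichotomy.
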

We prove this after proving the following lemma:

\begin{Lemma}\label{All Local Rings Which Vanish to High Power Have Trivial Subobject Lemma}
Let $C$ be a ring with a unique maximal ideal $\mathfrak{m}$ such that $\mathfrak{m}^N = 0$ for $N \gg 0$. Then any classical $C$-module (i.e. an element of $C$-mod$^{\heartsuit}$) contains a submodule isomorphic to $C/\mathfrak{m}$. 
\end{Lemma}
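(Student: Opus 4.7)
The plan is to work in the abelian category $C\text{-mod}^{\heartsuit}$ and construct the desired submodule explicitly, starting from any nonzero element. (The statement is only meaningful for a nonzero module $M$, since $C/\mathfrak{m}$ is itself nonzero; I would either implicitly assume $M \neq 0$ or note this as a hypothesis.)

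First, I would pick any nonzero element $x \in M$ and examine the descending chain of submodules
\[
Cx \supseteq \mathfrak{m} x \supseteq \mathfrak{m}^2 x \supseteq \cdots \supseteq \mathfrak{m}^N x = 0,
\]
where the last equality uses the hypothesis that $\mathfrak{m}^N = 0$. Since $x \neq 0$, the chain begins nonzero and ends at zero, so there is a largest integer $k \geq 0$ for which $\mathfrak{m}^k x \neq 0$. Pick any nonzero $y \in \mathfrak{m}^k x$; by maximality of $k$, we have $\mathfrak{m} y \subseteq \mathfrak{m}^{k+1} x = 0$.

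Next, I would examine the annihilator $\text{Ann}(y) \subseteq C$. By the previous step, $\mathfrak{m} \subseteq \text{Ann}(y)$. On the other hand, $y \neq 0$ forces $1 \notin \text{Ann}(y)$, so $\text{Ann}(y)$ is a proper ideal. Because $C$ is local with maximal ideal $\mathfrak{m}$, every proper ideal is contained in $\mathfrak{m}$, and combined with the reverse containment this forces $\text{Ann}(y) = \mathfrak{m}$. The cyclic submodule $Cy \subseteq M$ is then isomorphic to $C/\text{Ann}(y) = C/\mathfrak{m}$, as required.

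There is no real obstacle to this argument; the only thing to be careful about is ensuring the module is nonzero so that one has some $x \neq 0$ to begin with, and that the locality of $C$ is genuinely used at the final step to upgrade $\mathfrak{m} \subseteq \text{Ann}(y)$ to equality. The nilpotence hypothesis $\mathfrak{m}^N = 0$ is what guarantees the descending chain terminates and thus that a suitable $y$ exists.
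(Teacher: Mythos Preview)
Your proof is correct and follows essentially the same approach as the paper's: both use the nilpotence of $\mathfrak{m}$ to locate a nonzero element annihilated by $\mathfrak{m}$, whose cyclic submodule is then $C/\mathfrak{m}$. The only cosmetic difference is that the paper first passes to a cyclic submodule $C/I \hookrightarrow M$ and then runs the descending chain argument on the ideals $\mathfrak{m}^n$ relative to $I$ inside $C$, whereas you run the chain $\mathfrak{m}^k x$ directly inside $M$; the underlying idea is identical.
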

\begin{proof}
Let $M$ be any nonzero $C$-module. Then since $M$ is not zero, there exists a nonzero map $C \to M$. Let $I$ denote its kernel, so that we have an induced injection $C/I \xhookrightarrow{} M$. We wish to exhibit a nonzero map $C/\mathfrak{m} \to C/I$. Since $C \to M$ is nonzero, $I \neq C$. Therefore, there exists a nonnegative integer $n$ such that $\mathfrak{m}^{n + 1} \subseteq I \subsetneq \mathfrak{m}^n$, since $\mathfrak{m}^N = 0$ for $N \gg 0$. Let $x \in \mathfrak{m}^n \setminus I$, and consider the map of $C$-modules $C \to C/I$ sending $1$ to $x$. Then the kernel of this map is precisely $\mathfrak{m}$ since the kernel contains $\mathfrak{m}$ and does not contain $1$. Therefore, we obtain an injective map $C/\mathfrak{m} \xhookrightarrow{} C/I$. Composing with our injection $C/I \xhookrightarrow{} M$, we obtain our claim. 
\end{proof}

\begin{proof}[Proof of \cref{Computation of Nondegenerate Category O at a Block}]
By construction, we have a quotient functor \[J^!: \D(G/_{\lambda}B)^N \to \D(G/_{\lambda}B)^N_{\text{nondeg}}\] admitting a continuous right adjoint. In particular, $J^!$ preserves the set of compact generators $\oplus_{w}L_{w \cdot \lambda}$ where $w$ varies over $W$. However, we see that for any $w$ such that $w \cdot \lambda$ is not antidominant, $\Avpsi(L_{w \cdot \lambda})$ vanishes by \cref{Avpsi is Translation} and the fact that all indecomposable injective hulls of simples of $\mathcal{O}_{\lambda}$ are distinct. Therefore, we see that $\underline{L}_a^{\lambda} := \oplus_{w}J^!(L_{w \cdot \lambda})$ is a compact generator, where $w$ varies only over those $w \in W$ such that $w \cdot \lambda$ is also antidominant. Furthermore, since each block of the abelian category $\mathcal{O}_{\lambda}$ contains precisely one simple labeled by an antidominant weight, we see that the category $\D(G/_{\lambda}B)^N_{\text{nondeg}}$ is equivalent to modules over the ring
\raggedbottom
\[A_{\lambda} := \uEnd_{\D(G/_{\lambda}B)^N_{\text{nondeg}}}(\underline{L}^{\lambda}_a) \simeq \oplus_{w} \uEnd_{\D(G/_{\lambda}B)^N_{\text{nondeg}}}(J^!(L_{w\cdot \lambda}))\]

\noindent where we note that there are precisely $[W : W_{\lambda}]$ many (nonzero) objects in the right direct sum. Furthermore, we may compute the right-hand side explicitly as follows. Let $L_a$ be any simple indexed by an antidominant weight. Then, by \cref{Enhanced V is Monadic on Eventually Coconnective Derived Ind Category O}, we see that these endomorphisms are equivalently given by the $C_{\lambda}$-module structure on $\tildeV(L_a) \simeq k$. However, since $C_{\lambda}$ is an Artinian ring with a unique maximal ideal, we see that, by \cref{All Local Rings Which Vanish to High Power Have Trivial Subobject Lemma}, there is precisely one module structure which we may place on $\tildeV(L_a)$--namely, the trivial one. Therefore we see that the category $\D(G/_{\lambda}B)^N_{\text{nondeg}}$ is a direct sum of $[W : W_{[\lambda]}]$ many copies of $E_{\lambda}\text{-mod}$ for $E_{\lambda} := \uEnd_{C_{\lambda}\text{-mod}}(k)$. 

An identical description holds for $\IndCoh(W \mathop{\times}\limits^{W_{[\lambda]}}\text{Spec}(C_{\lambda}))$. Specifically, we note that this category is equivalent to a direct sum of $[W : W_{[\lambda]}]$ many copies of $\IndCoh(\text{Spec}(C_{\lambda}))$ and that moreover if $i: \Spec(k) \xhookrightarrow{} \Spec(C_{\lambda})$ denotes the closed embedding of the unique closed point, then $i_*^{\IndCoh}(k)$ is a compact generator of $\IndCoh(\text{Spec}(C_{\lambda}))$, which, for example, directly follows from \cite[Chapter 4, Proposition 6.2.2]{GaRoI}. Furthermore, since $i_*^{\IndCoh}(k) \in \IndCoh(\Spec(C_{\lambda}))^{\heartsuit}$ and the quotient functor 
\noindent 
\[\Psi_{\Spec(C_{\lambda})}: \IndCoh(\Spec(C_{\lambda})) \to \QCoh(\Spec(C_{\lambda})) \simeq C_{\lambda}\text{-mod}\]

\noindent is $t$-exact and induces an equivalence on the heart \cite[Chapter 4, Proposition 1.2.2]{GaRoI} and is compatible with pushforwards \cite[Chapter 4, Proposition 2.1.2]{GaRoI}, we see that \[\uEnd_{\IndCoh(\Spec(C_{\lambda}))}(i_*^{\IndCoh}(k)) \simeq \uEnd_{C_{\lambda}\text{-mod}}(k) =: E_{\lambda}.\] Thus $\IndCoh(W \mathop{\times}\limits^{W_{[\lambda]}}\text{Spec}(C_{\lambda}))$ is a direct sum of $[W : W_{[\lambda]}]$ many copies of $E_{\lambda}\text{-mod}$, as desired. 
\end{proof}

\subsection{Classification of Degenerate Subcategory of Universal Flag Variety}
\newcommand{\monpluscat}{\D(G/N)^{N, +}_{\text{mon}}}
\newcommand{\monHNpluscat}{\D(G/N)^{N,+}_{\overline{\text{mon}}}}
\newcommand{\DGNmonbar}{\D(G/N)_{\overline{\text{mon}}}}
\newcommand{\DGNtildemon}{\D(\tilde{G}/\tilde{N})_{\text{mon}}}
\newcommand{\DGNtildemonbar}{\D(\tilde{G}/\tilde{N})_{\overline{\text{mon}}}}
\newcommand{\DGNtilde}{\mathcal{D}(\tilde{G}/\tilde{N})}
\newcommand{\DofGNalphaaveragevanishing}{\D(G/N)^{\mathbb{G}_m^{\alpha}, \perp}}
\newcommand{\simpleforLmu}{\mathcal{E}_{\mu}}
\newcommand{\HNtilde}{\mathcal{D}(\tilde{N}\backslash \tilde{G}/\tilde{N})}
\newcommand{\HNtildeplus}{\mathcal{D}(\tilde{N}\backslash \tilde{G}/\tilde{N})^+}
\newcommand{\HNZtilde}{\mathcal{D}(\tilde{N}\backslash \tilde{G}/\tilde{N})^{Z \times Z}}
\newcommand{\HNZtildeplus}{\mathcal{D}(\tilde{N}\backslash \tilde{G}/\tilde{N})^{Z \times Z, +}}
In this section, we provide an alternate description of $\D(G/N)_{\text{deg}}$, see \cref{Simply Connected Version of Nondegenerate Subcategory of D(G/N) is the HN-subcategory Gen'd By Non-antidominant Simples}. Using the notation of \cref{Reduction to tildeG Subsubsection}, we are able to set the following notation. Let $\DGNtildemon$ denote those objects which are $\tilde{Q}_{\alpha}$-monodromic for some simple root $\alpha$, and let $\DGNtildemonbar$ denote the full right $\HNtilde$-subcategory of $\DGNtilde$ generated by $\DGNtildemon$. (This notation will not be used outside this section.) The main result of this section is the following theorem: 

\begin{Theorem}\label{Simply Connected Version of Nondegenerate Subcategory of D(G/N) is the HN-subcategory Gen'd By Non-antidominant Simples}
There is an equivalence of right $\HNtilde$-categories
\raggedbottom
\[\DGNtildemonbar \xhookrightarrow{\sim} \D(\tilde{G}/\tilde{N})_{\text{deg}}\]
\noindent induced by the inclusion $\DGNtildemon \xhookrightarrow{} \D(\tilde{G}/\tilde{N})_{\text{deg}}$ given by \cref{Avpsi Vanishes on Qalpha Monodromic Objects}.
\end{Theorem}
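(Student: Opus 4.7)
The plan is to first establish the easy containment $\DGNtildemonbar \subseteq \D(\tilde{G}/\tilde{N})_{\text{deg}}$, which is immediate from \cref{Avpsi Vanishes on Qalpha Monodromic Objects} combined with the closure of $\D(\tilde{G}/\tilde{N})_{\text{deg}}$ under the right $\HNtilde$-action (whose proof is the left-right transpose of \cref{Definition of D(G/N)deg}). I then verify that the inclusion $\iota : \DGNtildemonbar \hookrightarrow \D(\tilde{G}/\tilde{N})_{\text{deg}}$ admits a continuous right adjoint which is automatically $(\tilde{G}, \HNtilde)$-equivariant by \cref{Adjoint Functor is Automatically G Equivariant} and \cref{Continuous Right Adjoint on N Invariants Implies HN Linear}. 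This uses that both categories are compactly generated (the source by $q^!$-images of compact generators of $\D(\tilde{G}/\tilde{Q}_\alpha)$ convolved with compact objects of $\HNtilde$; the target by \cref{Degenerate Inclusion Admits Continuous Right Adjoint}) and that the monodromic compact generators are also among the compact generators of the target, so that \cref{Subcategory Generated By Compact Objects Implies Inclusion Preserves Compact Objects} applies.

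The main step is to invoke \cref{Can Check Equivalence on Each field-valued Point for Groups} for the left $\tilde{G}$-action: it suffices to show $\iota$ becomes an equivalence after taking $(\tilde{B}, \mathcal{L}_{[\lambda]})$-invariants for every field-valued point $\lambda \in \LTd(L)$. Upon taking such invariants, the target identifies, via a direct generalization of \cref{All Equivariant Objects Which Forget to Ceckos are Ceckos in the equivariant category} to the left-equivariant setting (the proof is formally symmetric), with $\D({}_\lambda \tilde{B}\backslash \tilde{G}/\tilde{N})_{\text{deg}}$. By Beilinson-Bernstein together with \cref{Equivalent Notions of Degeneracy for Ind Derived Category O}, this category is generated under filtered colimits and extensions by simples $L_\mu$ with $\mu$ non-antidominant and lying in the $(W,\cdot)$-orbit of $\lambda$.

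The remaining task is to show that each such $L_\mu$ lies in the $(\tilde{B}, \mathcal{L}_{[\lambda]})$-invariants of $\DGNtildemonbar$. For $\mu$ non-antidominant there is a simple root $\alpha$ with $\langle \mu + \rho, \alpha^\vee \rangle \in \Z_{>0}$, equivalently $\langle \mu, \alpha^\vee \rangle \in \Z_{\geq 0}$; hence $L_\mu$ is $\mathfrak{sl}_{2,\alpha}$-finite and so integrates to an $M_\alpha$-representation. Combined with the $U_\alpha$-integrability forced by the locally nilpotent $N$-action on objects of $\mathcal{O}_\lambda$ (since $U_\alpha \subseteq N$), we conclude $L_\mu$ is $Q_\alpha = M_\alpha \cdot U_\alpha$-integrable, so under Beilinson-Bernstein the corresponding $\D$-module on $\tilde{G}/\tilde{N}$ is left $\tilde{Q}_\alpha$-equivariant and in particular $\tilde{Q}_\alpha$-monodromic.

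I expect the principal obstacle to be reconciling this equivariant realization with the right $\HNtilde$-subcategory structure: what has been shown is that $L_\mu$ is $\tilde{Q}_\alpha$-monodromic as an object of the ambient $\D(\tilde{G}/\tilde{N})$, but one must verify it is present already in the $(\tilde{B}, \mathcal{L}_{[\lambda]})$-invariants of the \emph{full right $\HNtilde$-subcategory} $\DGNtildemonbar$ and not merely of a larger ambient category. This should reduce to the observation that the $\tilde{Q}_\alpha$-monodromic right $\HNtilde$-subcategory is preserved by $(\tilde{B}, \mathcal{L}_{[\lambda]})$-averaging (which commutes with the right $\HNtilde$-action), so that each relevant $L_\mu$ can be obtained as the $(\tilde{B}, \mathcal{L}_{[\lambda]})$-average of a left $\tilde{Q}_\alpha$-equivariant object of $\DGNtildemonbar$.
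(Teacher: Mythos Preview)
Your overall architecture matches the paper's: reduce via \cref{Can Check Equivalence on Each field-valued Point for Groups} to checking on $(\tilde{B}, \mathcal{L}_{[\lambda]})$-invariants, identify the degenerate side with the subcategory generated by non-antidominant simples via \cref{Equivalent Notions of Degeneracy for Ind Derived Category O}, and then place each such simple inside $\DGNtildemonbar$. The gap is in the last step.

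Your claim that ``for $\mu$ non-antidominant there is a \emph{simple} root $\alpha$ with $\langle \mu, \alpha^{\vee}\rangle \in \Z_{\geq 0}$'' is false at non-integral parameters. Antidominance is tested against \emph{all} positive coroots, and for non-integral $\mu$ the witness coroot need not be simple. Concretely, take $\tilde{G}=\SL_3$ with simple roots $\alpha_1,\alpha_2$ and choose $\mu$ with $\langle \mu+\rho,\alpha_1^{\vee}\rangle=\langle \mu+\rho,\alpha_2^{\vee}\rangle=\tfrac12$; then $\langle \mu+\rho,(\alpha_1+\alpha_2)^{\vee}\rangle=1$, so $\mu$ is non-antidominant, yet $\langle \mu,\alpha_i^{\vee}\rangle=-\tfrac12\notin\Z_{\geq 0}$ for $i=1,2$ and $L_\mu$ is not $\tilde{Q}_{\alpha_i}$-monodromic for either simple root. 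Thus $L_\mu$ does \emph{not} lie in $\DGNtildemon$ itself; it only lies in the right $\HNtilde$-subcategory that $\DGNtildemon$ generates, and you have not produced the $\HNtilde$-element carrying it there.

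This is precisely the content the paper supplies in \cref{Symplectic Fourier Transform for a Root Swaps Simples Non-integral For That Root} and \cref{The Gelfand-Graev Action Matches Any Non-Antidominant Weight With Some Q-monodromic Weight}: one inducts on the height of a witnessing positive coroot, using the Gelfand-Graev operators $F_{s}$ (which, being left $\tilde{G}$-equivariant endofunctors of $\D(\tilde{G}/\tilde{N})$, are given by right convolution with objects of $\HNtilde$ and hence preserve $\DGNtildemonbar$) to replace $L_\mu$ by $L_{s\cdot\mu}$ whenever $\langle\mu,\alpha^{\vee}\rangle\notin\Z$, until one reaches a weight $\nu$ with $\langle\nu,\alpha^{\vee}\rangle\in\Z_{\geq 0}$ for a genuinely simple $\alpha$. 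Only then does your $\mathfrak{sl}_{2,\alpha}$-finiteness argument apply and give $L_\nu\in\DGNtildemon$. So the ``principal obstacle'' you flagged is not the one you identified; rather, it is producing the correct $\HNtilde$-kernels (the $F_s$) that move an arbitrary non-antidominant simple to a $\tilde{Q}_\alpha$-monodromic one.
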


We prove this theorem below after proving some preliminary statements. 

\begin{Lemma}\label{Extension of Non-Integral Gm is Clean}
Fix a simple root $\alpha$, and let $j: \tilde{G}/\tilde{N} \xhookrightarrow{} V_s$ be the open embedding as in \cref{Kazhdan-Laumon SFT Construction Reminder}. Assume $\F \in \D(\tilde{G}/\tilde{N})$ is right $(\mathbb{G}_m^{\alpha}, \chi)$-monodromic for some non-integral $\chi \in (\text{Lie}(\mathbb{G}_m^{\alpha})/\mathbb{Z})(k)$. Then we have a canonical isomorphism $j_!(\F) \xrightarrow{} j_{*, dR}(\F)$ (and, in particular, $j_!(\F)$ is defined).
\end{Lemma}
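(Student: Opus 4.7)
\medskip

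\noindent\textbf{Proof plan.}
The plan is to show that $j_{*, dR}(\F)$ is clean, i.e.\ that its $!$-restriction to the complement of $\tilde{G}/\tilde{N}$ in $V_s$ vanishes. Let $z: \tilde{G}/\tilde{Q}_\alpha \hookrightarrow V_s$ denote the complementary closed embedding (the zero section of the vector bundle $V_s \to \tilde{G}/\tilde{Q}_\alpha$). By the standard recollement cofiber sequence $j_!j^! \to \mathrm{id} \to z_{*, dR}z^!$ applied to $j_{*,dR}(\F)$, the cofiber of the canonical map $j_!(\F) \to j_{*, dR}(\F)$ is $z_{*, dR}z^!(j_{*, dR}(\F))$; it therefore suffices to prove the vanishing $z^!(j_{*, dR}\F) \simeq 0$.

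The key observation is that the right $\mathbb{G}_m^\alpha$-action on $\tilde{G}/\tilde{N}$ extends to an action on all of $V_s$ which scales the fibers of $V_s \to \tilde{G}/\tilde{Q}_\alpha$ (via the cocharacter $\alpha^\vee$), and that this extended action fixes the zero section pointwise: indeed, $\mathbb{G}_m^\alpha = \alpha^\vee(\mathbb{G}_m)$ is contained in $M_\alpha \subseteq \tilde{Q}_\alpha$, so the right $\mathbb{G}_m^\alpha$-action on $\tilde{G}/\tilde{Q}_\alpha$ is trivial. Moreover, the square whose vertical arrows are $\mathrm{id}_{\mathbb{G}_m^\alpha} \times j$ and $j$ and whose horizontal arrows are the action maps is Cartesian (since the zero-section complement is $\mathbb{G}_m^\alpha$-invariant), and similarly for $z$ in place of $j$. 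Base change along these squares, applied to the monodromicity identification $\mathrm{act}^!\F \simeq \mathcal{L}_\chi \boxtimes \F$, first yields $\mathrm{act}^!(j_{*, dR}\F) \simeq \mathcal{L}_\chi \boxtimes j_{*, dR}\F$ on $\mathbb{G}_m^\alpha \times V_s$, and then applying $(\mathrm{id} \times z)^!$ gives an equivalence
\[ p_2^!(z^!j_{*, dR}\F) \simeq \mathcal{L}_\chi \boxtimes z^!j_{*, dR}\F \]
in $\D(\mathbb{G}_m^\alpha \times \tilde{G}/\tilde{Q}_\alpha)$, where we have used that on the fixed-point locus $\tilde{G}/\tilde{Q}_\alpha$ the action map coincides with the projection $p_2$.

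Finally, pushing forward the displayed equivalence by $p_{2, *, dR}$, and using the projection formula together with $p_2^!(z^!j_{*,dR}\F) \simeq \omega_{\mathbb{G}_m^\alpha} \boxtimes z^!j_{*,dR}\F$ up to shift, gives an identification $H^*_{dR}(\mathbb{G}_m^\alpha) \otimes z^!j_{*, dR}\F \simeq H^*_{dR}(\mathbb{G}_m^\alpha, \mathcal{L}_\chi) \otimes z^!j_{*, dR}\F$. Since $\chi$ is non-integral, the de Rham cohomology of the Kummer local system $\mathcal{L}_\chi$ on $\mathbb{G}_m$ vanishes, whereas $H^*_{dR}(\mathbb{G}_m^\alpha)$ is non-zero, forcing $z^!j_{*, dR}\F \simeq 0$. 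This is a $\mathbb{G}_m$-analogue of \cref{No Simultaneously Monodromic and Twisted Monodromic Ga Objects}, and indeed the argument is essentially the same computation. The main mild obstacle is keeping track of the base change isomorphisms for the two Cartesian squares involving the action map; however these reduce to standard base change along open embeddings and along closed embeddings, so no serious technical issue arises.
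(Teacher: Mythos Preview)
Your monodromicity idea is exactly right and is what the paper uses, but you have applied it to the wrong restriction functor. The triangle $j_!j^! \to \text{id} \to z_{*,dR}z^!$ is not a cofiber sequence: the two valid recollement triangles are $z_{*,dR}z^! \to \text{id} \to j_{*,dR}j^!$ and $j_!j^! \to \text{id} \to z_{*,dR}z^*$. As a consequence, the vanishing you compute, $z^!j_{*,dR}\F \simeq 0$, is vacuously true for \emph{every} $\F$: apply the first triangle to $j_{*,dR}\F$ and use that $j_{*,dR}$ is fully faithful. So your argument, as written, imposes no constraint on $\F$. The relevant cofiber of $j_!\F \to j_{*,dR}\F$ is $z_{*,dR}z^*j_{*,dR}\F$, and the correct vanishing to establish is that of $z^*j_{*,dR}\F$, equivalently of $\uHom(j_{*,dR}\F, z_{*,dR}\mathcal{G})$ for all $\mathcal{G}$. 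There is also a circularity: the second triangle already presupposes that $j_!$ and $z^*$ are defined on the objects in question, which is part of the assertion.

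The paper avoids both issues by working directly with Hom spaces. Exactly as you observe, $j_{*,dR}\F$ is $(\mathbb{G}_m^\alpha,\chi)$-monodromic on $V_s$, while any $z_{*,dR}\mathcal{G}$ is $\mathbb{G}_m^\alpha$-monodromic for the trivial twist because the scaling action fixes the zero section; since $\chi$ is non-integral these monodromic categories are orthogonal, so $\uHom(j_{*,dR}\F, z_{*,dR}\mathcal{G}) \simeq 0$. Plugging this into the always-defined triangle $z_{*,dR}z^!\F' \to \F' \to j_{*,dR}j^!\F'$ shows that $j_{*,dR}\F$ corepresents the functor $\uHom(\F, j^!(-))$, establishing both the existence of $j_!\F$ and the isomorphism $j_!\F \xrightarrow{\sim} j_{*,dR}\F$ in one step. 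Your argument is easily repaired by redirecting the monodromicity computation from $z^!j_{*,dR}\F$ to $\uHom(j_{*,dR}\F, z_{*,dR}(-))$.
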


\begin{proof}
We equip $V_s$ with a $\mathbb{G}_m$-action by scalar multiplication. This makes the embedding $\tilde{G}/\tilde{N} \xhookrightarrow{j} V_s$ $\mathbb{G}_m$-equivariant, where $\mathbb{G}_m$ acts by $\mathbb{G}_m^{\alpha}$ on $\tilde{G}/\tilde{N}$. In particular, the object $j_{*, dR}(\F)$ is $(\mathbb{G}_m^{\alpha}, \chi)$-monodromic. Similarly, let $z: \tilde{G}/Q_s \xhookrightarrow{} V_s$ denote the embedding of the complementary closed subscheme, which is the zero section of $V_s$. Then we see that any object of the form $z_{*, dR}(\mathcal{G})$ is $\mathbb{G}_m^{\alpha}$-monodromic. In particular, because $\chi$ is not integral, we see $\uHom(j_{*, dR}(\F), z_{*, dR}(\mathcal{G})) \simeq 0$. Therefore, if $\F' \in \D(V_s)$ we see that the composite
\raggedbottom
\[\uHom_{\D(V_s)}(j_{*, dR}(\F), \F') \xrightarrow{} \uHom_{\D(V_s)}(j_{*, dR}(\F), j_{*, dR}j^!(\F')) \simeq \uHom_{\D(\tilde{G}/\tilde{N})}(\F, j^!(\F'))\]

\noindent is an isomorphism, where the second equivalence follows from the fully faithfulness of $j_{*, dR}$ and the first arrow can be seen to be an isomorphism by applying $\uHom(j_{*, dR}(\F), -)$ to the cofiber sequence $i_{*, dR}i^!(\F') \to \F' \to j_{*, dR}j^!(\F')$.
\end{proof}

\begin{Lemma}\label{Symplectic Fourier Transform for a Root Swaps Simples Non-integral For That Root}
Let $\mu \in \LTd(k)$ be such that $\langle \alpha^{\vee}, \mu \rangle \notin \mathbb{Z}$. Then $F_{s_{\alpha}}(L_{\mu}) \cong L_{s_{\alpha}\cdot \mu}$.  
\end{Lemma}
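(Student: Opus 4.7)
The strategy is to use the cleanness of the extension of $L_\mu$ across the zero section of $V_s$, compute the symplectic Fourier transform of this clean extension, and then identify the resulting simple $\D$-module via its weight data. Since $L_\mu$ has right $T$-weight $\mu$ via the Beilinson-Bernstein identification, the restriction of this action to the one-parameter subgroup $\mathbb{G}_m^{\alpha} \subseteq T$ has non-integral monodromy $\chi := \langle \alpha^{\vee}, \mu \rangle \bmod \mathbb{Z}$, by hypothesis. Hence \cref{Extension of Non-Integral Gm is Clean} gives a canonical isomorphism $j_!(L_\mu) \xrightarrow{\sim} j_{*, dR}(L_\mu)$, so this extension is clean and in particular irreducible as an object of $\D(V_s)$.

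Applying the symplectic Fourier transform $\mathbf{F}$, the fiberwise computation over the base $\tilde{G}/[\tilde{P}_s, \tilde{P}_s]$ reduces to the standard Fourier--Deligne transform on $\mathbb{A}^2$ for the $\SL_{2,\alpha}$-action. Non-integrally $\mathbb{G}_m$-monodromic simples on $\mathbb{A}^2 \setminus \{0\}$ Fourier-transform to sheaves which remain non-integrally monodromic along the dual scaling $\mathbb{G}_m$; in particular, they are supported away from the zero section of $V_s^{\vee} \cong V_s$, so the canonical map $\mathbf{F}(j_{*, dR}L_\mu) \to j_{*, dR}j^!\mathbf{F}(j_{*, dR}L_\mu)$ is an equivalence (this is exactly in the spirit of \cref{Nondeg and Zero Section of Vector Bundle Don't Talk}, replacing nondegeneracy with non-integral monodromy). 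Therefore $F_{s_\alpha}(L_\mu) = j^!\mathbf{F}j_{*, dR}(L_\mu)$ is a simple perverse $\D$-module on $\tilde{G}/\tilde{N}$.

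To identify $F_{s_\alpha}(L_\mu)$ with $L_{s_\alpha \mu}$, I would use that $F_{s_\alpha}$ is $G$-equivariant, hence preserves the Bruhat-stratification support class of a simple, and that the effect of $\mathbf{F}$ on the $\mathbb{G}_m^{\alpha}$-monodromy along the open stratum is by the reflection $s_\alpha$ (more precisely, by negation of $\chi$, which on the $T$-weight corresponds to $\mu \mapsto s_\alpha\mu$, since only the $\alpha^{\vee}$-component is affected). Since a simple $\D$-module in $\D(\tilde{G}/\tilde{N})$ with a given Bruhat support class is determined by its $T$-weight on the open stratum, this pins down $F_{s_\alpha}(L_\mu) \cong L_{s_\alpha \mu}$. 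The main obstacle will be the explicit $\SL_{2,\alpha}$-fiber Fourier computation and convention tracking: namely, showing that the non-integrally monodromic simple with monodromy parameter $\mu$ on $\mathbb{A}^2 \setminus \{0\}$ Fourier-transforms precisely to the one with parameter $s_\alpha \mu$ (matched up with the symplectic twist $\omega^!$ of \cref{Kazhdan-Laumon SFT Construction Reminder}), while arranging so that the calculation is purely at the level of monodromies thanks to cleanness.
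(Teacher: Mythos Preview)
Your argument that $F_{s_\alpha}(L_\mu)$ is a simple object of the heart is correct and more direct than the paper's: cleanness from \cref{Extension of Non-Integral Gm is Clean} makes $j_{*,dR}(L_\mu)$ an irreducible perverse sheaf on $V_s$, the $t$-exact autoequivalence $\mathbf{F}$ preserves this, and since the result again has non-integral $\mathbb{G}_m^\alpha$-monodromy it is supported off the zero section, so $j^!$ returns a simple on $G/N$. The paper instead deduces simplicity by showing $F_{s_\alpha}$ restricts to an involutive autoequivalence on the non-integrally-$\alpha^\vee$-monodromic part (the cone of $F_{s_\alpha}^2 \to \text{id}$ is $\SL_{2,\alpha}$-monodromic, hence vanishes there) and then applying $F_{s_\alpha}$ to an arbitrary simple subobject of $F_{s_\alpha}(L_\mu)$.

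The genuine gap is the identification step. Left $G$-equivariance of $F_{s_\alpha}$ says nothing about individual Bruhat cells; what you actually have is that $F_{s_\alpha}$ is fibered over $G/Q_{\alpha}$, which pins down only the pair $\{w, s_\alpha w\}$ of possible open-support cells (writing $\mu = w\cdot\lambda$). Your monodromy claim then gives at best the $T$-monodromy class $[s_\alpha\mu] \in \LTd/\characterlatticeforT$, and this need not separate $L_{\mu}$ from $L_{s_\alpha\mu}$: already for $\SL_2$ with $\langle\alpha^\vee,\mu\rangle = \tfrac{1}{2}$ one has $\mu - s_\alpha\mu = \tfrac{1}{2}\alpha \in \characterlatticeforT$, so the two simples lie in the same monodromy class. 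What is missing is exactly the support computation you flag as ``the main obstacle'' and then do not carry out. The paper supplies it via \cref{SwapsSupport}: from $\text{Av}_*^{\mathbb{G}_m^\alpha}(L_\mu) = 0$ and the support of $L_\mu$, one concludes that $F_{s_\alpha}(L_\mu)$ has vanishing $!$-restriction to the cell indexed by $w$; since the total support projects to the same parabolic cell in $G/Q_{\alpha}$, the open support of $F_{s_\alpha}(L_\mu)$ must be the cell indexed by $s_\alpha w$, which identifies the simple.
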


\begin{proof}
Since $\D(G/N)^{B_{\lambda}} \xrightarrow{\sim} \D(\tilde{G}/\tilde{N})^{\tilde{B}_{\tilde{\lambda}}}$, we may assume that $G$ is semisimple and simply connected. Recall the block decomposition of \cite{LusYun}, $\D(B_{\lambda}\backslash G/N) \simeq \oplus_{\lambda' \in \text{orbit}_W(\lambda)} \D(B_{\lambda}\backslash G)^{B_{\lambda'}\text{-mon}}$.  We have that $L_{\mu}$ is $(T, \mathcal{L}_{[\mu]})$-monodromic, so, in particular, with respect to the $\mathbb{G}_m^{\alpha}$ action, we have that $L_{\mu}$ is $(\mathbb{G}_m^{\alpha}, \mathcal{L}_{[\overline{\mu}]})$-monodromic, where $\overline{\mu}$ is the image of $\mu$ under the projection map $\LTd \to \text{Lie}(\mathbb{G}_m^{\alpha})^{\ast}$. Note that $\langle \alpha^{\vee}, \overline{\mu}\rangle = \langle \alpha^{\vee}, \mu \rangle$ is not an integer. In particular, we see that $F_{s_{\alpha}, *}(L_{\mu}) \in \D(B_{\lambda}\backslash G/N)^{\heartsuit}$ since the canonical map $F_{s_{\alpha}, !}(L_{\mu}) \xrightarrow{} F_{s_{\alpha}, *}(L_{\mu})$ is an isomorphism by \cref{Extension of Non-Integral Gm is Clean} and the functor $F_{s_{\alpha}, !} \simeq j^!\mathbf{F}j_!$ is left $t$-exact and the latter functor $F_{s_{\alpha}, *} \simeq j^!\mathbf{F}j_{*, dR}$ is right $t$-exact.

Now, any object of $\D(B_{\lambda}\backslash G/N)^{\heartsuit}$ is a union of its compact subobjects by \cref{Colim in AB5 is Colim of Image}. Let $L_{\nu}$ be a simple subobject of $F_{s_{\alpha}}(L_{\mu})$. Then, since $F_{s_{\alpha}}$ preserves those sheaves for which the $\mathbb{G}_m^{\alpha}$-averagings vanish (because the pushforward of the sheaf $\omega_{\mathbb{G}_m^{\alpha}}$ to the torus is canonically $\langle s_{\alpha} \rangle$-equivariant), we see that $\nu$ has the property that $\langle \alpha^{\vee}, \nu \rangle \notin \mathbb{Z}$. In particular, we may apply an identical argument to see that $F_{s_{\alpha}}(L_{\nu}) \simeq H^0F_{s_{\alpha}}(L_{\nu})$ is a subobject of $F_{s_{\alpha}}F_{s_{\alpha}}(L_{\mu})$. However, the canonical map $F_{s_{\alpha}}F_{s_{\alpha}}(L_{\mu}) \to L_{\mu}$ is an isomorphism because the kernel is $\SL_{2, \alpha}$-monodromic; in other words, the fiber of the map $F_{s_{\alpha}}^2(\delta_1) \to \delta_1$ is the constant sheaf up to shift, and therefore the fiber of this map is given by $\SL_{2, \alpha}$-averaging up to shift, which vanishes on our $L_{\mu}$. Therefore, we see that $F_{s_{\alpha}}(L_{\nu})$ is a subobject of $L_{\mu}$, and therefore $F_{s_{\alpha}}(L_{\nu}) \simeq L_{\mu}$ since $F_{s_{\alpha}}^2(L_{\nu}) \simeq L_{\nu}$ is nonzero. 

We now show $\nu = s_{\alpha} \cdot \mu$. Write $\mu = w \cdot \lambda$ for some $w \in W$, which is unique because, by assumption, $\lambda$ is regular with respect to the $(W, \cdot)$-action. We first assume that $w$ is such that $s_{\alpha}w > w$ in the Bruhat ordering. Consider the locally closed embedding given by the union of Schubert cells of elements in the subset $\overline{\{w\}} \cup \overline{\{s_{\alpha}w\}}$. We note this subset contains an element of maximal length, namely $s_{\alpha}w$. Furthermore, by (1) of \cref{SwapsSupport} we have that the restriction to the Schubert cell labeled by $w$ vanishes. Therefore, the restriction to the Schubert cell labeled by $s_{\alpha}w$ does not vanish, since the symplectic Fourier transform is a transformation over $G/Q_{s_{\alpha}}$. Thus, $F_{s_{\alpha}}(L_{\mu})$ is a simple object in the heart whose restriction to the cell labeled by $s_{\alpha}w$ is nonzero and whose restriction to any other cell for $u$ with $\ell(u) \geq \ell(w)$ vanishes. Therefore, we see that this simple object is the simple $L_{s_{\alpha}w \cdot \lambda} = L_{s_{\alpha} \cdot \mu}$. 

If instead we had $s_{\alpha}w < w$ in the Bruhat ordering, then we may repeat the above argument swapping the roles of $\nu$ and $\mu$ to see that $F_{s_{\alpha}}(L_{s_{\alpha}\mu}) = L_{\mu}$, so the claim follows by applying $F_{s_{\alpha}}$ to this equality since $F_{s_{\alpha}}^2(L_{s_{\alpha}\mu}) \xleftarrow{\sim} L_{s_{\alpha}\mu}$. 
\end{proof}

\begin{Corollary}\label{The Gelfand-Graev Action Matches Any Non-Antidominant Weight With Some Q-monodromic Weight}
Fix some non-antidominant weight $\mu$. Then there exists some expression $s_1, ..., s_r$ such that, letting $F_{s_{i}}: \D(G/N)^{B_{\lambda}} \to \D(G/N)^{B_{\lambda}}$ denote the induced functor on $B_{\lambda}$-invariant categories, we have $F_{s_{1}}...F_{s_{r}}(L_{\mu}) \cong L_{\nu}$ for some $\nu \in \LTd(k)$ such that $\langle \alpha^{\vee}, \nu \rangle \in \mathbb{Z}^{\geq 0}$ for some simple root $\alpha$. In particular, $L_{\nu} \in \D(B_{\lambda}\backslash G)^{Q_{\alpha}\text{-mon}}$.
\end{Corollary}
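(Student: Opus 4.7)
The plan is to apply Lemma~\ref{Symplectic Fourier Transform for a Root Swaps Simples Non-integral For That Root} iteratively, after first disposing of the case when $\mu$ is integral. First I would handle the integral case: if $\mu \in \Lambda$, non-antidominance supplies some $\beta \in \Phi^+$ with $\langle \beta^\vee, \mu + \rho\rangle \in \mathbb{Z}^{>0}$, and since every positive coroot is a non-negative integer combination of simple coroots, this forces some simple $\alpha_i$ to satisfy $\langle \alpha_i^\vee, \mu\rangle \in \mathbb{Z}^{\geq 0}$; taking $r = 0$ and $\nu = \mu$ finishes this case.

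For non-integral $\mu$, let $\Phi_\mu := \{\beta \in \Phi : \langle \beta^\vee, \mu\rangle \in \mathbb{Z}\}$ be the integral root subsystem. It is a proper nonzero subsystem of $\Phi$, nonemptiness coming from non-antidominance (which produces an integral root of $\mu$ with strictly positive pairing against $\mu + \rho$). I would pick a simple root $\beta$ of $\Phi_\mu$ with $\langle \beta^\vee, \mu + \rho\rangle \in \mathbb{Z}^{\geq 1}$, and choose $w \in W$ such that $\alpha := w\beta$ is a simple root of $\Phi$ (always possible since every root of $\Phi$ is Weyl-conjugate to some simple root of the same length). Then the target weight $\nu := w \cdot \mu$ satisfies $\langle \alpha^\vee, \nu\rangle = \langle \beta^\vee, \mu + \rho\rangle - 1 \in \mathbb{Z}^{\geq 0}$.

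The hard part---and main obstacle---is realizing this $w$ as an expression $s_{i_r}\cdots s_{i_1}$ in simple reflections of $\Phi$ for which each $\alpha_{i_k}$ is non-integral at the intermediate weight $\mu_{k-1} := s_{i_{k-1}}\cdots s_{i_1}\cdot \mu$, so that Lemma~\ref{Symplectic Fourier Transform for a Root Swaps Simples Non-integral For That Root} applies at each step and gives $F_{s_{i_r}}\cdots F_{s_{i_1}}(L_\mu) \cong L_\nu$. I would argue by induction on $\ell(w)$, using the key observation that at each intermediate weight $\mu_{k-1}$ the integral subsystem $\Phi_{\mu_{k-1}} = s_{i_{k-1}}\cdots s_{i_1}\Phi_\mu$ stays a proper subsystem of $\Phi$, so there is always a simple root of $\Phi$ that is non-integral at $\mu_{k-1}$; one must then choose such a simple reflection that makes genuine progress toward $\nu$ (for instance, by decreasing the length of the element of $W$ still remaining to be applied).

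Finally, the containment $L_\nu \in \D(B_{\lambda}\backslash G)^{Q_{\alpha}\text{-mon}}$ follows from $\langle \alpha^\vee, \nu\rangle \in \mathbb{Z}^{\geq 0}$: this ensures the $\alpha$-copy of $\mathfrak{sl}_2$ acts locally finitely on $L_\nu$, which, together with the $B$-equivariance, yields the full $Q_\alpha$-monodromic structure.
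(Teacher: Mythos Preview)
Your overall strategy of iterating Lemma~\ref{Symplectic Fourier Transform for a Root Swaps Simples Non-integral For That Root} is correct, the integral case is fine, and the final $Q_\alpha$-monodromicity argument matches the paper's. The gap is precisely where you flag it as ``the hard part.'' You fix a target $\nu$ in advance and attempt to reach it by a word $s_{i_r}\cdots s_{i_1}$ in which each simple reflection is non-integral at the intermediate weight. You correctly observe that at every stage \emph{some} non-integral simple root of $\Phi$ exists (since $\Phi_{\mu_{k-1}}$ is a proper subsystem), but you give no argument that one can be chosen that also shortens the remaining Weyl element. It may well happen that every simple $\alpha$ with $\ell(w's_\alpha)<\ell(w')$ lies in the integral subsystem $\Phi_{\mu_{k-1}}$, at which point your induction stalls. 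The parenthetical ``for instance, by decreasing the length of the element of $W$ still remaining to be applied'' is the entire content of the inductive step, and it is not justified.

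The paper avoids this by inducting on a different quantity and never fixing a target in advance. Set $\Xi_\mu^{\geq 0}$ to be the set of positive coroots $\gamma$ with $\langle\gamma,\mu\rangle\in\mathbb{Z}^{\geq 0}$; this is nonempty exactly because $\mu$ is not antidominant. Induct on the minimal height of an element of $\Xi_\mu^{\geq 0}$. If a minimal-height $\gamma$ is simple, take $\nu=\mu$ and $r=0$. Otherwise pick any simple coroot $\alpha^\vee$ for which $s_\alpha(\gamma)$ has strictly smaller height (always possible for a non-simple positive coroot). The key observation is that minimality of $\gamma$ \emph{forces} $\langle\alpha^\vee,\mu\rangle\notin\mathbb{Z}$: were it a negative integer then $\gamma-\alpha^\vee$ would be a positive coroot in $\Xi_\mu^{\geq 0}$ of smaller height, and were it a nonnegative integer then $\alpha^\vee\in\Xi_\mu^{\geq 0}$ already has height one---either way contradicting minimality. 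Hence the lemma applies, $F_{s_\alpha}(L_\mu)\cong L_{s_\alpha\mu}$, and since $s_\alpha(\gamma)\in\Xi_{s_\alpha\mu}^{\geq 0}$ has smaller height, induction finishes. In this scheme the non-integrality of the chosen reflection is a \emph{consequence} of the induction hypothesis rather than an extra constraint to be reconciled with ``progress toward a target,'' which is exactly what makes the argument go through where yours does not.
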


\begin{proof}
Since $\D(G/N)^{B_{\lambda}} \xrightarrow{\sim} \D(\tilde{G}/\tilde{N})^{\tilde{B}_{\tilde{\lambda}}}$, we may assume that $G$ is semisimple and simply connected. For any given $\mu \in \LTd(k)$, let $\Xi_{\mu}^{> 0}$ denote the set of positive coroots $\gamma$ such that $\langle \mu + \rho, \gamma \rangle \in \mathbb{Z}^{> 0}$. Fix some non-antidominant weight $\mu$. By definition of antidominance, $\mu$ has the property that $\Xi_{\mu}^{> 0}$ is nonempty. We induct on the minimal height of an element in $\Xi_{\mu}^{> 0}$. For the base case, we note that if there is an element of $\Xi_{\mu}^{> 0}$ of height one, then it must be a simple coroot $\alpha^{\vee}$, since simple coroots are precisely the positive coroots of height one. Since $\langle \mu + \rho, \alpha^{\vee} \rangle = \langle \mu, \alpha^{\vee} \rangle + 1$, we deduce that $\langle \mu, \alpha^{\vee} \rangle \in \Z^{\geq 0}$. Therefore, taking the empty expression and $\mu = \nu$ gives our claim in this case.

Now assume $\gamma \in \Xi_{\mu}^{> 0}$ is some minimal height coroot of height larger than one. In particular, $\gamma$ is positive but not simple. Therefore, following the discussion in \cite[Section 0.2]{HumO}, there exists some simple coroot $\alpha^{\vee}$ such that $s_{\alpha}(\gamma)$ is a positive coroot whose height is less than the height of $\gamma$. Since \[\gamma - s_{\alpha}(\gamma) = \langle \alpha, \gamma \rangle \alpha^{\vee}\] we deduce that \begin{equation}\label{Height inequality}
    \langle \alpha, \gamma \rangle > 0
\end{equation} from the fact that the height of $s_{\alpha}(\gamma)$ is less than the height of $\gamma$. 

We now claim that for this coroot $\alpha^{\vee}$, $\langle \mu, \alpha^{\vee}\rangle$ is not an integer. To see this, we first compute that \[\langle \mu + \rho, s_{\alpha}(\gamma)\rangle = \langle \mu, s_{\alpha}(\gamma)\rangle + \langle \rho, s_{\alpha}(\gamma)\rangle = \langle s_{\alpha}(\mu), \gamma\rangle + \langle \rho, s_{\alpha}(\gamma)\rangle\] \[= \langle \mu - \langle \mu, \alpha^{\vee}\rangle\alpha, \gamma\rangle + \langle \rho, s_{\alpha}(\gamma)\rangle = \langle \mu, \gamma\rangle - \langle \mu, \alpha^{\vee}\rangle\langle \alpha, \gamma\rangle + \langle \rho, s_{\alpha}(\gamma)\rangle.\] We deduce that \begin{equation*}\langle \mu, \gamma\rangle - \langle \mu, \alpha^{\vee}\rangle\langle \alpha, \gamma\rangle + \langle \rho, s_{\alpha}(\gamma)\rangle \notin \Z^{> 0}\end{equation*} since $s_{\alpha}(\gamma)\notin \Xi^{>0}_{\mu}$ by minimality of $\gamma$. Writing $p := \langle \mu + \rho, \gamma\rangle$, we deduce that  \begin{equation*}p - \langle \rho, \gamma\rangle  - \langle \mu, \alpha^{\vee}\rangle\langle \alpha, \gamma\rangle + \langle \rho, s_{\alpha}(\gamma)\rangle \notin \Z^{> 0}\end{equation*} from the fact that $ p - \langle \rho, \gamma\rangle = \langle \mu, \gamma\rangle$. Therefore, \[-\langle\alpha, \gamma\rangle(1 + \langle \mu, \alpha^{\vee}\rangle) = -\langle \alpha, \gamma\rangle\langle \rho, \alpha^{\vee}\rangle - \langle \mu, \alpha^{\vee}\rangle\langle\alpha, \gamma\rangle = \langle \rho, -\langle \alpha, \gamma\rangle\alpha^{\vee}\rangle - \langle \mu, \alpha^{\vee}\rangle\langle\alpha, \gamma\rangle\] \[ = \langle \rho, s_{\alpha}(\gamma) - \gamma\rangle - \langle \mu, \alpha^{\vee}\rangle\langle\alpha, \gamma\rangle = - \langle \rho, \gamma\rangle  - \langle \mu, \alpha^{\vee}\rangle\langle \alpha, \gamma\rangle + \langle \rho, s_{\alpha}(\gamma)\rangle \notin \Z^{> -p}\] and so, in summary, \begin{equation}\label{Not in Zgp}
    -\langle\alpha, \gamma\rangle(1 + \langle \mu, \alpha^{\vee}\rangle) \notin \Z^{> -p}.
\end{equation}

Because $\langle\alpha, \gamma\rangle > 0$ by \labelcref{Height inequality}, from \labelcref{Not in Zgp} we see that $1 + \langle \mu, \alpha^{\vee}\rangle$ cannot be a negative integer or zero. Therefore, if $\langle \mu, \alpha^{\vee} \rangle$ were an integer at all, we would have that $1 + \langle \mu, \alpha^{\vee} \rangle > 0$, so $\langle \mu, \alpha^{\vee} \rangle > -1$, and so $\langle \mu, \alpha^{\vee} \rangle$ must therefore be a \textit{nonnegative} integer. But this would imply that \[\langle \mu + \rho, \alpha^{\vee} \rangle = \langle \mu, \alpha^{\vee} \rangle + 1 \in \Z^{>0}\] which would in turn imply that $\alpha^{\vee} \in \Xi^{>0}_{\mu}$, which would in turn violate our minimality assumption on $\gamma$.

We have just shown that $\langle \mu, \alpha^{\vee} \rangle \notin \Z$. From this fact, \cref{Symplectic Fourier Transform for a Root Swaps Simples Non-integral For That Root} gives that $F_{s_{\alpha}}(L_{\mu}) \simeq L_{s_{\alpha} \cdot \mu}$. Observe also that $s_{\alpha}(\gamma) \in \Xi_{s_{\alpha} \cdot \mu}^{> 0}$ since \[\langle s_{\alpha} \cdot \mu + \rho, s_{\alpha}(\gamma)\rangle = \langle s_{\alpha}(\mu + \rho), s_{\alpha}(\gamma)\rangle = \langle \mu + \rho, \gamma\rangle \in \Z^{> 0}\] since $\gamma \in \Xi^{>0}_{\mu}$. Therefore, by induction, since $s_{\alpha}(\gamma)$ has smaller height than $\gamma$, we may write $F_{s_1}...F_{s_{r-1}}(L_{s_{\alpha} \cdot \mu}) \cong L_{\nu}$. Setting $s_r = s_{\alpha}$, we obtain our desired isomorphism. 

The final sentence follows from \cite[Theorem 9.4]{HumO}. Specifically, this theorem states that our simple $L_{\nu} \in \text{(}\LG\text{-mod)}_{\chi_{\lambda}}^{N, \heartsuit}$ is in particular locally finite for the action of $\text{Lie}(Q_{\alpha})$, where we recall that $Q_{\alpha}$ is the commutator of the associated parabolic $P_{\alpha}$. We have seen that, as $G$ is simply connected, $Q_{\alpha}$ may be non-canonically written as a semidirect product of simply connected groups $U_{w_0s_{\alpha}} \rtimes \SL_2$. The fact that $L_{\nu}$ is $N$-equivariant implies that this representation may be lifted to the subgroup $U_{w_0s}$. Moreover, the local finiteness of the action for $\text{Lie}(Q_{\alpha})$ implies the local finiteness for the action of $\LSL_2$, and so we see that there exists a corresponding $\SL_2$-representation as well since $\SL_2$ is simply connected. Therefore we see that $L_{\nu} \in \text{(}\LG\text{-mod)}_{\chi_{\lambda}}^{Q_{\alpha}\text{-mon}, \heartsuit} \simeq \D(B_{\lambda}\backslash G)^{Q_{\alpha}\text{-mon}, \heartsuit}$ which establishes our claim. 
\end{proof}

\begin{proof}[Proof of \cref{Simply Connected Version of Nondegenerate Subcategory of D(G/N) is the HN-subcategory Gen'd By Non-antidominant Simples}]
We show the functor is essentially surjective. Let $\F \in \D(\tilde{G}/\tilde{N})^{\tilde{N}, +}_{\text{deg}}$. Since the essential image is closed under colimits, we may assume that $\F$ is eventually coconnective, and, since the inclusion functor is a map of left $\IndCoh(\LTd/\characterlatticeforT)$-categories, we may check that the essential image of the associated $\tilde{B}_{\lambda}$-invariants agree for all field-valued $\lambda$ by \cref{Can Check Equivalence on Each field-valued Point for Groups}. Using \cref{Categorical Extension of Scalars Proposition}, we may assume that $\lambda \in \LTd(k)$. Furthermore, any eventually coconnective object can be written as a colimit of its cohomology groups since the $t$-structure on $\D(\tilde{B}_{\lambda}\backslash \tilde{G}/\tilde{N})$ is right-complete by \cref{Conservative t Exact Functor to right-complete Implies right-complete and Dual Statement}. 

Note further that by \cref{Avpsi is Translation} the functor $\Avpsi$ has the same kernel as the contravariant functor $\tildeV_I$. An object $M \in \D(\tilde{B}_{\lambda}\backslash \tilde{G}/\tilde{N})^{\heartsuit}$ is in the kernel of $\tildeV_I$ if and only if it does not admit any subquotient of some antidominant simple. Therefore by \cref{Equivalent Notions of Degeneracy for Ind Derived Category O}, it suffices to prove that each $L_{\mu}$ for which $\mu$ lies in the $(W, \cdot)$-orbit of $\lambda$ and $\mu$ is not antidominant, $L_{\mu}$ lies in the right $\HN$-orbit of some object of $\D(\tilde{B}_{\lambda}\backslash \tilde{G})^{\tilde{Q}_{\alpha}\text{-mon}}$ for some simple root $\alpha$. However, this is precisely the content of \cref{The Gelfand-Graev Action Matches Any Non-Antidominant Weight With Some Q-monodromic Weight}, since the symplectic Fourier transformations are given by convolution with the Kazhdan-Laumon sheaves. 
\end{proof}

In particular, \cref{Simply Connected Version of Nondegenerate Subcategory of D(G/N) is the HN-subcategory Gen'd By Non-antidominant Simples} implies that, if $G$ is simply connected, a nondegenerate $G$-category in the sense of \cref{NondegenerateDefinition}  is equivalently nondegenerate in the sense of \cref{Nondegenerate G Category Definition for Simply Connected Group}. 

\begin{Remark}
If $G = \text{PGL}_2$, $\D(G/N)_{\text{deg}}$ is not generated by right $G$-monodromic objects. For example, an immediate consequence of \cref{Avpsi is Translation} is the fact that the two dimensional representation $k^2$ of $\mathfrak{pgl}_2$ is an eventually coconnective object in the kernel of $\Avpsi$; however, it is not $G$-monodromic since, for example, if we let $\mu$ denote the central character of the two dimensional representation, then we have that $k^2 \in \D(B_{\mu}\backslash G/N)^{\heartsuit}$ and
\raggedbottom
\[\D(B_{\lambda}\backslash G/N)^{G\text{-mon}} \simeq \LG\text{-mod}^G_{\chi_{\mu}} \simeq \text{Rep}(\text{PGL}_2)_{\chi_{\mu}}\]

\noindent is zero.
\end{Remark}

    \appendix

\section{Twisted D-Modules are the Derived Category of their Heart}\label{Twisted D-Modules are Derived Category of Heart Section}
Fix some $\mu \in \LTd(k)$, and let $\lambda \in \LTd(k)$ denote some regular antidominant weight in $\mu + \characterlatticeforT$. In this appendix, we prove that \[\D(G/_{\mu} B)^N \simeq \D(G/_{\lambda}B)^N\] is the unbounded derived category of its heart. Here, by derived category, we mean the canonical DG structure on the derived category of its heart in the sense of \cite[Definition 1.3.5.8]{LuHA}. To define the (unbounded) derived category of an abelian category $\mathcal{A}$ in this sense, we must first argue that $\mathcal{A}$ is a Grothendieck abelian category.

\begin{Lemma}\label{Derived Category is Defined for DG Categories with t-Structure Compatible with Filtered Colimits Remark}
The category $\D(G/_{\lambda}B)^{N, \heartsuit}$ is a Grothendieck abelian category. 
\end{Lemma}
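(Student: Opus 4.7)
The plan is to verify the standard criteria for being a Grothendieck abelian category: the category is abelian with small colimits, filtered colimits are exact, and there is a generator. These follow once we have a well-behaved $t$-structure on $\D(G/_\lambda B)^N$.

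First, I would equip $\D(G/_\lambda B)$ with its natural $t$-structure, which is compatible with filtered colimits (since $G/_\lambda B$ is a smooth scheme, so $\D(G/_\lambda B)$ is in fact the derived category of its heart by \cite[Proposition 4.7.3]{GaiRozCrystals}). Since $N$ acts on $G/_\lambda B$ by left multiplication, \cref{If Smooth Group Acts on Scheme Induced Action on Sheaves is Compatible with t-Structure} tells us this action is compatible with the $t$-structure, so by the construction recalled in \cref{t-structures on G Categories} (following \cite[Appendix B]{Ras2}), the category $\D(G/_\lambda B)^N$ inherits a unique $t$-structure for which the forgetful functor $\text{oblv}^N: \D(G/_\lambda B)^N \to \D(G/_\lambda B)$ is $t$-exact.

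Next I would verify that this $t$-structure on $\D(G/_\lambda B)^N$ is accessible and compatible with filtered colimits. Accessibility follows because $\D(G/_\lambda B)^{N, \geq 0}$ is the preimage under the continuous functor $\text{oblv}^N$ of the accessible subcategory $\D(G/_\lambda B)^{\geq 0}$, and thus is accessible by \cite[Section 5.4.6]{LuHTT}. For compatibility with filtered colimits, note that $\text{oblv}^N$ is $t$-exact, conservative, and continuous (it has a continuous right adjoint $\text{Av}_*^N$ by \cref{Inv=Coinv}), so filtered colimits are reflected and preserved by $\text{oblv}^N$. Since $\D(G/_\lambda B)^{\geq 0}$ is closed under filtered colimits, so too is $\D(G/_\lambda B)^{N, \geq 0}$.

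With these properties in hand, the main step is to conclude that $\D(G/_\lambda B)^{N, \heartsuit}$ is Grothendieck abelian. The heart of any presentable stable $\infty$-category with an accessible $t$-structure compatible with filtered colimits is automatically a Grothendieck abelian category by \cite[Proposition 1.3.5.21]{LuHA}: small colimits exist, filtered colimits are exact (this is exactly AB5, which follows from the compatibility with filtered colimits together with the fact that $H^0$ is a left adjoint on the connective part), and a generator can be exhibited by taking $H^0$ applied to a set of compact generators of $\D(G/_\lambda B)^N$. Such compact generators exist because $\D(G/_\lambda B)^N$ is itself compactly generated---this can be seen via the Beilinson--Bernstein identification with $\LG\text{-mod}^N_{\widehat{\chi_\lambda}}$ or by averaging compact generators of $\D(G/_\lambda B)$. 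The main subtlety to watch is ensuring the required compatibility with filtered colimits on the connective side; I expect this to reduce cleanly to the analogous statement for $\D(G/_\lambda B)$ via the conservativity and $t$-exactness of $\text{oblv}^N$.
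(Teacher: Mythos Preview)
Your proposal is correct and follows essentially the same approach as the paper: both verify that the $t$-structure on $\D(G/_{\lambda}B)^N$ is accessible and compatible with filtered colimits by transferring these properties through a $t$-exact conservative forgetful functor, and then invoke Lurie's criterion (the paper cites \cite[Remark 1.3.5.23]{LuHA}). The only cosmetic difference is that the paper forgets all the way to $\D(G)$ rather than to $\D(G/_{\lambda}B)$, and does not spell out the generator argument separately since it is subsumed in Lurie's statement.
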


\begin{proof}
Note that the $t$-structure on $\D(G)$ is compatible with filtered colimits \cite[Section 4.3.2]{GaiRozCrystals}. In particular, the connective (respectively, coconnective) objects can be identified with the ind-completion of the subcategory of connective (respectively, coconnective) objects which are compact \cite[Chapter 4, Lemma 1.2.4]{GaRoI}, and so we also see that the $t$-structure on $\D(G)$ is accessible. Therefore, since the forgetful functor $\text{oblv}: \D(G/_{\lambda}B)^{N} \to \D(G)$ is $t$-exact and conservative, we also see that the $t$-structure on $\D(G/_{\lambda}B)^{N, \heartsuit}$ is compatible with filtered colimits and accessible. Therefore $\D(G/_{\lambda}B)^{N, \heartsuit}$ is a Grothendieck abelian category by \cite[Remark 1.3.5.23]{LuHA}. 
\end{proof}

\begin{Proposition}\label{N Invariant Twisted D Modules on Flag Variety Is Derived Category of Heart Proposition}
The category $\D(G/_{\lambda}B)^N$ is the unbounded derived category of its heart.
\end{Proposition}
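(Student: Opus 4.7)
The plan is to apply Barr--Beck--Lurie using a compact projective generator of $\D(G/_{\lambda}B)^N$ sitting in the heart, whose endomorphism algebra will be shown to be concentrated in cohomological degree zero. Once this is done, $\D(G/_{\lambda}B)^N$ is equivalent to modules over a discrete ring, which is canonically the unbounded derived category of its heart.

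First, I would identify $\D(G/_{\lambda}B)^{N, \heartsuit}$ with the ind-completion of $\mathcal{O}_\lambda$. Monodromic Beilinson--Bernstein localization \labelcref{Monodromic Beilinson-Bernstein Localization at Zero} (applied at central character $\chi_\lambda$), combined with the identification $\D(G/_{\lambda}B)^N \simeq \D(G)^{B_\lambda\text{-mon},N}$ and Kashiwara's equivalence, realizes the heart as $\text{Ind}(\mathcal{O}_\lambda)$, using that $\mathcal{O}_\lambda$ consists of finitely generated (hence compact) objects in the Grothendieck abelian category provided by \cref{Derived Category is Defined for DG Categories with t-Structure Compatible with Filtered Colimits Remark}.

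Second, I would construct a compact projective generator. Set $\mathcal{P} := \bigoplus_{w} P_{w \cdot \lambda}$ where $w$ ranges over $W/W_{[\lambda]}$ and $P_{w \cdot \lambda}$ is the indecomposable projective cover of $L_{w \cdot \lambda} \in \mathcal{O}_\lambda$. Each $P_{w \cdot \lambda}$ is finite length, hence compact in $\text{Ind}(\mathcal{O}_\lambda)$, and these objects generate the heart because every finite length object of $\mathcal{O}_\lambda$ is a quotient of a coproduct of copies of $\mathcal{P}$. To upgrade compactness in the heart to compactness in the DG category $\D(G/_{\lambda}B)^N$, I would use that $\mathcal{O}_\lambda$ has finite global cohomological dimension together with right-completeness of the $t$-structure (from \cref{Conservative t Exact Functor to right-complete Implies right-complete and Dual Statement}) and its compatibility with filtered colimits (\cref{Derived Category is Defined for DG Categories with t-Structure Compatible with Filtered Colimits Remark}): any map from $\mathcal{P}$ to a filtered colimit only sees bounded cohomological range and hence factors through a finite stage.

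Third, I would show that $R_\lambda := \uEnd_{\D(G/_{\lambda}B)^N}(\mathcal{P})$ is discrete. Projectivity of each $P_{w \cdot \lambda}$ in $\mathcal{O}_\lambda$ yields vanishing of $\text{Ext}^i$ to every object of $\mathcal{O}_\lambda$ computed abelian-categorically, and this extends to every object of the heart via filtered colimits using the compatibility from \cref{Derived Category is Defined for DG Categories with t-Structure Compatible with Filtered Colimits Remark}. Combined with right-completeness of the $t$-structure, one shows $\uHom_{\D(G/_{\lambda}B)^N}(\mathcal{P}, X)$ is concentrated in degree zero for every $X$ in the heart, and hence by a spectral sequence / truncation argument that $R_\lambda$ is concentrated in degree zero. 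Applying Barr--Beck--Lurie to the conservative, colimit-preserving functor $\uHom(\mathcal{P}, -)$ yields a $t$-exact equivalence $\D(G/_{\lambda}B)^N \simeq R_\lambda\text{-mod}$; since $R_\lambda$ is discrete, the right-hand side is the unbounded derived category of $R_\lambda^\heartsuit\text{-mod} \simeq \D(G/_{\lambda}B)^{N, \heartsuit}$.

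The main obstacle is the third step: upgrading projectivity of $\mathcal{P}$ in the abelian heart to projectivity in the DG sense, i.e. vanishing of higher $\uHom$ in the enclosing DG category $\D(G/_{\lambda}B)^N$. The subtlety is that derived Ext in an abelian heart need not agree a priori with $\uHom$ in an ambient stable $\infty$-category with a compatible $t$-structure; the argument requires carefully combining the finite global dimension of $\mathcal{O}_\lambda$, the right-completeness of the $t$-structure, and its compatibility with filtered colimits to force agreement.
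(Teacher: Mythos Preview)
Your overall strategy is sound, and in fact the paper itself uses exactly this Barr--Beck argument in \cref{Ind of Bounded Derived is Bounded Derived of Ind for Abelian Cats with Finite Cohomological Dimension and Proj Generators}---but applied to $\text{Ind}(\mathcal{D}^b(\mathcal{O}_\lambda))$, where the vanishing of higher $\uHom$ from projectives holds \emph{by construction}. The gap in your proposal is precisely the circularity you flag in your final paragraph and do not resolve: you cannot deduce that $\uHom_{\D(G/_{\lambda}B)^N}(\mathcal{P}, X)$ is concentrated in degree zero for $X$ in the heart from abelian-categorical projectivity of $\mathcal{P}$ together with right-completeness and compatibility with filtered colimits alone. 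These are properties of the $t$-structure and say nothing about whether the ambient DG $\uHom$ agrees with the abelian derived $\text{Ext}$; there could in principle be phantom higher maps in $\D(G/_{\lambda}B)^N$ invisible to the heart. The same issue undermines your compactness argument in step two: finite global dimension of $\mathcal{O}_\lambda$ is a statement about abelian $\text{Ext}$, not about $\uHom$ in the ambient DG category, so it does not directly bound the cohomological amplitude of $\uHom(\mathcal{P},-)$.

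The paper's route supplies the missing geometric input. Rather than working with projectives directly, it proves the dual statement for injectives (\cref{The Higher Ext of Compact Objects and Compact Injective Objects Vanishes}) by exploiting the highest weight structure: injectives in $\mathcal{O}_\lambda$ admit costandard filtrations, and the crucial vanishing $\uHom_{\D(G/_{\lambda}B)^N}(\Delta_w,\nabla_v) \simeq 0$ for $v \neq w$ is computed \emph{geometrically} via $!$-restriction to Schubert cells---this is a genuine computation in the DG category, not an appeal to abelian projectivity. From there the paper establishes the bounded-below equivalence via \cref{Category is Left-Bounded Derived Category of Heart Iff Injectives Have Correct Mapping Property}, and then handles the unbounded case by separately verifying left-completeness on both sides (\cref{The unbounded derived category of Olambda of the ind-completion is left-complete}, \cref{t-Structure on Twisted D-Modules is Left-Complete}). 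If you want to salvage your direct approach, the cleanest fix is to observe that the standard objects $\Delta_w$ are compact in $\D(G/_{\lambda}B)^N$ for geometric reasons (they are $!$-extensions from cells, and $!$-restriction provides the left adjoint), so projectives---which admit standard filtrations---are compact as iterated extensions of compacts; then use the same cell-geometric $\uHom(\Delta_w,\nabla_v)$ computation to establish the DG Ext vanishing you need.
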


This entire appendix will be devoted to the proof of \cref{N Invariant Twisted D Modules on Flag Variety Is Derived Category of Heart Proposition}. The strategy will proceed as follows: in \cref{Bounded By Below Twisted D-Modules are Bounded Derived Category of Heart}, we show that the bounded-below derived categories agree. Then, in \cref{Left-Completeness of Derived Category} and \cref{Identification of Derived Category O with N-Invariants of Flag Variety}, we show that both categories of \cref{N Invariant Twisted D Modules on Flag Variety Is Derived Category of Heart Proposition} are left-complete, thus proving \cref{N Invariant Twisted D Modules on Flag Variety Is Derived Category of Heart Proposition}. 

\subsection{Bounded-Below Twisted D-Modules are the Bounded Derived Category of the Heart}\label{Bounded By Below Twisted D-Modules are Bounded Derived Category of Heart}
In \cref{Left Bounded N Invariant Twisted D Modules on Flag Variety Is Left Bounded Derived Category of Heart Proposition}, we will show that the bounded below variant of \cref{N Invariant Twisted D Modules on Flag Variety Is Derived Category of Heart Proposition} holds. We will do this via the following general claim: 

\begin{Lemma}\cite[Proposition 1.3.3.7, Dual Version]{LuHA}\label{Category is Left-Bounded Derived Category of Heart Iff Injectives Have Correct Mapping Property}
Assume we are given a stable $\infty$-category $\C$ equipped with a right-complete $t$-structure for which $\C^{\heartsuit}$ has enough injectives. Then we have an induced $t$-exact functor $\D^+(\C^{\heartsuit}) \to \C^+$, and this functor is an equivalence if and only if for any object $I \in \C^{\heartsuit}$ which is injective at the abelian categorical level (i.e. the functor $\text{Hom}_{\C^{\heartsuit}}(-, I)$ is an exact functor of abelian categories), and $Y \in \C^{\heartsuit}$, there exists some $Z \in \C^{\heartsuit}$ and an epimorphism $Z \twoheadrightarrow Y$ such that $\text{Ext}^i_{\C}(Z, I) \cong 0$ for all $i > 0$.    
\end{Lemma}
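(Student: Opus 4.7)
The plan is to first construct the comparison functor $\mathcal{D}^+(\mathcal{C}^{\heartsuit}) \to \mathcal{C}^+$, and then verify that the stated condition is equivalent to this functor being an equivalence. Since $\mathcal{C}^{\heartsuit}$ has enough injectives, I can identify $\mathcal{D}^+(\mathcal{C}^{\heartsuit})$ with the $\infty$-categorical localization of bounded-below complexes of injectives modulo chain homotopy; the functor then sends such a complex $I^{\bullet}$ to its totalization in $\mathcal{C}^+$, which exists because each cohomology lies in $\mathcal{C}^{\heartsuit}$ and we are bounded below so that right-completeness of the $t$-structure on $\mathcal{C}$ controls convergence. $t$-exactness is immediate since the functor restricts to the identity on $\mathcal{C}^{\heartsuit}$.

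For the \lq\lq only if\rq\rq{} direction, if the functor is an equivalence then for any abelian-categorically injective $I \in \mathcal{C}^{\heartsuit}$ and any $Y \in \mathcal{C}^{\heartsuit}$, the group $\text{Ext}^i_{\mathcal{C}}(Y, I)$ identifies with $\text{Ext}^i_{\mathcal{D}^+(\mathcal{C}^{\heartsuit})}(Y, I)$, which vanishes for $i > 0$ by abelian-categorical injectivity. Thus the stated condition is trivially satisfied by taking $Z := Y$ with the identity morphism.

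The \lq\lq if\rq\rq{} direction is the substantive part. Essential surjectivity onto $\mathcal{C}^+$ will follow from right-completeness and $t$-exactness by an induction on cohomological amplitude using the standard truncation triangles $\tau^{\leq n}X \to X \to \tau^{> n}X$ to reduce to objects of the heart, which are tautologically in the image. Fully faithfulness is where the hypothesis is used: given $Y \in \mathcal{C}^{\heartsuit}$ and a bounded-below complex of injectives $I^{\bullet}$, I need to show that $\text{Hom}_{\mathcal{C}}(Y, I^{\bullet})$ agrees with the mapping space computed in $\mathcal{D}^+(\mathcal{C}^{\heartsuit})$. The plan is to use the hypothesis to iteratively build, for each $Y$, an epimorphism $Z \twoheadrightarrow Y$ with $\text{Ext}^i_{\mathcal{C}}(Z, I) = 0$ for $i > 0$ against every injective $I$; taking the kernel and repeating produces a resolution of $Y$ by such objects, and a standard double-complex/spectral sequence argument then shows that $\text{Ext}^i_{\mathcal{C}}(Y, I^{\bullet})$ is computed by the expected totalization, matching the derived-categorical answer. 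The main obstacle is this inductive construction together with verifying that the higher coherences work in the $\infty$-categorical setting, which is the technical heart of \cite[Proposition 1.3.3.7]{LuHA} and its dualization to the injective/right-complete setting used here.
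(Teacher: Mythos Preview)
The paper does not prove this lemma; it is cited directly from Lurie's \emph{Higher Algebra} as the dual of Proposition 1.3.3.7, and the paper uses it as a black box in the proof of \cref{Left Bounded N Invariant Twisted D Modules on Flag Variety Is Left Bounded Derived Category of Heart Proposition}. So there is no paper-side argument to compare against, and your sketch is essentially a reconstruction of Lurie's argument in the dual setting.

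Your construction of the comparison functor and the ``only if'' direction are fine. In the ``if'' direction there is one imprecision worth flagging. You write that you will build, for each $Y$, an epimorphism $Z \twoheadrightarrow Y$ with $\text{Ext}^i_{\C}(Z, I) = 0$ \emph{against every injective} $I$, and then resolve $Y$ by such objects. But the hypothesis only furnishes a $Z$ depending on the pair $(Y, I)$, not on $Y$ alone; there is no reason a single $Z$ should work for all injectives simultaneously. The correct order is: first reduce fully faithfulness to showing $\text{Ext}^i_{\C}(Y, I) = 0$ for every $Y \in \C^{\heartsuit}$ and every fixed abelian-categorical injective $I$ (by resolving the target by injectives), and then, with $I$ fixed, run the dimension-shifting argument. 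Namely, choose $Z \twoheadrightarrow Y$ for this particular $I$, use abelian-categorical injectivity of $I$ to see that $\text{Hom}(Z, I) \to \text{Hom}(K, I)$ is surjective (where $K = \ker(Z \to Y)$), conclude $\text{Ext}^1_{\C}(Y, I) = 0$, and then observe $\text{Ext}^{i+1}_{\C}(Y, I) \cong \text{Ext}^i_{\C}(K, I)$ for $i \geq 1$; now apply the hypothesis again to the pair $(K, I)$ and iterate. This is the substance of Lurie's proof, and once you reorder the quantifiers your outline is correct.
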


We first show this Ext vanishing for objects of $\mathcal{O}_{\lambda}$:

\begin{Lemma}\label{The Higher Ext of Compact Objects and Compact Injective Objects Vanishes}
Assume $\mathcal{F} \in \D(G/_{\lambda}B)^{N, c, \heartsuit}$ and $J \in \D(G/_{\lambda}B)^{N, c, \heartsuit}$ is injective. Then $\uHom_{\D(G/_{\lambda}B)^N}(\F, J)$ is concentrated in degree 0. 
\end{Lemma}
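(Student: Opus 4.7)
The plan is to leverage \cref{Avpsi is Translation}, which identifies $\underline{I}_{\lambda}$ as the image of the one-dimensional vector space under the shifted averaging functor $\AvNshifted: \text{Vect} \simeq \D(G/_{\lambda}B)^{N^{-}, \psi} \to \D(G/_{\lambda}B)^N$. Combined with the shifted adjunction from \cref{BBMAdjointTheorem} and the $t$-exactness result of \cref{BBMShiftedLeftAdjointIsExact}, this will reduce the computation of $\uHom(\F, J)$ to a $\uHom$-computation in $\text{Vect}^{\heartsuit}$, which is automatically concentrated in degree $0$.

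First I would reduce to the case $J = \underline{I}_{\lambda}$. A compact injective object in $\D(G/_{\lambda}B)^{N, c, \heartsuit}$ is, at the level of the underlying abelian category $\mathcal{O}_{\lambda}$, a finite direct sum of the indecomposable injective hulls $I_{\mu}$ indexed by antidominant $\mu$ in the $(W, \cdot)$-orbit of $\lambda$. Such a $J$ embeds as a direct summand of $\underline{I}_{\lambda}^{\oplus n}$ for some $n \geq 0$, where $\underline{I}_{\lambda} := \bigoplus_{\mu} I_{\mu}$ is the direct sum over all such antidominant $\mu$ as in the Struktursatz of \cref{Soergel Summary}. Since $\uHom$ commutes with finite direct sums and passes to retracts, this reduction is free.

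For $J = \underline{I}_{\lambda}$, the shifted adjunction $\Avpsishifted \dashv \AvNshifted$ from \cref{BBMAdjointTheorem} together with the identification $\underline{I}_{\lambda} \simeq \AvNshifted(k)$ from \cref{Avpsi is Translation}(1) yields
\[
\uHom_{\D(G/_{\lambda}B)^N}(\F, \underline{I}_{\lambda}) \simeq \uHom_{\text{Vect}}(\Avpsishifted(\F), k).
\]
By \cref{BBMShiftedLeftAdjointIsExact}, the functor $\Avpsishifted: \D(G/_{\lambda}B)^N \to \text{Vect}$ is $t$-exact, so $\Avpsishifted(\F) \in \text{Vect}^{\heartsuit}$ whenever $\F$ lies in the heart. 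Since the $\uHom$ from an ordinary vector space to $k$ sits in $\text{Vect}^{\heartsuit}$, the claim follows.

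No serious obstacle arises here: all the representation-theoretic content—the Whittaker-side identification $\underline{I}_{\lambda} \simeq \AvNshifted(k)$ and the shifted BBM adjunction together with its $t$-exactness—is already in hand from earlier in the paper. The only care required is in tracking cohomological shifts, which is streamlined by working uniformly with the shifted functors $\AvNshifted$ and $\Avpsishifted$ on both sides of the adjunction so that the final Hom computation lands cleanly in degree $0$.
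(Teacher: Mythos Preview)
Your reduction step contains a genuine error. You assert that every compact injective $J$ in $\mathcal{O}_{\lambda}$ is a direct summand of $\underline{I}_{\lambda}^{\oplus n}$ for some $n$, but this is false: $\underline{I}_{\lambda}$ is by definition the direct sum of the indecomposable injective hulls $I_{\mu}$ only over the \emph{antidominant} $\mu$ in the $(W,\cdot)$-orbit of $\lambda$, whereas $\mathcal{O}_{\lambda}$ has an indecomposable injective hull $I_{\mu}$ for \emph{every} $\mu$ in that orbit. These are pairwise non-isomorphic, so for non-antidominant $\mu$ the object $I_{\mu}$ is a compact injective that is not a summand of any power of $\underline{I}_{\lambda}$. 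Your Whittaker adjunction argument, which is correct and quite clean, therefore establishes the lemma only in the special case $J = \underline{I}_{\lambda}$ (or a summand thereof), and says nothing about the remaining injectives.

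The paper's proof instead exploits the highest weight structure: every injective $J$ carries a filtration by costandard objects $\nabla_{w}$, and the standard objects $\Delta_{w}$ satisfy $\text{Ext}^{j}(\Delta_{w}, \nabla_{v}) = 0$ for $j > 0$. One then inducts on the Bruhat order, using the short exact sequence $0 \to S \to \Delta_{w} \to L_{w} \to 0$ with $S$ filtered by $L_{v}$ for $v < w$, to propagate the Ext-vanishing from standards to simples and hence to all of $\mathcal{O}_{\lambda}$. Your approach would become a genuine alternative if you could supply, for each non-antidominant $\mu$, a realization of $I_{\mu}$ as $\AvNshifted$ of something in the heart of the Whittaker category; but no such identification is available, and indeed $\AvNshifted$ applied to anything in $\text{Vect}^{\heartsuit}$ lands in the additive span of the antidominant $I_{\mu}$.
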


The proof below closely follows ideas from \cite{BeilinsonGinzburgSoergelKoszulDualityPatternsInRepresentationTheory}. 

\begin{proof}[Proof of \cref{The Higher Ext of Compact Objects and Compact Injective Objects Vanishes}]
The category $\D(G/_{\lambda}B)^{N, c, \heartsuit}$ is of finite length, and each object admits a filtration by simple objects of $\D(G/_{\lambda}B)^{N, c, \heartsuit}$. Each of these simples is given by the intermediate extension of the generator of $\D(N\backslash NwB)^{B_{\lambda}, \heartsuit}$. We will denote this intermediate extension by $L_w$. We show, by induction on the ordering in $W$, that for a fixed $w \in W$ any object which admits a filtration by simples $L_v$ for $v \leq w$ has this Ext vanishing property. We recall that our injective object $J$ admits a filtration by costandard objects $\nabla_w$, i.e. those objects which are the $(*, dR)$-pushforward of a generator of $\D(N\backslash NwB)^{B_{\lambda}}$ (in $\mathcal{O}_{\lambda}$, for example, this is the Verdier dual version of \cite[Theorem 3.10]{HumO}). 

For the base case, when $\ell(w) = 0$ (and therefore $w = 1$), we let $M$ be any object which admits a filtration whose subquotients are all of the form $L_1$. Then, since $\Delta_1 \xrightarrow{\sim} L_1$, any such $M$ is the direct sum of copies of $\Delta_1$. Therefore our desired Ext vanishing follows from the fact that there are no extensions between any $\Delta_1$ and objects which admit filtrations whose subquotients are $\nabla_w$ (which, in turn, follows by induction on the length of a filtration and the fact that $i_1^!(\nabla_w) \simeq 0$ if $w \neq 1$). Finally, for a general $w \in W$, we have a short exact sequence 
\raggedbottom
\[0 \to S \to \Delta_w \to L_w \to 0\]

\noindent where $S$ admits a filtration of objects $L_v$ for $v < w$. We then obtain an exact sequence
\raggedbottom
\[\text{Ext}^{j - 1}(\Delta_w, J) \to \text{Ext}^{j - 1}(S, J) \to \text{Ext}^j(L_w, J) \to \text{Ext}^j(\Delta_w, J)\]

\noindent for any $j > 0$. Note that the last term in this sequence vanishes because, in $\D(G/_{\lambda}B)^N$, there are no extensions between any $\Delta_w$ and objects which admit filtrations whose subquotients are $\nabla_w$. If $j = 1$, then, we may identify the third term as the cokernel of the map $\text{Hom}(\Delta_w, J) \to \text{Hom}^{j - 1}(S, J)$, where the maps are equivalently taken in $\D(G/_{\lambda}B)^{N, \heartsuit}$, a full subcategory of $\D(G/_{\lambda}B)^N$. In particular, the injectivity of $J$ shows that this map is a surjection so the cokernel vanishes. If $j > 1$, we similarly obtain that $\text{Ext}^{j - 1}(\Delta_w, J)$ vanishes and so that our inductive hypothesis shows that $\text{Ext}^j(L_w, J) \xleftarrow{\sim} \text{Ext}^{j - 1}(S, J)$ vanishes, since $S$ admits a filtration by simple objects $L_v$ for $v < w$. 
\end{proof}

We now use the following general lemma to reduce to the above computations.

\begin{Lemma}\label{Every Injective of Ind-Completion Has Product Which is Product of Indecomposable Injectives}
Let $\mathcal{A}'$ be an abelian category closed under subquotients in its ind-completion $\mathcal{A}$ with enough injectives such that every injective object of $\mathcal{A}'$ is a finite direct product of indecomposable injective objects. Then, for every injective object $I \in \mathcal{A}$, there exists some injective $M \in \mathcal{A}$ such that $I \times M \cong \prod_n I_n$ for $I_n \in \mathcal{A}'$ indecomposable injectives. 
\end{Lemma}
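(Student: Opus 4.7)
The plan is to construct a monomorphism from $I$ into a product of indecomposable injective objects of $\mathcal{A}'$, and then split this monomorphism using the injectivity of $I$. First, I would observe that the closure of $\mathcal{A}'$ under subquotients in $\mathcal{A}$ forces $\mathcal{A}'$ to consist precisely of the Noetherian (equivalently, compact) objects of $\mathcal{A}$; in particular, any injective object of $\mathcal{A}'$ is automatically injective in $\mathcal{A}$, via a standard Zorn's lemma argument. To extend a map $X \to J$ along a monomorphism $X \hookrightarrow Y$ in $\mathcal{A}$ (with $J$ injective in $\mathcal{A}'$), one writes $Y$ as a filtered union $\bigcup_\alpha Y_\alpha$ of compact subobjects, and extends inductively using injectivity in $\mathcal{A}'$ applied to the inclusions $X \cap Y_\alpha \hookrightarrow Y_\alpha$ of objects of $\mathcal{A}'$.

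Next, I would use \cref{Colim in AB5 is Colim of Image} to write $I = \bigcup_\alpha A_\alpha$ as a filtered union of compact subobjects, each of which lies in $\mathcal{A}'$ by the subquotient closure. For each $\alpha$, embed $A_\alpha$ into an injective object $J_\alpha \in \mathcal{A}'$ (using the enough injectives hypothesis); by assumption, $J_\alpha \cong \bigoplus_{i=1}^{n_\alpha} I_{\alpha, i}$ decomposes as a finite direct sum of indecomposable injectives. By the first step, $J_\alpha$ is injective in $\mathcal{A}$, so the composition $A_\alpha \hookrightarrow J_\alpha$ extends along $A_\alpha \hookrightarrow I$ to a morphism $I \to J_\alpha$. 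Taking the product over all $\alpha$ yields a morphism
$$\psi: I \to \prod_\alpha J_\alpha \cong \prod_n I_n,$$
after reindexing so that each $I_n$ is an indecomposable injective of $\mathcal{A}'$.

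To verify that $\psi$ is a monomorphism, I would argue by contradiction: if $K := \ker(\psi)$ were nonzero, then $K$ would contain a nonzero compact subobject $B$, and choosing $\alpha$ with $B \subseteq A_\alpha$, the composition $B \hookrightarrow I \xrightarrow{\psi} \prod_\alpha J_\alpha \to J_\alpha$ would factor as the monomorphism $B \hookrightarrow A_\alpha \hookrightarrow J_\alpha$, contradicting $B \subseteq K$. Since $I$ is injective, the monomorphism $\psi$ splits, producing an isomorphism $\prod_n I_n \cong I \oplus M$; and $M$ is injective as a retract of a product of injective objects. The main obstacle is the first step, namely establishing that injectivity transfers from $\mathcal{A}'$ to $\mathcal{A}$, since this is precisely what lets us import the decomposition hypothesis about injectives of $\mathcal{A}'$ into a useful embedding of $I$ inside $\mathcal{A}$; once this is in hand, the remainder of the argument is formal.
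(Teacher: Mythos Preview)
Your proposal is correct and follows essentially the same route as the paper: write $I$ as a filtered union of objects of $\mathcal{A}'$ via \cref{Colim in AB5 is Colim of Image}, embed each piece into an injective of $\mathcal{A}'$, extend these embeddings to maps out of $I$, bundle them into a monomorphism $I \hookrightarrow \prod_n I_n$, and split. The only difference is emphasis: you devote your first paragraph to justifying that injectives of $\mathcal{A}'$ remain injective in $\mathcal{A}$ via a Zorn/filtered-union argument, whereas the paper simply asserts this in one clause (``$\mathcal{A}$ is a Grothendieck abelian category, so $I_n$ is an injective object of $\mathcal{A}$'') and moves on.
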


\begin{proof}
Assume $I \in \mathcal{A}$ is some injective object. By assumption and \cref{Colim in AB5 is Colim of Image}, we can write $I$ as an increasing union of objects of $\mathcal{A}'$, say $I = \cup_n A_n$. By assumption that $\mathcal{A}'$ has enough injectives, we may choose an injection $A_n \xhookrightarrow{} I_n$ into an injective object $I_n \in \mathcal{A}'$. Note that $\mathcal{A}$ is a Grothendieck abelian category, so $I_n$ is an injective object of $\mathcal{A}$ and so the inclusion map $i_n: A_n \subseteq I$ extends to a map $f_n: I \to I_n$. 

Consider the canonical induced map $f: I \to \prod_n I_n$ with projection maps $pr_m: \prod_n I_n \to I_m$. The map $f$ is injective, since if $K$ denotes the kernel of $f$, then $\text{pr}_nf|_{A_n \cap K} = i_n|_{A_n \cap K}$ so $A_n \cap K = 0$ for all $n$, and thus $K = \cup_n (A_n \cap K) = 0$. Write each $I_n$ as a product of indecomposable injectives and relabel, if necessary, so that each $I_n$ is an indecomposable injective object; we may do this since every injective object of $\mathcal{A}'$ is a finite direct product of indecomposable injective objects. However, $\prod_n I_n$ is injective since the property of being an injective object is closed under products, and so the injective map $I \xhookrightarrow{f} \prod_n I_n$ splits, using:

\begin{equation*}
  \xymatrix@R+2em@C+2em{
 I \ar[r]^{f} \ar[d]^{\text{id}}  & \prod_n I_n \ar[dl]^{\exists} \\ I
  }
 \end{equation*}
 
 \noindent and so we can write $\prod_n I_n \cong I \times M$ for some $M \in \mathcal{A}$. The fact that $\prod_n I_n$ and $I$ are both injective implies that $M$ is, which follows from the fact that a product in an abelian category is injective only if each factor is. 
\end{proof}

\begin{Corollary}\label{Left Bounded N Invariant Twisted D Modules on Flag Variety Is Left Bounded Derived Category of Heart Proposition}
The induced functor from the left-bounded derived category of the heart of $\D(G/_{\lambda}B)^N$ to the category $\D(G/_{\lambda}B)^{N, +}$ is an equivalence. 
\end{Corollary}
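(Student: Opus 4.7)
The plan is to verify the hypotheses of \cref{Category is Left-Bounded Derived Category of Heart Iff Injectives Have Correct Mapping Property} for the category $\C := \D(G/_{\lambda}B)^N$. The $t$-structure on $\C$ is right-complete by \cref{Conservative t Exact Functor to right-complete Implies right-complete and Dual Statement}, since the forgetful functor $\text{oblv}^N: \C \to \D(G/_{\lambda}B)$ is $t$-exact and conservative and the target has a right-complete $t$-structure by standard results on $\D$-modules. The heart $\C^{\heartsuit}$ has enough injectives because it is a Grothendieck abelian category by \cref{Derived Category is Defined for DG Categories with t-Structure Compatible with Filtered Colimits Remark}. Thus the only remaining task is to verify, for every abelian-categorically injective $I \in \C^{\heartsuit}$ and every $Y \in \C^{\heartsuit}$, the existence of an epimorphism $Z \twoheadrightarrow Y$ with $\text{Ext}^i_{\C}(Z, I) = 0$ for all $i > 0$.

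For this, first recall that $\C^{\heartsuit}$ is the ind-completion of $\mathcal{O}_{\lambda}$ and that $\mathcal{O}_{\lambda}$ is closed under subquotients in $\C^{\heartsuit}$. Since $\mathcal{O}_{\lambda}$ has finitely many simples (indexed by the $(W,\cdot)$-orbit of $\lambda$), every injective object of $\mathcal{O}_{\lambda}$ is a finite direct sum of indecomposable injective hulls $I(\mu)$. Hence the hypotheses of \cref{Every Injective of Ind-Completion Has Product Which is Product of Indecomposable Injectives} are satisfied, so we can find an injective $M \in \C^{\heartsuit}$ and a collection of compact indecomposable injectives $I_n \in \mathcal{O}_{\lambda}$ such that $I \oplus M \cong \prod_n I_n$ in $\C^{\heartsuit}$.

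For the test object, use \cref{Colim in AB5 is Colim of Image} to write $Y$ as the union of its compact subobjects $Y_{\alpha} \in \mathcal{O}_{\lambda}$, and set $Z := \bigoplus_{\alpha} Y_{\alpha}$, equipped with the canonical epimorphism $Z \twoheadrightarrow Y$. For any $i > 0$, higher Ext into $I$ is a direct summand of higher Ext into $I \oplus M \cong \prod_n I_n$, so it suffices to show $\text{Ext}^i_{\C}(Z, \prod_n I_n) = 0$. Using that $\uHom_{\C}(-,-)$ converts direct sums in the first variable into products and commutes with products in the second variable, we obtain
\[
\text{Ext}^i_{\C}(Z, \textstyle\prod_n I_n) \;\cong\; \prod_n \prod_{\alpha} \text{Ext}^i_{\C}(Y_{\alpha}, I_n),
\]
and each factor vanishes for $i > 0$ by \cref{The Higher Ext of Compact Objects and Compact Injective Objects Vanishes}, since both $Y_{\alpha}$ and $I_n$ lie in $\C^{N,c,\heartsuit} = \mathcal{O}_{\lambda}$.

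The main obstacle will be the bookkeeping around \cref{Every Injective of Ind-Completion Has Product Which is Product of Indecomposable Injectives}: one must be careful that the object $I$, while injective in $\C^{\heartsuit}$, need not itself be a product of compact indecomposables (only $I$ together with a complementary injective $M$ has this form), and one must correctly invoke that direct summands of Ext-vanishing classes remain Ext-vanishing. Once this is phrased correctly, the application of \cref{Category is Left-Bounded Derived Category of Heart Iff Injectives Have Correct Mapping Property} yields the stated equivalence on eventually coconnective subcategories.
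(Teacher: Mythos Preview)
Your proof is correct and follows essentially the same approach as the paper: both invoke \cref{Category is Left-Bounded Derived Category of Heart Iff Injectives Have Correct Mapping Property}, use \cref{Every Injective of Ind-Completion Has Product Which is Product of Indecomposable Injectives} to reduce the injective $I$ to a product of compact indecomposable injectives, and then apply \cref{The Higher Ext of Compact Objects and Compact Injective Objects Vanishes}. Your version is somewhat more careful—you explicitly verify right-completeness and enough injectives, and you construct the test object $Z=\bigoplus_\alpha Y_\alpha$ from compact subobjects via \cref{Colim in AB5 is Colim of Image}, whereas the paper instead argues directly that $\uHom(\F,I)$ is concentrated in degree~$0$ for every $\F\in\C^{\heartsuit}$ by writing $\F$ as a filtered colimit of compact objects.
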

\begin{proof}
By \cref{Category is Left-Bounded Derived Category of Heart Iff Injectives Have Correct Mapping Property}, it suffices to show that $\uHom_{\D(G/_{\lambda}B)^N}(\mathcal{F}, I)$ is concentrated in degree 0 if $\F, I \in \D(G/_{\lambda}B)^{N, \heartsuit}$ and $I$ is injective. However, the category $\D(G/_{\lambda}B)^{N, \heartsuit}$ satisfies the hypotheses of \cref{Every Injective of Ind-Completion Has Product Which is Product of Indecomposable Injectives}, so we can find some injective $M$ of $\D(G/_{\lambda}B)^{N, \heartsuit}$ for which 
\raggedbottom
\[\uHom(\F, I) \times \uHom(\F, M) \simeq \uHom(\F, I \times M) \simeq \prod \uHom(\F, I_n)\]

\noindent for $I_n$ compact injective objects of $\D(G/_{\lambda}B)^{N, \heartsuit}$. Therefore, it suffices to show that $I$ is a compact injective object. Furthermore, by compact generation of $\D(G/_{\lambda}B)^N$, we may write $\F$ as a filtered colimit of compact objects. Since the $t$-structure on Vect is compatible with filtered limits, to show that $\uHom(\F,  I)$ is concentrated in degree 0, it suffices to assume that $\F$ is compact. The claim then follows from \cref{The Higher Ext of Compact Objects and Compact Injective Objects Vanishes}. 
\end{proof}

\subsection{Left-Completeness of Derived Category}\label{Left-Completeness of Derived Category}
Let $\mathcal{O}_{\lambda}:= \D(N\backslash G/_{\lambda}B)^{\heartsuit}$. Note that the category $\mathcal{D}^{b}(\mathcal{O}_{\lambda})$ is canonically equipped with a $t$-structure, so that $\text{Ind}(\mathcal{D}^{b}(\mathcal{O}_{\lambda}))$ is equipped with a canonical $t$-structure such that the inclusion functor $\mathcal{D}^{b}(\mathcal{O}_{\lambda}) \xhookrightarrow{} \text{Ind}(\mathcal{D}^{b}(\mathcal{O}_{\lambda}))$ is $t$-exact and the heart of the $t$-structure is $\text{Ind}(\mathcal{O}_{\lambda})$, see e.g. \cite[Chapter 4, Lemma 1.2.4]{GaRoI}. 

\begin{Remark}\label{Ind Completion of Abelian Category in Higher Categorical Setting Remains Abelian}
We note in passing that the ind-completion of any ordinary (1,1)-category is always an ordinary (1,1)-category. This is because, by definition, the ind-completion of an ordinary category $C$ is defined as the full subcategory of presheaves on $C$, i.e. $\text{Fun}(C^{op}, \text{Spc})$ containing the representable functors and closed under filtered colimits. In particular, since all representable objects map to discrete spaces (i.e. sets) and discrete spaces are closed under filtered colimits, we see that the classical definition of the ind-completion of $C$ agrees with the higher categorical definition as in \cite[Section 5.3]{LuHTT}. 
\end{Remark}

We now claim the following: 

\begin{Lemma}\label{Ind of Bounded Derived is Bounded Derived of Ind for Abelian Cats with Finite Cohomological Dimension and Proj Generators}
If $\mathcal{A}$ is some small abelian category of finite cohomological dimension with a compact projective generator, then we have an equivalence of DG categories $\text{Ind}(\mathcal{D}^{b}(\mathcal{A})) \simeq \D(\text{Ind}(\mathcal{A}))$, where the right-hand category denotes the canonical DG model on the derived $\infty$-category of the Grothendieck abelian category $\text{Ind}(\mathcal{A})$ given by the ind-completion of $\mathcal{A}$. 
\end{Lemma}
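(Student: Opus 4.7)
The plan is to identify both sides with $R\text{-Mod}$, the DG category of modules over a classical ring $R$, by showing that $\D^b(\O)$ is the subcategory of compact objects in $\D(\Ind(\O))$.

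Let $P \in \O$ be the compact projective generator, and set $R := \End_{\O}(P)^{op}$. Since $P$ is projective, $R$ is concentrated in cohomological degree zero. By the classical Morita theorem, the functor $\uHom(P, -)$ induces an equivalence of abelian categories $\Ind(\O) \simeq R\text{-mod}^{\heartsuit}$, under which $\O \subseteq \Ind(\O)$ corresponds to the subcategory of finitely presented $R$-modules. Passing to derived DG categories gives $\D(\Ind(\O)) \simeq R\text{-Mod}$, whose compact objects are precisely the perfect $R$-modules.

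The finite cohomological dimension hypothesis on $\O$ translates to a finite global dimension for $R$; combined with the coherence of $R$ (automatic from the fact that the finitely presented modules $\O$ form an abelian category), this implies every finitely presented $R$-module admits a bounded resolution by finitely generated projectives. Hence every bounded complex of finitely presented modules is perfect, giving a canonical equivalence of small DG categories $\D^b(\O) \simeq R\text{-Mod}^c$. Taking Ind-completions and using that $\D(\Ind(\O))$ is compactly generated, I obtain $\Ind(\D^b(\O)) \simeq \Ind(R\text{-Mod}^c) \simeq R\text{-Mod} \simeq \D(\Ind(\O))$, and tracing through the identifications shows this realizes the equivalence induced by the natural functor $\Ind(\D^b(\O)) \to \D(\Ind(\O))$.

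The main obstacle will be establishing the passage from finite cohomological dimension of $\O$ to the statement that bounded complexes of finitely presented $R$-modules are perfect: this uses the coherence of $R$ together with an iterated syzygy argument to terminate projective resolutions in finitely many steps. Once this is in hand, the rest is a formal consequence of Morita theory and the universal property of Ind-completion.
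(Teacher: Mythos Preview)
Your argument is correct, but it is organized differently from the paper's. You work on the right-hand side first: identify $\D(\Ind(\O)) \simeq R\text{-Mod}$, then show $\D^b(\O)$ sits inside as the perfect complexes using coherence of $R$ and a syzygy argument, and finally Ind-complete. The paper instead works entirely on the left-hand side: it observes that the projective generator $\mathcal{G}$ thickly generates $\D^b(\O)$ (since every object of $\O$ has a finite projective resolution by finite sums of $\mathcal{G}$), hence is a compact generator of $\Ind(\D^b(\O))$, and then Barr--Beck immediately gives $\Ind(\D^b(\O)) \simeq E\text{-mod}$ for $E = \uEnd(\mathcal{G})$, which is a classical ring by projectivity; since this equivalence is $t$-exact, the left-hand side is the derived category of its heart $\Ind(\O)$.

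The paper's route is shorter because it never needs to explicitly identify $\D^b(\O)$ with the perfect $R$-modules, and so sidesteps the coherence and termination-of-resolution discussion you flag as the main obstacle. Your route, on the other hand, makes the comparison with $R\text{-Mod}^c$ explicit, which is informative in its own right and would be the natural thing to do if one wanted to track compact objects through the equivalence.
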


\begin{proof}[Proof of \cref{Ind of Bounded Derived is Bounded Derived of Ind for Abelian Cats with Finite Cohomological Dimension and Proj Generators}]
Let $\mathcal{G} \in \mathcal{A}$ denote the compact projective generator. Then every object $M \in \mathcal{A}$ admits a finite length projective resolution, so the smallest subcategory containing $\mathcal{G}$ and all finite colimits is $\mathcal{D}^b(\mathcal{A})$. Since, by definition, every object of $\text{Ind}(\mathcal{D}^b(\mathcal{A}))$ is a filtered colimit of objects of $\mathcal{D}^b(\mathcal{A})$, we see that the smallest subcategory containing $\mathcal{G}$ and all colimits is $\text{Ind}(\mathcal{D}^b(\mathcal{A}))$ itself. Therefore, the functor 
\raggedbottom
\[\uHom_{\text{Ind}(\mathcal{D}^b(\mathcal{A}))}(\mathcal{G}, -): \text{Ind}(\mathcal{D}^b(\mathcal{A})) \to E\text{-mod}\] 

\noindent where $E := \uEnd_{\text{Ind}(\mathcal{D}^b(\mathcal{A}))}(\mathcal{G})$, gives an equivalence of categories by Barr-Beck. Furthermore, the fact that $\mathcal{G}$ is a projective object implies that $E$ is a discrete (i.e. classical) algebra and that this equivalence is $t$-exact. Thus $\text{Ind}(\mathcal{D}^b(\mathcal{A}))$ is equivalent to a category which is the unbounded derived category of its heart by a $t$-exact functor. We therefore see that $\text{Ind}(\mathcal{D}^b(\mathcal{A}))$ is itself the unbounded derived category of its heart.
\end{proof}

By \cite[Theorem B.1.3]{AGKRRVStackOfLocalSystemsWithRestrictedVariation}, we therefore see:

\begin{Corollary}\label{The unbounded derived category of Olambda of the ind-completion is left-complete}
The unbounded derived category of the ind-completion of $\mathcal{O}_{\lambda}$ is left-complete in its $t$-structure.
\end{Corollary}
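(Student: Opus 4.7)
The plan is to combine the identification of Lemma~\ref{Ind of Bounded Derived is Bounded Derived of Ind for Abelian Cats with Finite Cohomological Dimension and Proj Generators} with the general left-completeness criterion of \cite[Theorem B.1.3]{AGKRRVStackOfLocalSystemsWithRestrictedVariation}. The lemma produces a $t$-exact equivalence $\text{Ind}(\D^b(\mathcal{O}_\lambda)) \simeq \D(\text{Ind}(\mathcal{O}_\lambda))$, so it suffices to verify that the right-hand side — the derived $\infty$-category of a Grothendieck abelian category in the sense of \cite[Definition 1.3.5.8]{LuHA} — is left-complete.

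First I would check that the hypotheses of the cited theorem apply. The category $\text{Ind}(\mathcal{O}_\lambda)$ is Grothendieck abelian, essentially by construction, since $\mathcal{O}_\lambda$ is a small abelian category and the ind-completion of a small abelian category is always Grothendieck abelian. The substantive hypothesis is a uniform bound on cohomological dimension. For $\mathcal{O}_\lambda$, such a bound is classical: each simple object admits a BGG-type resolution of length at most $|\Phi^+|$, so every object of $\mathcal{O}_\lambda$ has projective dimension bounded by $|\Phi^+|$. Consequently, $\text{Ext}^i$ vanishes in $\mathcal{O}_\lambda$ for $i > |\Phi^+|$.

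This bound then transfers to $\text{Ind}(\mathcal{O}_\lambda)$ using the AB5 structure (cf.~\cref{Colim in AB5 is Colim of Image}): any object of $\text{Ind}(\mathcal{O}_\lambda)$ may be written as a filtered colimit of objects of $\mathcal{O}_\lambda$, and one can compute Ext groups via filtered colimits of Ext groups in $\mathcal{O}_\lambda$ (noting that compact objects of $\text{Ind}(\mathcal{O}_\lambda)$ are precisely the objects of $\mathcal{O}_\lambda$, closed under subobjects). Since filtered colimits of zero groups are zero, higher Ext groups continue to vanish beyond degree $|\Phi^+|$, so $\text{Ind}(\mathcal{O}_\lambda)$ itself has cohomological dimension bounded by $|\Phi^+|$.

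With these hypotheses verified, \cite[Theorem B.1.3]{AGKRRVStackOfLocalSystemsWithRestrictedVariation} directly yields left-completeness of $\D(\text{Ind}(\mathcal{O}_\lambda))$, and transport along the equivalence from the previous lemma concludes the proof. The main obstacle is quite mild — essentially just bookkeeping the dimension bound through the ind-completion — since the hard analytic content is packaged inside the cited left-completeness theorem.
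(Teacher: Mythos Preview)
Your proposal is correct and follows essentially the same route as the paper: both invoke Lemma~\ref{Ind of Bounded Derived is Bounded Derived of Ind for Abelian Cats with Finite Cohomological Dimension and Proj Generators} and then cite \cite[Theorem B.1.3]{AGKRRVStackOfLocalSystemsWithRestrictedVariation}. The only difference is in how the hypotheses of the cited theorem are verified: the paper leans directly on the identification with $E\text{-mod}$ for a discrete algebra $E$ obtained in the proof of the lemma, whereas you separately establish a uniform cohomological-dimension bound for $\text{Ind}(\mathcal{O}_\lambda)$ (a minor aside: the BGG resolution is by Vermas rather than projectives, so the bound on projective dimension comes from the standard finite-global-dimension fact for $\mathcal{O}_\lambda$ rather than literally from BGG, but your conclusion is correct).
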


\subsection{Identification of Derived Categories}\label{Identification of Derived Category O with N-Invariants of Flag Variety}
Recall in \cref{The unbounded derived category of Olambda of the ind-completion is left-complete} that we showed the unbounded derived category of $\D(N\backslash G/_{\lambda}B)^{\heartsuit}$ is left-complete. Therefore, as we have identified the bounded-below subcategories in \cref{Left Bounded N Invariant Twisted D Modules on Flag Variety Is Left Bounded Derived Category of Heart Proposition}, to identify the unbounded derived categories it suffices to show the following claim:

\begin{Proposition}\label{t-Structure on Twisted D-Modules is Left-Complete}
The $t$-structure on $\D(G/_{\lambda}B)^N$ is left-complete.
\end{Proposition}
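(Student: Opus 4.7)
The plan is to reduce left-completeness of $\D(G/_{\lambda}B)^N$ to that of $\D(G/N)$ via a conservative, $t$-exact, limit-preserving forgetful functor, and then invoke \cref{Conservative t Exact Functor to right-complete Implies right-complete and Dual Statement}(2). First I would establish left-completeness of $\D(G/N)$: since $G/N$ is a smooth quasi-affine variety (\cref{Universal Flag Variety is Quasiaffine}), the category $\D(G/N)$ is the unbounded derived $\infty$-category of its Grothendieck abelian heart \cite[Corollary 3.3.3]{GaiRozCrystals}, and this heart has finite cohomological dimension. The derived category of any Grothendieck abelian category of finite cohomological dimension is left-complete, by the argument of \cite[Theorem B.1.3]{AGKRRVStackOfLocalSystemsWithRestrictedVariation}.

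Next I would consider the composite forgetful functor $\text{oblv}: \D(G/_{\lambda}B)^N \to \D(G/N)$, which drops both the left $N$-equivariance and the right $(T, \mathcal{L}_{[\lambda]})$-equivariance (using the identification $\D(G/_{\lambda}B) \simeq \D(G/N)^{T, \mathcal{L}_{[\lambda]}}$). This composite is $t$-exact by construction of the invariance $t$-structures; it is conservative because forgetting $N$-equivariance is fully faithful (\cref{Unipotent Implies Oblv FF}) and forgetting twisted torus-equivariance is conservative, since the underlying sheaf determines the equivariant object; and it commutes with small limits, because each constituent forgetful functor is the evaluation of a totalization of cosimplicial DG categories at its $0$-th term, and limits in a totalization are computed level-wise. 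Equivalently, the essential images of the two forgetful functors---the monodromic and twisted monodromic subcategories---are closed under limits, because monodromicity is characterized by an isomorphism of $!$-pullbacks along the action and projection maps, a condition preserved by limits.

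Combining these properties, \cref{Conservative t Exact Functor to right-complete Implies right-complete and Dual Statement}(2) yields that $\D(G/_{\lambda}B)^N$ is left-complete. The main technical input is the left-completeness of $\D(G/N)$, which rests on the smoothness of $G/N$ and the resulting finite cohomological dimension of its abelian category of $\D$-modules; once this is established, the remainder of the argument is formal.
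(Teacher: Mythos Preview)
Your proof is correct but takes a different route from the paper's. The paper instead applies \cref{Conservative Right Adjoint of Bounded Cohomological Amplitude to Left-Complete Category Implies Source Is} (a mild generalization of \cref{Conservative t Exact Functor to right-complete Implies right-complete and Dual Statement}(2), allowing bounded cohomological amplitude in place of $t$-exactness) to the functor $R := \uHom_{\D(G/_{\lambda}B)^N}\bigl(\bigoplus_w \Delta_w,\,-\bigr)$ with target $\text{Vect}$. The target is trivially left-complete, and the bounded amplitude of $R$ follows because each $\uHom(\Delta_w,-)$ is computed by $!$-restriction to a locally closed Schubert cell. Your approach trades this geometric input for an auxiliary statement---the left-completeness of $\D(G/N)$---which is true and can be proved by essentially the same mechanism (e.g.\ via the $t$-exact forgetful functor to $\QCoh(G/N)$, or via a compact generator), but is itself a nontrivial step. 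In exchange you get to use the conceptually cleaner $t$-exact forgetful functor and the earlier \cref{Conservative t Exact Functor to right-complete Implies right-complete and Dual Statement} directly.

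One quibble: your ``equivalently'' clause does not furnish an alternative argument that $\text{oblv}$ commutes with limits. The characterization you invoke (existence of an isomorphism between $!$-pullbacks along the action and projection maps) describes equivariance rather than monodromicity, and in any case closure of the essential image under limits does not by itself show the functor preserves limits when the functor is not fully faithful---which $\text{oblv}^{T,\mathcal{L}_{[\lambda]}}$ is not. Your first justification (projection out of a cosimplicial totalization) is the correct one and suffices on its own.
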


We will prove \cref{t-Structure on Twisted D-Modules is Left-Complete} after showing the following general lemma: 

\begin{Lemma}\label{Conservative Right Adjoint of Bounded Cohomological Amplitude to Left-Complete Category Implies Source Is}
Let $R: \mathcal{C} \to \mathcal{D}$ denote a conservative functor between categories with $t$-structures such that $R$ commutes with limits (i.e. is a right adjoint) and has bounded cohomological amplitude. Then if the $t$-structure on $\mathcal{D}$ is left-complete, then the $t$-structure on $\mathcal{C}$ is also left-complete. 
\end{Lemma}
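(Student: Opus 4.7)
The plan is to use the conservativity of $R$ to reduce the question to one in $\mathcal{D}$, where the left-completeness hypothesis is available. Fix $X \in \mathcal{C}$. Applying $\lim_m$ to the family of truncation fiber sequences $\tau^{\leq -m-1} X \to X \to \tau^{\geq -m} X$ in the stable $\infty$-category $\mathcal{C}$ yields the fiber sequence
\[\lim_m \tau^{\leq -m-1} X \to X \to \lim_m \tau^{\geq -m} X,\]
so left-completeness of $X$ is equivalent to $K_X := \lim_m \tau^{\leq -m-1} X$ vanishing. By the conservativity of $R$, it suffices to show $R(K_X) \simeq 0$, and since $R$ commutes with limits, $R(K_X) \simeq \lim_m R(\tau^{\leq -m-1} X)$.

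Next I would invoke the bounded cohomological amplitude of $R$: letting $[a,b]$ denote the amplitude, the exactness of $R$ together with an induction on the number of nonvanishing cohomology groups shows that $R$ carries $\mathcal{C}^{[p,q]}$ into $\mathcal{D}^{[p+a,q+b]}$, and this extends to give $R(\tau^{\leq -m-1} X) \in \mathcal{D}^{\leq -m-1+b}$ for every $m$ (see below on this step). Granting this, I conclude as follows. For any $n$ and for $Y \in \mathcal{D}^{\geq -n}$, and for $m$ large enough that $-m-1+b < -n$, the mapping space $\mathrm{Hom}_{\mathcal{D}}(Y, R(\tau^{\leq -m-1} X))$ is contractible (by the defining property of the $t$-structure on $\mathcal{D}$). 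Since the $\mathbb{N}^{\mathrm{op}}$-indexed limit may be computed cofinally over such $m$, the space $\mathrm{Hom}_{\mathcal{D}}(Y, \lim_m R(\tau^{\leq -m-1} X))$ is contractible. Applying this with $Y := \tau^{\geq -n} \lim_m R(\tau^{\leq -m-1} X)$ makes the identity map of $\tau^{\geq -n} \lim_m R(\tau^{\leq -m-1} X)$ nullhomotopic, forcing $\tau^{\geq -n} \lim_m R(\tau^{\leq -m-1} X) \simeq 0$ for all $n$. The left-completeness of $\mathcal{D}$ then yields $\lim_m R(\tau^{\leq -m-1} X) \simeq 0$, as desired.

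The main obstacle will be the step justifying that $R(\tau^{\leq -m-1} X) \in \mathcal{D}^{\leq -m-1+b}$ when the truncation $\tau^{\leq -m-1} X$ is not cohomologically bounded below. The inductive extension from $\mathcal{C}^\heartsuit$ gives this only on objects in $\mathcal{C}^{[p,q]}$, and Proposition 2.2 of the paper which passes from $\mathcal{C}^\heartsuit$ to $\mathcal{C}^{\leq 0}$ requires left-completeness of $\mathcal{C}$ — the very conclusion we are after. To sidestep this circularity, I would reinterpret the hypothesis as asserting that $R$ is right $t$-exact up to shift directly (in the cases of interest, $R$ is in fact $t$-exact, so $b = 0$ works), or argue that the bound propagates by approximating $\tau^{\leq -m-1}X$ with its further truncations $\tau^{[-k,-m-1]}X$, passing each through $R$ where the bounded amplitude applies, and noting that the limit-commuting property of $R$ yields the bound in the limit. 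Once this technical point is handled, the remainder of the argument is formal.
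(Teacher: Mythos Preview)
Your strategy runs parallel to the paper's: you pass to the fiber $K_X = \lim_m \tau^{\leq -m-1}X$ and aim to show $R(K_X) \simeq 0$, whereas the paper shows the map $R(\F) \to \lim_m R(\tau^{\geq m}\F)$ is an equivalence via a double-limit maneuver---writing each $R(\tau^{\geq m}\F)$ as $\lim_n \tau^{\geq n}R(\tau^{\geq m}\F)$ by left-completeness of $\D$, swapping the two limits, and then arguing that for each fixed $n$ the inner system is eventually constant with value $\tau^{\geq n}R(\F)$. Both arguments must ultimately control $R$ applied to a highly connective object, and you are right to flag this as the subtle point; the paper glosses over it with the phrase ``both steps follow from the definition of bounded cohomological amplitude.''

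However, your proposed fix is circular: recovering $\tau^{\leq -m-1}X$ as the \emph{limit} of its bounded truncations $\tau^{[-k,-m-1]}X$ is precisely the statement that $\tau^{\leq -m-1}X$ is its own left-completion in $\C$, which is what you are trying to prove. The non-circular route goes through \emph{colimits} instead. Assuming $\C$ is right-complete (not stated in the lemma but always available in the paper's applications), one has $\tau^{\leq -m-1}X \simeq \mathrm{colim}_k\,\tau^{[-k,-m-1]}X$; in the paper's $\mathrm{DGCat}_{\mathrm{cont}}$ conventions $R$ is automatically continuous, each $R(\tau^{[-k,-m-1]}X)$ lies in $\D^{\leq -m-1+b}$ by bounded amplitude on bounded objects, and $\D^{\leq -m-1+b}$ is closed under colimits since its inclusion into $\D$ admits $\tau^{\leq -m-1+b}$ as a right adjoint. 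The paper's double-limit trick is a cleaner packaging of the same idea: the successive fibers $\tau^{[m',m-1]}\F$ in the $m$-tower are already bounded, so the stabilization of the inner system is visible without ever explicitly applying $R$ to an unbounded connective object---though the final identification of the stable value with $\tau^{\geq n}R(\F)$ still implicitly relies on the same right-completeness input.
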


\begin{proof}
Let $\F \in \C$. We wish to show the canonical map $\F \to \text{lim}_m\tau^{\geq m}(\F)$ is an equivalence, where $m$ varies over the nonpositive integers. By conservativity of $R$ and the fact that $R$ commutes with limits, we see that we may equivalently show the canonical map $R(\F) \to \text{lim}_mR(\tau^{\geq m}\F)$ is an equivalence. However, note that 
\raggedbottom
\[\text{lim}_mR(\tau^{\geq m}\F) \simeq \text{lim}_m \text{lim}_n \tau^{\geq n}R(\tau^{\geq m}\F) \simeq \text{lim}_n \text{lim}_m \tau^{\geq n}R(\tau^{\geq m}\F)\]

\noindent where the first expression uses the left-completeness of the $t$-structure on $\D$ and the second uses the fact that limits commute with limits. Now let $[p,q]$ denote the cohomological amplitude of $R$ for $p, q \in \mathbb{Z}$. We see that for any fixed $n$ we have that if $m < n + p$ then
\raggedbottom
\[\tau^{\geq n}R(\tau^{\geq m}\F) \simeq \tau^{\geq n}R(\tau^{\geq n + p}\F) \simeq \tau^{\geq n}R(\F)\]

\noindent where both steps follow from the definition of bounded cohomological amplitude. Therefore, continuing the above chain of equivalences, we see
\raggedbottom
\[\text{lim}_mR(\tau^{\geq m}\F) \simeq \text{lim}_n\tau^{\geq n}R(\F) \xleftarrow{\sim} R(\F)\]

\noindent where the last step follows by the left-completeness of $\D$, as desired. 
\end{proof}

\begin{proof}[Proof of \cref{t-Structure on Twisted D-Modules is Left-Complete}]
By \cref{Conservative Right Adjoint of Bounded Cohomological Amplitude to Left-Complete Category Implies Source Is}, we may exhibit a finite set of compact generators of finite cohomological amplitude, since then we may take the direct sum $\mathcal{G}$ and set $R := \uHom_{\D(G/_{\lambda}B)^N}(\mathcal{G}, -)$. However, we take our compact generators as the standard objects $\Delta_{w} \in \D(G/_{\lambda}B)^{N, \heartsuit}$. Then maps from each standard object are given by a $!$-restriction by a locally closed map of smooth schemes, which in particular has finite cohomological amplitude \cite[Proposition 1.5.13, Proposition 1.5.14]{HTT}. 
\end{proof}
\printbibliography
    \end{document}